\documentclass[11pt]{article}
\usepackage{amsmath,amssymb,fullpage}

\usepackage[english]{babel} 
\usepackage[latin1]{inputenc} 
\usepackage[T1]{fontenc}
\usepackage{amsmath}
\usepackage[all,dvips]{xy}

\usepackage{tikz}
\usepackage{tikz,fullpage}
\usetikzlibrary{positioning, arrows.meta}
\usetikzlibrary{matrix}

\usepackage{enumerate}
\usepackage{mathtools}

\usepackage{graphicx}
\usepackage{color}
\definecolor{vert}{rgb}{0.02,0.4,0.10}
\usepackage{hyperref}
\hypersetup{
	colorlinks=true,                
	breaklinks=true,                
	linkcolor= vert,                
	citecolor= blue                
}

\usepackage{graphicx}
\graphicspath{ {images/} }
\usepackage{pgf,tikz,pgfplots}
\pgfplotsset{compat=1.14}
\usepackage{mathrsfs}
\usetikzlibrary{arrows}

\usepackage{makeidx}
\usepackage{pstricks}

\providecommand{\otherindexspace}[1]{}

\usepackage{authblk}
\usepackage{amsthm}

\newtheorem{theorem}{Theorem}[section]

\newtheorem{lemma}[theorem]{Lemma}
\newtheorem{proposition}[theorem]{Proposition}
\newtheorem{remark}[theorem]{Remark}

\newtheorem{corollary}[theorem]{Corollary}

\DeclareMathOperator{\supp}{supp}

\numberwithin{equation}{section}

\def\vp{\varepsilon}

\def\cal#1{\mathcal{#1}}

\def \R{\mathbb {R}}
\def \N{\mathbb{N}}
\def \Z{\mathbb{Z}}
\def \Q{\mathbb{Q}}
\def \C{\mathbb{C}}
\def \U{\mathbb{U}}

\def\E{\mathbb{E}}
\def\P{\mathbb{P}}
\def\lb{[\![}
\def\rb{]\!]}

\usepackage{fancyhdr}


\pagestyle{fancy}

\pagestyle{plain}
\usepackage{graphics}
\usepackage{graphicx}

\DeclareGraphicsExtensions{.eps,.bmp,.jpg,.pdf,.mps,.png,.gif}

\makeatletter
\def\titre{\@title}
\makeatother

\title{Distances on the $\mathrm{CLE}_4$, critical Liouville quantum gravity and $3/2$-stable maps}
\author{Emmanuel Kammerer \thanks{CMAP, \'Ecole polytechnique, Institut Polytechnique de Paris, 91120 Palaiseau, France, \textsf{emmanuel.kammerer@polytechnique.edu}
}}
\date{\today}
\begin{document}

\maketitle

\begin{abstract}
The purpose of this article is threefold. First, we show that when one explores a conformal loop ensemble of parameter $\kappa=4$ ($\mathrm{CLE}_4$) on an independent $2$-Liouville quantum gravity ($2$-LQG) disk, the surfaces which are cut out are independent quantum disks. 
To achieve this, we rely on approximations of the explorations of a $\mathrm{CLE}_4$: we first approximate the $\mathrm{SLE}_4^{\langle \mu \rangle}(-2)$ explorations for $\mu \in \R$ using explorations of the $\mathrm{CLE}_\kappa$ as $\kappa \uparrow 4$ and then we approximate the uniform exploration by letting $\mu \to \infty$. Second, we describe the relation between the so-called natural quantum distance and the conformally invariant distance to the boundary introduced by Werner and Wu. Third, we establish the scaling limit of the distances from the boundary to the large faces of $3/2$-stable maps and relate the limit to the $\mathrm{CLE}_4$-decorated $2$-LQG. 
\end{abstract}

\begin{figure}[h]
	\centering
	\includegraphics[scale=0.48]{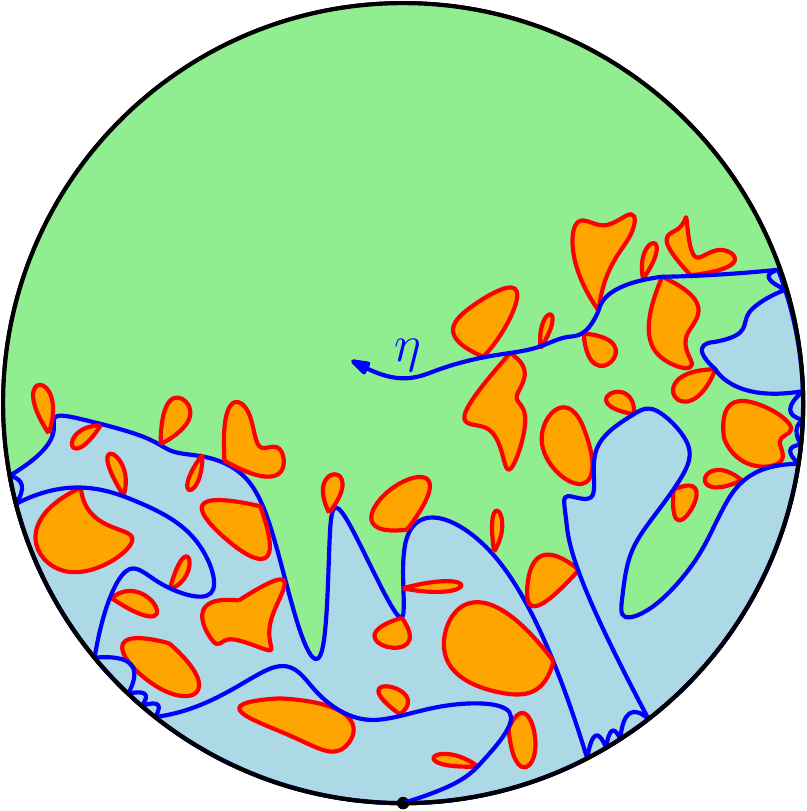}
	\hspace{1.2cm}
	\includegraphics[scale=0.48]{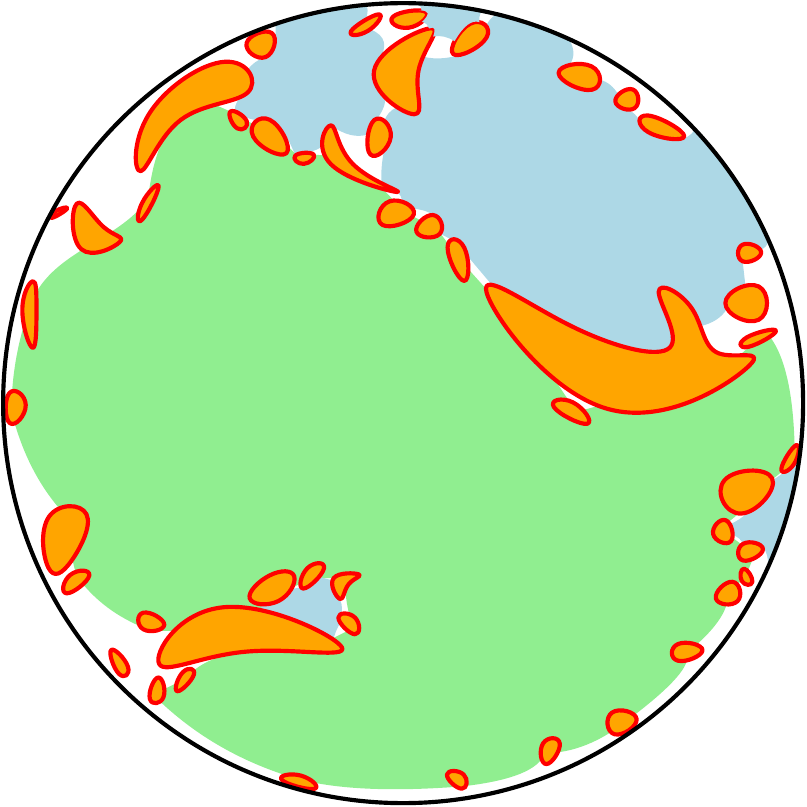}
	\caption{Left: illustration of the $\mathrm{SLE}^{\langle \mu \rangle}_4(-2)$ exploration of the $\mathrm{CLE}_4$ ensemble. Right: illustration of the uniform exploration of the $\mathrm{CLE}_4$ ensemble. The blue curve $\eta$ is the trunk of the exploration. The unexplored region is in green, the domains encircled by loops are in orange and the cut out domains are in blue.}
	\label{tronc et explo unif}
\end{figure}

\tableofcontents

\section{Introduction}
\subsection{Motivation}
This article is motivated by the study of random $3/2$-stable maps and aims to be a first step in the study of their scaling limit when their perimeter goes to $\infty$. {Informally, a $3/2$-stable map is a random map such that every face of degree $k\ge1$ has a weight of order $k^{-2}$, see Subsection \ref{section background} for the precise definition.} Even though we proved in \cite{Kam23} that $3/2$-stable maps, equipped with the graph distance on the dual map, do not satisfy a scaling limit in the usual sense of Gromov-Hausdorff or Gromov-Prokhorov, we show here that at a smaller scale their limiting behavior is related to the conformal loop ensemble $\mathrm{CLE}_4$, where we measure the lengths of the loops using an independent critical Liouville quantum gravity. In order to make sense of the scaling limit, one can order the faces of a $3/2$-stable map in the non-increasing order of their degrees. Similarly, we order the loops of the $\mathrm{CLE}_4$ on the unit disk $\mathbb{D}$ in the non-increasing order of quantum boundary lengths, where the boundary $\partial \mathbb{D}$ is also viewed as a loop. We conjecture that the scaling limit of the $3/2$-stable maps is the set of the loops of the $\mathrm{CLE}_4$ equipped with a distance $\mathrm{d}_{\mathrm{Lamperti}}$ which is defined as a Lamperti transform of the so-called quantum distance introduced in \cite{AHPS21}, {in the sense that the rescaled infinite matrix of distances with the above ordering converges in distribution for the product topology.} {Such a random metric space which appears in random conformal geometry and which is proven to be the scaling limit of random planar maps has not been uncovered since the introduction of the Brownian map \cite{MM06, Mie13, LG13, MS20, MS21, MS21b}. For instance, until now, on the one hand, the Liouville quantum gravity metric of parameter $\gamma\neq \sqrt{8/3}$ defined in \cite{GM21} has not yet been identified as the scaling limit of some natural model of random planar maps and on the other hand, the scaling limit of stable maps of \cite{LGM11,Mar18,CMR25} have not yet been constructed using Liouville quantum gravity and conformal loop ensembles.} However, the quantum distance of \cite{AHPS21} is so far only defined as a distance from the loops to the boundary. In this work, we establish the scaling limit for the distances to the boundary (which provides in particular the tightness for this topology) and we give some insight on the relation between several distances to the boundary in the $\mathrm{CLE}_4$.

\subsection{Background}\label{section background}
The conformal loop ensemble $\mathrm{CLE}_4$ is a random collection of disjoint simple loops included in some simply connected domain. It was introduced and studied in \cite{S09} as part of the one parameter family of simple conformal loop ensembles $\mathrm{CLE}_{\kappa}$ for $\kappa \in (8/3,8)$, whose loops are no longer simple when $\kappa >4$. In \cite{S09}, Sheffield also defined a family of branching explorations parametrized by some ``drift'' $\mu \in \R$ of the $\mathrm{CLE}_4$ starting from the boundary and discovering the loops, called the $\mathrm{SLE}^{\langle \mu \rangle}_4(-2)$ exploration. According to \cite{MSW17} this exploration can be described using a random simple continuous curve called the trunk of the exploration which hits the loops it discovers (see Figure \ref{tronc et explo unif}). When the trunk hits the boundary of the explored region, the unexplored domain is split into two unexplored regions. One can choose for example to continue the exploration of the region which contains a particular point which is called a target point. The whole branching exploration can then be seen as a coupling of the explorations with target points. In \cite{WW13}, Werner and Wu introduced a uniform exploration of the $\mathrm{CLE}_4$ which can be seen as the limit of the $\mathrm{SLE}^{\langle \mu \rangle}_4(-2)$ exploration as $\mu \to \infty$. Informally, it consists in discovering the loop next to a random point chosen according to the harmonic measure on the boundary of the explored region and then updating this random point. The uniform exploration of the $\mathrm{CLE}_4$ is also a branching exploration, given that the unexplored domain is regularly split into two unexplored domains. The time parametrization of the uniform exploration in \cite{WW13} is interpreted as a distance to the boundary. In an {unfinished} work, Sheffield, Watson and Wu {developed a strategy to prove that}
this distance from the boundary to the loops of the $\mathrm{CLE}_4$, which will be denoted by $\mathrm{d}_{\mathrm{WW}}$, 
can be extended to a conformally invariant distance between all the loops of the  $\mathrm{CLE}_4$, including the boundary of the domain. However, we will not use this result in this work.

More recently, the $\mathrm{CLE}_\kappa$ was shown to interact nicely with an independent Liouville quantum gravity surface of parameter $\sqrt{\kappa}$ on the same simply connected domain ($\sqrt{\kappa}$-LQG). An LQG disk parametrized by a simply connnected domain can be defined as a well-chosen version of a Gaussian free-field on this domain which provides a way to measure quantum lengths of some curves and quantum areas of sub-domains. In \cite{MSW22}, the law of the quantum boundary length of the unexplored region along the exploration of the $\mathrm{CLE}_\kappa$ for $\kappa<4$ parametrized by the quantum length of the trunk is shown to behave as some branching Markov process called a growth-fragmentation process, which was introduced by Bertoin in \cite{Ber17}. For the case $\kappa =4$, building on the rich interplay between $\mathrm{CLE}$'s, LQG's, and Brownian motion, and particularly on \cite{DMS21}, Aru, Holden, Powell and Sun identified in \cite{AHPS21} a coupling between a uniform $\mathrm{CLE}_4$ exploration, an independent $2$-LQG disk and a Brownian excursion in the upper half-plane.  Thanks to \cite{AdS22}, this coupling actually enables to describe the quantum boundary length of the explored region along the uniform exploration, parametrized by a  so-called quantum natural distance to the boundary, as a growth-fragmentation process.

Independently, Boltzmann $3/2$-stable maps were first studied in \cite{BCM} in the infinite volume setting. In this introduction, we will only talk about finite $3/2$-stable maps. Recall that a planar map $\mathfrak{m}$ is a connected planar graph embedded in the sphere seen up to orientation preserving homeomorphism, with a distinguished oriented edge $\vec{e}_r$. The face on the right of the root edge is called the root face and is written $f_r$. The number of edges counted with multiplicity around a face $f$ is called its degree and is denoted by $\deg(f)$. We focus on bipartite planar maps, which only have faces with even degrees. For all $\ell \in \N$, let us denote by $\mathcal{M}^{(\ell)}$ the set of bipartite planar maps of perimeter $2\ell$, i.e. such that the degree of the root face is $2 \ell$.
Let ${\bf q} = (q_k)_{k\ge 0}$ be a non-zero sequence of non-negative real numbers. The Boltzmann weight of a map $\mathfrak{m} \in  \mathcal{M}^{(\ell)}$ is defined by
$$
w_{\bf q} (\mathfrak{m}) = \prod_{f \in \mathrm{Faces}(\mathfrak{m})\setminus \left\{f_r\right\}} q_{\deg(f)/2}.
$$
When the \textit{partition function} 
$
W^{(\ell)}\coloneqq \sum_{\mathfrak{m} \in \mathcal{M}^{(\ell)}} w_{\bf q}(\mathfrak{m})
$ is finite, we say that the weight sequence $\bf q$ is admissible. The Boltzmann probability measure on $\mathcal{M}^{(\ell)}$ is then defined by 
$
\P^{(\ell)}(\mathfrak{m}) = {w_{\bf q}(\mathfrak{m})}/{W^{(\ell)}}
$ 
for all $\mathfrak{m} \in \mathcal{M}^{(\ell)}$. The associated expectation is written $\E^{(\ell)}$. {The geometry of random maps of law $\P^{(\ell)}$ heavily depends on the asymptotic behaviour of the partition function $W^{(\ell)}$ as $\ell \to  \infty$. By calculating the partition function and using Laplace's method, one can show that
	$$
	W^{(\ell)} = O(c_{\bf q}^\ell \ell^{-3/2}) \qquad \text{and} \qquad W^{(\ell)} = \Omega(c_{\bf q}^\ell \ell^{-5/2})
	$$
	as $\ell \to \infty$ for some constant $c_{\bf q}>0$. See Section 5.3 of \cite{StFlour}. As in the lecture notes \cite{StFlour}, we say that the weight sequence $\bf q$ is subcritical if there exists a constant $p_{\bf q} > 0$ such that $W^{(\ell)} \sim \frac{p_{\bf q}}{2} c_{\bf q}^{\ell+1} \ell^{-3/2}$ when $\ell \to \infty$. The exponent $-3/2$ is reminiscent of the asymptotics obtained in the enumeration of trees. The sequence $\bf q$ is said critical generic if there exists $p_{\bf q} > 0$ such that $W^{(\ell)} \sim \frac{p_{\bf q}}{2} c_{\bf q}^{\ell+1} \ell^{-5/2}$ as $\ell \to \infty$. When $q_k = 0$ for $k$ large enough, i.e.\@ when the face degrees are bounded, and $\bf q$ is admissible, then either $\bf q$ is subcritical, or it is generic critical. 
	
	To obtain intermediate exponents, we have to favor the appearance of high degrees by choosing $\bf q$ so that the weights decay ``slowly enough''. We say that $\bf q$ is \textit{critical non-generic of type $a$} with $3/2 < a < 5/2$ if there exists $p_{\bf q} > 0$ such that
	\begin{equation}\label{eq critique de type deux}
		W^{(\ell)} \mathop{\sim}\limits_{\ell \to \infty} \frac{p_{\bf q}}{2} c_{\bf q}^{\ell+1} \ell^{-a}.
		\end{equation}}
{Such a sequence $\bf q$ does exist and an explicit example is given by Lemma 6.1 of \cite{BC}.} {Intuitively, \eqref{eq critique de type deux} can be obtained by carefully choosing $\bf q$ with $q_k \sim p_{\bf q} c_{\bf q}^{-k+1} k^{-a}$ and such a polynomial decay favors the appearance of faces with high degrees. A random map of law $\P^{(\ell)}$ is called an \textit{$(a-1/2)$-stable map} of perimeter $2\ell$, for instance since the number of vertices satisfies a scaling limit towards a random variable related to a $1/(a-1/2)$-stable random variable as shown in Proposition 10.4 of \cite{StFlour}.} In what follows, ${\bf q}$ will be assumed \textbf{critical non-generic of type ${\mathbf{a=2}}$}. 

The distances that we will study on these maps are the graph distance on the dual map, called the \textit{dual graph distance}, obtained by exchanging the vertices with the faces, written $\mathrm{d}^\dagger_{\mathrm{gr}}$, and the \textit{first passage percolation distance} which is obtained by putting i.i.d.\@ parameter $1$ exponential random lengths on each edge of the dual map, {written $\mathrm{d}^\dagger_{\mathrm{fpp}}$}. {The study of the distance $\mathrm{d}^\dagger_{\mathrm{fpp}}$ is simpler than $\mathrm{d}^\dagger_{\mathrm{gr}}$ and often gives an intuition on the behaviour of $\mathrm{d}^\dagger_{\mathrm{gr}}$.} {These distances behave in a dramatically different way from the (primal) graph distance on the map since faces with high degrees play the role of hubs. The scaling limit of stable maps equipped with the primal graph distance was first studied in \cite{LGM11}, then it was recently constructed in \cite{CMR25} and it is conjectured to be related to a quantum metric on the CLE carpet/gasket which has not been defined yet.}

The main technique in \cite{BCM} and \cite{Kam23} to study these distances is the peeling exploration first introduced in \cite{B16}. It consists in a step by step Markovian exploration of the map where at each step, we ``peel'' an edge on the boundary of the unexplored region and discover what is behind it. In this setting, we can also observe the evolution {of} the perimeter of the unexplored region (which is the number of edges on its boundary). 
\subsection{Main results}\label{sous-section resultats}

Our main contributions can be informally summarized as follows:
\begin{itemize}
	\item We describe the law of the unexplored regions during the uniform exploration (resp.\@ $\mathrm{SLE}^{\langle \mu \rangle}_4(-2)$ exploration) of the $\mathrm{CLE}_4$-decorated $2$-LQG disk when it is parametrized by the quantum natural distance $\mathrm{d}_\mathrm{q}$ (resp.\@ by the quantum length of the trunk of the $\mathrm{SLE}^{\langle \mu \rangle}_4(-2)$). In particular, we show that for all $t\ge 0$, conditionally on the quantum boundary lengths of the loops which are discovered before time $t$ and conditionally on the quantum boundary lengths of the unexplored regions, the unexplored regions are independent $2$-LQG disks (with prescribed boundary lengths). See Theorem \ref{prop de Markov disques quantiques}.
	\item We relate the distance $\mathrm{d}_\mathrm{WW}$ to the Lamperti transform of the quantum natural distance $\mathrm{d}_\mathrm{q}$ with some continuous increasing process with stationary increments along a particular branch of the uniform exploration. See Theorem \ref{th Lamperti}.
	\item We establish the scaling limit of the distances to the root in $3/2$-stable maps. More precisely, we show that the scaling limit of the degrees of large faces corresponds to the quantum boundary lengths of the loops of the $\mathrm{CLE}_4$ {(this scaling limit can actually be seen as a simple consequence of the results of \cite{BBCK}, \cite{AHPS21} and \cite{AdS22})}, while the scaling limit of the distances from these faces to the boundary corresponds to the Lamperti transform of $\mathrm{d}_\mathrm{q}$. However the quantum distance $\mathrm{d}_\mathrm{q}$ is not the scaling limit of some distance in $3/2$-stable maps, but corresponds to the scaling limit of the time of exploration in the peeling explorations of the maps. See Theorem \ref{gros théorème}.
\end{itemize}
Let us already state our main results more precisely even if the exact definitions will only arise in the next sections.

\paragraph{Exploration of $\mathrm{CLE}_4$ and LQG.}
Recall that the uniform exploration of the $\mathrm{CLE}_4$ is a branching exploration, in the sense that the unexplored domain is gradually split into different unexplored regions which are explored in a Markovian way. {See Subsection \ref{sous-section explo unif} for a definition of the uniform exploration.} {Consider this uniform exploration on top of an independent $2$-LQG disk (or $2$-quantum disk), which is a simply connected domain equipped with a quantum area measure and a quantum boundary length measure which are both constructed using a Gaussian free field {(see Subsection \ref{sous-section disques quantiques} for the definition of LQG disks)}.} For simplicity, we choose to write the first results for a particular branch of the exploration of the $\mathrm{CLE}_4$ on the unit disk $\mathbb{D}$ equipped with an independent $2$-LQG disk, namely the branch of the locally largest component: at each time the {region remaining to be explored} splits in two regions, we choose to explore the region which has the largest quantum boundary length and the other domain is called a cut out domain {(see Subsection \ref{sous-section composante localement la plus grande} for the definition of the exploration branch of the locally largest component)}. See the right side of Figure \ref{tronc et explo unif}. We parametrize the uniform exploration by the quantum natural distance to the boundary and the $\mathrm{SLE}^{\langle \mu \rangle}_4(-2)$ exploration by the quantum length of the trunk. {See Subsection \ref{sous-section rappels CLE}  for the definition of the $\mathrm{SLE}^{\langle \mu \rangle}_4(-2)$ exploration.} These parametrizations are characterized as follows: if we denote by $(Y_t)_{t\ge 0}$ (resp. $(Y^\mu_t)_{t\ge 0}$) the càdlàg process of the quantum boundary length of the locally largest component along the uniform exploration (resp. $\mathrm{SLE}^{\langle \mu \rangle}_4(-2)$ exploration), then we have for all $t\ge 0$ the limit in probability
$$
\lim_{\vp \to 0}  \vp \# \{ s \in [0,t], \ \lvert\Delta Y_s \rvert \in [\vp, 2\vp]\}
=\lim_{\vp \to 0}  \vp \# \{ s \in [0,t], \ \lvert\Delta Y^\mu_s \rvert \in [\vp, 2\vp]\}
=t,
$$
where for any càdlàg process $(X_t)_{t\ge 0}$ we write $\Delta X_t \coloneqq X_t-X_{t-}$ for all $t\ge 0$. The positive jumps of $(Y_t)_{t\ge 0}$ or $(Y^\mu_t)_{t\ge 0}$ occur when the exploration draws a loop while the negative jumps happen when a part of the unexplored region is cut out by the exploration. 

Let us recall the positive self-similar Markov process studied in \cite{AdS22}, which corresponds to a particular case in a family of such processes introduced in \cite{BBCK} for $\theta=1$. Let $\Lambda_1$ be the image by $x \mapsto \log x$ of the measure
$
{1 / (\pi(x(1-x))^2)} {\bf 1}_{x \in(1/2,1)\cup(1,\infty)}  dx.
$
We denote by $\xi$ the Lévy process 
with no Brownian part, with drift $-2/\pi$ and L\'evy measure $\Lambda_1$.
{More precisely, the Laplace exponent $\Psi : \R_+ \to (-\infty,\infty]$ of $\xi$, which is characterized by $\E(\exp(q\xi(t))) = \exp(t\Psi(q))$ for all $t,q\ge 0$, is given by
$$
\forall q \ge 0, \qquad \qquad \Psi(q) = \frac{-2}{\pi} q + \int_\R (e^{qy}-1+q(1-e^y))\Lambda_1(dy).
$$}
 If $\alpha \in \R$, we define the Lamperti time-substitution
\begin{equation}\label{Lamperti}
	\tau(t) =  \inf \left\{ r \ge 0 \ ; \ \int_0^r e^{-\alpha \xi(s)} ds \ge t \right\}.
\end{equation}
For all $x>0$, we write $P_x^{(\alpha)}$ the distribution of the time-changed process
$
X^{(\alpha)}(t) = x e^{\xi(\tau({tx^\alpha}))}
$
with the convention that $X^{(\alpha)}(t)  = \partial$ for $t\ge \zeta \coloneqq x^{-\alpha} \int_0^\infty e^{-\alpha \xi(s)} ds$, where $\partial$ is a cemetery point. Then $X^{(\alpha)}$ is a positive self-similar Markov process of index $\alpha$, in the sense that for every $x>0$, the law of $(xX^{(\alpha)}(x^\alpha t))_{t\ge 0}$ under $P_1^{(\alpha)}$ is $P_x^{(\alpha)}$. By the definition of $\xi$, we can see that either $X^{(\alpha)}$ is absorbed a.s.\@ at the cemetery point $\partial$ after a finite time, or it converges a.s.\@ exponentially fast to zero. In the case $\alpha=-1$, we have $\zeta < +\infty$. 

{The process $X^{(\alpha)}$ is the building block of a branching process: when $X^{(\alpha)}$ performs a negative jump at time $t\ge 0$, one can interpret this jump as the splitting event of a ``cell'' of size $X^{(\alpha)}(t-)$ into two cells of sizes $X^{(\alpha)}(t)$ and $X^{(\alpha)}(t-)-X^{(\alpha)}(t)$. Then one can run another self-similar process with the same law as $X^{(\alpha)}$ starting from $X^{(\alpha)}(t-)-X^{(\alpha)}(t)$. We will not delve into these branching processes until Subsection \ref{sous-section cartes et CLE}.}

{\begin{remark}\label{remarque taux de sauts}The positive self-similar Markov process $X^{(\alpha)}$ can also be described using its jump kernel: at time $t\ge0$ the process $X^{(\alpha)}$ makes a jump of size $y\in \R^*$ from $X^{(\alpha)}(t)=x$ according to the jump kernel
		$$
		K(x,dy)= x^{\alpha} \frac{1}{\pi ((1+y/x)y/x)^2} \frac{dy}{x} {\bf 1}_{y/x-1 \in (1/2,1)\cup (1,\infty)} = \frac{x^{3+\alpha}}{\pi y^2(x+y)^2} dy {\bf 1}_{y>-x/2 \text{ and } y \neq 0} dy.
		$$
		This is the way closely related self-similar Markov processes are described in Paragraph 1.1.3 of \cite{MSW22}. 
		\end{remark}
 }

Our first main result is the analogue of Theorem 1.2 of \cite{MSW22} in the case $\kappa=4$ for the $\mathrm{SLE}_4^{\langle \mu \rangle}(-2)$ exploration and for the uniform exploration.

\begin{theorem}\label{prop de Markov disques quantiques}
Let $\mu \in \R$. The processes $(Y_t)_{t\ge 0}$ and $(Y^\mu_t)_{t\ge 0}$ have the same law as $(X^{(-1)}(\pi t))_{t\ge 0}$. 
Moreover, conditionally on $(Y_s)_{0\le s \le t}$ (resp. $(Y^\mu_s)_{0\le s \le t}$), if we denote by $(x_i)_{i\ge 1}$ the sizes of the jumps of $(Y_s)_{0\le s \le t}$ (resp. $(Y^\mu_s)_{0\le s \le t}$) up to time $t$ ranked in the non-increasing order of absolute value, then the corresponding domains which are encircled by loops, associated with the positive jumps, or which are cut out, associated with the negative jumps, 
and the unexplored region are independent $2$-quantum disks of boundary lengths $|x_i|$'s and $Y_t$ (resp. $Y^\mu_t$).
\end{theorem}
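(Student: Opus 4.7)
I would treat the two parametrizations separately, first establishing the statement for the $\mathrm{SLE}_4^{\langle \mu \rangle}(-2)$ exploration parametrized by trunk length, and then deducing the uniform case by passing to the limit $\mu \to \infty$. The first part of the theorem (identifying the law of $Y,Y^\mu$ as $X^{(-1)}(\pi \cdot)$) is a ``one-dimensional'' statement about the boundary length along a single branch, while the second part (independence of the cut-out and encircled $2$-quantum disks) is a structural Markov statement about the full LQG decoration; I would attack them separately.

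\textbf{Step 1 (law of $Y^\mu$ via $\kappa \uparrow 4$ approximation).} Following the plan announced in the abstract, I approximate the $\mathrm{SLE}_4^{\langle \mu \rangle}(-2)$ exploration of a $\mathrm{CLE}_4$ on an independent $2$-LQG disk by the corresponding exploration of a $\mathrm{CLE}_\kappa$ on a $\sqrt{\kappa}$-LQG disk for $\kappa \uparrow 4$. For each $\kappa \in (8/3,4)$, Theorem~1.2 of \cite{MSW22} identifies the quantum boundary length process $Y^{\kappa,\mu}$ (parametrized by the quantum length of the trunk) as a positive self-similar Markov process given explicitly by an integrable L\'evy process $\xi^{\kappa}$ with a closed-form L\'evy measure. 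I would then show that as $\kappa \uparrow 4$, $\xi^{\kappa}$ converges in law (in the Skorokhod $J_1$ topology) to $\xi$ with drift $-2/\pi$ and L\'evy measure $\Lambda_1$, and that the associated Lamperti time-substitution converges, giving convergence of $Y^{\kappa,\mu}$ to a self-similar Markov process of index $\alpha=-1$ (the index $-1$ is precisely the scaling exponent of the $2$-LQG quantum length). The constant $\pi$ in $X^{(-1)}(\pi t)$ is then fixed by the normalization of the L\'evy measure.

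\textbf{Step 2 (uniform exploration).} Having settled the law of $Y^\mu$, I would recover the law of $Y$ by letting $\mu \to \infty$, using \cite{WW13} for the convergence of $\mathrm{SLE}_4^{\langle \mu \rangle}(-2)$ explorations to the uniform one. The nontrivial point is that $Y^\mu$ is parametrized by the trunk length while $Y$ is parametrized by the quantum natural distance $\mathrm{d}_\mathrm{q}$; I would argue that the defining limit in probability of the parametrization (the $\varepsilon$-counting of jumps) is stable under $\mu \to \infty$, using the convergence of the jump measures of $Y^\mu$ and $Y$ in a vague sense. Alternatively, I could invoke \cite{AHPS21} together with \cite{AdS22}, which already encode $Y$ through a Brownian excursion and identify it as a positive self-similar Markov process, and then match the L\'evy data with the limit from Step~1 by uniqueness.

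\textbf{Step 3 (conditional independence of quantum disks, and main obstacle).} The independence claim I would obtain from the Markov property of the $\mathrm{CLE}$-decorated $2$-LQG disk, which says that at each jump time of $Y$ (or $Y^\mu$) the newly created component, whether encircled by a loop (positive jump) or cut out by the trunk (negative jump), is a $2$-quantum disk of prescribed boundary length, independent of the previously explored region and of the remaining unexplored domain, which is itself a $2$-quantum disk. This Markov property for the $\mathrm{SLE}_\kappa^{\langle \mu \rangle}(-2)$-decorated $\sqrt{\kappa}$-LQG is part of the $\mathrm{CLE}_\kappa$/LQG coupling for $\kappa<4$ and is preserved under both limits $\kappa \uparrow 4$ and $\mu \to \infty$; iterating the Markov property along the c\`adl\`ag jumps ranked by size yields the full statement. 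The main obstacle in this programme is the control of the $\kappa \uparrow 4$ limit in Step~1: the small jumps of $\xi^\kappa$ accumulate as $\kappa \to 4$ and the drift is only obtained as a delicate cancellation, so the convergence must be established in the Skorokhod topology with careful compensation; the factor $\pi$ in the time-change and the identification of the drift $-2/\pi$ with the one in \cite{AdS22} is the most subtle bookkeeping.
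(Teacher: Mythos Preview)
Your overall two-stage strategy (first $\kappa\uparrow 4$, then $\mu\to\infty$) is exactly the route the paper takes, and Steps~1--2 are broadly in line with it, though the paper works at the level of the chordal exploration first (via a Doob $h$-transform of the pair $(L,R)$ of left/right boundary lengths, Proposition~\ref{Prop 5.1 MSW}) and only afterwards passes to the locally largest branch.

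However, there is a genuine gap in Step~3, and you have misidentified the main obstacle. You write that the quantum-disk Markov property ``is preserved under both limits $\kappa\uparrow 4$ and $\mu\to\infty$'' without indicating how this is to be proved. The small-jump compensation and drift identification you flag are \emph{not} where the difficulty lies: the law of the boundary-length process is handled in a short lemma (Lemma~\ref{lemme cv Skorokhod}) by dominated convergence on the Doob $h$-transform, and the constant $\pi$ is routine bookkeeping. The real work is that ``convergence of quantum surfaces'' is not convergence of one-dimensional processes. To pass the Markov property through the limit one must show that each cut-out or encircled domain at level $\kappa_n$ converges \emph{as a domain} (in the Carath\'eodory topology) to the corresponding domain at $\kappa=4$, that the restricted fields, area measures and boundary-length measures converge accordingly, and that this matching respects the \emph{order} in which domains appear along the exploration. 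None of this is automatic: the Carath\'eodory convergence of cut-out and encircled domains (Proposition~\ref{prop cv CLE}) combines convergence of driving functions from \cite{Le21} with Hausdorff convergence of $\mathrm{SLE}_\kappa$ curves from \cite{KS17}; the convergence of the restricted quantum disks (Lemma~\ref{lemme cvDQ} and the subsequent seven-step argument) uses the results of \cite{AHPS21} and \cite{AG23} on convergence of quantum boundary lengths of $\mathrm{CLE}_\kappa$ loops together with a mass-balance argument via the area measure to rule out ``lost'' domains in the limit; and one must separately verify that the trunk-length parametrizations themselves converge ($\mathrm{ql}^{\kappa_n}\to\mathrm{ql}^4$). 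The same issues recur in the $\mu\to\infty$ stage, where a Carath\'eodory convergence of the radial $\mathrm{SLE}_4^{\langle\mu\rangle}(-2)$ to the uniform exploration must first be established, and this is not supplied by \cite{WW13}. Your proposal gives no mechanism for any of this, so the implication ``Markov property for $\kappa<4$ $\Rightarrow$ Markov property for $\kappa=4$'' is asserted rather than proved.
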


Note that the fact that $(Y_t)_{t\ge 0}$ has the same law as $(X^{(-1)}(\pi t))_{t\ge 0}$ can be seen as a direct consequence of Theorem 5.5 of \cite{AHPS21} and Theorem 1.1 of \cite{AdS22}. {This is not the case for the statement on the quantum disks since the half-plane Brownian excursion of \cite{AHPS21} does not characterize the pair $\mathrm{CLE}_4$/$2$-quantum disk.} Besides, the above theorem {extends} Theorem 1.1 of the recent article \cite{AG23} {which focuses on the regions encircled by loops}. {Actually, in the proof of the above result, we will use a few results of \cite{AG23}.}

\paragraph{Quantum natural distance and the conformally invariant distance.} 
For all $u\ge 0$, we denote by $C_u$ the unexplored region at time $u$ when following the locally largest component in the uniform exploration of \cite{WW13} parametrized by the distance $\mathrm{d}_\mathrm{WW}$. {We stress that this time parametrization is \textit{different from the time parametrization by the quantum natural distance} used in the previous paragraph, which was denoted by $t$. In general, in this paper the letter $t\ge 0$ is related to the quantum natural distance.} One can extend the definition of the quantum natural distance from loops to the boundary in order to measure also the quantum natural distance $\mathrm{d}_\mathrm{q}(\partial \mathbb{D}, C_u)$ from the unexplored region $C_u$ to the boundary, so that if $u$ is a time of discovery of a loop $\mathcal{L}$, then $\mathrm{d}_\mathrm{q}(\partial \mathbb{D}, C_u)$ corresponds to the quantum distance $\mathrm{d}_\mathrm{q}(\partial \mathbb{D}, \mathcal{L})$ from the boundary to $\mathcal{L}$. See Subsection \ref{sous-section composante localement la plus grande} for the precise definition. Let $(\sigma(t))_{t\ge 0}$ be the right-continuous inverse of $u \mapsto \mathrm{d}_\mathrm{q}(\partial \mathbb{D}, C_u)$. The next result, a consequence of Theorem \ref{prop de Markov disques quantiques}, gives some insight on $(\sigma(t))_{t\ge 0}$.

\begin{theorem}\label{th Lamperti}
	The process $(\sigma(t))_{t\ge 0}$ is continuous and increasing. Moreover, the continuous increasing process $S$ characterized by
$$
	\forall t \ge 0, \qquad \sigma(t) = S\left(\int_0^t \frac{1}{Y_s} ds\right)
$$
	has stationary increments.
\end{theorem}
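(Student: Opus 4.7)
The plan is to first prove the regularity of $\sigma$ and then derive the stationary-increments property of $S$ from Theorem \ref{prop de Markov disques quantiques} combined with the scale invariance of the CLE-decorated quantum disk and the conformal invariance of $\mathrm{d}_\mathrm{WW}$. For the regularity of $\sigma$: since the family of unexplored domains $(C_u)_{u \ge 0}$ is strictly decreasing, the map $u \mapsto \mathrm{d}_\mathrm{q}(\partial\mathbb{D}, C_u)$ is non-decreasing. Strict monotonicity follows from the accumulation of infinitely many small jumps of $Y^\mathrm{WW}$ in any non-degenerate interval of WW time (the branching exploration having an infinite-activity jump structure), which contribute strictly to the natural distance via its defining small-jump limit. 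Continuity at loop-discovery times follows from the extension described in Subsection \ref{sous-section composante localement la plus grande}, where $\mathrm{d}_\mathrm{q}(\partial\mathbb{D}, C_{u_\ast})$ agrees with $\mathrm{d}_\mathrm{q}(\partial\mathbb{D}, \mathcal{L})$ both from the left and from the right; continuity at cut-out times is analogous, as one merely restricts to a sub-component whose boundary was already part of the exposed front. Thus $\sigma$ is continuous and strictly increasing.

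For the stationary increments of $S$, fix $s_0 \ge 0$ and set $t_0 = \Phi^{-1}(s_0)$ with $\Phi(t) = \int_0^t Y_s^{-1}\, ds$, together with $u_0 = \sigma(t_0)$ and $L_0 = Y_{t_0}$. By Theorem \ref{prop de Markov disques quantiques} applied at time $t_0$, conditionally on $(Y_s)_{0 \le s \le t_0}$ the unexplored region $C_{u_0}$ is an independent $2$-quantum disk of boundary length $L_0$ carrying a fresh uniform exploration. Let $(\hat Y_r, \hat\sigma, \hat\Phi, \hat S)$ denote the analogues of $(Y_r, \sigma, \Phi, S)$ for an independent uniform exploration of a $2$-quantum disk of boundary length $1$. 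The $(-1)$-self-similarity of $Y$ (Theorem \ref{prop de Markov disques quantiques}), combined with the conformal invariance of $\mathrm{d}_\mathrm{WW}$ (making the WW-parametrization invariant under the LQG scaling from boundary length $L_0$ to $1$), yields the distributional identities $(Y_{t_0+r})_{r\ge 0} = (L_0\, \hat Y_{r/L_0})_{r \ge 0}$ and $\sigma(t_0+r)-\sigma(t_0) = \hat\sigma(r/L_0)$, conditionally on $\mathcal{F}_{u_0}$. The change of variables
$$
\Phi(t_0+r) - \Phi(t_0) = \int_0^r \frac{dv}{L_0\, \hat Y_{v/L_0}} = \hat\Phi(r/L_0)
$$
gives $\Phi^{-1}(s_0+s) - t_0 = L_0\, \hat\Phi^{-1}(s)$, so
$$
S(s_0+s) - S(s_0) = \sigma\bigl(t_0 + L_0\, \hat\Phi^{-1}(s)\bigr) - \sigma(t_0) = \hat\sigma\bigl(\hat\Phi^{-1}(s)\bigr) = \hat S(s).
$$
Since the factors of $L_0$ cancel, $\hat S$ has the same law as $S$ (the $S$-process being independent of the initial boundary length) and is independent of $\mathcal{F}_{u_0}$, yielding the stationary (and in fact independent) increments.

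The main obstacle is the continuity of $u \mapsto \mathrm{d}_\mathrm{q}(\partial\mathbb{D}, C_u)$, which demands a careful treatment of the extension of the natural distance from loops to general unexplored regions using the coupling results of \cite{AHPS21, AdS22}. Once this regularity is settled, the stationary-increments property is a clean consequence of the self-similar Markov structure of $Y$ provided by Theorem \ref{prop de Markov disques quantiques} together with the conformal invariance of $\mathrm{d}_\mathrm{WW}$.
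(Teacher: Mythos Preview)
Your overall strategy for the stationary-increments part --- restart the uniform exploration inside the unexplored region using the quantum Markov property of Theorem~\ref{prop de Markov disques quantiques}, rescale to a unit-boundary disk via the $(-1)$-self-similarity of $Y$, and use conformal invariance of $\mathrm{d}_\mathrm{WW}$ to see that the WW-parametrisation is unaffected by this rescaling --- is exactly the route the paper takes. The paper carries this out with the Lamperti change of variable $\tau^{-1}$ (which coincides with your $\Phi^{-1}$), and is more explicit about two points you gloss over: (i) Theorem~\ref{prop de Markov disques quantiques} is stated for deterministic $t$, so applying it at the stopping time $t_0=\Phi^{-1}(s_0)$ needs a short argument; (ii) the ``fresh uniform exploration'' is not part of Theorem~\ref{prop de Markov disques quantiques} but comes separately from the Poisson-point-process construction of the uniform exploration, and one must check that the renewed PPP is independent of the renormalised disk conditionally on the $Y$-filtration. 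With these points filled in, your argument for stationarity is the paper's argument.

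There is, however, a genuine error in your final claim of \emph{independent} increments. Theorem~\ref{prop de Markov disques quantiques} only describes the conditional law of the future given $\sigma\bigl((Y_s)_{0\le s\le t_0}\bigr)$; it says nothing about conditioning on the larger $\sigma$-algebra generated by $(\mathcal{Y}(u))_{0\le u\le u_0}$ (which is what is needed if your $\mathcal{F}_{u_0}$ is to contain the past of $S$). The point is that $(S(r))_{0\le r\le s_0}$ depends on the WW-time parametrisation $\sigma$, which is \emph{not} measurable with respect to the $Y$-filtration alone. So your argument gives ``$\hat S$ has a fixed conditional law given $(Y_s)_{s\le t_0}$'', hence stationary increments, but not ``$\hat S$ is independent of $(S(r))_{r\le s_0}$''. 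In fact the paper explicitly conjectures that $S$ does \emph{not} have independent increments (see the paragraph after the theorem and Subsection~\ref{sous-section heuristique}): since $S$ is continuous and increasing, independent stationary increments would force $S$ to be a deterministic linear function, i.e.\ $\mathrm{d}_\mathrm{WW}=c\,\mathrm{d}_\mathrm{Lamperti}$, which the heuristics of Subsection~\ref{sous-section heuristique} argue against.
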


{If one identifies $Y$ with the process $(X^{(-1)}(\pi t))_{t\ge 0}$ using Theorem \ref{prop de Markov disques quantiques}, then one can also define the process $S$ by setting $S(t) = \sigma(\tau^{-1}(\pi t)/\pi)$, where $\tau^{-1}$ is the inverse bijection of $\tau$ defined in \eqref{Lamperti} with $\alpha=-1$, which can also be written $\tau^{-1}(t)= \int_0^t e^{\xi(s)} ds$ for all $t\ge 0$.} 
The above result motivates the introduction of another distance to the boundary that we first define here only for the loops which are discovered by the branch of the locally largest component (see \eqref{def d Lamperti} for its definition for all the loops of the $\mathrm{CLE}_4$): for every loop $\mathcal{L}$ which is discovered by the exploration of the locally largest component, we set $\mathrm{d}_{\mathrm{Lamperti}} (\partial \mathbb{D}, \mathcal{L})\coloneqq \int_0^{\mathrm{d}_{\mathrm{q}}(\partial \mathbb{D}, \mathcal{L})} {ds}/{Y_s} $. Then we have $\mathrm{d}_{\mathrm{WW}}(\partial \mathbb{D}, \mathcal{L})=S(\mathrm{d}_\mathrm{Lamperti}(\partial \mathbb{D}, \mathcal{L}))$. However, we do not believe that $S$ has independent increments. The independence of the increments would entail that $S$ is a deterministic linear function, hence $\mathrm{d}_\mathrm{Lamperti}$ and $\mathrm{d}_{\mathrm{WW}}$ would be equal up to some multiplicative constant and Subsection \ref{sous-section heuristique} provides some heuristics against it.

\paragraph{$3/2$-stable maps and $\mathrm{CLE}_4$-decorated LQG.} 
Denote by $(P_*(n))_{n\ge 0}$ the perimeter process during the peeling exploration of the locally largest component (in terms of perimeter) under the law $\P^{(\ell)}$. Then it is known from Proposition 6.6 of \cite{BBCK} (which comes from Theorem 1 of \cite{BCM}) that under $\P^{(\ell)}$ the rescaled process $(P_*(\lfloor \ell t\rfloor )/\ell)_{t\ge 0}$ converges towards $(X^{(-1)}(\pi p_{\bf q} t))_{t\ge 0}$ where $p_{\bf q}$ is introduced in \eqref{eq critique de type deux} {and where $X^{(-1)}$ is defined using the Lamperti transform \eqref{Lamperti}}, so that by Theorem \ref{prop de Markov disques quantiques} the quantum length of the trunk of the $\mathrm{SLE}_4^{\langle \mu \rangle}(-2)$ exploration and the quantum natural distance corespond to the scaling limit of the time in the peeling exploration. The next results extend this relation between $3/2$-stable maps and $\mathrm{CLE}_4$-decorated LQG by establishing the scaling limit of the distances to the boundary.

By using some particular peeling algorithm, we can explore the map in a way that follows the growth of the balls centred at the root face $f_r$ or equivalently such that the distances from the edges on the boundary of the unexplored region to the root face are roughly the same. For example, for $\mathrm{d}^\dagger_{\mathrm{gr}}$, the idea is to explore the map layer by layer: at first the edges at distance $0$ from the root face, then at distance $1$, etc. This peeling algorithm is called the peeling by layers algorithm. For $\mathrm{d}^\dagger_{\mathrm{fpp}}$, we use the uniform peeling algorithm which takes at each step an edge uniformly at random on the boundary. The peeling explorations also have a branching nature since there are peeling steps when the unexplored region splits into two regions. A particular branch of the exploration, called the peeling exploration of the locally largest component, consists in choosing to explore the region which has the largest perimeter at each splitting event. See Subsection \ref{rappels épluchage} for the definition of the peeling explorations. Let $\partial\overline{\mathfrak{e}}_n$ be the boundary of the explored region during the exploration of the locally largest component at time $n\ge0$, using the appropriate peeling exploration for each distance. Then we prove that the radius of the balls during the exploration evolves in the following way:
\begin{align}
&\text{under } \P^{(\ell)}, \qquad (\mathrm{d}^\dagger_{\mathrm{fpp}} (f_r, \partial \overline{\mathfrak{e}}_{\lfloor \ell t \rfloor}))_{t\ge 0}
\mathop{\longrightarrow}\limits_{\ell \to \infty}^\mathrm{(d)}
\left(\int_0^t \frac{1}{2Y(p_{\bf q} s)} ds \right)_{t\ge 0};\label{cv rayon fpp}
\\
&\text{under } \P^{(\ell)}, \qquad (\mathrm{d}^\dagger_{\mathrm{gr}} (f_r, \partial \overline{\mathfrak{e}}_{\lfloor \ell t \rfloor})/(\log \ell))_{t\ge 0}
\mathop{\longrightarrow}\limits_{\ell \to \infty}^\mathrm{(d)} \left(\int_0^t \frac{p_{\bf q}}{2Y(p_{\bf q} s)} ds \right)_{t\ge 0};\label{cv rayon graphe}
\end{align}
for the $J_1$ Skorokhod topology. {Note that the above scaling limit can be seen as the Lamperti transform used to come back from the time parametrization of the self-similar Markov process $X^{(-1)}$ to the time parametrization of the Lévy process $\xi$ in the definition of $X^{(-1)}$. In other words, the scaling limit of the exploration time is the time parametrization of $X^{(-1)}$ while the scaling limit of the distances to the root face is the time parametrization of $\xi$.}

All our statements only focused so far on the locally largest component. But by the Markov property of Theorem \ref{prop de Markov disques quantiques} for the $\mathrm{CLE}_4$ exploration, and by the spatial Markov property of the peeling exploration, they can be extended to the whole exploration tree. For each face $f$ of a map $\mathfrak{m}$, we may define the exploration time $n(f)$ of $f$ as the number of peeling steps before discovering $f$ which occurred in the peeling exploration branch of $\mathfrak{m}$ which discovers $f$. Note that $n(f)$ depends on the choice of the peeling algorithm. See Subsection \ref{sous-section cartes et CLE} for a precise definition of $n(f)$. This enables us to state the scaling limit of the degree, the time of exploration and the distance from the boundary to the large faces of $3/2$-stable maps towards the quantum boundary length, the quantum natural distance (or quantum length of the trunk depending on the exploration) and its Lamperti transform of a $\mathrm{CLE}_4$-decorated LQG disk. For every loop $\mathcal{L}$ of the $\mathrm{CLE}_4$, recall that we denote by $\mathrm{d}_{\mathrm{q}}(\partial \mathbb{D}, \mathcal{L})$ the quantum natural distance to $\mathcal{L}$. We also denote by $\mathrm{d}_{\mathrm{Lamperti}}(\partial \mathbb{D}, \mathcal{L})$ its Lamperti transform defined by
\begin{equation}\label{def d Lamperti}
\mathrm{d}_{\mathrm{Lamperti}}(\partial \mathbb{D}, \mathcal{L}) \coloneqq  \int_0^{ \mathrm{d}_{\mathrm{q}}(\partial \mathbb{D}, \mathcal{L})} \frac{dt}{Y_{\mathcal{L}}(t)},
\end{equation}
where $(Y_{\mathcal{L}}(t))_{0 \le t \le \mathrm{d}_{\mathrm{q}}(\partial \mathbb{D}, \mathcal{L})}$ is the quantum boundary length of the unexplored region surrounding $\mathcal{L}$ parametrized by the quantum natural distance. One can define it using an exploration with a target point $z$ inside the loop. See Subsection \ref{sous-section composante localement la plus grande} for a definition of $Y_{\mathcal{L}}$ and $\mathrm{d}_{\mathrm{q}}(\partial \mathbb{D}, \mathcal{L})$.
\begin{theorem}\label{gros théorème}
For all $\ell \ge 1$, let $\mathfrak{M}^{(\ell)}$ of law $\P^{(\ell)}$. Let $(f_i^{(\ell)})_{i\ge 1}$ be the collection of faces of $\mathfrak{M}^{(\ell)}$ ranked in the non-increasing order of degree. Let $(n(f_i^{(\ell)}))_{i\ge 1}$ be their exploration times using the peeling by layers algorithm. Let $(\mathcal{L}_i)_{i\ge 1}$ be the collection of loops of the $\mathrm{CLE}_4$ ranked in the non-increasing order of quantum boundary length. Let us denote by $\nu^2_{h^2}(\mathcal{L})$ the quantum boundary length of a loop $\mathcal{L}$. Then, for the product topology,
$$
\left(\frac{\deg(f_i^{(\ell)})}{2\ell}, \frac{n(f_i^{(\ell)})}{p_{\bf q} \ell}, \frac{2\mathrm{d}^\dagger_\mathrm{gr}(f_r, f_i^{(\ell)})}{ \log \ell}\right)_{i\ge 1}
\mathop{\longrightarrow}\limits_{\ell \to \infty}^{(\mathrm{d})}
\left(\nu^2_{h^2}(\mathcal{L}_i), \mathrm{d}_{\mathrm{q}}(\partial \mathbb{D}, \mathcal{L}_i), \mathrm{d}_{\mathrm{Lamperti}}(\partial \mathbb{D}, \mathcal{L}_i)\right)_{i\ge 1}.
$$
\end{theorem}
The same theorem can be stated for the $\mathrm{SLE}_4^{\langle \mu \rangle}(-2)$ exploration, replacing the quantum natural distance by the quantum length of the trunk. For the fpp distance, the factor $\log \ell$ is replaced by a factor $1/p_{\bf q}$ and the peeling by layers exploration is replaced by the uniform peeling exploration.

{The scaling limit of the dual graph distances to the root was obtained in \cite{BCK,BBCK} in the cases $a \in (2,5/2]$, relying on the work \cite{BC} in the cases $a \in (2,5/2)$ where the scale of the distances to the root under $\P^{(\ell)}$ is $\ell^{a-2}$. See Theorem 6.8 in \cite{BBCK}. The proof of Theorem \ref{gros théorème} makes use of the same kind of branching processes as in \cite{BCK,BBCK}. However, dealing with the dual graph distance in the case $a=2$ is much more delicate than in the case $a >2$ since it involves Cauchy processes. To obtain Theorem \ref{gros théorème} for the dual graph distance, we use techniques developed in \cite{BCM} for this special case.}

\subsection{Outline}
In Section \ref{explo unif et LQG} we recall the definition of the uniform exploration of the $\mathrm{CLE}_4$ from \cite{WW13} and the definitions of $\gamma$-LQG disks for $\gamma \in ({0},2]$ from \cite{AHPS21}. The definition of the $\mathrm{SLE}^{\langle \mu \rangle}_4(-2)$ exploration is postponed to the next section.

Then, in Section \ref{section kappa vers 4}, after introducing the remaining definitions, we prove Theorem \ref{prop de Markov disques quantiques} for the $\mathrm{SLE}^{\langle \mu \rangle}_4(-2)$ exploration by approximating the exploration with some well-chosen $\mathrm{SLE}_\kappa^\beta(\kappa-6)$ exploration, using the results of \cite{Le21} and by approximating the $2$-LQG disks with $\sqrt{\kappa}$-LQG disks relying on \cite{AHPS21}. 

Next, in Section \ref{section mu vers l'infini}, so as to obtain the statement of Theorem \ref{prop de Markov disques quantiques} for the uniform exploration, we approximate the uniform exploration with the $\mathrm{SLE}^{\langle \mu \rangle}_4(-2)$ exploration as $\mu \to \infty$. This section relies partly on the same ideas as in Section \ref{section kappa vers 4}.

On the basis of Theorem \ref{prop de Markov disques quantiques}, using the Poissonian structure of the uniform exploration in \cite{WW13}, we prove Theorem \ref{th Lamperti} in Section \ref{section Lamperti}. 

In Section \ref{section cartes}, we first establish the metric growth in the infinite volume setting using results of \cite{BCM}. By absolute continuity, we prove the convergences (\ref{cv rayon fpp}) and (\ref{cv rayon graphe}). We then describe the limit of the whole exploration tree and deduce Theorem \ref{gros théorème}. Our approximation results are informally summarized by the following diagram:

\begin{center}
\noindent\begin{tikzpicture}[node distance=0.3cm and 4.4cm, on grid, auto]
    \node (A) at (0,0) {$\mathrm{CLE}_\kappa$ exploration};
    \node[right=of A] (B) {$\mathrm{SLE}_4^{\langle \mu \rangle}(-2)$};
    \node[right=of B] (C) {Uniform exploration};
    \node[right=of C] (D) {$3/2$-stable map};
		
		\node[below=of A] (A2) {{\qquad \qquad \qquad \qquad \qquad}};
    \node[below=of B] (B2) {{\qquad \qquad \qquad \, \, }};
    \node[below=of C] (C2) {{\qquad \qquad \qquad \qquad \qquad  }};
		\node[below=of D] (D2) {{\qquad \qquad \qquad \qquad }};

    \node[below=of A2] (Atext) {on a $\sqrt{\kappa}$-LQG disk};
    \node[below=of B2] (Btext) {on a $2$-LQG disk};
    \node[below=of C2] (Ctext) {on a $2$-LQG disk};
		\node[below=of D2] (Dtext) {of perimeter $\ell$};

    \draw[->, line width=1pt] (A2) -- node[below] {$\kappa \uparrow 4$} node[above] {Sec. \ref{section kappa vers 4}} (B2);
    \draw[->, line width=1pt] (B2) -- node[below] {$\mu \to \infty$} node[above] {Sec.\ref{section mu vers l'infini}} (C2);
    \draw[<->, line width=1pt] (D2) -- node[below] {$\ell \to \infty$} node[above] {Sec.\ref{section cartes}} (C2);
\end{tikzpicture}
\end{center}
\qquad

Finally in Section \ref{section discussion}, we describe the applications to $O(2)$-loop decorated planar maps and we argue why we believe that the conformally invariant distance from \cite{WW13} is not the Lamperti transform of the quantum natural distance.

\section{Preliminaries on the uniform exploration of the $\mathrm{CLE}_4$ and LQG}\label{explo unif et LQG}

In this section, we first define the uniform exploration of the $\mathrm{CLE}_4$ and then we give some background on Liouville quantum gravity and quantum disks.

\subsection{The uniform exploration of the $\mathrm{CLE}_4$}\label{sous-section explo unif}

We recall here the uniform exploration of the $\mathrm{CLE}_4$ introduced in \cite{WW13}. {Let $\mathcal{C}(\R_+,\R_+)$ be the set of continuous functions from $\R_+$ to $\R_+$.} 
Let $E$ be the space of positive excursions defined by
$$
E= \{e \in \mathcal{C}(\R_+, \R_+); \ e(0)=0 \text{ and } \tau(e) \coloneqq \sup\{s>0; \ e(s)>0\} \in (0,\infty)\}.
$$
Let $\lambda$ be the {infinite measure on $E$ called the Itô measure of positive excursions of standard Brownian motion}, which {is such that} for any $x>0$,
the measure of the subset $E_x$ of excursions of height at least $x$ is finite and if one renormalizes $\lambda$ so that it is a probabillity measure on $E_x$, then after reaching $x$ the excursion has the same law as a {standard} Brownian motion {starting from $x$} which is stopped when attaining zero. {See e.g.\@ \cite{LG10} or Chapter XII of \cite{RY05} for a definition of Itô's excursion measure of Brownian motion (see Chapter VI Section 8 of \cite{RW00} for the general theory of excursions of Markov processes). The above description can be seen as a consequence of the point (iii) in Theorem (48.1) of Chapter VI in \cite{RW00}.} 
We denote by $\mathbb{H}$ the upper half-plane. If $e\in E$ is an excursion of duration $\tau(e)$, we define the driving function $w$ on $[0,\tau(e)]$ by
$$
\forall t \in [0,\tau(e)], \qquad w_t = 2e(t) -\int_0^t \frac{ds}{e(s)}.
$$
It is shown in \cite{SW12} that for $\lambda$-almost all $e \in E$, the conformal maps $g_t$ characterized by the ODE
$$
g_0(z)=z \qquad \text{and}\qquad \forall t\ge 0, \enskip \partial_tg_t(z) = \frac{2}{g_t(z)-w_t}
$$
define a simple loop $\gamma= \gamma(e)$ in the upper half plane starting at zero, in the sense that the domain of definition of $g_t$ can be written $\mathbb{H}\setminus \gamma((0,t])$. {The above ODE, called Loewner's differential equation, is classically used to define Schramm-Loewner evolutions (SLE). Let us give an intuition for why we choose such a driving function $w$. Intuitively, the path $\gamma$ returns to $0$ when $w_t= g_t(\gamma(t)) = g_t(0)$. But letting $v_t \coloneqq g_t(0)-w_t$, we have $dv_t = \partial_tg_t(0) dt - dw_t  = (2/v_t)dt -2de(t) + dt/e(t)$ so that one can see that $v_t= -2e(t)$ (since $v_0 = 2e(0)=0$). This ``shows'' that the path $\gamma$ returns to $0$ at the end of the excursion.} 

We define the measure $\mu^0$ as the pushforward measure $\gamma_*\lambda$. For all $x\in \R$, we denote by $\mu^x$ the measure on loops rooted at $x$ obtained by translating $\mu^0$. {The measure $\mu^x$ is often called the SLE$_4$ ``bubble'' measure as in \cite{WW13} or the ``one-point pinned'' measure as in \cite{SW12}.}

Let $z\in \mathbb{H}$. Let us first define the uniform exploration targeted at $z$. Let $(\widehat{\gamma}_u)_{u\ge 0}$ be a {Poisson point process (PPP)} of intensity $M= \int_\R dx \mu^x$. {We denote by $\partial$ the cemetery point.} For every loop $\widehat{\gamma}_u$, we denote by $\widehat{f}_u$ the conformal transformation from the connected component of $\mathbb{H}\setminus \widehat{\gamma}_u$ containing $z$ onto $\mathbb{H}$ which fixes the point $z$ and such that $\widehat{f}_u'(z) \in \R_+$.  We define $\widehat{F}^z_u  = \widehat{F}_u \coloneqq\circ_{0\le v<u} \widehat{f}_v$ where the composition is done in the order of appearance of the maps $\widehat{f}_u$. This indeed defines a conformal map by p.19 of \cite{WW13}. We also denote by $\tau^z$ the first time at which the loop $\widehat{\gamma}_u$ surrounds the point $z$. We define $ (\widetilde{\gamma}^z_u)_{u\le \tau^z} \coloneqq (\widehat{F}_u^{-1}(\widehat{\gamma}_u))_{ u\le \tau^z}$.

 Lemma 8 from \cite{WW13} shows that 
 it is possible to couple two processes $(\widetilde{\gamma}^z_u)_{u\le \tau^z}$ and $(\widetilde{\gamma}^{z'}_u)_{u\le \tau^{z'}}$ so that they coincide until the time at which they disconnect $z$ from $z'$. If we perform this coupling for a dense countable subset of $\mathbb{H}$ then the union of the loops surrounding points of this subset of $\mathbb{H}$ is the $\mathrm{CLE}_4$. The processes $(\widetilde{\gamma}^z_u)_{u\le \tau^z}$ for $z \in \mathbb{H}$ can then be seen as branches of the uniform exploration. {There is also a nested version of the CLE$_4$ which is obtained recursively by putting independent CLE$_4$'s in the domains which are surrounded by loops.} If we keep on drawing loops with $(\widehat{F}_u^{-1}(\widehat{\gamma}_u))_{u\ge 0}$ after the time $\tau^z$, we then build the nested $\mathrm{CLE}_4$. Except in Sections \ref{section mu vers l'infini} and \ref{sous-section cartes deco}, we will always consider the non-nested version of the $\mathrm{CLE}_4$ and it will simply be referred {to} the $\mathrm{CLE}_4$.

For future use, we come back to the unit disk $\mathbb{D}$ via a conformal transformation $\Phi: \mathbb{H} \rightarrow \mathbb{D}$ and we define the domain $D_u\subseteq \mathbb{D}$ of the unexplored points (during the uniform exploration of the $\mathrm{CLE}_4$) at time $u\ge 0$ by $D_u = \Phi(\left\{ z\in \mathbb{H};\  \tau^z >u\right\})$. 
We also define for all $u\ge 0$ and $z \in \mathbb{D}$ the domain $\widetilde{D}^z_u\subseteq \mathbb{D}$ consisting in the unexplored points at time $u$ in the uniform exploration of the nested $\mathrm{CLE}_4$ targeted at $z$ by $\widetilde{D}^z_u= \Phi((F_u^{\Phi^{-1}(z)})^{-1}(\mathbb{H}) )$. 
The choice of $\Phi$ does not change the law of the exploration by Proposition 9 of \cite{WW13}. {The time $u\ge 0$ at which a loop $\mathcal{L}$ is discovered can be seen as a distance from the loop to the boundary and we also denote it by $\mathrm{d}_{\mathrm{WW}}(\partial \mathbb{D}, \mathcal{L})$.}

\subsection{Critical Liouville quantum gravity and quantum disks}\label{sous-section disques quantiques}

We recall here the definitions of the free boundary Gaussian free field (GFF) and of the $\gamma$-quantum disks for $\gamma \in ({0},2]$ following \cite{AHPS21}{, which are the surfaces that are expected to appear as scaling limits of discrete models such as random maps coupled with statistical mechanics models}. {The $\gamma$-quantum disks were first introduced in \cite{HRV18} and in \cite{DMS21}.} See also \cite{She07} for more details on GFF and \cite{BP} for more about GFF and Liouville quantum gravity. We follow exactly the definitions in Subsection 4.1 of \cite{AHPS21}.  Let $D \subset \C$ be a simply connected domain with harmonically non-trivial boundary {(i.e.\@ for any point $z\in D$, a Brownian motion started at $z$ almost surely hits $\partial D$)}. We denote by $H(D)$ the Hilbert space closure of the space of real functions $f$ of class $\mathcal{C}^\infty$ on $D$ modulo constants which have finite Dirichlet energy (i.e. $\int_D \lvert \nabla f \rvert^2 <\infty$), equipped with the Dirichlet inner product defined for any pair $f,g$ of such functions by
$$
(f\vert g)_\nabla = \frac{1}{2\pi} \int_{D} \nabla f(z) \cdot \nabla g(z) d^2 z,
$$
where $d^2z$ is the Lebesgue measure on $\C$.
Let $(f_n)_{n\ge 0}$ be a Hilbert basis of $H(D)$. Let $(X_n)_{n\ge 0}$ be i.i.d. random variables of law $\mathcal{N}(0,1)$. The \textit{free boundary GFF} on $D$ is defined as the sum
$$
h=
\sum_{n \ge 0} X_n f_n,
$$
which converges in the space of distributions modulo constants. Even though the resulting random variable $h$ lives in the space of distributions modulo constants, one can fix that constant by assigning some particular value to $h$ on an arbitrary test function. The resulting random distribution is actually in the Sobolev space $H^{-1}_\mathrm{loc}(D)$ which is the space of distributions such that their restrictions to any relatively compact open domain $U\subset D$ are in the Sobolev space $H^{-1}(U)$. We endow the space $H^{-1}_\mathrm{loc}(D)$ with the corresponding limit topology which is metrizable and separable, such that a sequence converges in $H^{-1}_\mathrm{loc}(D)$ if and only if the restrictions to any relatively compact open domain $U$ converge in $H^{-1}(U)$.

Next, we recall that a $\gamma$-LQG surface for $\gamma\in ({0},2]$ can be seen as an equivalence class of domains $D\subset \C$ equipped with a variant of the GFF $h$ (a free-boundary GFF plus a continuous random function on $\overline{D}$) under conformal maps. To that instance of the GFF, we associate an area and a boundary length measures $\mu^\gamma_h$ and $\nu^\gamma_h$.  
We define the \textit{quantum area and quantum boundary length measures} $\mu^\gamma_h$ and $\nu^\gamma_h$ by the limits in probability: for $\gamma=2$, for all bounded open set $A \subseteq D$ we set
$$
\mu^2_h (A)= \lim_{\delta \to 0}  \int_A \left(-h_\delta + \log\left(\frac{1}{\delta}\right)\right)e^{2 h_\delta(z)} \delta d^2z $$
and for all bounded open set $B\subseteq \partial D$
$$
\nu^2_h (B) = \lim_{\delta \to 0} \int_B  \left(-\frac{h_\delta}{2} + \log\left(\frac{1}{\delta}\right)\right)e^{h_\delta(z)}\delta dz,
$$
while for $\gamma \in ({0},2)$, for all bounded open set $A \subseteq D$, we set
$$
\mu_{h}^\gamma (A)= \lim_{\delta \to 0} (8(2-\gamma))^{-1} \int_A \exp \left( \gamma h_\delta(z)\right) \delta^{\gamma^2/2} d^2z $$
and for all bounded open set $B\subseteq \partial D$
$$
\nu_{h}^\gamma (B) = \lim_{\delta \to 0} (8(2-\gamma))^{-1}\int_B  \exp\left(\frac{\gamma}{2} h_\delta(z)\right) \delta^{\gamma^2/4} dz,
$$
where $d^2z$ is the Lebesgue measure on $\C$, $dz$ is the Lebesgue measure on $\R$ when $D$ is the upper half-plane $\mathbb{H}$ and $h_\delta(z)$ is the circular mean of $h$ on the circle of center $z$ and radius $\delta$.

We say that two pairs $(D,h)$ and $(\widetilde{D},\widetilde{h})$, where $D,\widetilde{D}$ are domains and $h,\widetilde{h}$ are distributions, are equivalent (as $\gamma$-quantum surfaces) if there exists a conformal map $\varphi: D\rightarrow \widetilde{D}$ such that 
\begin{equation}\label{covariance}
	h= \widetilde{h}\circ \varphi + Q_\gamma \log |\varphi'|\qquad \text{where} \qquad Q_\gamma = 2/\gamma+ \gamma/2. 
\end{equation}
If $h, \widetilde{h}$ are absolutely continuous with respect to a GFF plus a continuous function, then $\mu^\gamma_{\widetilde{h}} = \varphi_*\mu^\gamma_h$ and $\nu^\gamma_{\widetilde{h}} =\varphi_* \nu^\gamma_h$.

Informally, a $\gamma$-quantum disk (also called $\gamma$-LQG disk) corresponds to the ``domain encircled by a $\mathrm{CLE}_{\gamma^2}$ loop with the restriction of an {independent} GFF {on $\mathbb{C}$}''. A quantum disk has a finite boundary length and a finite volume. More precisely, our unit-boundary length $\gamma$-quantum disks are defined as in Definition 4.1 for $\gamma \in ({0},2)$ in \cite{AHPS21} and Definition 4.3 for $\gamma= 2$ from \cite{AHPS21}. {They correspond up to a constant to the original definition in \cite{DMS21}.} Let us summarize their definition although we will not rely on it since we will use directly the results of \cite{AHPS21}. The definition is easier to state when we parametrize the disk by $\mathcal{S}\coloneqq \R \times (0,\pi) \subset \C$, the infinite strip. Let $h_1^\gamma$ be a function on $\mathcal{S}$ defined for all $s \in \R$ and $y \in (0,\pi)$ by $h_1^\gamma(s+ i y)= \mathcal{B}^\gamma_s$, where $\mathcal{B}^\gamma$ is defined by:
\begin{enumerate}[(i)]
	\item For $\gamma \in ({0},2)$, the function $(\mathcal{B}^\gamma_s)_{s \ge 0}$ has the same law as $B_{2s}-{(2/\gamma - \gamma/2)s}$ conditioned to stay negative for all $s\ge 0$, where $(B_s)_{s \ge 0}$ is a standard Brownian motion, while $(\mathcal{B}^2_s)_{s\ge 0}$ is $(-\sqrt{2})$ times a Bessel process of dimension $3$ (which corresponds in some sense to $(B_{2s})_{s\ge 0}$ conditioned to stay negative by a famous result of Williams).
	\item The process $(\mathcal{B}^\gamma_{-s})_{s \ge 0}$ is independent of $(\mathcal{B}^\gamma_s)_{s \ge 0}$ and has the same law as $(\mathcal{B}^\gamma_s)_{s \ge 0}$.
\end{enumerate}
Let $h_2$ be the orthogonal projection of a free boundary GFF on $\mathcal{S}$ on the closure in $H(\mathcal{S})$ of the functions of mean zero along the vertical lines of $\mathcal{S}$. Set $h^\gamma_s = h_1^\gamma+h_2$ and let $\widehat{h}^\gamma$ be a random distribution which has the law of $h^\gamma_s - (2/\gamma) \log \nu^\gamma_{h^\gamma_s}(\partial \mathcal{S})$ reweighted by $\nu^\gamma_{h^\gamma_s}(\partial \mathcal{S})^{(4/\gamma^2)-1}$. Then the \textit{$\gamma$-quantum disk} can be defined as the equivalence class of $(\mathcal{S}, \widehat{h}^\gamma)$. We denote by $(\mathbb{D},h^\gamma)$ the parametrization by the unit disk $\mathbb{D}$ of the unit boundary length $\gamma$-quantum disk.

One can define a marked quantum disk by sampling a marked boundary point according to the quantum boundary length measure, but we will not do this in this work. For $b>0$, we define the $b$-boundary length $\gamma$-quantum disk to be the quantum surface parametrized by $(\mathbb{D},h^\gamma+(2/\gamma)\log b)$. Note that $\nu^\gamma_{h^\gamma+(2/\gamma)\log b} = b\nu^\gamma_{h^\gamma}$ and $\mu^\gamma_{h^\gamma+(2/\gamma)\log b}= b^2 \mu^\gamma_{h^\gamma}$.

\subsection{The quantum natural distance and the locally largest component}\label{sous-section composante localement la plus grande}

Let $(\mathbb{D}, h^2)$ be a parametrization of a $2$-quantum disk by $\mathbb{D}$ and consider an independent uniform exploration of a $\mathrm{CLE}_4$ on $\mathbb{D}$. For all $z \in \Q^2 \cap \mathbb{D}$, for all $u \in [0,\tau^{z})$, let $D_u^{z}$ be the connected component of $D_u$ containing $z$. {The time $u$ is also interpreted as a distance from $D_u^z$ to the boundary of the disk and is written $u=\mathrm{d}_{\mathrm{WW}}(\partial \mathbb{D}, D_u^z)$.} For all $z \in \Q^2 \cap \mathbb{D}$, using the restriction of $h^2 $ to $D_u^{z}$, one can define the quantum boundary length 
$$
	\mathcal{Y}^{z}_u\coloneqq \nu^2_{h^2_{\vert D_u^{z}}} (\partial D_u^{z})
$$
 of $D_u^{z}$ since it is encircled by $\mathrm{SLE}_4$-type curves. {Note that, as explained in the previous subsection, in the above expression the right-hand side is defined as a limit in probability. Still, we can make sense of the jump of $\mathcal{Y}^z$ at time $u\ge 0$ as the random variable $\Delta \mathcal{Y}^z_u =\nu^2_{h^2_{\vert D_u^{z}}} (\partial D_u^{z})- \nu^2_{h^2_{\vert D^z_{u-}}}(\partial D^z_{u-})$. Actually, as we will see in the rest of this subsection, the process $\mathcal{Y}^z$ has a càdlàg version and $\Delta \mathcal{Y}^z_u$ is indeed the jump of $\mathcal{Y}^z$ at time $u$.}

 {
 	We recall the following result of \cite{AHPS21}, which comes from Equation (5.2) in Theorem 5.5 of \cite{AHPS21}.
 	\begin{lemma}\label{lemme cv distance quantique naturelle}(consequence of Equation (5.2) in \cite{AHPS21})
 		For all $z \in \mathbb{D}$, the quantity
 		$$
 		  \vp \# \{ s \in [0,\tau^z], \ \lvert\Delta \mathcal{Y}^z_s \rvert \in [\vp, 2\vp]\}
 		  $$
 		  converges in probability as $\vp \to 0$ to a quantity called the quantum natural distance to the loop surrounding $z$.
 	\end{lemma}
 	For all $u\in [0,\tau^z)$ such that $\widehat{\gamma}_u \neq \partial$, following Remark 2.3 of \cite{MSW22}, we define the \textit{quantum natural distance} from $\partial \mathbb{D}$ to $D^z_u$ as the limit in probability
 	\begin{equation}\label{eq distance quantique target}
 		\mathrm{d}_\mathrm{q} (\partial \mathbb{D}, D^{z}_u) = \lim_{\vp \to 0}  \vp \# \{ s \in [0,u], \ \lvert\Delta \mathcal{Y}^z_s \rvert \in [\vp, 2\vp]\}.
 	\end{equation}
 	The above convergence holds thanks to Lemma \ref{lemme cv distance quantique naturelle} and we extend the definition of $u\mapsto \mathrm{d}_\mathrm{q} (\partial \mathbb{D}, D^{z}_u)$ into a right-continuous non-decreasing function on the interval $[0,\tau^z]$ thanks to the fact that the set of times $u\ge 0$ such that $\widehat{\gamma}_u\neq \partial$, i.e.\@ the set of times $u\ge 0$ of discovery of a loop, is a.s.\@ dense. The above equality will actually hold for all $u \in [0,\tau^z]$ since we will see shortly that $u\mapsto \mathrm{d}_\mathrm{q} (\partial \mathbb{D}, D^{z}_u)$ is continuous.

 	Let $\mathcal{L}^z$ be the loop surrounding $z$. We set 
 	$$
 	\mathrm{d}_\mathrm{q} (\partial \mathbb{D}, \mathcal{L}^z)\coloneqq \lim_{u \uparrow \tau^z}\mathrm{d}_\mathrm{q} (\partial \mathbb{D}, D^{z}_{u})=\lim_{\vp \to 0}  \vp \# \{ s \in [0,\tau^z], \ \lvert\Delta \mathcal{Y}^z_s \rvert \in [\vp, 2\vp]\},
 	$$
 	which is a limit in probability thanks to Lemma \ref{lemme cv distance quantique naturelle}.

 	We denote by $(Y^z_t)_{ 0\le t < \mathrm{d}_\mathrm{q} (\partial \mathbb{D}, \mathcal{L}^z)}$ the process $\mathcal{Y}^z$ reparametrized by the right-continuous inverse $(\sigma^z(t))_{t\ge 0}$ of $u \mapsto \mathrm{d}_\mathrm{q} (\partial \mathbb{D}, D^{z}_u) $. We have the following result from \cite{AHPS21} which describes the jumps of the process $Y^z$:
 	\begin{lemma}\label{lemme AHPS sauts positifs sauts negatifs}(consequence of Theorem 5.5 of \cite{AHPS21})
 		The process $Y^z$ has a càdlàg version. Each positive jump of $(Y^z(t))_{ 0\le t < \mathrm{d}_\mathrm{q} (\partial \mathbb{D}, \mathcal{L}^z)}$ at some time $t\ge 0$ corresponds to the quantum boundary length of the loop discovered at time $\sigma^z(t)$ by $(D^z_u)_{0 \le u < \tau^z}$ (computed using the restriction of $h^2$ to the domain encircled by the loop). Moreover, each negative jump of $(Y^z(t))_{ 0\le t < \mathrm{d}_\mathrm{q} (\partial \mathbb{D}, \mathcal{L}^z)}$ at some time $t\ge 0$ corresponds to the splitting at time $u= \sigma^z(t)$ of $D_{u-}^{z}$ into two components $D_u^{z}$ and $D_{u-}^{z} \setminus D_u^{z}$ and the absolute value of the size of the negative jump corresponds to the quantum boundary length of the ``cut out region'' $D_{u-}^{z} \setminus D_u^{z}$. Besides, the set of times $t\ge 0$ of positive jumps of $(Y^z(t))_{ 0\le t < \mathrm{d}_\mathrm{q} (\partial \mathbb{D}, \mathcal{L}^z)}$ is dense.
 	\end{lemma}

 	The above lemma could also be seen as a consequence of our convergence results of CLE explorations and actually we will not rely on it for the proof of the convergence. Note that the times of positive jumps of $(Y^z_t)_{0\le t< \mathrm{d}_\mathrm{q} (\partial \mathbb{D}, \mathcal{L}^z)}$ are the quantum distances from $\partial \mathbb{D}$ to the corresponding loops which are discovered at those times. Notice also that if $z,z' \in \Q^2 \cap \mathbb{D}$ are encircled by the same loop $\mathcal{L}$, then $\mathcal{Y}^z=\mathcal{Y}^{z'}$, $Y^z=Y^{z'}$ and we can set $Y^{\mathcal{L}}\coloneqq Y^z$.
 	
 	Thanks to Lemma \ref{lemme AHPS sauts positifs sauts negatifs}, we see that the quantum distances from the boundary to the loops discovered by $(D^z_u)_{0 \le u < \tau^z}$ are distinct, so that $u\mapsto \mathrm{d}_{\mathrm{q}}(\partial \mathbb{D}, D^z_u)$ is increasing. But by density of the set of times of positive jumps of $(Y^z(t))_{ 0\le t < \mathrm{d}_\mathrm{q} (\partial \mathbb{D}, \mathcal{L}^z)}$ one can see that $\sigma^z$ is increasing. Hence, $u\mapsto \mathrm{d}_{\mathrm{q}}(\partial \mathbb{D}, D^z_u)$ is also continuous. As a result, $(\mathcal{Y}^{z}_u)_{0\le u < \tau^{z}}$ is càdlàg, each positive jump {$(\mathcal{Y}^{z}_u)_{0\le u < \tau^{z}}$} corresponds to the discovery of a loop of the $\mathrm{CLE}_4$ and the size of the jump is exactly the quantum boundary length of the loop while each negative jump at some time $u$ corresponds to the splitting of $D_{u-}^{z}$ into two components $D_u^{z}$ and $D_{u-}^{z} \setminus D_u^{z}$ and the size of the negative jump corresponds to the quantum boundary length of $D_{u-}^{z} \setminus D_u^{z}$.
 }

{Now, let us introduce the exploration of the locally largest component in terms of quantum boundary length, which is no longer an exploration towards a fixed point.} Informally, at each discovery of a loop, it goes on exploring the exterior of the loop and at each splitting event, it explores the domain which has the largest quantum boundary length. Let us first define formally this locally largest component. Let $(z_j)_{j\ge 1}$ be an enumeration of $\mathbb{D} \cap \Q^2$.

The locally largest component $(C_u)_{u\ge 0}$ is defined as follows. Let
$$
\tau_1= \tau^{z_1} \wedge \inf \{u\ge 0; \ \mathcal{Y}^{z_1}_{u}<\mathcal{Y}^{z_1}_{u-}/2\}
$$
be the first time $u$ at which $z_1$ is encircled by a loop or $\mathcal{Y}^{z_1}_{u}<\mathcal{Y}^{z_1}_{u-}/2$. For all $u<\tau_1$, we set $C_u= D^{z_1}_u$. We then define by induction a sequence $(\tau_k)_{k\ge 1}$ and a subsequence $(z_{j_k})_{k\ge 1}$ by setting for all $k\ge 1$:
\begin{itemize}
	\item If $\tau_k = \tau^{z_{j_k}}$, then $\tau_k$ is the time at which a loop $\mathcal{L}$ encircles $z_{j_k}$. Let $j_{k+1}$ be the first integer $j>j_k$ for which $z_j\in D_{\tau_k-}^{z_{j_k}}$ and such that $z_j$ is not encircled by $\mathcal{L}$. 
	\item If $\tau_k = \inf \{u\ge 0; \ \mathcal{Y}^{z_{j_k}}_{u}<\mathcal{Y}^{z_{j_k}}_{u-}/2\}$, then $\tau_k $ is the first splitting time at which $z_{j_k}$ lies in the component with smallest quantum boundary length. Let $j_{k+1}$ be the first integer $j>j_k$ for which $z_j\in D_{\tau_k-}^{z_{j_k}} \setminus D_{\tau_k}^{z_{j_k}} $.
\end{itemize}
In both cases we set
$$
\tau_{k+1} = \tau^{z_{j_{k+1}}} \wedge  \inf \{u\ge 0; \ \mathcal{Y}^{z_{j_{k+1}}}_{u}<\mathcal{Y}^{z_{j_{k+1}}}_{u-}/2\}
\qquad
\text{and}
\qquad
\forall u \in [\tau_k, \tau_{k+1}{)}, \ C_u= D^{z_{j_{k+1}}}_u.
$$
{On the event $\lim_{k\to \infty}\tau_k<\infty$, for all $u\ge \lim_{k\to \infty}\tau_k$, by convention we set $C_u = \emptyset$.}
Notice that by Lemma 8 of \cite{WW13}, the process $(C_u)_{u\ge 0}$ does not depend on the choice of the enumeration $(z_j)_{j\ge 1}$. For all {$u \in [0,\lim_{k\to \infty}\tau_k)$}, we set $$\mathcal{Y}(u)\coloneqq \nu^2_{h^2_{\vert C_u}} (\partial C_u),$$
{and by convention $\mathcal{Y}(u)=0$ for all $u \ge \lim_{k\to \infty}\tau_k$ when $\lim_{k\to \infty}\tau_k<\infty$.}
By applying  \eqref{eq distance quantique target} to the $z_{j_k}$'s for all ${k\ge 1}$, {for all $u \in [0,\lim_{k\to \infty}\tau_k)$,} the quantum natural distance from $\partial \mathbb{D}$ to $C_u$ is characterized as the limit in probability
\begin{equation}\label{eq distance quantique}
\mathrm{d}_\mathrm{q} (\partial \mathbb{D}, C_u) = \lim_{\vp \to 0}  \vp \# \{ s \in [0,u], \ \lvert\Delta \mathcal{Y}(s) \rvert \in [\vp, 2\vp]\}.
\end{equation}
It is clear that $u \mapsto \mathrm{d}_\mathrm{q} (\partial \mathbb{D}, C_u)$ is  non-decreasing. Moreover, it is adapted with respect to the natural filtration of $\mathcal{Y}$. We denote by $\sigma$ the right-continuous inverse of $u \mapsto \mathrm{d}_\mathrm{q} (\partial \mathbb{D}, C_u)$ and for all $t\ge 0$ we set $Y_t= \mathcal{Y}({\sigma(t)})$. {By combining Theorem 5.5 of \cite{AHPS21} with Theorem 1.1 of \cite{AdS22}, one can directly see that $(Y_t)_{t\ge 0}$ has the same law as the self-similar Markov process $(X^{(-1)}(\pi t))_{t\ge 0}$ defined in Section \ref{sous-section resultats}. However, we will recover this result by transferring the results of \cite{MSW22} which hold for $\kappa<4$ by letting $\kappa \uparrow 4$ and then taking another limit.}

\section{The convergence $\kappa \uparrow 4$}\label{section kappa vers 4}
In this section, we prove Theorem \ref{prop de Markov disques quantiques} for the $\mathrm{SLE}^{\langle \mu \rangle}_4(-2)$ exploration. To this end, we approximate the $\mathrm{SLE}_4^{\langle \mu \rangle}(-2)$ explorations of the $\mathrm{CLE}_4$ decorated $2$-quantum disk by explorations of a $\mathrm{CLE}_\kappa$-decorated $\sqrt{\kappa}$-quantum disk as $\kappa \uparrow 4$, relying on the convergence results of \cite{Le21} and \cite{AHPS21} so as to transfer the analogous property of \cite{MSW22} {which describes the law of the quantum boundary lengths in the exploration of a CLE$_\kappa$-decorated $\sqrt{\kappa}$ quantum disk and states that, conditionally on the quantum boundary lengths, the encircled and cut out regions, as well as the unexplored region, are independent $\sqrt{\kappa}$-quantum disks}. But first, we recall the definition {of} the Carathéodory topology and the $\mathrm{SLE}_\kappa^\beta(\kappa-6)$ and $\mathrm{SLE}_4^{\langle \mu \rangle}(-2)$ explorations.

\subsection{Carath\'eodory topology}
We first provide some useful results on Carath\'eodory topology.
Let $z \in \C$. For all $n \ge 0$, let $U_n \subset \C$ be a simply connected domain. Let $U \subset \C$ be a simply connected domain containing $z$. When $z \in U_n$, let $f_n$ (resp. $f$) be the unique conformal map  from $\mathbb{D}$ to $U_n$ (resp. $U$) such that $f_n(0)=z$ (resp. $f(0)=z$) and $f'_n(0)>0$ (resp. $f'(0)>0$). We recall that $U_n$ converges to $U$ as $n\to \infty$ in the \textit{Carath\'eodory topology} if $z \in U_n$ for $n$ large enough and $f_n$ converges to $f$ uniformly on compact subsets of $\mathbb{D}$. Note that the convergence does not depend on the choice of $z\in U$ (see e.g.\@ Lemma 2.2 of \cite{BRY19}). The Carathéodory topology on the space of simply connected open domains of $\C$ containing $z$ is clearly metrizable {using the distance
\begin{equation}\label{eq distance caratheodory}
d_{\text{Carathéodory},z}(U_1,U_2) \coloneqq \sum_{j\ge 1} 2^{-j} \sup_{(1-1/j)\mathbb{D}} \vert f_1-f_2\vert,
\end{equation}
where $f_1,f_2$ are defined as explained above.}

An equivalent definition of Carathéodory convergence (see for instance Definition 2.1 of \cite{BRY19}), in fact the original one, states that $U_n$ converges to $U$ for the Carathéodory topology if and only if for every compact $K \subset U$, we have $K\subset U_n$ for all $n$ large enough and for every connected open set $V\ni z$, if $V\subset U_n$ for infinitely many $n$ then $V \subset U$. {The equivalence between the two definitions of the Carathéodory convergence comes from the Carathéodory kernel theorem (see e.g.\@ p.3 of \cite{BRY19} or Proposition 3.63 in \cite{Law05}).} From this equivalent definition, it is not hard to see that the space of simply connected open domains of $\C$ containing $z$ is separable.  Still, with this definition, notice that a sequence $U_n$ which converges to $U$ in the sense of Carath\'eodory may also converge to another simply connected open domain $V$ such that $V \cap U=\emptyset$. {For instance, let $U_n$ be the union of two disjoint non-empty open domains $U,V$ and an $\vp_n$-neighbourhood of a path $\Gamma$ connecting the domains $U$ and $V$ with $\vp_n \coloneqq 1/n$. Then one can see that $U_n\to U$ and $U_n \to V$ as $n\to \infty$ in the sense of Carathéodory.}

From this equivalent definition, one can see that for all compact $K \subset U$ the restriction of $f_n^{-1}$ to $K$ is well defined for $n$ large enough. Actually, as a consequence of Arzelà-Ascoli's theorem, the convergence in the sense of Carathéodory of $U_n$ to $U$ implies the uniform convergence of $f_n^{-1}$ towards $f^{-1}$ on every compact subset of $U$. See Proposition 2.1 of \cite{BRY19} for more details. 

\subsection{Background on $\mathrm{CLE}_\kappa$ explorations}\label{sous-section rappels CLE}

We recall here the explorations of $\mathrm{CLE}_\kappa$ for $\kappa \in (8/3,4]$ defined in \cite{S09}. We will not rely on the precise definition of the explorations and we refer to \cite{S09} for details. If $\beta \in [-1,1]$, we define the skew Bessel process $(X_u)_{u\ge 0}$ of dimension $\delta>0$ as the process such that $(|X_u|)_{u\ge 0}$ is a Bessel process of dimension $\delta$ and to each excursion of $(|X_u|)_{u\ge0}$ we assign an independent sign: the corresponding excursion of $(X_u)_{u\ge 0}$ is positive with probability $\frac{1+\beta}{2}$ and negative with probability $\frac{1-\beta}{2}$. When $\delta \neq 1$, we associate a process $(Y_u)_{u\ge 0}$ to the skew Bessel process $X$ which is characterized by Proposition 3.8 of \cite{S09}: {the process $(Y_u)_{u\ge 0}$ is the unique continuous process such that:
\begin{itemize}
	\item We have
$$
\frac{dY_u}{du} = \frac{1}{X_u} \qquad \text{on} \qquad \left\{u\ge 0; \ X_u \neq 0 \right\};
$$
\item The pair $(X,Y)$ is adapted with respect to the natural filtration of the Brownian motion appearing in the SDE which defines the Bessel process $X$;
\item The law of $(X,Y)$ is invariant under Brownian scaling;
\item If $T$ is a stopping time for $(X,Y)$ such that $X_T=0$ a.s.\@, then $(X_{T+u},Y_{T+u})_{u\ge 0}$ has the same law as $(X,Y)$.
\end{itemize}}
In the case $\delta=1$, the process $(Y_u)_{u\ge 0}$ is only defined when $\beta=0$ (and can be defined as the limit in law when $\delta \to 1$). Let $\mu \in \R$. One can obtain processes which satisfy the same properties as $(Y_u)_{u\ge 0}$ in Proposition 3.8 of \cite{S09} for $\delta=1$ by replacing $(Y_u)_{u\ge 0}$ by $(Y_u-\mu \ell^0_u)_{u\ge 0}$, where $(\ell^0_u)_{u\ge 0}$ is the local time at zero of $(X_u)_{u\ge 0}$. {The term $\mu \ell^0_u$ can be interpreted as ``a small drift each time $X$ is not making an excursion''. Actually, as explained in Section 3.2 of \cite{S09}, if one reparametrizes $Y$ with the right-continuous inverse of the local time $\ell^0$, then one obtains a $(2-\delta)$-stable Lévy process. In the case $\delta=1$, when we add this drift it remains a $1$-stable Lévy process.}

Let $\delta = 3-8/\kappa$. If $\delta \neq 1$, then let $\beta \in [-1,1]$, otherwise let $\mu \in \R$. Let $(X_u)_{u\ge 0}$ and $(Y_u)_{u\ge 0}$ be the two processes defined above associated to $\delta,\beta$ when $\delta \neq 1$ and to $\delta,\mu$ when $\delta=1$. Let $(O_u)_{u\ge 0}$ and $(W_u)_{u\ge 0}$ be the processes given by $O_0=W_0=0$ and for all $u\ge 0$,
$$
O_u = -\frac{2}{\sqrt{\kappa}} Y_u \qquad \text{and} \qquad W_u=O_u + \sqrt{\kappa}X_u.
$$
Then the \textit{chordal $\mathrm{SLE}^\beta_\kappa(\kappa-6)$} for $\kappa \in (8/3,4)$ and the \textit{chordal $\mathrm{SLE}^{\left\langle \mu\right\rangle}_4(-2)$} from $0$ to $\infty$ in $\mathbb{H}$ are defined as the growing family of closed sets $(K_u)_{u\ge 0}$ determined by the ODE
$$
g_0(z)=z \qquad \text{and} \qquad \partial_u g_u(z) =\frac{2}{g_u(z)-W_u},
$$
in the sense that the domain of definition of $g_u$ is $\mathbb{H} \setminus K_u$. 
{These processes can be seen as special cases of SLE processes with force point, with parameters $\kappa$ and $\rho =\kappa-6$. Such processes first appeared in \cite{LSW03}. See e.g. Appendix A.3 of \cite{BP} for an overview. As explained in Remark A.6 of \cite{BP}, the Bessel process $X_u$ describes the distance between the driving function $W_u$ and the marked point $O_u$. However, given that in our case $\rho \le -2$ and since the marked point $O_u$ is sometimes infinitesimally close to the driving function $W_u$, the above definition is not so simple (see Remark A.7 of \cite{BP}).}

\begin{figure}[h]
	\centering
	\includegraphics[scale=0.8]{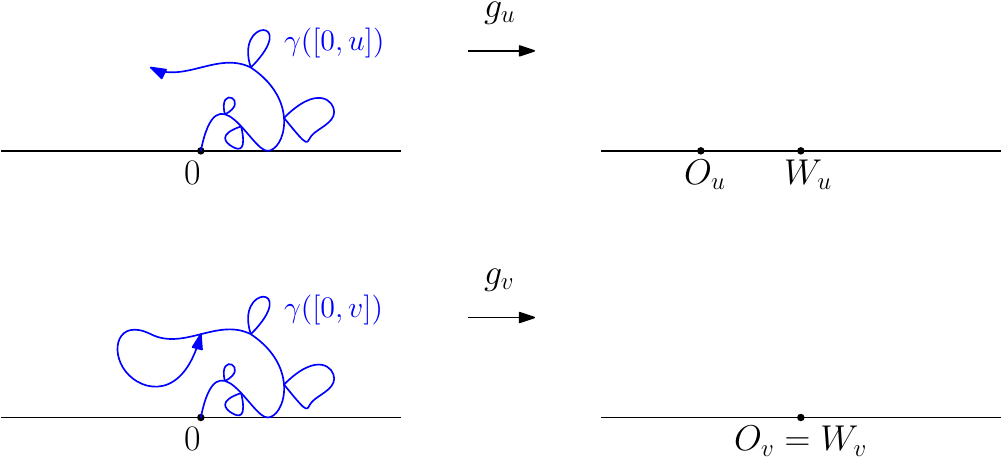}
	\caption{{Illustration of a chordal $\mathrm{SLE}_\kappa^\beta(\kappa-6)$ or $\mathrm{SLE}^{\langle \mu \rangle}_4(-2)$ from $0$ to $\infty$ in $\mathbb{H}$. Above: at a time $u$ where $\gamma$ is drawing a loop, i.e.\@ a time such that $O_u\neq W_u$. Below: at the time $v$ where $\gamma$ closes the loop, so that $O_v=W_v$.}}
	\label{SLEkappakappamoinssix}
\end{figure}

It was shown in \cite{MSW17} (in Theorem 7.4 for $\kappa \in (8/3,4)$ and in Proposition 5.3 for $\kappa =4$) that $\mathbb{H}\setminus K_u$ is the unbounded connected component of $\mathbb{H}\setminus\gamma([0,u])$ for some continuous path $\gamma$. On each excursion of $(X_u)_{u\ge 0}$, the path $\gamma$ draws a simple loop {(see Figure \ref{SLEkappakappamoinssix})}. We also recall from Proposition 3.10 of \cite{S09} that $(K_u)_{u\ge 0}$ satisfies a ``renewal property'' {and a ``conformal Markov property'' which are described as follows.
\begin{proposition}\label{prop renewal et Markov conforme}(from Proposition 3.10 of \cite{S09})
	The process $(K_u)_{u\ge 0}$ satisfies the following properties:
	\begin{itemize}
		\item \textbf{Renewal property.} Conditionally on $W_u$ up to a stopping time $T$ (for the natural filtration of $(X,Y)$) for which $O_T=W_T$ almost surely, the process $(W_{T+u}-W_T)_{u\ge 0}$ has the same law as $(W_u)_{u\ge 0}$.
		\item \textbf{Conformal Markov property.} Conditionally on $W_u$ up to some stopping time $T$ for which a.s.\@ $O_T \neq W_T$, the growing family given by the closure of $g_T(K_{T+u})$ up to the first time $u\ge 0$ such that $W_{T+u} = O_{T+u}$ has the same law (up to a time-change) as a chordal $\mathrm{SLE}_\kappa$ in $\mathbb{H}$ from $W_T$ to $O_T$.
	\end{itemize}
\end{proposition} 
The renewal property can also be rephrased as follows: if $T$ is a stopping time at which a.s.\@ the exploration is not drawing a loop (i.e.\@ such that $X_T=0$ a.s.), then conditionally on $(K_u)_{0 \le u \le T}$, the process $(g_T(K_{T+u}))_{u\ge 0}$ has the same law as $(K_u)_{u\ge 0}$.}

A chordal $\mathrm{SLE}^\beta_\kappa(\kappa-6)$ (or $\mathrm{SLE}^{\left\langle \mu\right\rangle}_4 (-2)$ when $\kappa=4$) from $a$ to $b$ which are on the boundary of some simply connected domain $D \subset \C$ is just the image of the above $\mathrm{SLE}^\beta_\kappa(\kappa-6)$ (or $\mathrm{SLE}^{\left\langle \mu\right\rangle}_4 (-2)$ when $\kappa=4$) by a conformal map sending $0$ to $a$ and $\infty$ to $b$. One can define the trunk $\eta$ of $\gamma$, a chordal $\mathrm{SLE}^\beta_\kappa(\kappa-6)$ (or  $\mathrm{SLE}^{\left\langle \mu\right\rangle}_4 (-2)$ when $\kappa =4$) 
as the path $\gamma$ where we removed the loops. In Theorem 7.4 of \cite{MSW17}, the trunk is shown to be an $\mathrm{SLE}_{\kappa'}$-type curve for $\kappa'=16/\kappa$. The path $\gamma$ can be decomposed into the trunk $\eta$ and the loops it traces along each excursion of $X$, see \cite{MSW17} for details {and the left-hand side of Figure \ref{tronc et explo unif} for an illustration.}

One can also define a \textit{radial} version of the $\mathrm{SLE}^{\beta}_\kappa (\kappa-6)$ (or $\mathrm{SLE}^{\langle \mu \rangle}_4 (-2)$ for $\kappa=4$) from $-i$ to $0$ in the unit disk $\mathbb{D}$, which is characterized by Proposition 3.13 of \cite{S09} using the chordal $\mathrm{SLE}^{\beta}_\kappa (\kappa-6)$ (or $\mathrm{SLE}^{\langle \mu \rangle}_4 (-2)$ for $\kappa=4$). As explained in \cite[p.~11]{WW13} one can define this radial version relying only on chordal $\mathrm{SLE}$'s as follows. Consider $\gamma_1$ a chordal $\mathrm{SLE}^{\beta}_\kappa (\kappa-6)$ (or $\mathrm{SLE}^{\langle \mu \rangle}_4 (-2)$ for $\kappa=4$) from $-i$ to $i$, up to the first time $u_1$ at which it finishes to draw a loop intersecting the circle of radius $1/2$ centred at $-i$. Then, let $F_1$ be the conformal transformation from $D_{u_1}^0$ to $\mathbb{D}$ that keeps $0$ fixed and maps $\gamma_1(u_1)$ to $-i$ and we repeat the same procedure again: growing $\gamma_2$ another independent chordal $\mathrm{SLE}^{\beta}_\kappa (\kappa-6)$ (or $\mathrm{SLE}^{\langle \mu \rangle}_4 (-2)$ for $\kappa=4$) from $-i$ to $i$ until it draws a loop intersecting the circle of radius $1/2$ centred at $-i$, looking at its preimage by $F_1$ etc. We stop when we draw a loop surrounding $0$. The radial $\mathrm{SLE}^{\beta}_\kappa (\kappa-6)$ (or $\mathrm{SLE}^{\langle \mu \rangle}_4 (-2)$ for $\kappa=4$) targeted at $z\in \mathbb{D}$ can be defined in a similar way.

{
	Two radial $\mathrm{SLE}^\beta_\kappa (\kappa-6)$ (or $\mathrm{SLE}^{\left\langle \mu\right\rangle}_4 (-2)$ when $\kappa=4$) are related by the following target-invariance property.
\begin{proposition}\label{prop target invariance}(Proposition 3.14 of \cite{S09})
	For all $z_1,z_2 \in \mathbb{D}$, one can couple two radial $\mathrm{SLE}^\beta_\kappa (\kappa-6)$ (or $\mathrm{SLE}^{\left\langle \mu\right\rangle}_4 (-2)$ when $\kappa=4$), $\gamma_1$ starting from $-i$ and targeted at $z_1$ and $\gamma_2$ from $-i$ to some $z_2$ so that they coincide up to some time-change until the time that $z_1$ and $z_2$ fail to lie in the same connected component of $\mathbb{D} \setminus \gamma_1([0,u])$. 
\end{proposition}
}
By performing this coupling with targets in a countable dense subset of $\mathbb{D}$, one can thus build the branching $\mathrm{SLE}^\beta_\kappa(\kappa-6)$ (or $\mathrm{SLE}^{\left\langle \mu\right\rangle}_4 (-2)$ when $\kappa=4$) starting at $-i$ in $\mathbb{D}$, hence exploring all the loops of a $\mathrm{CLE}_\kappa$. It is also called the exploration tree. The radial and chordal $\mathrm{SLE}$'s can be seen as ``branches'' of this exploration tree. If we continue the radial explorations after the time they draw a loop surrounding their targets, then they explore the nested $\mathrm{CLE}_\kappa$. Except in Sections \ref{section mu vers l'infini} and \ref{sous-section cartes deco}, we will always consider the non-nested version of the $\mathrm{CLE}_\kappa$ and it will simply be referred as the $\mathrm{CLE}_\kappa$.
\subsection{Carathéodory convergence of the chordal exploration}
We now describe an approximation result of the $\mathrm{SLE}^{\langle \mu \rangle}_4(-2)$ exploration in the sense of Carathéodory based on the results of \cite{Le21}. Let $\mu \in \R$. If $\kappa \in (8/3,4)$, let $(\gamma^\kappa(u))_{u\ge 0}$ be a chordal $\mathrm{SLE}^{\mu((4/\kappa)-1)}_{\kappa}(\kappa-6)$ in $\mathbb{D}$ from $-i$ to $i$. For all $u\ge 0$, if $X(u)=0$, then let $\upsilon^\kappa(u)=u$ and otherwise let $\upsilon^\kappa(u)$ be the time of the beginning of the excursion $X$ which is being drawn at time $u$. Let $(\gamma^4(u))_{u\ge 0}$ be a chordal $\mathrm{SLE}^{\langle \mu \rangle}_{4}(-2)$ in $\mathbb{D}$ from $-i$ to $i$. Let $\upsilon^4(u)$ for $u\ge 0$ be defined as before using the corresponding driving function. For all $\kappa \in (8/3,4]$, for all $u\ge 0$, for all $z \in \mathbb{D} $, let $D^{z,\kappa}_u$ be the connected component of $\mathbb{D}\setminus \gamma^\kappa([0,\upsilon^\kappa(u)])$ containing $z$.  
Note that we choose to restrict the definition of the $D^{z,\kappa}_u$'s to the times $\upsilon^\kappa(u)$ since the relation with quantum disks that we will recall in the next subsection is only valid for these times. Then a consequence of the results of \cite{Le21} is the following proposition.
\begin{proposition}\label{prop cv CLE}
We have the convergence in law
\begin{equation}\label{cvCLE}
	\left(D^{z,\kappa}_u\right)_{u\ge 0, z \in \mathbb{Q}^2 \cap \mathbb{D}}
	\enskip \mathop{\longrightarrow}\limits_{\kappa \uparrow 4}^{(\mathrm{d})} \enskip 
	(D^{z,4}_u)_{u\ge 0, z \in \mathbb{Q}^2 \cap \mathbb{D}} 
\end{equation}
in terms of finite-dimensional distributions for the Carathéodory topology.
\end{proposition}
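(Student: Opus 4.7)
The plan is to deduce Proposition \ref{prop cv CLE} from the scaling limit result of \cite{Le21} for radial/chordal $\mathrm{SLE}_\kappa^{\beta}(\kappa-6)$ as $\kappa \uparrow 4$. The key input that I expect to be available from \cite{Le21} is the convergence in law of the driving processes: if $X^\kappa$ is the skew Bessel process of dimension $\delta_\kappa = 3-8/\kappa$ and asymmetry parameter $\beta_\kappa = \mu((4/\kappa)-1)$ used to build $\gamma^\kappa$, and $X^4$ the (symmetric) Bessel process of dimension $1$ driving $\gamma^4$, then $(X^\kappa, Y^\kappa) \to (X^4, Y^4 - \mu \ell^0_\cdot)$ in law as $\kappa \uparrow 4$ (uniformly on compacts), where $Y^\kappa$ is the associated process described in Subsection \ref{sous-section rappels CLE} and $\ell^0$ is the local time of $X^4$ at zero. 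Via $W_u = O_u + \sqrt{\kappa}X_u$ and $O_u = -2 Y_u/\sqrt{\kappa}$, this yields convergence in law of the driving functions $W^\kappa \to W^4$ uniformly on compact time intervals.

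From the convergence of driving functions, I would apply standard Loewner-flow continuity (e.g. in the spirit of Proposition 4.43 in Lawler's book) to obtain Carathéodory convergence of $\mathbb{D}\setminus K^\kappa_u$ to $\mathbb{D}\setminus K^4_u$ for each fixed time $u$, jointly in finitely many times. In the strongest form available in \cite{Le21}, one even obtains convergence of the curves $\gamma^\kappa$ to $\gamma^4$ uniformly on compacts (after a suitable parametrisation), and I would prefer to work with that statement directly, since it makes handling connected components transparent.

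Next I must replace the deterministic time $u$ by $\upsilon^\kappa(u)$, the start of the excursion of $X^\kappa$ straddling $u$. Because $X^4$ almost surely has no isolated zeros and never stays at zero on an interval, the mapping $u \mapsto \upsilon^4(u)$ is almost surely continuous at every fixed $u$. Combined with uniform convergence of $X^\kappa \to X^4$ on compacts, this gives $\upsilon^\kappa(u) \to \upsilon^4(u)$ in probability, jointly for finitely many times $u$. Then joint convergence of $(\gamma^\kappa,\upsilon^\kappa(u_1),\dots,\upsilon^\kappa(u_n))$ and continuity of the curves give that $\mathbb{D}\setminus \gamma^\kappa([0,\upsilon^\kappa(u_i)])$ converges to $\mathbb{D}\setminus \gamma^4([0,\upsilon^4(u_i)])$ in the Carathéodory sense, jointly for $i=1,\dots,n$. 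For each fixed $z \in \mathbb{Q}^2\cap \mathbb{D}$, since $z$ almost surely lies in the interior and not on the limiting curve, the connected component $D^{z,\kappa}_{u_i}$ containing $z$ converges to $D^{z,4}_{u_i}$ in the Carathéodory topology by a standard monotonicity / separability argument. A continuous-mapping argument then yields the claimed joint convergence in \eqref{cvCLE}.

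The main obstacle I anticipate is the first step: transferring the conclusion of \cite{Le21} into exactly the statement I used above. The delicate point is that the dimension of the Bessel process is crossing the critical value $\delta=1$, where the notion of local time becomes singular, and the parameter $\beta_\kappa \to 0$ must be tuned precisely so that the renormalised sum of signed excursions produces the drift $-\mu \ell^0_\cdot$ in the limit; this is exactly the content expected from \cite{Le21}, but I would need to be careful in translating their convergence statement (which may be stated for $W^\kappa$, for the trunk $\eta^\kappa$, or for the curves $\gamma^\kappa$) into the joint convergence of driving process and excursion endpoints used here. The Carathéodory step itself is essentially a soft consequence once the curve-level convergence is in hand.
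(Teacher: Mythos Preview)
Your high-level plan is right: feed in the convergence of driving data from \cite{Le21}, push it through the Loewner flow, and read off Carath\'eodory convergence of the pieces. The problem is the last step, which you treat as soft but is in fact where the work lies.

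First, the input you want from \cite{Le21} is stronger than what is actually available. The paper uses from \cite{Le21} only the convergence of the driving pairs $(W^{\kappa},O^{\kappa})\to(W^4,O^4)$ (their Proposition~3.10) and the convergence of swallowing times $\zeta^\kappa_x\to\zeta_x$ (their Theorem~1.2). There is no ready-made uniform convergence of the full curve $\gamma^\kappa\to\gamma^4$ to invoke; on each excursion the paper has to import Kemppainen--Smirnov \cite{KS17} separately to get curve-level convergence of the chordal $\mathrm{SLE}_\kappa$ piece, and even that is only used after conformally straightening by $g^{\kappa}_{R_{i,\vp}^\kappa}$. So your sentence ``in the strongest form available in \cite{Le21}, one even obtains convergence of the curves'' is an unjustified shortcut, and without it your continuous-mapping argument collapses.

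Second, even granting convergence of $(W^\kappa,O^\kappa)$ and of the straddling times $\upsilon^\kappa(u)$, the passage from ``$\mathbb{H}\setminus K^\kappa_u\to\mathbb{H}\setminus K^4_u$ in Carath\'eodory'' to ``$D^{z,\kappa}_u\to D^{z,4}_u$ for every $z$'' is not a standard monotonicity argument. Loewner continuity only controls the \emph{unbounded} component of the complement of the hull. For a point $z$ that has been encircled by a loop or cut off by the trunk, $D^{z,\kappa}_u$ sits inside $K^\kappa_u$, and its Carath\'eodory limit is not read off from $g^\kappa_u$. The paper treats the three cases separately: the unexplored region comes from Loewner directly; the domain encircled by a loop drawn on an excursion $[S^\kappa_{i,\vp},T^\kappa_{i,\vp}]$ requires combining Loewner-based Hausdorff control on $[S,u]$ with \cite{KS17} curve convergence on $[R,T]$, followed by a nontrivial topological argument (Figure~\ref{image preuve cv boucle}) to conclude Carath\'eodory convergence of the encircled region; the cut-out domains require the swallowing-time convergence from \cite{Le21} together with target-invariance to pass to an exploration aimed at a boundary point $x$. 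Your proposal does not distinguish these cases and does not supply the ingredients for the last two.

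In short: the convergence of $\upsilon^\kappa(u)$ and of the unexplored region is essentially as you say, but the encircled and cut-out components need the extra inputs (\cite{KS17} on excursions, swallowing times from \cite{Le21}) and the geometric argument that the paper carries out.
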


{The rest of this subsection is devoted to the proof of the above proposition. The outline of the proof is as follows: Proposition 3.10 of \cite{Le21} gives the convergence of the driving functions of the exploration. Next, Proposition 4.47 of \cite{Law05} enables to deduce the convergence at the level of conformal maps. Finally, some technical work remains to deduce the convergence of the cut out and encircled regions in the desired topology.}

We fix $\mu \in \R$. For all $\kappa \in (8/3, 4]$, we denote by $(W^\kappa_u)_{u\ge 0}$, $(O^\kappa_u)_{u\ge 0}$, $(g^\kappa_u)_{u\ge 0}$ and $(K^\kappa_u)_{u\ge 0}$ the processes with which the $\mathrm{SLE}^{\mu((4/\kappa)-1)}_{\kappa}(\kappa-6)$ for $\kappa <4$ and $\mathrm{SLE}^{\langle \mu \rangle}_4(-2)$ for $\kappa=4$ is defined as described above. Let $\kappa_n$ be an increasing sequence tending to $4$. By Proposition 3.10 of \cite{Le21} we have the convergence of the driving functions:
\begin{equation}\label{eq cv pilote kappa}
(W^{\kappa_n},O^{\kappa_n})\mathop{\longrightarrow}\limits_{n \to \infty}^{(\mathrm{d})} (W^4, O^4)
\end{equation}
for the topology of uniform convergence on compact sets. By Skorokhod's representation theorem, we may suppose {in the rest of the subsection} that the above convergence is almost sure. In particular, by Proposition 4.47 of \cite{Law05}, for all $u_0>0$, for all $\vp>0$, we get a.s.\@ the convergence
\begin{equation}\label{eq cv g unif}
g^{\kappa_n} \mathop{\longrightarrow}\limits_{n\to \infty} g^4
\end{equation}
uniformly on the set $[0,u_0]\times \left\{z \in \mathbb{H}; \ d(z,K^4_{u_0})\ge \vp\right\}$. {As a result, we obtain the Carathéodory convergence of the unexplored region: a.s.\@ for all $u \ge 0$, {for any sequence of real numbers $u_n\ge0$ converging to $u$},
\begin{equation}\label{eq cv region inexploree}
	\mathbb{H} \setminus K^{\kappa_n}_{u_n}
	\mathop{\longrightarrow}\limits_{n \to \infty}
	\mathbb{H} \setminus K^4_u.
\end{equation}
} In particular, for all $u \ge 0$, we have a.s.
\begin{equation}\label{eq cv H moins K}
\mathbb{H} \setminus K^{\kappa_n}_{u}
\mathop{\longrightarrow}\limits_{n \to \infty}
\mathbb{H} \setminus K^4_u
\qquad
\text{and}
\qquad
\mathbb{H} \setminus K^{\kappa_n}_{\upsilon^{\kappa_n}(u)}
\mathop{\longrightarrow}\limits_{n \to \infty}
\mathbb{H} \setminus K^4_{\upsilon^4(u)}
.
\end{equation}

We then look into the domains encircled by loops. {The next lemma states the convergence of the domains encircled by the loops which are drawn by excursions of height larger than $\vp$.}
{
\begin{lemma}\label{lemme cv domaine entoure par une boucle}
	Let $\vp>0$ and $i\ge 1$. Let $\mathcal{B}^n_{i,\vp}$ (resp. $\mathcal{B}_{i,\vp}$) be the domain encircled by the loop which is drawn by the $i$-th excursion of $W^{\kappa_n}-O^{\kappa_n}$ (resp.\@ $W^4-O^4$) of height larger than $\vp$. Then 
	$$
	\mathcal{B}^n_{i,\vp} \mathop{\longrightarrow}\limits_{n\to \infty}^{\mathrm{a.s.}}
	\mathcal{B}_{i,\vp}
	$$
	in the sense of Carathéodory.
\end{lemma}}
{In order to prove the above lemma, let us introduce some notation.} Let $\vp>0$. For all $i\ge 1$, for all $\kappa \in (8/3,4]$, let $e_{i,\vp}^\kappa$ be the $i$-th excursion of $W^\kappa-O^\kappa$ of height larger than $\vp$ and let $S_{i,\vp}^\kappa<T_{i,\vp}^\kappa$ be its starting time and end time. First notice that by (\ref{eq cv pilote kappa}) we get readily the uniform convergence of the excursions $e_{i,\vp}^{\kappa_n}$'s to the $e_{i,\vp}^4$'s and the convergence of their endpoints $S_{i,\vp}^{\kappa_n},T_{i,\vp}^{\kappa_n}$ to $S^4_{i,\vp},T_{i,\vp}^4$ as $n\to \infty$. {Similarly, for all $\kappa \in (8/3,4]$, let $R^{\kappa}_{i,\vp}$ be the $i$-th time at which $W^{\kappa}-O^{\kappa}$ first reaches $\vp$ after hitting zero. Then the $R^{\kappa_n}_{i,\vp}$'s converge a.s.\@ to the $R^4_{i,\vp}$'s as $n \to \infty$. Let us prove the following convergences:}
\begin{lemma}\label{lemme auxiliaire cv domaine entoure par une boucle}
	We have the following convergences:
	\begin{itemize}
		\item \textbf{Convergence of the beginning of the loop.} For all $u \in (S^4_{i,\vp}, T^4_{i,\vp})$,
		\begin{equation}\label{eq cv Hausdorff debut}
			(g^{\kappa_n}_{S^{\kappa_n}_{i,\vp}}\circ\gamma^{\kappa_n})( [S^{\kappa_n}_{i,\vp}, u])
			\mathop{\longrightarrow}\limits_{n\to \infty}^{(\mathrm{a.s.})}
			(g^{4}_{S^{4}_{i,\vp}}\circ\gamma^{4})( [S^{4}_{i,\vp}, u])
		\end{equation}
		with respect to the Hausdorff distance. 
		\item \textbf{Convergence of the end of the loop.} for all $\delta>0$ small enough, 
		\begin{equation}\label{eq cv Hausdorff fin}
			(g^{\kappa_n}_{S^{\kappa_n}_{i,\vp}}\circ\gamma^{\kappa_n})([R^{\kappa_n}_{i,\vp}, T^{\kappa_n}_{i,\vp}])\cap 
			V_\delta
			\mathop{\longrightarrow}\limits_{n\to \infty}^{(\mathrm{a.s.})}
			(g^4_{S^4_{i,\vp}}\circ \gamma^4)( [R^4_{i,\vp}, T^4_{i,\vp}])
			\cap
			V_\delta
		\end{equation}
		for the Hausdorff distance, where $$V_\delta\coloneqq \left\{
		z \in \mathbb{H}; \ d(z, g^4_{S^4_{i,\vp}}(\gamma^4([S^4_{\vp,i}, R^{4}_{i,\vp}]))\cup \R)>\delta
		\right\}.
		$$
	\end{itemize} 
\end{lemma}
\begin{proof}
Let $u \in (S^{4}_{i,\vp}, T^4_{i,\vp}) $ and $ \delta >0$. Applying Proposition 4.47 of \cite{Law05} to 
$(W^{\kappa_n}_{(S^{\kappa_n}_{i,\vp}+u)\wedge T^{\kappa_n}_{i,\vp}})_{u\ge 0}$ which converges uniformly to $(W^{4}_{(S^{4}_{i,\vp}+u)\wedge T^{4}_{i,\vp}})_{u\ge 0}$ as $n \to \infty$, we know that for all $n$ large enough, 
\begin{equation}\label{eq cv g excursion}
\left((v,z)\mapsto g^{\kappa_n}_{S^{\kappa_n}_{i,\vp} + v}((g^{\kappa_n}_{S^{\kappa_n}_{i,\vp}})^{-1}(z)) \right)\mathop{\longrightarrow}\limits_{n \to \infty}
\left(
(v,z) \mapsto g^4_{S^4_{i,\vp}+v}((g^{4}_{S^{4}_{i,\vp}})^{-1}(z))
\right)
\end{equation}
uniformly on $[0,u] \times \left\{z \in \mathbb{H}; \ d(z,g^4_{S^{4}_{i,\vp}}(K^4_u)) \ge \delta \right\}$. In particular, for $n$ large enough,
$$
 \left\{z \in \mathbb{H}  ; \  d(z,g^4_{S^{4}_{i,\vp}}(K^4_u)) \ge \delta \right\}\subset \mathbb{H} \setminus g^{\kappa_n}_{S^{\kappa_n}_{i,\vp}}(K^{\kappa_n}_u).
$$ 
In other words, for all $n$ large enough, the curve $(g^{\kappa_n}_{S^{\kappa_n}_{i,\vp}}\circ\gamma^{\kappa_n})( [S^{\kappa_n}_{i,\vp}, u])$ is included in the $\delta$-neighbourhood of $(g^{4}_{S^{4}_{i,\vp}}\circ\gamma^{4})( [S^{4}_{i,\vp}, u])$. Besides, let $U$ be the open ball centred at $g^4_{S^4_{i,\vp}}(\gamma^4(u))$ of radius $\delta$. Then, by choosing $\delta$ small enough, $U$ is an open connected subset of $\mathbb{H}$  such that $U\cap (\mathbb{H}\setminus g^4_{S^4_{i,\vp}}(K^4_u)) \neq\emptyset$. By the Carathéodory convergence of $\mathbb{H} \setminus g^{\kappa_n}_{S^{\kappa_n}_{i,\vp}}(K^{\kappa_n}_u)$ towards $\mathbb{H}\setminus g^4_{S^4_{i,\vp}}(K^4_u)$ (which stems from (\ref{eq cv g excursion})), we cannot have $U\subset \mathbb{H} \setminus g^{\kappa_n}_{S^{\kappa_n}_{i,\vp}}(K^{\kappa_n}_u)$ infinitely often, otherwise we would get that $\gamma^4(u) \in \mathbb{H} \setminus K^4_u$, which is not the case. Thus, for all $n$ large enough, $U \cap g^{\kappa_n}_{S^{\kappa_n}_{i,\vp}}(K^{\kappa_n}_u)\neq \emptyset$. By choosing $\delta$ small enough, by connectedness of $(g^{\kappa_n}_{S^{\kappa_n}_{i,\vp}}\circ\gamma^{\kappa_n})( [S^{\kappa_n}_{i,\vp}, u])$, we obtain that for $n$ large enough, $(g^{4}_{S^{4}_{i,\vp}}\circ\gamma^{4})( [S^{4}_{i,\vp}, u])$ is in the $\delta$-neighbourhood of $(g^{\kappa_n}_{S^{\kappa_n}_{i,\vp}}\circ\gamma^{\kappa_n})( [S^{\kappa_n}_{i,\vp}, u])$. We have thus proven \eqref{eq cv Hausdorff debut} that for all $u\in(S^{4}_{i,\vp}, T^4_{i,\vp}) $.

{Next, let us prove \eqref{eq cv Hausdorff fin}.} Recall that for all $\kappa \in (8/3,4]$, $R^{\kappa}_{i,\vp}$ is the $i$-th time at which $W^{\kappa}-O^{\kappa}$ first reaches $\vp$ after hitting zero. Then $R^{\kappa_n}_{i,\vp}$ converges to $R^4_{i,\vp}$ as $n \to \infty$. 
Moreover, for all $n\ge 0$, by the conformal Markov property {recalled in Proposition \ref{prop renewal et Markov conforme}} the curve $g^{\kappa_n}_{R^{\kappa_n}_{i,\vp}}\circ\gamma^{\kappa_n}_{\vert [R^{\kappa_n}_{i,\vp}, T^{\kappa_n}_{i,\vp}]}$ has the same law as a chordal $\mathrm{SLE}(\kappa_n)$ in $\mathbb{H}$ from $g^{\kappa_n}_{R^{\kappa_n}_{i,\vp}}(\gamma^{\kappa_n}(R^{\kappa_n}_{i,\vp}))=W^{\kappa_n}_{R^{\kappa_n}_{i,\vp}}$ to $g^{\kappa_n}_{R^{\kappa_n}_{i,\vp}}(\gamma^{\kappa_n}(T^{\kappa_n}_{i,\vp}))=O^{\kappa_n}_{R^{\kappa_n}_{i,\vp}}$. Furthermore, by taking Möbius transformations sending one of these two endpoints to $\infty$, and by applying Theorem 1.10 of \cite{KS17} (which states that the chordal $\mathrm{SLE}(\kappa_n)$ from $0$ to $\infty$ converges uniformly on compact sets to the $\mathrm{SLE}(4)$ as $n\to \infty$), we obtain the convergence
\begin{equation}\label{eq cv Hausdorff SLE}
(g^{\kappa_n}_{R^{\kappa_n}_{i,\vp}}\circ\gamma^{\kappa_n})([R^{\kappa_n}_{i,\vp}, T^{\kappa_n}_{i,\vp}])
\mathop{\longrightarrow}\limits_{n\to \infty} 
(g^4_{R^4_{i,\vp}}\circ \gamma^4)( [R^4_{i,\vp}, T^4_{i,\vp}])
\end{equation}
for the Hausdorff distance. Besides, by $(\ref{eq cv g excursion})$ and by the convergence of $R^{\kappa_n}_{i,\vp}$ to $R^4_{i,\vp}$, we know that
$$
g^{\kappa_n}_{R^{\kappa_n}_{i,\vp}} \circ (g^{\kappa_n}_{S^{\kappa_n}_{i,\vp}})^{-1}
\mathop{\longrightarrow}\limits_{n\to \infty}
g^{4}_{R^{4}_{i,\vp}} \circ (g^{4}_{S^{4}_{i,\vp}})^{-1}
$$
uniformly on compact subsets of $\mathbb{H} \setminus (g^{4}_{S^{4}_{i,\vp}}\circ\gamma^{4})( [S^{4}_{i,\vp}, R^4_{i,\vp}])$. Since the limiting function is a conformal map, it is easy to see that $g^{\kappa_n}_{S^{\kappa_n}_{i,\vp}} \circ (g^{\kappa_n}_{R^{\kappa_n}_{i,\vp}})^{-1}$ converges pointwise to $g^{4}_{S^{4}_{i,\vp}} \circ (g^{4}_{R^{4}_{i,\vp}})^{-1}$. By Montel's theorem, we deduce that
$$
g^{\kappa_n}_{S^{\kappa_n}_{i,\vp}} \circ (g^{\kappa_n}_{R^{\kappa_n}_{i,\vp}})^{-1}
\mathop{\longrightarrow}\limits_{n\to \infty}
g^{4}_{S^{4}_{i,\vp}} \circ (g^{4}_{R^{4}_{i,\vp}})^{-1}
$$
uniformly on compact subsets of $\mathbb{H}$. Combining this with (\ref{eq cv Hausdorff SLE}), we get \eqref{eq cv Hausdorff fin}.\end{proof}

We are now in position to prove the Carathéodory convergence of the domain encircled by the loop{, i.e.\@ to prove Lemma \ref{lemme cv domaine entoure par une boucle}.}
\begin{proof}[Proof of Lemma \ref{lemme cv domaine entoure par une boucle}]
For all $n \in \N$, let $\mathcal{L}_n= g^{\kappa_n}_{S^{\kappa_n}_{i,\vp}}(\gamma^{\kappa_n}([S^{\kappa_n}_{\vp, i}, T^{\kappa_n}_{\vp,i}]))$ be the loop drawn by $u \mapsto W^{\kappa_n}_{(S^{\kappa_n}_{i,\vp} + u)\wedge T^{\kappa_n}_{i,\vp}}$ and let $\mathcal{L} = g^4_{S^4_{i,\vp}}(\gamma^4([S^4_{\vp, i}, T^4_{\vp,i}]))$ be the loop drawn by $u \mapsto W^4_{(S^4_{i,\vp} + u)\wedge T^4_{i,\vp}}$. By the convergence of $g^{\kappa_n}$ and $S_{i,\vp}^{\kappa_n}$ as $n\to \infty$, to prove the Carathéodory convergence of the open domain encircled by the loop $\gamma^{\kappa_n}([S^{\kappa_n}_{\vp, i}, T^{\kappa_n}_{\vp,i}])$ towards the domain encircled by $\gamma^4([S^4_{\vp, i}, T^4_{\vp,i}])$, it suffices to prove the Carathéodory convergence of the domain $\mathcal{B}_n$ encircled by $\mathcal{L}_n$ towards the domain $\mathcal{B}$ encircled by $\mathcal{L}$. 

\begin{figure}[h]
   \centering
   \includegraphics[scale=0.8]{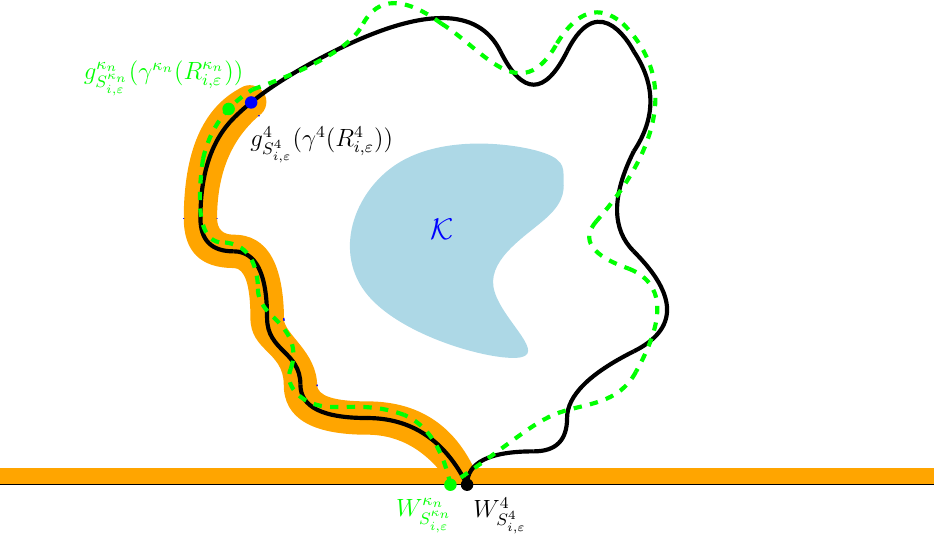}
   \caption{Illustration of the proof of the Carathéodory convergence of the domain encircled by $\mathcal{L}_n= g^{\kappa_n}_{S^{\kappa_n}_{i,\vp}}(\gamma^{\kappa_n}([S^{\kappa_n}_{\vp, i}, T^{\kappa_n}_{\vp,i}]))$, in green dashes, towards the domain encircled by $\mathcal{L} = g^4_{S^4_{i,\vp}}(\gamma^4([S^4_{\vp, i}, T^4_{\vp,i}]))$, in black. The complement of $V_\delta$ is in orange. A compact $\mathcal{K}$ encircled by $\mathcal{L}$ is drawn in blue.}
   \label{image preuve cv boucle}
\end{figure}

Let $\mathcal{K}$ be a compact subset of the open domain $\mathcal{B}$. Then we may take $\delta>0$ small enough such that $\mathcal{K}\subset V_\delta$, so that by (\ref{eq cv Hausdorff fin}), we obtain that $\mathcal{K}\cap(g^{\kappa_n}_{S^{\kappa_n}_{i,\vp}}\circ\gamma^{\kappa_n})([R^{\kappa_n}_{i,\vp}, T^{\kappa_n}_{i,\vp}]) = \emptyset$ for $n$ large enough. 
Moreover, by (\ref{eq cv Hausdorff debut}), for $u>0$ small enough, for all $n$ large enough, we have $(g^{\kappa_n}_{S^{\kappa_n}_{i,\vp}}\circ\gamma^{\kappa_n})([S^{\kappa_n}_{i,\vp}, R^{\kappa_n}_{i,\vp}+u]) \subset V_\delta$. Therefore, by the convergence of $R^{\kappa_n}_{i,\vp}$ to $R^4_{i,\vp}$ we deduce that for all $n$ large enough, $\mathcal{K}\cap \mathcal{L}_n = \emptyset$. One can actually see that $\mathcal{K}$ is encircled by $\mathcal{L}_n$ for $n$ large enough using that the endpoints of the path $(g^{\kappa_n}_{S^{\kappa_n}_{i,\vp}}\circ\gamma^{\kappa_n})([R^{\kappa_n}_{i,\vp}, T^{\kappa_n}_{i,\vp}])$ are in $\mathbb{H} \setminus V_\delta$ and applying (\ref{eq cv Hausdorff fin}) once more (see Figure \ref{image preuve cv boucle}). 

Let $z \in \mathcal{B}$. To end the proof of the Carathéodory convergence of $\mathcal{B}_n$ towards $\mathcal{B}$, it suffices to show that if $U$ is a connected open set containing $z$ such that $U\subset \mathcal{B}_n$ infinitely often, then $U\subset \mathcal{B}$. Since $U$ and $\mathcal{B}$ are open it is enough to see that $U \subset \overline{\mathcal{B}}$. Assume by contradiction that there exists $w\in U\setminus \overline{\mathcal{B}}$. Take $\delta> 0$ small enough so that $B(w, \delta) \subset U\setminus \overline{\mathcal{B}}$. In particular, $w \in V_{\delta}$. As a result, due to the Hausdorff convergences (\ref{eq cv Hausdorff debut}) and (\ref{eq cv Hausdorff fin}), the point $w$ is not encircled by $\mathcal{L}_n$ for $n$ large enough, hence a contradiction. We have thus proven that
$$
\mathcal{B}_n \mathop{\longrightarrow}\limits_{n \to \infty} \mathcal{B}
$$
in the sense of Carathéodory. {This ends the proof of the lemma.}
\end{proof}

\begin{proof}[Proof of Proposition \ref{prop cv CLE}]
{By \eqref{eq cv H moins K}, we have the convergence of the unexplored region. Moreover, by Lemma \ref{lemme cv domaine entoure par une boucle}, we have} the convergence of the domain encircled by the loop $\gamma^{\kappa_n}([S^{\kappa_n}_{\vp, i}, T^{\kappa_n}_{\vp,i}])$ for $i \ge 1$ towards the domain encircled by $\gamma^4([S^4_{\vp, i}, T^4_{\vp,i}])$. By letting $\vp \to 0$, we get the Carathéodory convergence of all the domains encircled by loops.

Finally, we focus on the domains which are cut out by the exploration. Let $u \ge 0$. Let $x \in \R$. For all $n$, let $\zeta^n_x$ be the first time that $x$ is swallowed by $(g^{\kappa_n}_u({K}^{\kappa_n}_{v+u}))_{v\ge 0}$ and let $\zeta_x$ be the first time that $x$ is swallowed by $(g^4_u(K^4_{u+v}))_{v\ge 0}$. For all $n\ge 1$, let $C^n_x$ be the interior of $g^{\kappa_n}_{u}({K}^{\kappa_n}_{u+\zeta^n_x})\setminus g^{\kappa_n}_u({K}^{\kappa_n}_{u+\zeta^n_x -})$ and let $C_x$ be the interior of $g^{4}_{u}({K}^{4}_{u+\zeta_x})\setminus g^{4}_u({K}^{4}_{u+\zeta_x -})$. By the proof of Theorem 1.2 of \cite{Le21} and by another application of Skorokhod's representation theorem, we know that $\zeta_x$ is the almost sure limit as $n\to \infty$ of $\zeta^n_x$. To be more precise, the proof of Theorem 1.2 of \cite{Le21} gives the convergence of $\zeta^n_1$, while the convergence of $\zeta^n_x$ follows by scale-invariance and {reflection-invariance.} 
As a consequence, by considering chordal $\mathrm{SLE}_{\kappa_n}^{\mu((4/\kappa_n)-1)}(\kappa_n-6)$'s and an $\mathrm{SLE}_4^{\langle \mu \rangle}(-2)$ targeted at $x$ and coupled with the chordal $\mathrm{SLE}_{\kappa_n}^{\mu((4/\kappa_n)-1)}(\kappa_n-6)$'s and $\mathrm{SLE}_4^{\langle \mu \rangle}(-2)$ targeted at $\infty$ until $x$ is separated from $\infty$  and by applying the convergence (\ref{eq cv g unif}), we obtain the a.s.\@ convergence
$$
C^n_x \mathop{\longrightarrow}\limits_{n \to \infty} C_x
$$ 
in the sense of Carathéodory. By (\ref{eq cv H moins K}), we deduce the convergence of the cut out domain
$$
(g_u^{\kappa_n})^{-1}(C^n_x)
\mathop{\longrightarrow}\limits_{n\to \infty}
(g_u^{4})^{-1}(C_x)
$$
in the sense of Carathéodory.
To conclude we just need to check that any cut out domain can be written in the form $(g_u^{4})^{-1}(C_x)$ for some $u\ge 0$ and $x \in \R$. This is straightforward by taking $u\ge 0$ such that the frontier of the image by $g_u^4$ of the cut out domain has a non-trivial intersection with $\R$.
\end{proof}

\subsection{ The $\sqrt{\kappa}$-quantum disks and $\mathrm{CLE}_\kappa$}

Here, we describe the results of \cite{MSW22} that we want to transfer to $\kappa =4$. Recall Subsection \ref{sous-section disques quantiques}. Let $\kappa \in (8/3,4]$. Let $(\mathbb{D}, h^{\sqrt{\kappa}})$ be a parametrization of a unit boundary $\sqrt{\kappa}$-quantum disk. By Lemma 4.5 of \cite{AHPS21}, we know that
\begin{equation}\label{cv disque quantique}
	(h^{\sqrt{\kappa}},\mu_{h^{\sqrt{\kappa}}}^{\sqrt{\kappa}},\nu_{h^{\sqrt{\kappa}}}^{\sqrt{\kappa}}) \mathop{\longrightarrow}\limits_{\kappa \uparrow 4} (h^2, \mu^2_{h^2},\nu^2_{h^2}),
\end{equation}
where the convergence of the first coordinate holds in the space $H^{-1}_\mathrm{loc}(\mathbb{D})$ while the convergences of the other coordinates hold with respect to the weak topology of measures. In what follows, when we speak of the convergence of some quantum disks, it always means the convergence of the GFF and of the quantum area and boundary length measures associated to the parametrization by the unit disk as in the above equation.

When we run an independent chordal $\mathrm{SLE}_\kappa^\beta(\kappa-6)$ (or $\mathrm{SLE}_4^{\langle \mu \rangle}(-2)$ for $\kappa=4$) $\gamma$ on a $\sqrt{\kappa}$-quantum disk $(\mathbb{D},h^{\sqrt{\kappa}})$, 
the trunk of $\gamma$ can be parametrized by its quantum length since it is an $\mathrm{SLE}_{\kappa'}$-type process by Theorem 7.4 of \cite{MSW17} and Remark 2.3 of \cite{MSW22}. Notice that this quantum length is defined up to some multiplicative constant. Besides, we can define the quantum boundary length of $\mathrm{SLE}_\kappa$ type curves, such as the loops drawn by $\gamma$ or the boundary of the domains cut out by $\gamma$, via a quantum boundary length measure induced by $h^{\sqrt{\kappa}}$, using the quantum zipper properties first discovered in \cite{S16}. The quantum boundary length measure on such a domain $D$ is equal to $\nu^{\sqrt{\kappa}}_{h^{\sqrt{\kappa}}_{\vert D}}$. We will see in the next proposition that they correspond exactly to the boundary length measures of some quantum disks.

We then state a result of \cite{MSW22} for $\kappa \in (8/3,4)$, which gives the law of the quantum boundary length of the unexplored region and a Markov property in terms of quantum disks. Let $L_0$ (resp. $R_0$) be the quantum boundary length along the clockwise (resp. counterclockwise) segment from $-i$ to $i$ in $\partial \mathbb{D}$. Let $\beta \in (-1,1)$. Let $\gamma$ be a chordal $\mathrm{SLE}_\kappa^\beta(\kappa-6)$ from $-i$ to $i$ taken independently from the quantum disk $(\mathbb{D}, h^{\sqrt{\kappa}})$. We parametrize its trunk $\eta$ by its quantum length. Let $T^{\kappa,\beta}$ be the total quantum length of the trunk. For each $t$ (corresponding here to the quantum length of the trunk), let $E_t$ be the event that the trunk has not yet attained $i$ before having quantum length $t$, i.e. $E_t= \{T^{\kappa, \beta } >t\}$. For all $t<T^{\kappa, \beta}$, we set $L^{\kappa,\beta}_t$ and $R^{\kappa,\beta}_t$ to be the left and right quantum boundary lengths of the unexplored region $\mathcal{D}^\kappa_t$ ($\mathcal{D}^\kappa_t$ is the connected component of $\mathbb{D} \setminus \gamma([0,\tau_t])$ containing $i$ in its closure, where $\tau_t$ is the time for $\gamma$ at which the quantum length of the trunk $\eta$ reaches $t$). More precisely, $L^{\kappa,\beta}_t$ is the quantum boundary length of the arc of $\partial\mathcal{D}^\kappa_t$  going from the tip of the trunk to $i$ in the clockwise direction and $R^{\kappa,\beta}_t$ is the quantum boundary length of the arc of $\partial\mathcal{D}^\kappa_t$  going from the tip of the trunk to $i$ in the anti-clockwise direction. Note that for all $t<T^{\kappa, \beta}$, we have $L^{\kappa,\beta}_t>0$ and $R^{\kappa,\beta}_t>0$. For all $t\ge T^{\kappa, \beta}$ we set $L^{\kappa,\beta}_t=R^{\kappa,\beta}_t=0$.  
If we write $S^{\kappa,\beta}_t= L^{\kappa,\beta}_t+R^{\kappa,\beta}_t$ for all $t\ge0$, then $S^{\kappa,\beta}_t$ is the boundary length of $\mathcal{D}^\kappa_t$ for all $t<T^{\kappa,\beta}$. Let $a= 1+4/\kappa$. Let $\widetilde{L}^{\beta,a}$ (resp. $\widetilde{R}^{\beta,a}$) be the $(a-1)$-stable Lévy process of Lévy measure $\Lambda_{L}^{\beta,a}\coloneqq\frac{1-\beta}{2}\cos(a \pi) (dx/x^a){\bf 1}_{x>0} + \frac{1}{2}(dx/|x|^a){\bf 1}_{x<0}$ (resp. $\Lambda_{R}^{\beta,a}\coloneqq\frac{1+\beta}{2}\cos(a \pi) (dx/x^a){\bf 1}_{x>0} + \frac{1}{2}(dx/|x|^a){\bf 1}_{x<0}$). The two processes $\widetilde{L}^{\beta,a}$ and $\widetilde{R}^{\beta,a}$ are taken independent. Let $\widetilde{S}^a = \widetilde{L}^{\beta,a}+\widetilde{R}^{\beta,a}$.

\begin{proposition}\label{Prop 5.1 MSW}(Proposition 5.1 of \cite{MSW22})
	Up to some linear time-change, the process $(L^{\kappa,\beta}_t,R^{\kappa,\beta}_t)_{t\ge 0}$ {has a càdlàg version which} is a Doob $h$-transform of the process $(\widetilde{L}^{\beta,a},\widetilde{R}^{\beta,a})$, in the sense that there exists a constant $c>0$ such that for all $t>0$, for all bounded measurable function $F: \mathbb{D}([0,t],\R)^2 \to \R$, for all $x,y>0$,
	\begin{align*}
	\E&\left( \left. F\left((L^{\kappa, \beta}_s,R^{\kappa, \beta}_s)_{0\le s\le t}\right){\bf 1}_{E_t} \right| (L^{\kappa,\beta}_0,R^{\kappa,\beta}_0)=(x,y) \right) \\
	&= 
	\E_{x,y} \left(\left(\frac{x+y}{\widetilde{S}^a_{ct}}\right)^a F\left((\widetilde{L}^{\beta,a}_{cs},\widetilde{R}^{\beta,a}_{cs})_{0\le s\le t}\right) {\bf 1}_{\forall s \in [0,t], \ \widetilde{L}^{\beta,a}_{cs},\widetilde{R}^{\beta,a}_{cs}>0}\right),
	\end{align*}
	where {under $\P_{x,y}$} the Lévy process $\widetilde{L}^{\beta,a}$ (resp. $\widetilde{R}^{\beta,a}$) starts at $x$ (resp. $y$). Moreover, conditionally on the quantum boundary lengths $(L^{\kappa,\beta}_s,R^{\kappa,\beta}_s)_{0\le s\le t}$, the quantum surfaces which are cut out to the left, cut out to the right, inside the left loops or inside the right loops are independent $\sqrt{\kappa}$-quantum disks of quantum boundary lengths corresponding to the {absolute values of the} sizes of the {negative and positive} jumps of $L^{\kappa,\beta}$ and $R^{\kappa,\beta}$, while $(\mathcal{D}^\kappa_t, h^{\sqrt{\kappa}}_{\vert \mathcal{D}_t})$ is an independent $\sqrt{\kappa}$-quantum disk of quantum boundary length $S^{\kappa,\beta}_t$.
\end{proposition}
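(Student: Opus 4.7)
The plan is to establish the statement first on an infinite-volume surface — a $\sqrt{\kappa}$-quantum wedge — where the structure is cleanest, and then transfer to the finite-volume quantum disk by a Doob $h$-transform. The main inputs are the mating-of-trees / quantum zipper framework of \cite{DMS21, S16}, which identifies boundary length processes of SLE exploring an LQG surface as Lévy processes, together with the target-invariance and renewal properties of $\mathrm{SLE}_\kappa^\beta(\kappa-6)$ recalled in Subsection \ref{sous-section rappels CLE}.

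First, I would consider a chordal $\mathrm{SLE}_\kappa^\beta(\kappa-6)$ from $0$ to $\infty$ on an independent $\sqrt{\kappa}$-quantum wedge parametrized by $\mathbb{H}$. The trunk is an $\mathrm{SLE}_{\kappa'}$-type curve with $\kappa'=16/\kappa$, so its quantum length gives a natural time parametrization. By the mating-of-trees picture, the pair of left/right boundary lengths of the unexplored region should then evolve as a pair of independent $(a-1)$-stable Lévy processes. The Lévy measures would split into positive and negative parts with the following interpretation: positive jumps arise from loops drawn during the excursions of the skew Bessel driving process, and the skew mechanism attributes each loop to the left with probability $(1-\beta)/2$ and to the right with probability $(1+\beta)/2$, yielding the $\cos(a\pi)\,dx/x^a$ part on the positive axis of $\Lambda_L^{\beta,a}$ and $\Lambda_R^{\beta,a}$; negative jumps arise when the trunk swallows a bubble on one of its two sides, and since the trunk is an $\mathrm{SLE}_{\kappa'}$-type curve whose dynamics is (conditionally) independent of the side on which the Bessel excursions occur, the negative parts of the two Lévy measures coincide and equal $\frac{1}{2}\,dx/|x|^a$ on $(-\infty,0)$. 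The stable exponent $\alpha=a-1=4/\kappa$ itself arises from the KPZ-type scaling of the quantum boundary length under a chordal $\mathrm{SLE}_{\kappa'}$ exploration.

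Next, combining the conformal Markov property of $\mathrm{SLE}_\kappa^\beta(\kappa-6)$ with the quantum zipper, each time the exploration closes a loop or cuts out a component, the resulting piece is, conditionally on the boundary length process, an independent $\sqrt{\kappa}$-quantum disk whose quantum boundary length equals the corresponding jump size. Moreover, the unexplored region at time $t$ is an independent quantum wedge with updated boundary lengths. Iterating over all jumps up to time $t$, together with the independence of the increments of the Lévy process, gives the full joint independence statement of the proposition, but on the wedge instead of the disk.

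Finally, to transfer from the wedge to the disk of boundary length $x+y$, one conditions the boundary length process to survive in the positive quadrant until the sum first reaches zero: this corresponds, via the LQG weighting of the wedge by a power of the total boundary length, to looking at a quantum disk of the prescribed boundary length. Using fluctuation theory for $(a-1)$-stable processes, the function $(x,y)\mapsto (x+y)^a$ plays, up to a multiplicative constant, the role of the positive $h$-harmonic function for the exit problem from the positive quadrant, which yields precisely the Doob $h$-transform formula in the statement; the linear time-change $t\mapsto ct$ absorbs the unfixed normalization of the quantum length. I expect the main obstacle to be this last step, since the bivariate stable process has positive Lévy measures that are asymmetric between the two coordinates (unless $\beta=0$), so the relevant fluctuation identity is not classical and must be checked by hand, for instance by exploiting the fact that exits from the quadrant can only be caused by the symmetric negative parts while positive jumps only increase one of the two coordinates.
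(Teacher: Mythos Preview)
This proposition is not proved in the paper at all: it is quoted verbatim as Proposition 5.1 of \cite{MSW22} and used as an input for the limiting arguments in Section \ref{section kappa vers 4}. There is therefore no ``paper's own proof'' to compare your proposal against.

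That said, your sketch is broadly in line with how \cite{MSW22} actually establishes the result: one first works on an infinite-volume $\sqrt{\kappa}$-quantum wedge, where the mating-of-trees machinery of \cite{DMS21} together with the description of the trunk as an $\mathrm{SLE}_{\kappa'}$-type curve (Theorem 7.4 of \cite{MSW17}) yields that the left/right boundary lengths are independent stable L\'evy processes and that the cut-out surfaces are independent $\sqrt{\kappa}$-quantum disks; one then passes to the finite-volume disk via a Radon--Nikodym argument. Your identification of $(x+y)^a$ as the relevant harmonic weight is exactly what is shown there. One point you understate: the harmonicity of $(x+y)^a$ in \cite{MSW22} is not derived purely from fluctuation theory but rather from the LQG relation between the disk and the wedge (the disk law is obtained by weighting the wedge by a suitable power of the boundary length), so the Doob transform drops out of the Radon--Nikodym derivative rather than from an independent stable-process identity. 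If you were writing this up as a genuine proof, you would need to make that step precise rather than appealing to a quadrant-exit computation.
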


When we parametrize the trunk by its quantum length, the parametrization is chosen up to a multiplicative constant, so that for our purpose we choose to take $c=1$ in the above proposition. This proposition will be a key ingredient to obtain analogous statements for $\kappa =4$.

\subsection{Joint convergence of the $\mathrm{CLE}_\kappa$ and the $\sqrt{\kappa}$-quantum disks as $\kappa \uparrow 4$}\label{sous-section kappa vers 4}
In this subsection, we combine the convergences (\ref{cvCLE}) and (\ref{cv disque quantique}) to extend Proposition \ref{Prop 5.1 MSW} to the case $\kappa=4$ for the chordal $\mathrm{SLE}_4^{\langle \mu \rangle} (-2)$. Then the statement of Theorem \ref{prop de Markov disques quantiques} for the $\mathrm{SLE}^{\langle \mu \rangle}_4(-2)$ exploration will be a direct corollary.

Let $(\mathbb{D},h^{2})$ be a unit boundary length $2$-quantum disk. Let $L_0^4$ (resp. $R_0^4$) be the quantum boundary length along the clockwise (resp. counterclockwise) segment from $-i$ to $i$ in $\partial \mathbb{D}$. Let $\mu \in \R$. Let $\gamma$ be an independent chordal {$\mathrm{SLE}_4^{\langle \mu \rangle}(-2)$} on $\mathbb{D}$ from $-i$ to $i$. We again parametrize the trunk by its quantum length and we denote by $T^{4,\mu}$ its total quantum length. For each $t\ge0$ (corresponding here to the quantum length of the trunk), let $E_t$ be the event that the trunk has not yet attained $i$ before having quantum length $t$, i.e. $E_t=\{T^{4,\mu}>t\}$. For all $t<T^{4,\mu}$, we set $L_t^{4,\mu}$ and $R_t^{4,\mu}$ to be the left and right quantum boundary lengths of the remaining to be explored region $\mathcal{D}_t^4$ ($\mathcal{D}^4_t$ is the connected component of $\mathbb{D} \setminus \gamma([0,\tau_t])$ containing $i$ in its closure, where $\tau_t$ is the time for $\gamma$ at which the quantum length of the trunk reaches $t$). Note that for all $t<T^{4,\mu}$, we have $L_t^{4,\mu}>0$ and $R_t^{4,\mu}>0$.  For all $t\ge T^{4,\mu}$, we set $L_t^{4,\mu}=R_t^{4,\mu}=0$. If we write $S^{4,\mu}_t= L^{4,\mu}_t+R^{4,\mu}_t$ for all $t\ge0$, then $S^{4,\mu}_t$ is the quantum boundary length of $\mathcal{D}^4_t$ when $t<T^{4,\mu}$. Let $\widetilde{L}^2$ and $\widetilde{R}^2$ be two independent symmetric Cauchy processes of Lévy measure $\frac{1}{2} (dx/x^2){\bf 1}_{x\neq 0} $. Let $\widetilde{S}^2 = \widetilde{L}^2+\widetilde{R}^2$.

\begin{proposition}\label{absolue continuité SLE4}
The process $(S^{4,\mu}_t)_{t\ge 0}$ {has a càdlàg version which} satisfies the following absolute continuity relation: there exists a constant $c>0$ such that for all $t>0$, for all bounded measurable function $F: \mathbb{D}([0,t], \R)^2 \to \R$, for all $x,y>0$,
\begin{equation}\label{transfo de Doob}
\E\left( \left. F\left((S^{4,\mu}_s)_{0\le s\le t}\right) {\bf 1}_{E_t} \right| (L^{4,{\mu}}_0,R^{4,{\mu}}_0)=(x,y) \right) = 
\E_{x,y} \left(\left(\frac{x+y}{\widetilde{S}^2_{ct}}\right)^2 F\left((\widetilde{S}^2_{cs})_{0\le s\le t}\right) {\bf 1}_{\forall s \in [0,t], \ \widetilde{L}^2_{cs},\widetilde{R}^2_{cs}>0}\right),
\end{equation}
where {under $\P_{x,y}$} the Cauchy process $\widetilde{L}^2$ (resp. $\widetilde{R}^2$) starts at $x$ (resp. $y$). 
Moreover, conditionally on $(S^{4,\mu}_s)_{0\le s\le t}$, the quantum surfaces which are cut out or inside discovered loops before time $t$ are independent quantum disks of quantum boundary length corresponding to {the absolute values of sizes of the negative and positive jumps} of $(S^{4,\mu}_s)_{0\le s \le t}$, while $(\mathcal{D}^4_t,h^2_{|\mathcal{D}^4_t})$ is an independent quantum disk of quantum boundary length $S^{4,\mu}_t$.
\end{proposition}

Here we also choose the multiplicative constant for the quantum length of the trunk so that $c=1$ to be consistent with our choice of $c$ for Proposition \ref{Prop 5.1 MSW}. Note that the quantum length of the trunk of the chordal $\mathrm{SLE}_4^{\langle \mu \rangle}(-2)$ can also be defined using the quantum boundary length measure since it is an $\mathrm{SLE}_4$-type curve, so that there should be a choice for $c$ such that the quantum length of the trunk and the quantum boundary lengths of the cut out and encircled domains are defined using the same multiplicative constant. However, our approach does not enable us to identify this constant.

\begin{remark}
	The above proposition is not exactly the proper counterpart of Proposition \ref{Prop 5.1 MSW} insofar as we do not describe the left and right quantum boundary lengths of the unexplored region, but only the whole quantum boundary length. To get the evolution of the left and right quantum boundary lengths, one has to identify the quantum surfaces which are cut out or encircled by a loop on the right side and on the left side of the trunk. For the cut out domains one can see from Proposition \ref{prop cv CLE} that the domains which are cut out on the right (resp. left) side of the trunk are limits of domains which are cut out on the right (resp. left) side of the trunk. For the loops, one has to look at the direction the trunk takes after hitting the loops, using the proof of Proposition \ref{prop cv CLE} once more. We do not go further in that direction since it is not the main purpose of this work. {Note that in the case $\kappa=4$, the quantum boundary lengths of the loops discovered on the right-hand side and on the left-hand side during the SLE$^{\langle \mu \rangle}_4(-2)$ exploration should have the same intensity since the corresponding Bessel process here is symmetric ($\beta=0$). The term $\mu$ only adds an ``infinitesimal rotation at each time the process is not drawing a loop''. This explains why the processes $\widetilde{L}^2$ and $\widetilde{R}^2$ have the same law contrary to the case $\beta \neq 0$ and their law does not depend on $\mu$.}
\end{remark}
Using Proposition \ref{absolue continuité SLE4}, one can deduce Theorem \ref{prop de Markov disques quantiques} for the $\mathrm{SLE}_4^{\langle \mu \rangle}(-2)$ exploration:
\begin{proof}[Proof of Theorem \ref{prop de Markov disques quantiques} for $(Y^{\mu}_t)_{t\ge 0}$ using Proposition \ref{absolue continuité SLE4}]
Using the $\mathrm{SLE}_4^{\langle \mu \rangle}(-2)$ exploration tree, one can define the branch which follows the locally largest component in terms of quantum boundary length. It can be defined through an approximation as in Subsection 5.3 of \cite{MSW22}: let $\vp>0$, for each time of the form $\vp n$ for $n \in \N$, one updates the target point for the chordal exploration and chooses the point such that the quantum boundary lengths on both sides of the unexplored region are equal. When $\vp \to 0$, this exploration approximates the branch of the locally largest component. Denote by $(Y^{\mu}_t)_{t\ge 0}$ the quantum boundary length of the locally largest component 
parametrized by the quantum length of the trunk (which is given by a concatenation of parts of trunks of chordal $\mathrm{SLE}_4^{\langle \mu \rangle}(-2)$'s). Then, using exactly the same reasoning as in Subsection 5.3 of \cite{MSW22}, we deduce from Proposition \ref{absolue continuité SLE4} the analogous result for the branch of the locally largest component, which is no more than the statement of Theorem \ref{prop de Markov disques quantiques} for $(Y^{\mu}_t)_{t\ge 0}$. {Let us still give the main idea for the identification of the law of $(Y^\mu_t)_{t\ge 0}$ for completeness.

By Proposition \ref{absolue continuité SLE4}, while $L^{4,\mu}_t,R^{4,\mu}_t>0$, as a Doob $h$-transform of a Cauchy process, the process $S^{4,\mu}$ makes a jump of size $\ell \in \R\setminus \{0\}$ at rate
$$
\frac{c}{\ell^2} \left( \frac{S_t^{4,\mu}}{S^{4,\mu}_0} \right)^2 \left( \frac{S_t^{4,\mu} + \ell}{S^{4,\mu}_0} \right)^{-2} = \frac{c}{\ell^2} \left( \frac{S^{4,\mu}_t}{S^{4,\mu}_t + \ell}\right)^2.
$$
As a result, the process $(Y^\mu_t)_{t\ge 0}$ giving the quantum boundary length of the locally largest component makes a positive (resp.\@ negative) jump of size $\ell>0$ (resp.\@ $-\ell<0$) at rates
$$
 \frac{c}{\ell^2} \left( \frac{Y^{\mu}_t}{Y^{\mu}_t + \ell}\right)^2\qquad \text{and respectively} \qquad  \frac{c}{\ell^2} \left( \frac{Y^{\mu}_t}{Y^{\mu}_t - \ell}\right)^2 {\bf 1}_{\ell<Y^\mu_t/2},
$$
hence the desired result thanks to the description of $(X^{(-1)}(t))_{t\ge 0}$ given in Remark \ref{remarque taux de sauts}.
}
\end{proof}
All this subsection and the next one are then devoted to the proof of Proposition \ref{absolue continuité SLE4}. Let $(\mathbb{D},h^{\sqrt{\kappa}})$ be a parametrization of a unit-boundary $\sqrt{\kappa}$-quantum disk. We run an independent chordal $\mathrm{SLE}^{\mu((4/\kappa)-1)}_{\kappa}(\kappa-6)$ in $\mathbb{D}$ from $-i$ to $i$. We will prove the above result by letting $\kappa \uparrow 4$. 

{
\paragraph{Short outline of proof of Proposition \ref{absolue continuité SLE4}.} Let us summarize the proof before entering the details. We first prove the convergence of the processes giving the quantum boundary length of the unexplored region. Next, we build on the convergence \eqref{cv disque quantique} of the quantum disk and on the Carathéodory convergence of the CLE$_\kappa$ exploration to obtain the convergence of the restrictions of the field $h^{\sqrt{\kappa}}$ and of the quantum area measure to the cut out, encircled or unexplored domain containing a fixed point $z \in \mathbb{D}$. After proving this, the difficulty is to see that these domains appear in the ``same order'' in the exploration. To see this, we prove the convergence of the number of cut out or encircled domains of quantum boundary length at least $\vp>0$ for all $\vp >0$. Finally, we prove the convergence of the quantum length of the trunk. We deduce Proposition \ref{absolue continuité SLE4} from Proposition \ref{Prop 5.1 MSW} by letting $\kappa\uparrow 4$ and using all the above-mentioned convergences.
}

\paragraph{}The first step towards Proposition \ref{absolue continuité SLE4} is the convergence in law of the processes involved in Proposition \ref{Prop 5.1 MSW}. 
Let $(\overline{L}^4_0,\overline{R}^4_0)=(L^4_0,R^4_0)$ (see the beginning of Subsection \ref{sous-section kappa vers 4}). Next, conditionally on $(\overline{L}^4_0,\overline{R}^4_0)=(x,y)$, define a process $(\overline{L}_t^4,\overline{R}_t^4)_{t\ge 0}$ via the same Doob $h$-transform as in Proposition \ref{Prop 5.1 MSW} for $\kappa=4$, $a=1+4/\kappa=2$ and $\beta =0$, replacing $E_t$ by $\{\overline{L}_t^4,\overline{R}_t^4>0\}$ and setting $\overline{L}_t^4=\overline{R}_t^4=0$ on the complement. {Note that we choose to write $\overline{L}^4, \overline{R}^4$ the processes appearing in the following lemma instead of $L^{4,\mu},R^{4,\mu}$ since we do not know yet whether they indeed describe the quantum boundary length of the unexplored region as in Proposition \ref{absolue continuité SLE4}.

The following lemma establishes the convergence in law of the processes giving the left and right boundary lengths in  Proposition \ref{Prop 5.1 MSW}. Its proof relies on their description as a Doob $h$-transform of Lévy processes and on the convergence of these Lévy processes.}

\begin{lemma}\label{lemme cv Skorokhod}
	We have the convergence
	\begin{equation}\label{cv Skorokhod}
		(L_t^{\kappa,\mu((4/\kappa)-1)} , R_t^{\kappa,\mu((4/\kappa)-1)} )_{t\ge0}\mathop{\longrightarrow}\limits_{\kappa \uparrow 4}^{(\mathrm{d})}(\overline{L}_t^4 ,\overline{R}_t^4)_{t\ge 0}
	\end{equation}
	in the sense of Skorokhod. 
\end{lemma}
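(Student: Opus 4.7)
My plan is to leverage Proposition~\ref{Prop 5.1 MSW}, which represents the law of $(L^{\kappa,\mu((4/\kappa)-1)}, R^{\kappa,\mu((4/\kappa)-1)})$ as a Doob $h$-transform of a two-dimensional $(a-1)$-stable L\'evy process with $a=1+4/\kappa$ and $\beta=\mu((4/\kappa)-1)$, and then to pass to the limit $\kappa\uparrow 4$, i.e.\ $(a,\beta)\to(2,0)$. The limit process $(\overline{L}^4,\overline{R}^4)$ is by construction defined through exactly the same $h$-transform formula with the Cauchy processes $(\widetilde{L}^2,\widetilde{R}^2)$, so the lemma reduces to a stability statement for this $h$-transform. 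As a preliminary step, I would establish the convergence of the starting points $(L^\kappa_0,R^\kappa_0)\to(L^4_0,R^4_0)$: this follows from \eqref{cv disque quantique} applied to the two arcs of $\partial\mathbb{D}\setminus\{-i,i\}$, together with the a.s.\ fact that $\pm i$ are not atoms of $\nu^{\sqrt{\kappa}}_{h^{\sqrt{\kappa}}}$. By Skorokhod representation I would couple the quantum disks so that this convergence is almost sure.

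The core of the argument is then to pass the Doob $h$-transform identity to the limit. Since $\cos(a\pi)\to 1$ as $a\to 2$ and $\beta\to 0$, the L\'evy measures $\Lambda^{\beta,a}_{L},\Lambda^{\beta,a}_{R}$ converge vaguely on $\R\setminus\{0\}$ to $\tfrac{1}{2}\, dx/x^2\,\mathbf{1}_{x\ne 0}$, and the truncated second moments also converge. The classical L\'evy continuity theorem (Jacod--Shiryaev, Ch.~VII) then yields Skorokhod convergence $(\widetilde{L}^{\beta,a},\widetilde{R}^{\beta,a})\to(\widetilde{L}^2,\widetilde{R}^2)$ from any fixed starting point. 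Applying Proposition~\ref{Prop 5.1 MSW} with $c=1$, for any bounded continuous $F:\mathbb{D}([0,t],\R^2)\to\R$ and fixed $(x,y)$,
$$\mathbb{E}\!\left[F((L^{\kappa,\beta}_s,R^{\kappa,\beta}_s)_{0\le s\le t})\mathbf{1}_{E_t}\,\big|\,(L^\kappa_0,R^\kappa_0)=(x,y)\right] = \mathbb{E}_{x,y}\!\left[\left(\frac{x+y}{\widetilde{S}^a_t}\right)^{\!a}F((\widetilde{L}^{\beta,a}_s,\widetilde{R}^{\beta,a}_s)_{0\le s\le t})\mathbf{1}_{A^{\beta,a}_t}\right],$$
where $A^{\beta,a}_t$ is the event that both coordinates stay strictly positive on $[0,t]$. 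By Skorokhod representation for the L\'evy processes, the density $((x+y)/\widetilde{S}^a_t)^a$ is bounded by $((x+y)/\vp)^a$ on $\{\widetilde{S}^a_t\ge\vp\}$, and the Cauchy limit is a.s.\ strictly positive at the deterministic time $t$; a dominated-convergence argument then passes the right-hand side to its $\kappa=4$ counterpart, which is by definition the characterization of $(\overline{L}^4,\overline{R}^4)$.

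The two main technical points, which I expect to be the real obstacle, are (i) justifying that the indicator $\mathbf{1}_{A^{\beta,a}_t}$ is a.s.\ continuous at a limiting Cauchy trajectory (i.e.\ that the coordinates a.s.\ do not ``graze'' zero, touching the axis only at times when they strictly cross it), and (ii) upgrading the resulting convergence of laws, which delivers finite-dimensional convergence, to full Skorokhod convergence on $\mathbb{D}([0,\infty),\R^2)$. Point (ii) requires tightness of $(L^{\kappa,\beta},R^{\kappa,\beta})_{\kappa\uparrow 4}$, which near the killing time $T^{\kappa,\beta}$ is delicate because the $h$-transform density blows up as the process approaches zero; I would derive it from uniform oscillation controls on the underlying stable L\'evy processes (whose indices vary in a compact subinterval of $(1,3/2]$) together with moment bounds on the density, both uniform in $\kappa$ close to $4$. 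Combining tightness with finite-dimensional convergence at continuity points of the limit would then give the Skorokhod convergence stated in the lemma.
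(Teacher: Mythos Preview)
Your approach is essentially the same as the paper's: convergence of the initial data via \eqref{cv disque quantique}, convergence of the underlying stable L\'evy processes to the Cauchy pair $(\widetilde{L}^2,\widetilde{R}^2)$ via convergence of their L\'evy measures, and then passage to the limit in the Doob $h$-transform formula of Proposition~\ref{Prop 5.1 MSW}.

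The difference is in how the boundary difficulties you flag as (i) and (ii) are handled. The paper does not attempt to prove continuity of the positivity indicator or to establish tightness separately. Instead it restricts, for each fixed $t,\vp>0$, to bounded continuous test functions $F$ on $\mathbb{D}([0,t],\R)^2$ that vanish on the set $\{(f,g):\exists s\in[0,t],\ f(s)<\vp\text{ or }g(s)<\vp\}$. On the support of such an $F$ the indicator $\mathbf{1}_{\forall s,\ \widetilde{L}_s,\widetilde{R}_s>0}$ is identically $1$ and the density $((x+y)/\widetilde{S}^a_t)^a$ is bounded by $((x+y)/(2\vp))^a$, so dominated convergence applies directly to the right-hand side of the $h$-transform identity. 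Letting $\vp\downarrow 0$ then recovers the full statement, since on $E_t$ the c\`adl\`ag paths of the limit process are a.s.\ bounded away from the axes on $[0,t]$. This single trick disposes of both your points (i) and (ii) at once; in particular no separate tightness argument near the killing time is needed, because weak convergence is obtained directly through this determining class of test functions rather than via a tightness-plus-finite-dimensional-marginals scheme.
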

\begin{proof}
	The convergence of $(L_0^{\kappa,\mu((4/\kappa)-1}, R^{\kappa,\mu((4/\kappa)-1}_0)$ stems from the convergence of the quantum boundary length measure in (\ref{cv disque quantique}). 
	Besides, one can easily see that for all continuous bounded function $\varphi: \R \rightarrow \R$ vanishing on a neighbourhood of zero, we have
	$$
	\int_\R \varphi (x)\Lambda_L^{\mu((4/\kappa)-1), (4/\kappa)+1}(dx) \mathop{\longrightarrow}\limits_{\kappa \uparrow 4}
	\int_\R \varphi(x) \frac{dx}{2x^2}
	$$
	and
	$$
	\int_\R \varphi (x)\Lambda_R^{\mu((4/\kappa)-1), (4/\kappa)+1}(dx) \mathop{\longrightarrow}\limits_{\kappa \uparrow 4}
	\int_\R \varphi(x) \frac{dx}{2x^2}.
	$$
	It is classical (see e.g. \cite{JS87} VII.3.6) that this implies the convergence of Lévy processes
	$$
	\left(\widetilde{L}^{\mu((4/\kappa)-1), (4/\kappa)+1}_t\right)_{t \ge 0} \mathop{\longrightarrow}\limits_{\kappa \uparrow 4}^{(\mathrm{d})}
	\left(\widetilde{L}^{ 2}_t\right)_{t \ge 0}
	\enskip
	\text{and}
	\enskip
	\left(\widetilde{R}^{\mu((4/\kappa)-1), (4/\kappa)+1}_t\right)_{t \ge 0} \mathop{\longrightarrow}\limits_{\kappa \uparrow 4}^{(\mathrm{d})}
	\left(\widetilde{R}^{ 2}_t\right)_{t \ge 0},
	$$
	for the $J_1$ Skorokhod topology. We then reason by absolute continuity. Let $t,\vp >0$. By Proposition \ref{Prop 5.1 MSW}, if $F$ is a bounded continuous function on $\mathbb{D}([0,t],\R)^2$ vanishing on $\{ (f,g); \ \exists s \in [0,t], f(s)< \vp \text{ or } g(s) < \vp \}$, we have
	
	\begin{align*}
		\E&\left( \left. F\left((L^{\kappa,\beta}_s,R^{\kappa,\beta}_s)_{0\le s\le t}\right) {\bf 1}_{E_t}\right| (L^{\kappa,\beta}_0,R^{\kappa,\beta}_0)=(x,y) \right)\\
		&= 
		\E_{x,y} \left(\left(\frac{x+y}{\widetilde{S}^{ a}_{ct}}\right)^{a} F\left((\widetilde{L}_{cs}^{\beta, a},\widetilde{R}^{\beta, a}_{cs})_{0\le s\le t}\right) {\bf 1}_{\forall s \in [0,t], \ \widetilde{L}^{\beta, a}_{cs},\widetilde{R}^{\beta, a}_{cs}>0}\right),
	\end{align*}
	where we have set $\beta=\mu((4/\kappa)-1)$ and $a=(4/\kappa)+1$. One concludes by dominated convergence.
\end{proof}

Another ingredient is the fact that the convergences (\ref{cvCLE}) and (\ref{cv disque quantique}) imply the convergence of the underlying fields and quantum area measure of the cut out, encircled and unexplored surfaces:

\begin{lemma}\label{lemme cvDQ}
	If for all $z \in \mathbb{D}\cap \Q^2$, the function $f^{z,\kappa}_u$ is the unique conformal map from $\mathbb{D}$ to $D^{z,\kappa}_u$ which sends $0$ to $z$ such that $(f^{z,\kappa}_u)'(0)>0$, then jointly with (\ref{cvCLE}) and (\ref{cv disque quantique}),
	\begin{equation}\label{cvDQ}
		\!
		\!
		\!
		\begin{pmatrix}
			f^{z,\kappa}_u \\
			h^{\sqrt{\kappa}} \circ f^{z,\kappa}_u +Q_{\sqrt{\kappa}} \log |(f^{z,\kappa}_u)'|\\
			\mu^{\sqrt{\kappa}}_{h^{\sqrt{\kappa}}} \circ f^{z,\kappa}_u
		\end{pmatrix}_{u\ge 0, z \in \mathbb{Q}^2 \cap \mathbb{D}}
		\!
		\!
		\!
		\!
		\!
		\mathop{\longrightarrow}\limits_{\kappa \uparrow 4}^{(\mathrm{d})} \qquad
		\begin{pmatrix}
			f^{z,4}_u \\
			h^{2} \circ f^{z,4}_u +Q_{2} \log |(f^{z,4}_u)'|\\
			\mu^{2}_{h^{2}} \circ f^{z,4}_u
		\end{pmatrix}_{u\ge 0, z \in \mathbb{Q}^2 \cap \mathbb{D}},
	\end{equation}
	in terms of finite dimensional distributions, where the first coordinate is endowed with the topology of the uniform convergence on compact sets, the second one with the topology of the convergence of distributions and the third one with the weak convergence of measures on  $\mathbb{D}$.
\end{lemma}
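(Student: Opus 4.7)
The plan is to apply Skorokhod's representation theorem to the joint convergence of the triple $(h^{\sqrt{\kappa}}, \mu^{\sqrt{\kappa}}_{h^{\sqrt{\kappa}}}, \nu^{\sqrt{\kappa}}_{h^{\sqrt{\kappa}}})$ together with a fixed finite family $(D^{z_i, \kappa}_{u_i})_{i=1,\ldots,N}$ coming from (\ref{cvCLE})---all underlying spaces being Polish---so that on a common probability space these convergences hold almost surely. The problem then reduces to a deterministic statement for a fixed pair $(u,z)$, whose extension to finite collections is automatic.

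On this probability space, Carath\'eodory convergence $D^{z,\kappa}_u \to D^{z,4}_u$ is, by definition, uniform convergence $f^{z,\kappa}_u \to f^{z,4}_u$ on every compact subset of $\mathbb{D}$. Cauchy's integral formula upgrades this to uniform convergence of all derivatives on compacts, so in particular $Q_{\sqrt{\kappa}}\log |(f^{z,\kappa}_u)'|$ converges uniformly on compacts of $\mathbb{D}$ to $Q_2 \log|(f^{z,4}_u)'|$ (using also $Q_{\sqrt{\kappa}}\to Q_2$). As recalled in the paper via Proposition 2.1 of \cite{BRY19}, the inverses $(f^{z,\kappa}_u)^{-1}$ converge uniformly to $(f^{z,4}_u)^{-1}$ on every compact subset of $D^{z,4}_u$.

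For the pulled-back field, fix $\varphi \in C^\infty_c(\mathbb{D})$ with support $K \subset \mathbb{D}$. A conformal change of variables yields
\begin{equation*}
\langle h^{\sqrt{\kappa}}\circ f^{z,\kappa}_u, \varphi\rangle = \langle h^{\sqrt{\kappa}}, \widetilde{\varphi}_\kappa \rangle, \qquad \widetilde{\varphi}_\kappa(w) \coloneqq \varphi\bigl((f^{z,\kappa}_u)^{-1}(w)\bigr)\bigl|((f^{z,\kappa}_u)^{-1})'(w)\bigr|^2 \mathbf{1}_{w\in D^{z,\kappa}_u}.
\end{equation*}
The support of $\widetilde{\varphi}_\kappa$ is $f^{z,\kappa}_u(K)$, a compact subset of $D^{z,\kappa}_u$. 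By uniform convergence of $f^{z,\kappa}_u$ and its derivatives, these supports are contained in a common compact $K' \subset \mathbb{D}$ for $\kappa$ sufficiently close to $4$, and $\widetilde{\varphi}_\kappa \to \widetilde{\varphi}_4$ in $C^\infty_c(K')$, hence in $H^1_0(K')$. Pairing with the convergence $h^{\sqrt{\kappa}}\to h^2$ in $H^{-1}_{\mathrm{loc}}(\mathbb{D})$ yields $\langle h^{\sqrt{\kappa}}, \widetilde{\varphi}_\kappa\rangle \to \langle h^2, \widetilde{\varphi}_4\rangle$, and adding the uniform convergence of the affine term gives the convergence of the second component.

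For the pulled-back area measure, given $\psi \in C_c(\mathbb{D})$, the identity
\begin{equation*}
\int_\mathbb{D} \psi \, d\bigl(\mu^{\sqrt{\kappa}}_{h^{\sqrt{\kappa}}}\circ f^{z,\kappa}_u\bigr) = \int_{D^{z,\kappa}_u} \psi\bigl((f^{z,\kappa}_u)^{-1}(w)\bigr)\, d\mu^{\sqrt{\kappa}}_{h^{\sqrt{\kappa}}}(w),
\end{equation*}
together with the uniform convergence $\psi\circ(f^{z,\kappa}_u)^{-1}\mathbf{1}_{D^{z,\kappa}_u} \to \psi\circ(f^{z,4}_u)^{-1}\mathbf{1}_{D^{z,4}_u}$ on every compact of $D^{z,4}_u$, plus the weak convergence $\mu^{\sqrt{\kappa}}_{h^{\sqrt{\kappa}}}\to \mu^2_{h^2}$, yields the desired convergence, provided that $\mu^2_{h^2}(\partial D^{z,4}_u) = 0$, which holds because $\partial D^{z,4}_u$ is a union of $\mathrm{SLE}_4$-type curves (hence $\mu^2_{h^2}$-null a.s.). The main technical obstacle I expect is controlling the interplay between the moving supports $f^{z,\kappa}_u(K)$ and the weak/distributional topologies---verifying both the uniform compact containment in $\mathbb{D}$ and the regularity needed to pair against $H^{-1}_{\mathrm{loc}}$---but this is handled by the fact that $K$ is a fixed compact strictly inside $\mathbb{D}$ and $D^{z,4}_u$ is open in $\mathbb{D}$.
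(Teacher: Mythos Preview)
Your proof is correct and follows essentially the same approach as the paper's: reduce to almost-sure convergence via Skorokhod, read off uniform convergence of $f^{z,\kappa}_u$ (and of its inverse and their derivatives, via Cauchy) from Carath\'eodory convergence, handle the field by the change-of-variables formula and the $H^{-1}_{\mathrm{loc}}/H^1$ pairing, and handle the area measure by combining weak convergence of $\mu^{\sqrt{\kappa}}_{h^{\sqrt{\kappa}}}$ with uniform convergence of the compactly supported integrands. The only cosmetic difference is that the paper writes out the $H^{-1}(U)/H^1(U)$ estimate as a triangle inequality while you phrase it as convergence of $\widetilde\varphi_\kappa$ in $H^1_0(K')$, and your remark that $\mu^2_{h^2}(\partial D^{z,4}_u)=0$ is an extra precaution not actually needed once you observe that $f^{z,\kappa}_u(K)$ stays in a fixed compact of $D^{z,4}_u$.
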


\begin{proof}
	By Skorokhod's representation theorem we can assume that the convergences (\ref{cvCLE}) and (\ref{cv disque quantique}) hold a.s.. Let $z \in \mathbb{D} \cap \mathbb{Q}^2$ and $ u\ge 0$. Let $f_\kappa=f^{z,\kappa}_u$. Then, from the Carathéodory convergence of $D^{z,\kappa}_u$ as $\kappa \uparrow 4$, we know that $f_\kappa$ converges uniformly on compact sets towards $f_4$. 
	We also know that for every compact subset $K \subset D^{z,4}_u$, 
	the conformal map $f^{-1}_\kappa$ converges uniformly on $K$ towards $f_4^{-1}$ as $\kappa \uparrow 4$. From Cauchy's formula, we know that the derivatives of $f^{-1}_\kappa$ also converge uniformly on $K$ to the derivatives of $f_4^{-1}$. Let us first show that $h^{\sqrt{\kappa}} \circ f_\kappa$ converges to $h^2 \circ f_4$ for the weak-$*$ topology. Let $\varphi : \mathbb{D} \rightarrow \R$ be a $\mathcal{C}^\infty$ function with compact support. Since $f_\kappa$ converges uniformly on compact sets to $f_4$ as $\kappa \uparrow 4$, there exists some relatively compact open subset $U\subset \mathbb{D}$ such that for $\kappa$ close enough to $4$, one has
	$
	\supp (\varphi \circ f_\kappa^{-1}) = f_\kappa(\supp \varphi) \subset U.
	$ We thus have
	\begin{align*}
		|\langle h^{\sqrt{\kappa}} \circ f_\kappa -h^2 \circ f_4, \varphi \rangle| = 
		&|\langle h^{\sqrt{\kappa}}, (\varphi \circ f_\kappa^{-1}) \cdot |(f_\kappa^{-1})'|^2 \rangle
		-
		\langle h^{2}, (\varphi \circ f_4^{-1}) \cdot |(f_4^{-1})'|^2 \rangle|
		\\
		=&|\langle h^{\sqrt{\kappa}}, (\varphi \circ f_\kappa^{-1}) \cdot |(f_\kappa^{-1})'|^2 -(\varphi \circ f_4^{-1}) \cdot |(f_4^{-1})'|^2\rangle
		\\
		&+
		\langle h^{\sqrt{\kappa}}-h^{2}, (\varphi \circ f_4^{-1}) \cdot |(f_4^{-1})'|^2 \rangle|
		\\
		\le &\|h^{\sqrt{\kappa}} \|_{H^{-1}(U)} \| (\varphi \circ f_\kappa^{-1}) \cdot |(f_\kappa^{-1})'|^2 -(\varphi \circ f_4^{-1}) \cdot |(f_4^{-1})'|^2\|_{H^1(U)} \\
		&+\|h^{\sqrt{\kappa}}-h^{2}\|_{H^{-1}(U)} \|(\varphi \circ f_4^{-1}) \cdot |(f_4^{-1})'|^2\|_{H^1(U)}.
	\end{align*}
	But we know that $\|h^{\sqrt{\kappa}}-h^{2}\|_{H^{-1}(U)} \rightarrow 0$ as $\kappa \uparrow 4$. Thus, to prove that $\langle h^{\sqrt{\kappa}} \circ f_\kappa -h^2 \circ f_4, \varphi \rangle \to 0$, it remains to check that 
	$
	\| (\varphi \circ f_\kappa^{-1}) \cdot |(f_\kappa^{-1})'|^2 -(\varphi \circ f_4^{-1}) \cdot |(f_4^{-1})'|^2\|_{H^1(\mathbb{D})} \to 0
	$,
	in other words that
	$$
	\|(\varphi \circ f_\kappa^{-1}) \cdot |(f_\kappa^{-1})'|^2 -(\varphi \circ f_4^{-1}) \cdot |(f_4^{-1})'|^2\|_{\mathrm{L}^2(\mathbb{D})}
	+
	\left\|\nabla\left((\varphi \circ f_\kappa^{-1}) \cdot |(f_\kappa^{-1})'|^2 -(\varphi \circ f_4^{-1}) \cdot |(f_4^{-1})'|^2 \right)\right\|_{\mathrm{L}^2(\mathbb{D})} 
	$$
	tends to zero as $\kappa \uparrow 4$. {The above expression indeed goes to zero as $\kappa \uparrow 4$} since $\varphi$ is $\mathcal{C}^\infty$ with compact support and $f_\kappa^{-1}$ and its derivatives converge uniformly on compact sets to $ f_4^{-1}$ and its derivatives.
	Thus $h^{\sqrt{\kappa}} \circ f_\kappa$ converges to $h^2 \circ f_4$ for the weak-$*$ topology, so $h^{\sqrt{\kappa}} \circ f_\kappa+ Q_{\sqrt{\kappa}} \log |f_\kappa'|$ converges to $h^2 \circ f_4 + Q_{2} \log |f_4'|$ for the weak-$*$ topology. The third convergence of (\ref{cvDQ}) comes easily from the uniform convergence on compact sets of $f_\kappa^{-1}$ and from the convergence of $\mu^{\sqrt{\kappa}}_{h^{\sqrt{\kappa}}}$. 
\end{proof}

\subsection{Proof of Proposition \ref{absolue continuité SLE4}}
Now, let us prove Proposition \ref{absolue continuité SLE4} using the above lemmas. 

We first introduce some notation. For all $i\ge 1$, {for all $\vp>0$}, let $\tau_{\vp,i}^\kappa$ be the time of the $i$-th jump of $(S_t^{\kappa,\mu((4/\kappa)-1)})_{t\ge 0}$ of size larger than $\vp$. Then we know that {for all $\vp>0$}, $\tau_{\vp,i}^\kappa$ and the size of the jump at time $\tau_{\vp,i}^\kappa$ converge in distribution by (\ref{cv Skorokhod}) but we do not know yet that the limit corresponds to a jump of $S^{4,\mu}$.
Henceforth, in the rest of the subsection, we denote by $\mathrm{ql}^\kappa(u)$ the quantum length of the trunk at time $u$ for the original parametrization of the chordal $\mathrm{SLE}^{\mu((4/\kappa)-1)}_\kappa(\kappa-6)$. The function $\mathrm{ql}^\kappa$ is non-decreasing and such that its limit $\lim_{u\to \infty}\mathrm{ql}^\kappa(u)$ corresponds to the total quantum length of the trunk. Since the exploration runs for all $u>0$, we have $\mathrm{ql}^\kappa(u)<\lim_{v\to \infty}\mathrm{ql}^\kappa(v)$ for all $u\ge 0$. From Remark 2.3 of \cite{MSW22}, we know that $\mathrm{ql}^\kappa$ is characterized as the limit in probability:
\begin{equation}\label{eq longueur du tronc}
\mathrm{ql}^\kappa(u)  = C(\kappa)\lim_{\vp \to 0} \vp^{4/\kappa} \# \left\{ s \in [0,u]; \ -\Delta S_{\mathrm{ql}^\kappa(s)}^{\kappa,\mu((4/\kappa)-1)}  \in [\vp,2\vp]\right\}
\end{equation}
where one can compute $C(\kappa)=(4/\kappa)/(1-1/2^{4/\kappa})$ with our choice of the multiplicative constant below Proposition \ref{Prop 5.1 MSW} (the value of $C(\kappa)$ is not important for our purpose, we only want it to converge to a positive number as $\kappa\uparrow 4$).
It is clear from (\ref{eq longueur du tronc}) that $(\mathrm{ql}^\kappa(u))_{u\ge 0}$ is adapted with respect to the natural filtration of $\big(S_{\mathrm{ql}^\kappa(u)}^{\kappa,\mu((4/\kappa)-1)}\big)_{u\ge0}$ and in particular to the filtration associated to the quantum disks discovered during the chordal exploration. For all $t\ge 0$, we denote by $\mathrm{Cut}^\kappa_t$ the domain which is cut out or encircled by a discovered loop at (quantum length) time $t$. By convention, we set $\mathrm{Cut}^\kappa_t=\partial$ where $\partial$ is a cemetery point if no such domain is encircled or cut out at (quantum length) time $t$.

{Similarly, for all $u\ge 0$, we denote by $\mathrm{ql}^4(u)$ the quantum length of the trunk at time $u$ for the original parametrization of the chordal $\mathrm{SLE}_4^{\langle \mu \rangle}(-2)$, which is the limit in probability of $\vp$ times the number of cut out regions until time $t$ of quantum boundary length in $[\vp,2\vp]$, up to a multiplicative constant. For consistency with \eqref{eq longueur du tronc}, we take the constant equal to $2$. The function $\mathrm{ql}^4$ is then characterized as the limit in probability
\begin{equation}\label{eq longueur du tronc critique}
\mathrm{ql}^4(u) = 2 \lim_{\vp \to 0} \vp \# \{v \in [0,u];  \ -\Delta S^{4,\mu}_{\mathrm{ql}^4(v)} \in [\vp,2\vp]\}.
\end{equation}
}

By Skorokhod's representation theorem, we may assume that (\ref{cvCLE}), \eqref{eq cv region inexploree}, (\ref{cv disque quantique}) and (\ref{cvDQ}) hold almost surely.

Let us denote by $b_i^\kappa$ and $(b_i^\kappa)^2 a_i^\kappa$ the quantum boundary lengths and quantum areas of the domains 
which are encircled by loops or cut out by the trunk during the whole chordal exploration, ranked in the decreasing order of quantum boundary lengths. Note that by Proposition \ref{Prop 5.1 MSW}, conditionally on the $b_i^\kappa$'s, the $a_i^\kappa$'s are i.i.d. of the same law as $\mu^{\sqrt{\kappa}}_{h^{\sqrt{\kappa}}}(\mathbb{D})$. We also have the relation
\begin{equation}\label{eq somme des aires}
	\mu^{\sqrt{\kappa}}_{h^{\sqrt{\kappa}}} (\mathbb{D}) = \sum_{i\ge 1} (b_i^\kappa)^2 a_i^\kappa .
\end{equation}
Moreover, by Lemma \ref{lemme cv Skorokhod}, the quantum boundary lengths of the cut out and encircled domains during the whole $\mathrm{SLE}^{\mu((4/\kappa)-1)}_\kappa(\kappa-6)$ chordal exploration ranked in the non-increasing order converge in distribution as $\kappa \uparrow 4$. We also have the convergence in distribution of the $a_i^\kappa$'s by (\ref{cv disque quantique}). Let $\kappa_n \uparrow 4$. We may find a subsequence again denoted by $\kappa_n$ such that the families $(b_i^{\kappa_n})_{i\ge 1}$ and {$(a_i^{\kappa_n})_{i\ge 1}$} converge a.s.\@ by Skorokhod's theorem so that we have a.s.\@ with the above convergences
\begin{equation}\label{eq cv a et b}
	\forall i\ge1, \qquad b_i^{\kappa_n} \mathop{\longrightarrow}\limits_{n\to \infty} b_i
	\qquad \text{and}
	\qquad
	a_i^{\kappa_n} \mathop{\longrightarrow}\limits_{n\to \infty} a_i.
\end{equation}
\paragraph{Step 1.} We first prove {for all $\vp>0$} the tightness of the number of cut out and encircled domains of quantum boundary length larger than $\vp$. 
{Let $u\ge 0$. For all $\vp >0$, for all $\kappa \in (8/3,4]$, let $N_\vp^\kappa(u)$ be the number of cut out domains or domains encircled by loops of quantum boundary length larger than $\vp$ before time $u$, i.e.
	$$
	N_\vp^\kappa(u)= \#\left\{t\le \mathrm{ql}^\kappa(u); \  
	\left\vert\Delta S_{t}^{\kappa,\mu((4/\kappa)-1)} \right\vert>\vp \right\}.
	$$
\begin{lemma}\label{lemme N epsilon est tendu}
For all $\vp >0$ and $u\ge 0$, the sequence $(N^{\kappa_n}_\vp(u))_{n\ge 0}$ is tight.
\end{lemma}}
\begin{proof}
{One can first see that}
\begin{equation}\label{eq tightness N}
\sum_{i\ge 1} \vp^2 a_i^{\kappa_n} {\bf 1}_{b_i^{\kappa_n}>\vp}  \le \sum_{i\ge 1} (b_i^{\kappa_n})^2 a_i^{\kappa_n} {\bf 1}_{b_i^{\kappa_n}>\vp} \le \mu^{\sqrt{\kappa_n}}_{h^{\sqrt{\kappa_n}}}(\mathbb{D}) 
\mathop{\longrightarrow}\limits^{\mathrm{a.s.}}_{n\to \infty} \mu^2_{h^2}(\mathbb{D}).
\end{equation}
{We deduce that $(\#\{i\ge 1;  \ {b_i^{\kappa_n}>\vp}\})_{n\ge 0}$ is tight, otherwise since the $a_i^{\kappa_n}$'s are i.i.d.\@ and independent from the $b_i^{\kappa_n}$'s there would exist $\eta>0$ such that for all integer $k\ge 1$, for infinitely many $n$, with probability at least $\eta$ the above sum stochastically dominates $\sum_{i=1}^k \vp^2 a_i^{\kappa_n}$ which converges to $\sum_{i=1}^k \vp^2 {a}_i$ by \eqref{eq cv a et b}, where the ${a}_i$'s are i.i.d.\@ random variables following the same law as $\mu^2_{h^2}(\mathbb{D})$ by (\ref{cv disque quantique}), which contradicts \eqref{eq tightness N} by taking $k$ large. Thus $N^{\kappa_n}_{\vp}(u)$ is tight as $N^{\kappa_n}_{\vp}(u)\le \#\{i\ge 1;  \ {b_i^{\kappa_n}>\vp}\}$.}
\end{proof}
{By Lemma \ref{lemme N epsilon est tendu},} we may take a subsequence such that jointly with the above convergences, for all $u\in \Q \cap \R_+$, {for all $\vp \in \Q\cap \R_+^*$}, we have
\begin{equation}\label{eq cv N}
N^{\kappa_n}_\vp(u) \mathop{\longrightarrow}\limits_{n \to \infty} N_\vp(u).
\end{equation}

\paragraph{Step 2.} {Let $\vp\in \Q\cap \R_+^*$}. Let us prove the Carath\'eodory convergence of the cut out and encircled domains of quantum boundary length larger than $\vp$ along some subsequence: 
{\begin{lemma}\label{lemme cut cv sous suite} Let $u \in \Q \cap \R_+$. Let $1 \le {i_0} \le N_\vp(u)$. The sequence $(\mathrm{Cut}_{\tau_{\vp,{i_0}}^{\kappa_n}}^{\kappa_{n}})_{n\ge 0}$ converges a.s.\@ along some (deterministic) subsequence which is again denoted by $(\kappa_n)_{n\ge 0}$ in the sense of Carathéodory.
\end{lemma}
}
\begin{proof}
{First, note that $\mu^{\sqrt{\kappa_n}}_{h^{\sqrt{\kappa_n}}}(\mathrm{Cut}_{\tau_{\vp,{i_0}}^{\kappa_n}}^{\kappa_{n}})$ can be written in the form $(b_{\vp,i_0}^{\kappa_n})^2 a_{\vp,i_0}^{\kappa_n}$, where $b_{\vp,i_0}^{\kappa_n}$ is the $i_0$-th jump of $S^{\kappa_n,\mu((4/\kappa_n)-1)}$ of size larger than $\vp$ (in absolute value), and where $a^{\kappa_n}_{\vp,i_0}$ is independent of $b_{\vp,i_0}^{\kappa_n}$ and has the same law as the $a_i^{\kappa_n}$'s. Thus, by \eqref{cv disque quantique} and \eqref{cv Skorokhod}, by taking another subsequence again denoted by $\kappa_n$ and by applying Skorokhod's representation theorem, we may assume that $b_{\vp,i_0}^{\kappa_n}$ and $a_{\vp,i_0}^{\kappa_n}$ converge a.s.\@ to some independent random variables $b_{\vp,i_0}\neq 0$ and $a_{\vp,i_0}$, where $a_{\vp,i_0}$ has the same law as $\mu^2_{h^2}(\mathbb{D})$. In particular,
\begin{equation}\label{cv aire quantique cut}
	\mu^{\sqrt{\kappa_n}}_{h^{\sqrt{\kappa_n}}}(\mathrm{Cut}_{\tau_{\vp,{i_0}}^{\kappa_n}}^{\kappa_{n}}) = (b_{\vp,i_0}^{\kappa_n})^2 a_{\vp,i_0}^{\kappa_n} \mathop{\longrightarrow}\limits_{n\to \infty}^{(\mathrm{a.s.)}}
		(b_{\vp,i_0})^2 a_{\vp,i_0}.
\end{equation}}

{Let us then prove that a.s.\@ there exists a random variable $\mathcal{Z}\in \mathbb{D}\cap \Q^2$ and a random subsequence $\mathcal{K}_n$ of $\kappa_n$ such that for all $n \ge 0$, we have $\mathcal{Z} \in \mathrm{Cut}_{\tau_{\vp,{i_0}}^{\mathcal{K}_n}}^{\mathcal{K}_n}$, so that in particular $ \mathrm{Cut}_{\tau_{\vp,{i_0}}^{\mathcal{K}_n}}^{\mathcal{K}_n}$ converges in the sense of Carathéodory towards $D^{\mathcal{Z},4}_u$ since  $\mathrm{Cut}_{\tau_{\vp,i_0}^{\mathcal{K}_n}}^{\mathcal{K}_n} = D^{\mathcal{Z},\mathcal{K}_n}_u$ and due to \eqref{cvCLE}.} 
Assume by contradiction that for all $z \in \mathbb{D} \cap \Q^2$, there exists $n_z \in \N$ such that for all $n \ge n_z$, we have $z \not \in {\mathrm{Cut}_{\tau_{\vp,i_0}^{\kappa_n}}^{\kappa_n}}$. Let $(D^{z_i,4}_u)_{i\ge 1}$ be an enumeration of the domains of the form $D^{z,4}_u$ for some $z \in \mathbb{D} \cap \Q^2$. By (\ref{cv disque quantique}) we know that $\mu^{\sqrt{\kappa}}_{h^{\sqrt{\kappa}}}(\mathbb{D})$ converges towards $\mu^2_{h^2}(\mathbb{D})$ as $\kappa \uparrow 4$. Moreover, by (\ref{cvDQ}), for all $z\in \mathbb{D}\cap \Q^2$, we also have the convergence of $\mu^{\sqrt{\kappa}}_{h^{\sqrt{\kappa}}}(D^{z,\kappa}_u)$ towards $\mu^2_{h^2}(D^{z,4}_u)$ as $\kappa \uparrow 4$. As a consequence, we have for all $k\ge 1$,
\begin{align}
\sum_{i=1}^k \mu_{h^2}^2(D^{z_i,4}_u)= \lim_{n\to \infty} \sum_{i=1}^k \mu^{\sqrt{\kappa_n}}_{h^{\sqrt{\kappa_n}}}(D^{z_i,\kappa_n}_u)\le &\lim_{n\to \infty} \mu^{\sqrt{\kappa_n}}_{h^{\sqrt{\kappa_n}}}(\mathbb{D})-\min \{(b^{{\kappa_n}}_j)^2 a^{{\kappa_n}}_j ; \  j\ge 1, b^{{\kappa_n}}_j >\vp\} \label{eq probleme de masse}
\\ 
&= \mu^2_{h^2}(\mathbb{D})-
\min \{(b_j)^2 a_j ; \  j\ge 1, \ b_j >\vp\}. \notag
\end{align}
The above inequality in (\ref{eq probleme de masse}) comes {from the fact that $\mathrm{Cut}_{\tau_{\vp,i_0}^{\kappa_n}}^{\kappa_n}\neq D^{z_i,\kappa_n}_u$ for all $i \in \lb 1 , k \rb$ for all $n$ large enough (since $z_i \not\in\mathrm{Cut}_{\tau_{\vp,i_0}^{\kappa_n}}^{\kappa_n}$ for $n$ large enough)} and from the fact that for $n$ large enough, the $D^{z_i,\kappa_n}_u$'s for $i \in \lb 1, k \rb$ are distinct. Indeed, if $i\neq j$, then we cannot have $D^{z_i,\kappa_n}_u=D^{z_j,\kappa_n}_u$ infinitely often, otherwise we would have $D^{z_i,4}_u=D^{z_j,4}_u$. Moreover, the left-hand side of (\ref{eq probleme de masse}) converges to $\mu^2_{h^2}(\mathbb{D})$ as $k\to \infty$, hence a contradiction. 

{
Finally, let us prove that a.s.\@ $\mathcal{Z} \in \mathrm{Cut}^{\kappa_n}_{\tau^{\kappa_n}_{\vp,i_0}}$ for all $n$ large enough.
This will end the proof since in this case we will have $\mathrm{Cut}^{\kappa_n}_{\tau^{\kappa_n}_{\vp,i_0}} = D^{\mathcal{Z}, \kappa_n}_u$ for all $n$ large enough, which ensures the desired Carathéodory convergence. To see this, it suffices to notice that, on the one hand, by \eqref{cvDQ}, we have the convergence of $\mu^{\sqrt{\kappa_n}}_{h^{\sqrt{\kappa_n}}}(D^{\mathcal{Z}, \kappa_n}_u)$ towards $\mu^{2}_{h^2}(D^{\mathcal{Z}, 4}_u)$ and on the other hand, we have the convergence \eqref{cv aire quantique cut}. Moreover, since the $a_i$'s have the same law as the inverse of an exponential random variable thanks to Theorem 5.5 of \cite{AHPS21} and to Theorem 1.1 of \cite{AdS22}, their law has a density, so that by \eqref{eq cv a et b}, a.s.\@ for all $z,z'\in \mathbb{D}\cap \Q^2$, if $D^{z,4}_u\neq D^{z',4}_u$ are distinct cut out domains, then $\mu^2_{h^2}(D^{z,4}_u)\neq \mu^2_{h^2}(D^{z',4}_u) $. Thus, a.s.\@  $\mathrm{Cut}^{\kappa_n}_{\tau^{\kappa_n}_{\vp,i_0}} $ cannot converge in the sense of Carathéodory to some other cut out domain $D^{z, 4}_u\neq D^{\mathcal{Z}, 4}_u$
along a subsequence. So by doing the same reasoning as in the previous paragraph, one can see that a.s.\@ we cannot have $\mathcal{Z}\not\in \mathrm{Cut}^{\kappa_n}_{\tau^{\kappa_n}_{\vp,i_0}}$ infinitely often (otherwise we would construct a random variable $\mathcal{Z}'\in \mathbb{D} \cap \Q^2$ a.s.\@ not in $D^{\mathcal{Z}, 4}_u$ and a random subsequence $\mathcal{K}'_n$ such that for all $n\ge 0$, $\mathcal{Z}'\in \mathrm{Cut}^{\mathcal{K}'_n}_{\tau^{\mathcal{K}'_n}_{\vp,i_0}}$ so that $\mathrm{Cut}^{\mathcal{K}'_n}_{\tau^{\mathcal{K}'_n}_{\vp,i_0}}$ would converge in the sense of Carathéodory towards $D^{\mathcal{Z}', 4}_u\neq D^{\mathcal{Z}, 4}_u$). }
\end{proof}

\paragraph{Step 3.} By applying Lemma \ref{lemme cut cv sous suite}, we assume that a.s.\@ for all $ i \in \lb 1, N_\vp(u) \rb$, the domain $\mathrm{Cut}^{{ \kappa}_n}_{\tau_{\vp,i}^{{\kappa}_n}}$ converges in the sense of Carath\'eodory. Let us prove that the convergence of the $\mathrm{Cut}^{{\kappa}_n}_{\tau_{\vp,i}^{\kappa_n}}$'s also holds in terms of quantum disks. {More precisely:

\begin{lemma}\label{lemme cvDQ3} Let  $ i \in \lb 1, N_\vp(u) \rb$. Let $z \in \mathbb{D}\cap \Q^2$. Let us work on the event where $z$ is in the almost sure limit of $\mathrm{Cut}^{\kappa_n}_{\tau_{\vp,i}^{\kappa_n}}$. If $f_{\kappa_n}$ (resp. $f_4$) is the unique conformal map from $\mathbb{D}$ to $D^{z,\kappa_n}_u$ (resp. $D^{z,4}_u$) such that $f_{\kappa_n}(0)=z$ and $f'_{\kappa_n}(0)>0$ (resp. $f_4(0)=z$ and $f'_4(0)>0$), then we have the convergence jointly with the above convergences
\begin{equation}\label{cvDQ3}
	\begin{pmatrix}
		h^{\sqrt{\kappa_n}} \circ f_{\kappa_n} +Q_{\sqrt{\kappa_n}} \log |f_{\kappa_n}'|\\
		\mu^{\sqrt{\kappa_n}}_{h^{\sqrt{\kappa_n}}} \circ f_{\kappa_n}\\
		\nu^{\sqrt{\kappa_n}}_{h^{\sqrt{\kappa_n}}_{\vert D^{z,\kappa_n}_u}} \circ f_{\kappa_n}
	\end{pmatrix}
	\qquad
	\mathop{\longrightarrow}\limits_{\kappa \uparrow 4}^{(\mathrm{d})} \qquad
	\begin{pmatrix}
		h^{2} \circ f_4 +Q_{2} \log |f'_4|\\
		\mu^{2}_{h^{2}} \circ f_4\\
		\nu^{2}_{h^{2}_{\vert D^{z,4}_u}} \circ f_4
	\end{pmatrix}
\end{equation}
where the first coordinate is endowed with the $H^{-1}_\mathrm{loc}$ topology and the two last ones with the weak convergence of measures respectively on $\mathbb{D}$ and $\partial \mathbb{D}$. 
\end{lemma}}
\begin{proof}
Recall the convergence (\ref{cvDQ}) and the fact that for $n$ large enough, by the Carath\'eodory convergence of $\mathrm{Cut}^{\kappa_n}_{\tau^{\kappa_n}_{\vp,i}}$, we have $D^{z,\kappa_n}_u = \mathrm{Cut}^{\kappa_n}_{\tau^{\kappa_n}_{\vp,i}}$. Moreover, $ \mathrm{Cut}^{\kappa_n}_{\tau^{\kappa_n}_{\vp,i}}$ is one of the cut-out or encircled domains of quantum boundary length $b_i^{\kappa_n}$ so that there exists a random integer $I_n$, which is a.s.\@ bounded due to (\ref{eq somme des aires}), such that $ \mathrm{Cut}^{\kappa_n}_{\tau^{\kappa_n}_{\vp,i}}$ is the quantum disk of quantum boundary length $b_{I_n}^{\kappa_n}$. Since for all $i \ge 1$ such that $b_i^{\kappa_n}> \vp$, conditionally on $b_i^{\kappa_n}$, the corresponding domain is a $\sqrt{\kappa_n}$-quantum disk, we deduce from (\ref{cv disque quantique}) that 
$$
\widetilde{h}^{\sqrt{\kappa_n}} \coloneqq {{\bf 1}_{z \in \mathrm{Cut}^{\kappa_n}_{\tau^{\kappa_n}_{\vp,i}}}} \left( h^{\sqrt{\kappa_n}} \circ f_{\kappa_n}+ Q_{\sqrt{\kappa_n}} \log |f_{\kappa_n}'|- \frac{2}{\sqrt{\kappa_n}} \log \left(\nu_{h^{\sqrt{\kappa_n}}_{\vert D^{z,\kappa_n}_u}}^{\sqrt{\kappa_n}} (\partial D^{z,\kappa_n}_u) \right)\right)
$$
is tight as $n \to \infty$ in  $H^{-1}_\mathrm{loc}(\mathbb{D})$ (and also the quantum area and length measures $\mu_{\widetilde{h}^{\sqrt{\kappa_n}}}^{\sqrt{\kappa_n}}$ and $\nu_{\widetilde{h}^{\sqrt{\kappa_n}}}^{\sqrt{\kappa_n}}$ for the weak convergence of measures).
Thus $\log \nu_{h^{\sqrt{\kappa_n}}_{\vert D^{z,\kappa_n}_u}}^{\sqrt{\kappa_n}} (\partial D^{z,\kappa_n}_u)$ is tight in $\R$ as $n \to \infty$. So $h^{\sqrt{\kappa_n}} \circ f_{\kappa_n}+ Q_{\sqrt{\kappa_n}} \log |f_{\kappa_n}'|$ is tight in $H^{-1}_\mathrm{loc}(\mathbb{D})$ and hence converges in $H^{-1}_\mathrm{loc}(\mathbb{D})$ towards $h^2 \circ f_4 + Q_{2} \log |f_4'|$. Let us take a subsequence again denoted by $\kappa_n \uparrow 4$ such that 
$$
\begin{pmatrix}
	h^{\sqrt{\kappa_n}} \circ f_{\kappa_n}+ Q_{\sqrt{\kappa_n}} \log |f_\kappa'|\\
	(\widetilde{h}^{\sqrt{\kappa_n}} , \mu_{\widetilde{h}^{\sqrt{\kappa_n}}}^{\sqrt{\kappa_n}},\nu_{\widetilde{h}^{\sqrt{\kappa_n}}}^{\sqrt{\kappa_n}}) \\
	\log \left(\nu_{h^{\sqrt{\kappa_n}}_{D^{z,\kappa_n}_u}}^{\sqrt{\kappa_n}} (\partial D^{z,\kappa_n}_u)\right)
\end{pmatrix}
\mathop{\longrightarrow}\limits_{n\to \infty}^{(\mathrm{a.s.})}
\begin{pmatrix}
	h^2 \circ f_4 + Q_{2} \log |f_4'|\\
	(\widetilde{h}^2, \mu^2_{\widetilde{h}^2}, \nu^2_{\widetilde{h}^2})\\
	Z
\end{pmatrix},
$$
where  $Z$ is some real random variable and $\widetilde{h}^2$ is the field corresponding to the parametrization by the unit disk of the quantum disk corresponding to $b^4_I$ for some random $I$ integer such that $b^4_I\ge \vp$. 
Then $\widetilde{h}^2+ Z = h^2 \circ f_4+ Q_{2} \log |f_4'|$. Therefore 
$$
e^Z=\nu^2_{\widetilde{h}^2+Z}(\partial \mathbb{D}) = \nu^2_{h^2 \circ f_4+ Q_{2} \log |f_4'|} (\partial \mathbb{D})
=\nu^2_{h^2_{|D^{z,4}_u}}(f_4(\partial \mathbb{D})) = \nu^2_{h^2_{|D^{z,4}_u}}(\partial D^{z,4}_u),
$$
where the third equality comes by definition of the equivalence relation on quantum surfaces. Thus, 
we have the convergence in law of $\log \nu_{h^{\sqrt{\kappa_n}}_{\vert D^{z,\kappa_n}_u}}^{\sqrt{\kappa_n}} (\partial D^{z,\kappa_n}_u)$ towards $\log \nu^2_{h^2_{|D^{z,4}_u}}(\partial D^{z,4}_u) $. As a consequence, 
$$\widetilde{h}^2 = h^2 \circ f_4+ Q_{2} \log |f_4'| - \log \nu^2_{h^2_{|D^{z,4}_u}}(\partial D^{z,4}_u). $$ 
The convergence of $
\nu^{\sqrt{\kappa_n}}_{h^{\sqrt{\kappa_n}}_{\vert D^{z,\kappa_n}_u}} \circ f_{\kappa_n}$ comes from 
the convergence of $\nu_{h^{\sqrt{\kappa_n}}_{\vert D^{z,\kappa_n}_u}}^{\sqrt{\kappa_n}} (\partial D^{z,\kappa_n}_u)$ and from the fact that
$$
\nu^{\sqrt{\kappa_n}}_{h^{\sqrt{\kappa_n}}_{\vert D^{z,\kappa_n}_u}} \circ f_{\kappa_n} =
\nu^{\sqrt{\kappa_n}}_{h^{\sqrt{\kappa_n}}_{\vert D^{z,\kappa_n}_u}} (\partial D^{z,\kappa_n}_u) \nu^{\sqrt{\kappa_n}}_{\widetilde{h}^{\sqrt{\kappa_n}}}.
$$
This ends the proof of (\ref{cvDQ3}).\end{proof}

\paragraph{Step 4.} Recall from \eqref{eq cv N} that $N_\vp(u)$ is the almost sure limit of $N^{\kappa_n}_\vp(u)$ for all $\vp \in \Q \cap \R_+^*$ and $u \in \Q \cap \R_+$. Let us identify the limit. {The main idea is to show that any cut out domain of quantum boundary length larger than $\vp$ is the limit of some cut out domain of quantum boundary length larger than $\vp$ so that $N_\vp(u) \ge N^4_\vp(u)$ and to use the convergence of the cut out domain of quantum boundary length larger than $\vp$ obtained in Lemma \ref{lemme cvDQ3} to obtain the reverse inequality.}
\begin{lemma}\label{lemme identification N epsilon}
	We have $N_\vp(u)=N^4_\vp(u)$ for all $u \in \Q \cap \R_+$.
\end{lemma} 
\begin{proof}
Recall that we have a sequence $\kappa_n \uparrow 4$ such that the convergences (\ref{cvCLE}), (\ref{cv disque quantique}), (\ref{cvDQ}), (\ref{eq cv a et b}), (\ref{eq cv N}) and (\ref{cvDQ3}) {from Lemma \ref{lemme cvDQ3}} hold almost surely ({for the last one, we again apply Skorokhod's representation theorem}).

By the proof of Proposition \ref{prop cv CLE}, we know that any cut out (resp. encircled) domain of the form $\mathrm{Cut}^{4}_{\tau_{i_0,\vp}^{4}}$ for some {$i_0 \le N^4_{\vp}(u)$} is the limit in the sense of Carath\'eodory of some cut out (resp. encircled) domain. Moreover, by (\ref{cvDQ}) we know that if $z \in \mathrm{Cut}^{4}_{\tau_{i_0,\vp}^{4}}$, then for $n$ large enough, we have 
\begin{equation}\label{eq inegalite aire quantique cut}
\mu^{\sqrt{\kappa_n}}_{h^{\sqrt{\kappa_n}}} (D^{z,\kappa_n}_u) \ge \mu^2_{h^2}( \mathrm{Cut}^{4}_{\tau_{i_0,\vp}^{4}})/2. 
\end{equation}
To prove that $D^{z,\kappa_n}_u$ converges in terms of quantum disks towards $\mathrm{Cut}^{4}_{\tau_{i_0,\vp}^{4}}$, we distinguish two cases:
\begin{itemize}
	\item[--] If $\mathrm{Cut}^{4}_{\tau_{i_0,\vp}^{4}}$ is a domain encircled by a loop, then we know by Proposition 3.1{6} of \cite{AG23} that the quantum boundary length of $\mathrm{Cut}^{4}_{\tau_{i_0,\vp}^{4}}$ is the limit of the quantum boundary length of $D^{z,\kappa_n}_u$ as $n \to \infty$. 
 \item[--] If $\mathrm{Cut}^{4}_{\tau_{i_0,\vp}^{4}}$ is a cut out domain, then {we proceed as follows. Let $b_{i,+}^{\kappa_n}$'s for $i\ge 1$ (resp.\@ the $(b_{i,+}^{\kappa_n})^2 a_{i,+}^{\kappa_n}$'s) stand for the $b_{i}^{\kappa_n}$'s (resp.\@ the $(b_{i}^{\kappa_n})^2 a_{i}^{\kappa_n}$'s) which correspond to a domain which is encircled by a loop, i.e.\@ the $b_{i,+}^{\kappa_n}$'s are the positive jumps of $S^{\kappa_n, \mu((4/\kappa_n)-1)}$, ordered so that $(b_{i,+}^{\kappa_n})_{i\ge 1}$ is non-increasing.}
 {By} {Lemma 3.14} of \cite{AG23} {we have} the convergence in law of the quantum boundary length of a loop {$\mathcal{L}_n$} surrounding a point chosen at random according to $\mu^{\sqrt{\kappa_n}}_{h^{\sqrt{\kappa_n}}}/\mu^{\sqrt{\kappa_n}}_{h^{\sqrt{\kappa_n}}}(\mathbb{D})$ towards a positive random variable {$\widetilde{b}$}. 
 {Besides, for all $\eta>0$,
 \begin{equation}\label{eq majoration somme des b i a i plus}
 \sum_{i \ge k} (b_{i,+}^{\kappa_n})^2 a_{i,+}^{\kappa_n} \le  \sum_{i\ge 1}  \left( {b_{i,+}^{\kappa_n}}\wedge \eta\right)^2a_{i,+}^{\kappa_n} + \sum_{i\ge k}(b_{i,+}^{\kappa_n})^2 a_{i,+}^{\kappa_n} {\bf 1}_{b_{i,+}^{\kappa_n}>\eta}
 .
 \end{equation}
 The first sum on the right hand side of \eqref{eq majoration somme des b i a i plus} can be rewritten
 $$
 \mu^{\sqrt{\kappa_n}}_{h^{\sqrt{\kappa_n}}} (\mathbb{D})
 \sum_{i\ge 1} \frac{a_{i,+}^{\kappa_n}}{\mu^{\sqrt{\kappa_n}}_{h^{\sqrt{\kappa_n}}}(\mathbb{D})} \left( {b_{i,+}^{\kappa_n}}\wedge \eta\right)^2 \le \mu^{\sqrt{\kappa_n}}_{h^{\sqrt{\kappa_n}}} (\mathbb{D}) \E\left(\left. \left(\nu^{\sqrt{\kappa_n}}_{h^{\sqrt{\kappa_n}}}(\mathcal{L}_n)\wedge \eta\right)^2 \right\vert (a_{i,+}^{\kappa_n})_{i\ge 1}\right).
 $$
 Then, note that by (\ref{cvDQ}), $ \mu^{\sqrt{\kappa_n}}_{h^{\sqrt{\kappa_n}}} (\mathbb{D})$ converges a.s.\@ and that by the convergence in law of $\nu^{\sqrt{\kappa_n}}_{h^{\sqrt{\kappa_n}}}(\mathcal{L}_n)$ towards $\widetilde{b}$ we have
 $$
 \E\left(\left(\nu^{\sqrt{\kappa_n}}_{h^{\sqrt{\kappa_n}}}(\mathcal{L}_n)\wedge \eta\right)^2\right) \mathop{\longrightarrow}\limits_{n\to \infty} \E \left( (\widetilde{b}\wedge \eta)^2 \right),
 $$
 which can be made arbitrarily small by letting $\eta \to 0$. Thus, by a union bound and by Markov's inequality, we deduce that for all $\delta>0$,
 \begin{equation}\label{eq limite premiere somme des b i a i plus}
  \lim_{\eta \to 0} \limsup_{n\to \infty}\P \left(\sum_{i\ge 1}  \left( {b_{i,+}^{\kappa_n}}\wedge \eta\right)^2a_{i,+}^{\kappa_n} >\delta\right) =0.
 \end{equation}
 Next, let us consider the second sum on the right-hand side of \eqref{eq majoration somme des b i a i plus}. By the proof of Lemma 3.13 in \cite{AG23} we know that the number of loops with quantum boundary length larger than $\eta$ is tight so that $N^{\kappa_n}_{\eta,+} \coloneqq\#\{i\ge 1 ; \ b^{\kappa_n}_{i,+}>\eta\}$ is tight and note that
 $$
 \sum_{i\ge k}(b_{i,+}^{\kappa_n})^2 a_{i,+}^{\kappa_n} {\bf 1}_{b_{i,+}^{\kappa_n}>\eta} = \sum_{i=k}^{N^{\kappa_n}_{\eta,+}} (b_{i,+}^{\kappa_n})^2 a_{i,+}^{\kappa_n}\le \sum_{i=k}^{N^{\kappa_n}_{\eta,+}} (b_{i}^{\kappa_n})^2 a_{i,+}^{\kappa_n}.
 $$
 Therefore, using \eqref{eq cv a et b}, and the fact that the $a_{i,+}^{\kappa_n}$'s are i.i.d.\@ with the same law as the $a_{i}^{\kappa_n}$'s, we obtain that the right-hand side in the above inequality is tight and if $L_{\eta,k}$ is a limit in distribution along a subsequence of the right-hand side of the above display, then one can see that $L_{\eta,k}$ goes to $0$ in probability as $k\to \infty$. As a result, for all $\delta>0$,
 \begin{equation}\label{eq limite deuxieme somme des b i a i plus}
 \lim_{k\to \infty} \limsup_{n\to \infty} \P\left( \sum_{i\ge k}(b_{i,+}^{\kappa_n})^2 a_{i,+}^{\kappa_n} {\bf 1}_{b_{i,+}^{\kappa_n}>\eta} >\delta\right) = 0.
 \end{equation}
 Combining \eqref{eq majoration somme des b i a i plus}, \eqref{eq limite premiere somme des b i a i plus} and \eqref{eq limite deuxieme somme des b i a i plus}, we get that for all $\delta>0$,
 $$
 \lim_{k\to \infty} \limsup_{n\to \infty} \P\left( \sum_{i=k}^\infty(b_{i,+}^{\kappa_n})^2 a_{i,+}^{\kappa_n} >\delta \right) =0.
 $$
}
As a consequence, by Proposition \ref{Prop 5.1 MSW}, and since the intensities of the Poisson point process of the positive and negative jumps of $\widetilde{S}^{1+4/\kappa_n}$ only differ by a multiplicative constant $\cos((1+4/\kappa_n)\pi)$ tending to $1$ as $n\to \infty$, we also have {for all $\delta>0$,
 $$
 \lim_{k\to \infty} \limsup_{n \to \infty} \P\left( \sum_{i \ge k} (b_{i,-}^{\kappa_n})^2 a_{i,-}^{\kappa_n} >\delta \right)=0,
 $$}
where the $b_{i,-}^{\kappa_n}$'s (resp.\@ the $(b_{i,-}^{\kappa_n})^2 a_{i,-}^{\kappa_n}$'s) stand for the $b_{i}^{\kappa_n}$'s (resp.\@ the $(b_{i}^{\kappa_n})^2 a_{i}^{\kappa_n}$'s) which correspond to a cut out domain, i.e.\@ the $b_{i,-}^{\kappa_n}$'s are the negative jumps of $S^{\kappa_n, \mu((4/\kappa_n)-1)}$. Therefore, {for all $\delta\in (0,1)$} we can choose $k\ge 0$ large enough so that for all $n$ {large enough,} {with probability at least $1-\delta$},
 $$
 \sum_{i\ge k} (b_{i}^{\kappa_n})^2 a_{i}^{\kappa_n}  <\mu^2_{h^2}( \mathrm{Cut}^{4}_{\tau_{i_0,\vp}^{4}})/2.
 $$
 Thus, for all $n$ large enough, {with probability at least $1-\delta$},
 {we cannot have $\mu^{\sqrt{\kappa_n}}_{h^{\sqrt{\kappa_n}}} (D^{z,\kappa_n}_u) = (b_{i}^{\kappa_n})^2 a_{i}^{\kappa_n} $ for some $i \ge k$ due to \eqref{eq inegalite aire quantique cut}, so that there exists $i<k$ such that $\mu^{\sqrt{\kappa_n}}_{h^{\sqrt{\kappa_n}}} (D^{z,\kappa_n}_u) = (b_{i}^{\kappa_n})^2 a_{i}^{\kappa_n} $. In other words, since the $b_i^{\kappa_n}$'s are ranked in the non-increasing order, the quantum boundary length of $D^{z,\kappa_n}_u$ is at least $b_k^{\kappa_n}$.} Thus, {given that $b_k^{\kappa_n}$ converges a.s.\@ towards $b_k$ by \eqref{eq cv a et b}}, for $n$ large enough, with probabililty at least $1-\delta$, the quantum boundary length of $D^{z,\kappa_n}_u$ is larger than {$b_k/2$}.
\end{itemize}
In both cases,  {for all $\delta\in (0,1)$, there exists $\eta>0$ small enough such that for all $n\ge0$,} $\nu_{h^{\sqrt{\kappa_n}}_{D^{z,\kappa_n}_u}}^{\sqrt{\kappa_n}}(D^{z,\kappa_n}_u)\ge \eta$ {with probability at least $1-\delta$}. {In particular, for all $\delta \in (0,1)$, there exists $\eta>0$ small enough such that for all $n\ge 0$, with probability at least $1-\delta$, there exists a random integer $I_{\eta,u,n}\le N^{\kappa_n}_\eta(u)$ such that $D^{z,\kappa_n}_u= \mathrm{Cut}^{\kappa_n}_{\tau^{\kappa_n}_{\eta,I_{\eta,u,n}}}$. Then, by the convergence (\ref{cvDQ3}) of the previous step (applied to $\mathrm{Cut}^{\kappa_n}_{\tau^{\kappa_n}_{\eta,I_{\eta,u,n}}}$), we deduce that $D^{z,\kappa_n}_u$ is tight as a quantum disk. In particular, after taking another subsequence of $\kappa_n$ and applying Skorokhod's representation theorem once more, we may assume that $D^{z,\kappa_n}_u$ converges a.s.\@ in terms of quantum disks, i.e.\@ for the topology of the convergence (\ref{cvDQ3}).}

{Since we already know by \eqref{cvDQ} (first coordinate) that $\mathrm{Cut}^{4}_{\tau_{i_0,\vp}^{4}}$ is the a.s.\@ limit of $D^{z,\kappa_n}_u$ in the sense of Carathéodory,} we deduce that
$$
	D^{z,\kappa_n}_u
	\mathop{\longrightarrow}\limits_{n \to \infty}^{{(\mathrm{a.s.})}}
	\mathrm{Cut}^{4}_{\tau_{i_0,\vp}^{4}}
$$
in terms of quantum disks. Since every cut out or encircled domain $\mathrm{Cut}^{4}_{\tau_{i_0,\vp}^{4}}$ for {$i_0 \le N^4_{\vp}(u)$} is such a limit, we get that $N_\vp(u) \ge N^4_\vp(u)$. Moreover, since by the previous step the domain $\mathrm{Cut}^{\kappa_n}_{\tau_{i_0,\vp}^{\kappa_n}}$ converges in terms of quantum disks, and since the points of a Poisson point process of intensity $dx/x^2$ are a.s.\@ different from $\vp$, we obtain that $N_\vp(u) \le N^4_\vp(u)$ and hence $N_\vp(u) =N^4_\vp(u)$.
\end{proof}

\paragraph{Step 5.} Let us prove that the cut-out and encircled domains converge in the right order.
\begin{lemma}\label{lemme cv Cut n}
	We have a.s.
	\begin{equation}\label{cv Cut n}
		\left(\mathrm{Cut}^{\kappa_n}_{\tau^{\kappa_n}_{\vp, i}} \right)_{i \in \lb 1 , N^{\kappa_n}_\vp(u) \rb}
		\mathop{\longrightarrow}\limits_{n\to \infty}
		\left( \mathrm{Cut}^{4}_{\tau^{4}_{\vp, i}} \right)_{i \in \lb 1, N^4_\vp(u) \rb},
	\end{equation}
	for the Carathéodory topology and in terms of quantum disks.
\end{lemma}
\begin{proof}
By the two previous steps, {i.e.\@ by Lemmas \ref{lemme cvDQ3} and \ref{lemme identification N epsilon},} we deduce that we have the a.s.\@ convergence of the domains $(\mathrm{Cut}^{\kappa_n}_{\tau_{i,\vp}^{\kappa_n}})_{1 \le i \le N^{\kappa_n}_\vp}$ towards some reordering of the family $(\mathrm{Cut}^{4}_{\tau_{i,\vp}^{4}})_{1 \le i \le N^{4}_\vp}$ for the Carath\'eodory topology and in terms of quantum disks.

Then \eqref{cv Cut n} is easy to see by looking at rational times $u_i$'s for $i\ge 1$ such that $N^4_\vp(u_i)=i$ for all $i\le N^4_\vp(u)$: at time $u_1$ there is only one possibility for the cut out or encircled domain, at time $u_2$ we have already identified the limit of the first region to be encircled or cut out, etc. Note that the convergence (\ref{cv Cut n}) holds for any $\vp \in \R_+^* \cap \Q$.
\end{proof}

\paragraph{Step 6.} Here we prove the identity (\ref{transfo de Doob}) of Proposition \ref{absolue continuité SLE4}. 

\begin{proof}[Proof of (\ref{transfo de Doob}) in Proposition \ref{absolue continuité SLE4}] Recall that we have the a.s.\@ the convergences (\ref{cvCLE}), (\ref{cv disque quantique}), (\ref{cvDQ}), (\ref{eq cv a et b}), (\ref{eq cv N}), (\ref{cvDQ3}) and also (\ref{cv Cut n}) {by Lemma \ref{lemme cv Cut n}}. We may also assume that the convergence (\ref{cv Skorokhod}) also holds almost surely by taking another subsequence again denoted by $\kappa_n$.
By the definition of $(\overline{L}^4_t+ \overline{R}^4_t)_{t\ge 0}$ in Lemma \ref{lemme cv Skorokhod}, to prove (\ref{transfo de Doob}), it is enough to check that for all $t\ge0$, on the event $E_t$ the process $(S^{4,\mu}_s)_{0\le s \le t}$ has the same law as $(\overline{S}^4_s)_{ 0\le s \le t} \coloneqq (\overline{L}^4_s+ \overline{R}^4_s)_{ 0\le s \le t}$ on the event $\{ \forall s \in [0,t], \ \overline{L}^4_s, \overline{R}^4_s>0\}$, where $\overline{L}^4$ and $\overline{R}^4$  are defined in Lemma \ref{lemme cv Skorokhod}. 
	In what follows, we drop the exponents $\mu((4/\kappa_n)-1)$ and $\mu$ above $S^{\kappa_n,\mu((4/\kappa_n)-1)}$ and $S^{4,\mu}$ to simplify the notation. Note that for all $i_0\ge 1$ for $\vp>0$ small enough, for all $i\le i_0$ and $n \ge 1$, the time $\tau^{\kappa_n}_{\vp,i}$ is finite and converges in law as $\kappa \uparrow 4$ by (\ref{cv Skorokhod}). Moreover, for all $u \ge 0$, by definition of $N^{\kappa_n}_\vp(u)$, we obtain that
	 \begin{equation}\label{eq encadrement ql}
	 	\tau^{\kappa_n}_{\vp, N^{\kappa_n}_\vp(u)} \le \mathrm{ql}^{\kappa_n}(u) < \tau^{\kappa_n}_{\vp, N^{\kappa_n}_\vp(u)+1}.
	 \end{equation}
	Thus, by (\ref{eq cv N}) and (\ref{cv Skorokhod}) by letting $n\to \infty$ and then $\vp \to 0$ for all rational $u\ge 0$, there is a non-decreasing process $(\widetilde{\mathrm{ql}}(u))_{u \in \Q \cap \R_+}$ such that for all $u \in \Q \cap \R_+$,
	$$
	\mathrm{ql}^{\kappa_n}(u)
	\mathop{\longrightarrow}\limits_{n\to \infty}
	\widetilde{\mathrm{ql}}(u).
	$$
	Actually, by density of the jumps of $\overline{S}^4$, one can see that $\widetilde{\mathrm{ql}}$ extends to a continuous increasing function so that we have a.s.\@ uniformly on compact sets
	$$
	\mathrm{ql}^{\kappa_n}
	\mathop{\longrightarrow}\limits_{n\to \infty}
	\widetilde{\mathrm{ql}}.
	$$
	Then, by the definition of the Skorokhod $J_1$ convergence, the above convergence implies that
	$$
	(S^{\kappa_n}_{\mathrm{ql}^{\kappa_n}(u)})_{u\ge 0}
	\mathop{\longrightarrow}\limits_{n \to \infty}^{(\mathrm{a.s.})}
	(\overline{S}^4_{\widetilde{\mathrm{ql}}(u)})_{u\ge 0}
	$$
	in the sense of Skorokhod. In particular, since the Skorokhod convergence implies the convergence of the jumps of size at least $\vp$ for all $\vp >0$, we deduce that a.s.\@ for all $u \in \Q \cap \R_+$, for all $\vp>0$, by (\ref{cv Cut n}) we have
	$$
	N^4_{\vp,-}(u) \coloneqq 
	 \# \{
	v \in [0,u]; \ -\Delta {S}^4_{{\mathrm{ql}}^4(v)} \ge \vp
	\}
	= \# \{
	v \in [0,u]; \ -\Delta \overline{S}^4_{\widetilde{\mathrm{ql}}(v)} \ge \vp
	\}.
	$$
	Since the left and the right terms are càdlàg in $u$, the above equality holds for all $u \ge 0$. We can check that $\widetilde{\mathrm{ql}}= \mathrm{ql}^4$ using the caracterization \eqref{eq longueur du tronc critique} of the quantum length of the trunk as a limit in probability: for all $u>0$,
	\begin{align*}
		\mathrm{ql}^4(u) &=
		2\lim_{\vp \to 0}
		\vp \# \{v \in [0,u]; \ -\Delta S^4_{\mathrm{ql}^4(v)} \in [\vp,2\vp]\}
		\\
		&=
		2\lim_{\vp \to 0}
		\vp (N^4_{\vp,-}(u)- N^4_{2\vp,-}(u)) \\
		&=
		2\lim_{\vp \to 0}
		\vp \# \{t \in [0,\widetilde{\mathrm{ql}}(u)]; \ -\Delta \overline{S}^4_t \in [\vp,2\vp]\} \\
		&= \widetilde{\mathrm{ql}}(u).
	\end{align*}
	The last equality of limits in probability stems from the definition of $\overline{S}^4$ as a Doob $h$-transform of Cauchy processes. Thus  a.s.\@ $\widetilde{\mathrm{ql}}= \mathrm{ql}^4$. 
	{Therefore $\mathrm{ql}^{\kappa_n}$ converges towards $\mathrm{ql}^4$ uniformly on compact sets. Using the fact that $\mathrm{ql}^4$ is increasing, it is elementary to prove that $(\mathrm{ql}^{\kappa_n})^{-1}$ converges towards $(\mathrm{ql}^4)^{-1}$ uniformly on compact subsets of $[0,\lim_{v\to \infty} \mathrm{ql}^4(v))$. But, by the convergence of the unexplored region for the Carathéodory topology (see \eqref{eq cv region inexploree}): for any sequence $u_n \to u$ of non-negative (possibly random) real numbers, a.s.
		$$
		\mathcal{D}^{\kappa_n}_{\mathrm{ql}^{\kappa_n}(u_n)} \mathop{\longrightarrow}\limits_{n\to \infty}
		\mathcal{D}^{4}_{\mathrm{ql}^{4}(u)}.
		$$
		We obtain that for all $t \in [0,\lim_{v\to \infty} \mathrm{ql}^4(v))$, a.s.\@ we have the convergence in the sense of Carathéodory
		\begin{equation}\label{eq cv region inexploree quantique}\mathcal{D}^{\kappa_n}_t = \mathcal{D}^{\kappa_n}_{\mathrm{ql}^{\kappa_n}((\mathrm{ql}^{\kappa_n})^{-1}(t))}
		\mathop{\longrightarrow}\limits_{n\to \infty}
		\mathcal{D}^{4}_{\mathrm{ql}^{4}((\mathrm{ql}^{4})^{-1}(t))} = \mathcal{D}^{4}_t.
	\end{equation}
	Besides, since
	$$
	\nu^{\sqrt{\kappa_n}}_{h^{\sqrt{\kappa_n}}_{\vert \mathcal{D}^{\kappa_n}}}(\partial \mathcal{D}^{\kappa_n}_t) = S^{\kappa_n}_t \mathop{\longrightarrow}\limits_{n\to \infty} \overline{S}^4_t,
	$$
	and since for all $n$, $\mathcal{D}^{\kappa_n}_t$ is a quantum disk of quantum boundary length $S^{\kappa_n}_t$, we deduce that $\mathcal{D}^{\kappa_n}_t$ is tight in terms of quantum disk. Thus, after taking another subsequence and applying Skorokhod's representation theorem once again, $\mathcal{D}^{\kappa_n}_t$ converges towards $\mathcal{D}^4_t$ in terms of quantum disks a.s.\@ for all $t \in [0,\lim_{v\to \infty} \mathrm{ql}^4(v))$. In particular, $S^4_t=\overline{S}^4_t$ almost surely, hence (\ref{transfo de Doob}).}
	\end{proof}
	
\paragraph{Step 7.} Finally, let us prove the remaining part of Proposition \ref{absolue continuité SLE4}.

\begin{proof}[Proof of Proposition \ref{absolue continuité SLE4}: law of the cut out domains and unexplored region.] In order to prove the statement on the cut out domains and on the {region remaining to be explored}, it suffices to check that for all $\vp>0$, if $F$ is a continuous bounded function on $\mathbb{D}([0,t])$ and $G$ is a continuous bounded function on the space of finite sequences of distributions on $\mathbb{D}$ seen modulo the transformation (\ref{covariance}) (formally it can be written as $\bigcup_{n\ge 0} (\mathcal{D}'(\mathbb{D}))^n$ where $\mathcal{D}'(\mathbb{D})$ is the space of distributions on $\mathbb{D}$ modulo the transformation (\ref{covariance}) for all Möbius transformation $\varphi$), then
	$$
	\E\left( F((S^4_s)_{0\le s \le t}) G\left(
	h_t,
	(h_{\vp,i})_{i\ge 1, \tau^4_{\vp,i}<t}
	\right)
	\right) =
	\E\left( F((S^4_s)_{0\le s \le t})G\left(
	h(S^4_t),
	(h(\Delta S^4_{\tau^4_{\vp,i}}))_{i\ge 1, \tau^4_{\vp,i}<t}
	\right)
	\right),
	$$
	where a representative of $h_t$ is $h^2 \circ f_z +2 \log \vert f_z'\vert$ where $z \in \mathcal{D}_t^4$ and $f_z: \mathbb{D} \to \mathcal{D}^4_t$ is the unique conformal map such that $f_z(0)=z$ and $f_z'(0)>0$, where for all $i\ge 1$, we have defined $h_{\vp,i}$ as the equivalence class of $h^2 \circ f_{z_i} +2 \log \vert f_{z_i}'\vert$ where $z_i\in \mathrm{Cut}^4_{\tau_{\vp,i}^4}$ and $f_{z_i}: \mathbb{D} \to \mathrm{Cut}^4_{\tau_{\vp,i}^4}$ is the unique conformal map such that $f_{z_i}(0)=z_i$ and $f_{z_i}'(0)>0$, and where $h(b)$ denotes the instance of the GFF of an independent quantum disk of quantum boundary length $b$ (conditionally on $b$). But the above equality holds clearly thanks to the convergences (\ref{cv Skorokhod}),  (\ref{cv Cut n}), {\eqref{eq cv region inexploree quantique}} and thanks to the second point of Proposition \ref{Prop 5.1 MSW} which states the same equality for $\kappa<4$.
\end{proof}

\section{The $\mathrm{SLE}^{\langle \mu \rangle}_4(-2)$ converges to the uniform exploration as $\mu \to \infty$}\label{section mu vers l'infini}

Recall that we proved Theorem \ref{prop de Markov disques quantiques} in the case of the $\mathrm{SLE}^{\langle \mu \rangle}_4$ exploration for $\mu \in \R$ in Subsection \ref{sous-section kappa vers 4}. In this section, we prove Theorem \ref{prop de Markov disques quantiques} in the case of the uniform exploration, which was defined in Subsection \ref{sous-section explo unif}. To this end, we let $\mu \to \infty$ to approximate the uniform exploration using the $\mathrm{SLE}^{\langle \mu \rangle}_4(-2)$ exploration. In this section, for convenience, we will explore the nested $\mathrm{CLE}_4$.

\subsection{From the Carath\'eodory convergence to the Markov property for quantum disks}\label{sous-section de la cv Caratheodory a la prop de markov quantique unif}

The fact that the uniform exploration can be constructed by letting $\mu \to \infty$ was first observed in the last paragraph of p.22 of \cite{WW13}. However, we will need a convergence in terms of Carath\'eodory topology. Recall the radial $\mathrm{SLE}^{\langle \mu \rangle}_4(-2)$ defined at the end of Subsection \ref{sous-section rappels CLE}. For all $\mu \in \R$, for all $z \in \mathbb{D}$, let $({\bf D}^{z,\mu}_{s})_{s\ge 0}$  be the radial exploration targeted at $z$ using the $\mathrm{SLE}^{\langle \mu \rangle}_4 (-2)$. More precisely, for every time $s\ge 0$, the domain ${\bf D}_s^{z,\mu}$ is the unexplored region containing $z$ for the radial $\mathrm{SLE}^{\langle \mu \rangle}_4(-2)$. For convenience, we do not stop the exploration when the loop arround $z$ is discovered, hence discovering the loops of the nested $\mathrm{CLE}_4$. The parametrization is chosen so that if $f_s: \mathbb{D} \to {\bf D}^{z,\mu}_s$ is the unique conformal map such that $f_s(0)=z$ and $f_s'(0)>0$, then $f'_s(0)=e^{-s}$. The positive number $f'_s(0)$ is called the conformal radius of ${\bf D}^{z,\mu}_s$ seen from $z$. 
For all $w\in \mathbb{D} \cap \Q^2 \setminus \{z\}$, we denote by $D^{z,w,\mu}_s$ the connected component containing $w$ of the complement in $\mathbb{D}$ of the path of the chordal $\mathrm{SLE}^{\langle \mu \rangle}_4(-2)$ towards $z$ until time $s$. Note that there is a time $\sigma^\mu_{z,w}$ at which the radial $\mathrm{SLE}^{\langle \mu \rangle}_4(-2)$ towards $z$ cuts $w$ out from $z$ or encircles $w$ with a loop which does not encircle $z$, {or encircles $z$ and not $w$}. Therefore, for all $s <\sigma^\mu_{z,w}$, we have $D^{z,w,\mu}_s = {\bf D}^{z,\mu}_s$ and for all $s \ge \sigma^\mu_{z,w}$, the domain $D^{z,w,\mu}_s = { D}^{z,w,\mu}_{\sigma^\mu_{z,w}}$ corresponds to the domain which is cut out or encircled by a loop.
Similarly, using the uniform exploration of the nested $\mathrm{CLE}_4$ defined in Subsection \ref{sous-section explo unif}, let $({\bf D}^z_{s})_{s\ge 0}$ be the exploration targeted at $z$, again parametrized so that the conformal radius of ${\bf D}^z_s$ is $e^{-s}$. More precisely, ${\bf D}^z_{s}$ is the unexplored region containing $z$ at time $s$. Note that for all $w\in \mathbb{D} \cap \Q^2 \setminus \{z\}$ there is a time $\sigma_{z,w}$ at which the exploration cuts $w$ out from $z$ or encircles $w$ with a loop which does not encircle $z$. We then set $D^{z,w}_s= {\bf D}^z_s$ for all $s <\sigma_{z,w}$, and we define $D^{z,w}_s$ for $s\ge \sigma_{z,w}$ as the domain which is cut out or encircled by a loop at time $\sigma_{z,w}$.

\begin{proposition}\label{prop mu vers l'infini}
	One has the convergence in distribution of
	$$
	\left(
	({\bf D}^{z,\mu}_{s})_{s\ge 0, z \in \mathbb{D} \cap \Q^2},
	(D^{z,w,\mu}_s)_{s\ge 0, z \in \mathbb{D} \cap \Q^2, w \in \mathbb{D} \cap \Q^2 \setminus \{z\}}
	\right)
	$$
	to
	$$
	\left(
	({\bf D}^z_{s})_{s\ge 0,z \in \mathbb{D} \cap \Q^2},
	(D^{z,w}_s)_{s\ge 0, z \in \mathbb{D} \cap \Q^2, w \in \mathbb{D} \cap \Q^2 \setminus \{z\}}
	\right)
	$$
	as $\mu \to \infty$ in terms of finite dimensional distributions for the Carath\'eodory topology. The convergence holds also in a stronger sense: if $(s_\mu)_{\mu\ge 0}$ converges towards $s$ as $\mu \to \infty$, then jointly with the above convergence, for all $z \in \mathbb{D} \cap \Q^2$, we have 
	$
	{\bf D}^{z,\mu}_{s_\mu} 
	\mathop{\longrightarrow}
	{\bf D}^z_s
	$
	as $\mu \to \infty$ in distribution.
\end{proposition}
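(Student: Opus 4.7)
The plan is to mirror the structure of the proof of Proposition \ref{prop cv CLE}, replacing the limit $\kappa \uparrow 4$ by $\mu \to \infty$ and working in the radial setting. I will first treat the convergence of $({\bf D}^{z,\mu}_s)_{s\ge 0}$ to $({\bf D}^{z}_s)_{s\ge 0}$ for a single target point $z$, then add the cut out or encircled domains $D^{z,w,\mu}_s$, and finally lift to a finite collection of targets using the target-invariance property.

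First I will establish the convergence of the driving data of the chordal $\mathrm{SLE}^{\langle \mu \rangle}_4(-2)$ branches appearing in the radial construction. Recall that at $\kappa = 4$ the driving function is $W^\mu = O^\mu + 2 X^\mu$, where $X^\mu$ is a standard Brownian motion and $O^\mu$ is obtained by adding a drift $-\mu\, \ell^0$ along the local time at $0$ of $X^\mu$. As $\mu \to \infty$, the loops drawn on positive excursions of $X^\mu$ become dense while the negative excursions shrink in a way that kills their Loewner contribution. A suitable time-change combined with excursion theory should show that the ordered (rescaled) sequence of loops converges to the Poisson point process used to build the uniform exploration in \cite{WW13}, matching the heuristic discussion at the end of p.~22 of that paper.

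From this convergence of driving data, Proposition 4.47 of \cite{Law05} yields the uniform convergence of the radial Loewner maps on compact subsets of the complement of the hull, and hence the Carath\'eodory convergence of ${\bf D}^{z,\mu}_s$ to ${\bf D}^{z}_s$ for each fixed $s\ge 0$. The strengthening to sequences $s_\mu \to s$ follows from the fact that the conformal radius of ${\bf D}^{z,\mu}_s$ seen from $z$ equals $e^{-s}$ and depends continuously on $s$, together with the Carath\'eodory-monotonicity of $s \mapsto {\bf D}^{z,\mu}_s$, which permits sandwiching ${\bf D}^{z,\mu}_{s_\mu}$ between ${\bf D}^{z,\mu}_{s \pm \vp}$. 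For the cut out and encircled domains $D^{z,w,\mu}_s$, I will follow the same three-step strategy as in \eqref{eq cv Hausdorff debut}, \eqref{eq cv Hausdorff SLE}, \eqref{eq cv Hausdorff fin} in the proof of Proposition \ref{prop cv CLE}: convergence of the start and end times of the relevant excursion separating $w$ from $z$, Hausdorff convergence of the beginning and of the end of the drawn loop, and an application of the conformal Markov property of chordal $\mathrm{SLE}_4$; these yield the Carath\'eodory convergence of the domain containing $w$. The extension to several target points is then immediate from target-invariance (Proposition 3.14 of \cite{S09} and Lemma 8 of \cite{WW13}), which ensures that two radial branches agree up to the time they disconnect their targets.

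The main obstacle is the first step: turning the qualitative description of the limit $\mu \to \infty$ from \cite{WW13} into a rigorous convergence of Loewner driving processes, in a form strong enough to feed into Proposition 4.47 of \cite{Law05}. The delicate point is that $O^\mu$ diverges pointwise because of the $\mu \ell^0$ term, so the convergence cannot hold in the usual uniform topology on the driving functions; one must either reparametrize by the conformal radius seen from $z$, or work in an excursion-based framework where the limit is naturally the Poisson point process of loop excursions rooted on the boundary of the disk. Once this convergence is established, the remaining arguments are direct radial adaptations of those used in the proof of Proposition \ref{prop cv CLE}.
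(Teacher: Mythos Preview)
Your proposal correctly identifies the central difficulty---the raw driving function diverges because of the $\mu\ell^0$ term---but it does not contain the idea that actually resolves it. Saying one should ``reparametrize by the conformal radius'' or ``work in an excursion-based framework'' is not yet a proof: the paper already parametrizes by $-\log$ of the conformal radius from the start, and even so the radial driving function $W^\mu_0=e^{i(2B-U)}$ has no limit as a function. The paper's mechanism is concrete and quite different from what you sketch. One fixes a threshold $1/2^n$ and keeps only the Brownian excursions of height at least $1/2^n$; the excursions themselves (and hence the shapes of the loops they draw) are the \emph{same} for all $\mu$. What depends on $\mu$ is only the phase at which each loop is attached, namely $X^{\mu,n}_k=(W^\mu_0)_{S^n_k}$, and the increment of phase between two successive big excursions contains a factor $e^{-i\mu(\ell^0_{S^n_k}-\ell^0_{T^n_{k-1}})}$ where the local-time increment is exponential. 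The elementary fact $(E,e^{-i\mu E})\to(E,X)$ in law with $X$ uniform on $\partial\mathbb{D}$ and independent of $E$ is the entire engine of the convergence (Lemma~\ref{lemme cv approximations}); a separate estimate (Lemma~\ref{lemme approximation uniforme}) shows the $n$-approximation is uniform in $\mu$, and this is where the divergence of $\mu\ell^0$ is absorbed. Without this two-step approximation you cannot feed anything into Proposition~4.47 of \cite{Law05} or into the measure-driven Loewner continuity of Remark~\ref{remarque cv pilote}.

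Your plan for the second part---treating $D^{z,w,\mu}_s$ by rerunning the Hausdorff arguments \eqref{eq cv Hausdorff debut}--\eqref{eq cv Hausdorff fin} from Proposition~\ref{prop cv CLE}---also does not transfer directly, again because you have no uniform convergence of driving functions to anchor Proposition~4.47 of \cite{Law05}. The paper instead first proves Carath\'eodory and Hausdorff convergence of the loop interiors along the branch towards $0$ (Lemma~\ref{lemme cv boucles}, which reuses the $e^{-i\mu E}\to$ uniform trick for the attachment phases), and then proves convergence of the separation time $\sigma^\mu_{0,w}\to\sigma_{0,w}$ by a compactness/subsequence argument: tightness of $\sigma^\mu_{0,w}$ comes from the Hausdorff convergence of the nested loops around $0$, and identification of the limit uses that a common loop forces the two limiting branches ${\bf D}^0$ and $\widetilde{\bf D}^w$ to coincide up to that time. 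The $D^{z,w,\mu}_s$ are then handled as the branch towards $w$ stopped at the separation time, via target-invariance. Your invocation of target-invariance at the very end is correct in spirit, but the substantive work (convergence of separation times) is missing from the proposal.
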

The proof of Theorem \ref{prop de Markov disques quantiques} for $(Y_t)_{t\ge 0}$ then follows from roughly the same lines as the proof of Proposition \ref{absolue continuité SLE4}, although here we deal with the radial exploration instead of the chordal one. Let us only give a sketch of the proof:
\begin{proof}[Sketch of the proof of Theorem \ref{prop de Markov disques quantiques} for $(Y_t)_{t\ge 0}$ using Proposition \ref{prop mu vers l'infini}]
For all $z \in \mathbb{D}$, for all $\mu \in \R$, let us write $\mathrm{ql}_{z,\mu}(s)$ the quantum length of the trunk of the radial exploration $({\bf D}^{z,\mu}_{s})_{s\ge 0}$. More precisely, it is characterized (with our choice of the multiplicative constant) by
$$
\mathrm{ql}_{z,\mu}(s)= 2 \lim_{\vp \to 0} \vp \# \{v \in [0,s]; -\Delta \nu^2_{h^2_{\vert {\bf D}^{z,\mu}_v}}(\partial{\bf D}^{z,\mu}_v) \in [\vp,2\vp]\}.
$$
Let us write $\mathrm{ql}_{z,\mu}^{-1}$ the right-continuous inverse of $\mathrm{ql}_{z,\mu}$ and for all $t\ge 0$,
$$
\widehat{S}^{z,\mu}_t \coloneqq \nu^2_{h^2_{\vert {\bf D}^{z,\mu}_{\mathrm{ql}_{z,\mu}^{-1}(t)}}}(\partial{\bf D}^{z,\mu}_{\mathrm{ql}_{z,\mu}^{-1}(t)}).
$$
Let $Z_i$ for $i \ge 1$ be i.i.d.\@ random points of $\mathbb{D}$ of law $\mu^2_{h^2}/\mu^2_{h^2}(\mathbb{D})$ which are independent of $h^2$ and of the explorations. 
One can see using Theorem \ref{prop de Markov disques quantiques} for $(Y^\mu_t)_{t\ge 0}$, the relation between the quantum area and the quantum boundary length and the renewal property {(Proposition \ref{prop renewal et Markov conforme})} that the process $(\widehat{S}^{Z_i,\mu}_t)_{t\ge 0, i\ge 1}$ has the same distribution for all $\mu \in \R$ 
and that for all $i \ge 1$, conditionally on their quantum boundary lengths, ${\bf D}^{Z_i,\mu}_{\mathrm{ql}^{-1}_{Z_i,\mu}(t)}$ is chosen at random among the quantum disks corresponding to the unexplored regions of the exploration tree at height $t$. More precisely, if we denote by $a_j^\mu(t)$ for $j \ge 1$ the quantum areas of the unexplored quantum disks when we stop the exploration tree at height $t$ ranked in the non-increasing order, then conditionally on the exploration tree at height $t$ and on the $a^\mu_j$'s, the quantum surface ${\bf D}^{Z_i,\mu}_{\mathrm{ql}^{-1}_{Z_i,\mu}(t)}$ corresponds to the quantum disk of area $a_j^\mu(t)$ with probability $a_j^\mu(t)/(\sum_{k\ge 1} a_k^\mu (t))$. In particular, the quantum surface ${\bf D}^{Z_i,\mu}_{\mathrm{ql}^{-1}_{Z_i,\mu}(t)}$ is constant in distribution when $\mu$ varies.

 By the same reasoning as in the proof of Lemma \ref{lemme cvDQ}, we know that the convergence in Proposition \ref{prop mu vers l'infini} also holds for the underlying fields in terms of the convergence in the sense of distribution and for the quantum area measures with the weak convergence of measures. 
 
 Using that $\mu^2_{h^2}(\mathbb{D})<\infty$, it is easy to see that the number of encircled or cut out disks of quantum boundary length larger than some $\vp >0$ during the exploration ${\bf D}^{Z_i,\mu}$ until time $s$ is tight, {as in Lemma \ref{lemme N epsilon est tendu}}. As a result, using that $\widehat{S}^{Z_i,\mu}$ is constant in law, we obtain as in (\ref{eq encadrement ql}) that for all $u \in \Q \cap \R_+$, the family $(\mathrm{ql}_{Z_i, \mu}(u))_{\mu \ge 0}$ is tight. {But note that $\mathrm{ql}_{Z_i, \mu}$ is a non-decreasing function. Moreover, by density of the jumps of $\widehat{S}^{Z_i,\mu}$, we get that $\mathrm{ql}_{Z_i, \mu}$ is continuous. As a result, $\mathrm{ql}_{Z_i, \mu}$} is tight as $\mu \to \infty$ for the topology of the uniform convergence on compact sets and that the limit must be continuous and non-decreasing.
 
 Let $\mu_n \to \infty$. By taking a subsequence again denoted by $\mu_n$ and by applying Skorokhod's theorem, we may assume that the convergence of Proposition \ref{prop mu vers l'infini} holds a.s.\@ together with the convergences for all $i \ge 1$,
 $$
 \widehat{S}^{Z_i,\mu_n}
 \mathop{\longrightarrow}\limits_{n \to \infty}
 \overline{S}^{Z_i}
 \qquad
 \mathrm{ql}_{Z_i,\mu_n}
 \mathop{\longrightarrow}\limits_{n \to \infty}
 \widetilde{\mathrm{ql}}_{Z_i},
 $$
 where the first convergence holds for the $J_1$ Skorokhod topology and the second one holds uniformly on compact sets, where $\overline{S}^{Z_i}$ is a càdlàg process which has the same law as $\widehat{S}^{Z_i,\mu_n}$ for all $n$ and where $\widetilde{\mathrm{ql}}_{Z_i}$ is a continuous non-decreasing process from $\R_+$ to $\R_+$. Let $\widetilde{\mathrm{ql}}^{-1}_{Z_i}$ be its right-continuous inverse, which is increasing {since $\widetilde{\mathrm{ql}}_{Z_i}$ is continuous}.
 
 {By the convergence of $\mathrm{ql}_{Z_i,\mu_n}$ to $\widetilde{\mathrm{ql}}_{Z_i}$, we deduce that a.s.\@ for all $t \in \R_+$ such that $\widetilde{\mathrm{ql}}^{-1}_{Z_i}$ is continuous at $t$, the random variable $\mathrm{ql}^{-1}_{Z_i,\mu_n}(t)$ converges towards $\widetilde{\mathrm{ql}}^{-1}_{Z_i}(t)$ as $n\to \infty$.} As a consequence, by the convergence of Proposition \ref{prop mu vers l'infini}, we also have a.s.\@ for all $t \in \R_+$ such that $\widetilde{\mathrm{ql}}^{-1}_{Z_i}$ is continuous at $t$,
 $$
 {\bf D}^{Z_i,\mu_n}_{\mathrm{ql}^{-1}_{Z_i,\mu_n}(t)}
 \mathop{\longrightarrow}\limits_{n\to \infty}
 {\bf D}^{Z_i}_{\widetilde{\mathrm{ql}}^{-1}_{Z_i}(t)}
 $$
 in the sense of Carath\'eodory, and thus also in terms of quantum disks, since it is constant in law {as a quantum surface}. Therefore, {a.s.\@} for all $t \in \R_+$ such that $\widetilde{\mathrm{ql}}^{-1}_{Z_i}$ is continuous at $t$,
 $$
 \widehat{S}^{Z_i,\mu_n}_t \mathop{\longrightarrow}\limits_{n\to \infty}
 \nu^2_{h^2_{\vert  {\bf D}^{Z_i}_{\widetilde{\mathrm{ql}}^{-1}_{Z_i}(t) }}} \left(
 \partial {\bf D}^{Z_i}_{\widetilde{\mathrm{ql}}^{-1}_{Z_i}(t)  }
 \right).
 $$
 Hence for all $t \in \R_+$ such that $\widetilde{\mathrm{ql}}^{-1}_{Z_i}$ is continuous at $t$,
 \begin{equation}\label{eq nu egal S barre}
 \nu^2_{h^2_{\vert  {\bf D}^{Z_i}_{\widetilde{\mathrm{ql}}^{-1}_{Z_i}(t) }}} \left(
 \partial {\bf D}^{Z_i}_{\widetilde{\mathrm{ql}}^{-1}_{Z_i}(t)  }
 \right) = \overline{S}^{Z_i}_t,
 \end{equation}
 and the equality holds in fact for all $t\ge 0$ since both processes are càdlàg.

  We see that the quantum length $\mathrm{ql}_{z,\mu_n}$ converges to the quantum natural distance as $n \to \infty$ using the definition of the quantum natural distance $\mathrm{d}_{\mathrm{q}}(\partial \mathbb{D}, {\bf D}^z_s)$ as the limit in probability {(which was given in \eqref{eq distance quantique target})}
$$
\mathrm{d}_{\mathrm{q}}(\partial \mathbb{D}, {\bf D}^z_s) = 
2 \lim_{\vp \to 0} \vp \#\{ v\in [0,s], \ - \Delta \nu^2_{h^2_{\vert {\bf D}^z_v}} (\partial {\bf D}^z_v)  \in [\vp,2\vp]\}.
$$
The above definition is indeed consistent with the definition \eqref{eq distance quantique target} since the Cauchy process $\widetilde{S}^2$ in Proposition \ref{absolue continuité SLE4} is symmetric. From the above formula, it is easy to show, as in the end of the sixth step of the proof of Proposition \ref{prop mu vers l'infini}, that for all $s \ge 0$, we have $\mathrm{d}_{\mathrm{q}} (\partial \mathbb{D}, {\bf D}^{Z_i}_s) = \widetilde{\mathrm{ql}}_{Z_i}(s)$. { Indeed, using the fact that $\overline{S}^{Z_i}$ has the same law as $\widehat{S}^{Z_i, \mu}$ and the definition of $\widehat{S}^{Z_i, \mu}$, one has the convergence in probability
\begin{align*}
	\widetilde{\mathrm{ql}}_{Z_i}(s)&=2 \lim_{\vp \to 0} \vp \# \{ r\in [0,\widetilde{\mathrm{ql}}_{Z_i}(s)], \ -\Delta \overline{S}^{Z_i}_r \in [\vp,2 \vp] \}\\
	&= 2 \lim_{\vp \to 0} \vp \# \{v \in [0,s], \ -\Delta \nu^2_{h^2_{\vert {\bf D}^{Z_i}_v}} (\partial {\bf D}^{Z_i}_v) \in [\vp, 2\vp]\} \qquad \qquad \text{by \eqref{eq nu egal S barre}} \\
	&= \mathrm{d}_{\mathrm{q}}(\partial \mathbb{D}, {\bf D}^{Z_i}_s).
\end{align*}}
For all $z \in \mathbb{D}$, if $\Psi_z$ is the inverse of $s \mapsto \mathrm{d}_{\mathrm{q}}(\partial \mathbb{D}, {\bf D}^z_s)$, then we set for all $t\ge 0$,
$$
\widehat{S}^z_t \coloneqq \nu^2_{h^2_{\vert {\bf D}^{z}_{\Psi_z(t)}}}(\partial{\bf D}^{z}_{\Psi_z(t)}).
$$
By (\ref{eq nu egal S barre}) and since for all $s \ge 0$, we have $\mathrm{d}_{\mathrm{q}} (\partial \mathbb{D}, {\bf D}^{Z_i}_s) = \widetilde{\mathrm{ql}}_{Z_i}(s)$, we deduce that $\overline{S}^{Z_i}= \widehat{S}^{Z_i}$ and thus a.s.
\begin{equation}\label{eq cv taille de la region autour de z}
(\widehat{S}^{Z_i,\mu_n}_t)_{t \ge 0}
\mathop{\longrightarrow}\limits_{n \to \infty} (\widehat{S}^{Z_i}_t)_{t\ge 0}
\end{equation}
in the sense of Skorokhod.

Next, we consider the locally largest component of the $\mathrm{SLE}^{\langle \mu_n \rangle}_4(-2)$ exploration parametrized by the quantum length of the trunk, that we denote by $(\mathcal{C}^{\mu_n}_t)_{t\ge 0}$. Recall that its quantum boundary length is denoted by $(Y^{\mu_n}_t)_{t\ge 0}$. By Theorem \ref{prop de Markov disques quantiques} for $Y^{\mu_n}$, we know that the law of $(Y^{\mu_n}_t)_{t\ge 0}$ does not depend on $n$ and that conditionally on $(Y^{\mu_n}_{t'})_{0\le t' \le t}$, the cut out or encircled domains, together with $\mathcal{C}_t^{\mu_n}$, are independent $2$-quantum disks of boundary lengths corresponding to the negative and positive jumps of $(Y^{\mu_n}_{t'})_{0\le t' \le t}$ and to $Y^{\mu_n}_t$.

Let us recall from Subsection \ref{sous-section composante localement la plus grande} that the locally largest component along the uniform exploration of the $\mathrm{CLE}_4$ parametrized by the quantum natural distance is denoted by $(C_{\sigma(t)})_{t\ge 0}$ and that its quantum boundary length is denoted by $(\mathcal{Y}(\sigma(t)))_{t\ge 0}$. We set $\mathcal{C}_t\coloneqq C_{\sigma(t)}$ and {recall that} $Y_t= \mathcal{Y}(\sigma(t))$ for all $t\ge 0$.

Let $t\ge 0$.  Let $i$ such that $Z_i \in \mathcal{C}_t$. Then $\mathcal{C}_t = {\bf D}^{Z_i}_{\Psi_{Z_i}(t)}$. Therefore, for all $t' \in [0,t]$, we have
\begin{equation}\label{eq egalite composante localement la plus grande et target explo unif}
{\bf D}^{Z_i}_{\Psi_{Z_i}(t')} = \mathcal{C}_{t'} \qquad \text{and} \qquad \widehat{S}^{Z_i}_{t'} = Y_{t'}.
\end{equation}
Besides, by the Skorokhod convergence (\ref{eq cv taille de la region autour de z}), we know that for ${n}$ large enough, the process $(\widehat{S}^{{Z_i},\mu_n}_{t'})_{0\le t' \le t}$ does not have any negative jump at {any} time $t'\le t$ of size at least half of $\widehat{S}^{{Z_i},\mu_n}_{t'-}$. In other words, for ${n}$ large enough, for all $t' \in [0,t]$,
\begin{equation}\label{eq egalite composante localement la plus grande et target SLE mu}
{\bf D}^{{Z_i},\mu_n}_{\mathrm{ql}_{{Z_i},\mu_n}^{-1}(t')} =\mathcal{C}^{\mu_n}_{t'}\qquad \text{and} \qquad
\widehat{S}^{{Z_i},\mu_n}_{t'} = Y^{\mu_n}_{t'}.
\end{equation}
{Therefore, from \eqref{eq cv taille de la region autour de z} we deduce that $Y^{\mu_n}$ converges almost surely towards $Y$ for the $J_1$ topology of Skorokhod. In particular, $Y$ has the same law as $Y^{\mu_n}$. We obtain the law of the encircled, cut out and unexplored region} by reasoning again as in {Step 7} of the proof of Proposition \ref{absolue continuité SLE4} {using the fact that \eqref{eq egalite composante localement la plus grande et target SLE mu} and \eqref{eq egalite composante localement la plus grande et target explo unif} together with the convergence of the cut out and encircled regions stated in Proposition \ref{prop mu vers l'infini} entail the convergence of the cut out and encircled regions of quantum boundary length at least $\vp>0$ and the convergence of the unexplored region along the exploration of the locally largest component}.
\end{proof}

{\paragraph{Outline of the proof of Proposition \ref{prop mu vers l'infini}.} Let us give a short outline of the proof of Proposition \ref{prop mu vers l'infini}, which is performed in the next two subsections. Intuitively, the idea is that as $\mu \to \infty$ the small rotations at each time the exploration is not drawing a loop will be more and more important as $\mu \to \infty$, in a way that for each time the exploration starts to draw a loop, ``the starting point looks like a point sampled uniformly at random on the boundary of the unexplored region''. We first prove in Subsection \ref{sous-section cv mu vers l infini une branche} the convergence of one branch of the exploration. By target-invariance, it suffices to prove the convergence of the branch towards the origin. This exploration is obtained using the same ideas as in Section 2 of \cite{AHPS21}: we approximate the explorations by focusing on the ``macroscopic loops''. Then, between each time of appearance of a macroscopic loop, we look at the impact of the ``rotations'' on the starting point of the next macroscopic loop. This starting point ``becomes uniform'' as $\mu \to \infty$. Next, in Subsection \ref{sous-section convergence jointe des branches quand mu tend vers l infini}, we prove that the convergence of the exploration branches towards each point $z \in \mathbb{D} \cap \Q^2$ holds jointly, relying intuitively on the fact that ``they draw the same loops'': the loops of a CLE$_4$.}
\subsection{Convergence of the branch towards the origin}\label{sous-section cv mu vers l infini une branche}
The rest of this section is devoted to the proof of Proposition \ref{prop mu vers l'infini} and follows the same lines as Section 2 of \cite{AHPS21}. For this purpose, we recall a stronger topology on the space of families of domains as in \cite{AHPS21} and \cite{MS16}. Let $\mathcal{D}$ be the space of families $(D_s)_{s \ge 0}$ of simply connected subdomains of $\mathbb{D}$ which are increasing for the inclusion, such that for every $s\ge 0$, we have $0 \in D_s$, and if $f_s=f[D_s]$ is the unique conformal map from $\mathbb{D}$ to $D_s$ which sends $0$ to $0$ and such that $f'_s(0)>0$, then $f'_s(0)= e^{-s}$. The space $\mathcal{D}$ is endowed with the topology such that a sequence of domains $(D^n_s)_{s\ge 0} \in \mathcal{D}$ converges to $(D_s)_{s\ge 0}$ if and only if for all compact subset $K\subset \mathbb{D}$, for all $T>0$,
$$
\sup_{s \in [0,T]} \sup_{z \in K} \left\lvert f^n_s(z)-f_s(z) \right\rvert \mathop{\longrightarrow}\limits_{n\to \infty} 0,
$$
where $f^n_s=f[D^n_s]$ and $f_s = f[D_s]$. The space $\mathcal{D}$ is then metrizable and separable. We define a distance $d_{\mathcal{D}}$ on $\mathcal{D}$ by setting for all $D,\widetilde{D} \in \mathcal{D}$,
\begin{equation}\label{eq distance sur D}
d_{\mathcal{D}}((D_s)_{s\ge 0},(\widetilde{D}_s)_{s\ge 0})
\coloneqq \sum_{j,k\in \N} 2^{-j-k} \sup_{s \in [0,2^k]} \sup_{z \in (1-1/j) \mathbb{D}} \vert f_s(z)-\widetilde{f}_s(z) \vert,
\end{equation}
where $f_s=f[D_s]$ and $\widetilde{f}_s=f[\widetilde{D}_s]$ for all $s\ge 0$. Note that the convergence of $(D^n_s)_{s\ge 0}$ to $(D_s)_{s\ge0}$ in $\mathcal{D}$ implies the convergence of $D^n_s$ towards $D_s$ in the Carath\'eodory topology for every $s\ge 0$. 
As in \cite{AHPS21}, we will sometimes consider evolving domains $(D_s)_{s\ge 0}$ such that the conformal radius of $D_s$ is $e^{-s}$ for all $s \in [0,S]$ for some finite $S>0$ and such that for all $s \ge S$, $D_s=D_S$. In order to see $(D_s)_{s\ge 0}$ as an element of $\mathcal{D}$, we replace $D_s$ for $s\ge S$ by $f_S(e^{-(s-S)}\mathbb{D})$ where $f_S= f[D_S]$.

Let us recall the definition of (measure-driven) radial Loewner chains. If $\lambda$ is a measure on $[0,\infty) \times \partial \mathbb{D}$ whose marginal on $[0,\infty)$ is the Lebesgue measure, we define the radial Loewner equation driven by $\lambda$ by setting for all $ z \in \mathbb{D}$ and for all $s\ge 0$,
$$
g_s(z) = \int_{[0,s] \times \partial \mathbb{D}} g_u(z) \frac{w+g_u(z)}{w-g_u(z)} d\lambda(u,w)+z.
$$
It is known (see Proposition 6.1 of \cite{MS16}) that for any such $\lambda$, this ODE has a unique solution $s \mapsto g_s(z)$ for each $z \in \mathbb{D}$, defined until time $s_z\coloneqq \sup \{ s \ge 0; \ g_s(z) \in \mathbb{D}\}$. Moreover, if one defines $D_s = \{ z \in \mathbb{D}; \ s_z < s\}$ for all $s\ge 0$, then $(D_s)_{s\ge 0}$ is called the radial Loewner chain driven by $\lambda$. 

If one restricts to a measure of the form $\lambda(ds,A) = \delta_{W(s)}(A)ds$ with $W: \R_+ \rightarrow \partial \mathbb{D}$ a piecewise continuous function, then one recovers the more classical notion of Loewner chains.

\begin{remark}\label{remarque cv pilote}(Remark 2.1 of \cite{AHPS21})
Using Proposition 6.1 of \cite{MS16}, we know that the weak convergence of the driving measure implies the convergence in $\mathcal{D}$ of the radial Loewner chain. 
In particular, the convergence of the radial Loewner chain holds if we assume the  convergence in the Skorokhod $J_1$ topology of the driving functions.
\end{remark}

Let us then recall the direct construction of the radial $\mathrm{SLE}^{\langle\mu\rangle}_4(-2)$ (see \cite{AHPS21} Subsection 2.1.3 for the analogous definition for $\kappa'>4$ and see Subsection 4.1 of \cite{ASW19} for the case $\mu=0$). Let $(B_s)_{s\ge 0}$ be a standard Brownian motion and let {$(\ell^{0}_s)_{s\ge 0}$ be its local time at $\pi \Z$.} Set, for all $s\ge 0$,
$$
U_s=  \int_0^s \cot(B_u) du + \mu \ell^{0}_s,
$$
where the integral is not absolutely convergent but can be defined as the limit in probability of integrals of the form $\int_0^s \cot(B_u){\bf 1}_{d(B_u,\pi \Z)\ge \vp} du$ as $\vp \to 0$, in the same way as in \cite{WW13} in the chordal case. 
Let $\left(W_0^\mu\right)_s = e^{i(2B_s -U_s)}$. Then, if one considers the radial Loewner chain $({\bf D}^{0,\mu}_s)_{s\ge 0}$ in $\mathbb{D}$ driven by $W^\mu_0$, it corresponds to the radial $\mathrm{SLE}^{\langle\mu\rangle}_4(-2)$ targeted at zero (and starting from $1$). Let $\tau_0$ be the first time that $B$ reaches $\pm \pi$, which corresponds to the first time that $0$ gets encircled by a loop during the $\mathrm{SLE}^{\langle\mu\rangle}_4(-2)$ exploration (see Subsection 4.1 of \cite{ASW19} and Subsection 4.3 of \cite{S09} for more details). Let 
$$
\left({ D}^{0,\mu}_s\right)_{s\ge 0} = \left({\bf D}^{0,\mu}_{s\wedge \tau_0}\right)_{s\ge 0}.
$$
We also recall that, during each excursion of $B$ away from $\pi \Z$, the radial Loewner chain draws a loop of the nested $\mathrm{CLE}_4$ which does not contain $0$ (see again for example Subsection 4.1 of \cite{ASW19} in the less general case $\mu=0$ and \cite{S09} for more details). See Figure \ref{image explo radiale}.

\begin{figure}[h]
   \centering
   \includegraphics[scale=0.53]{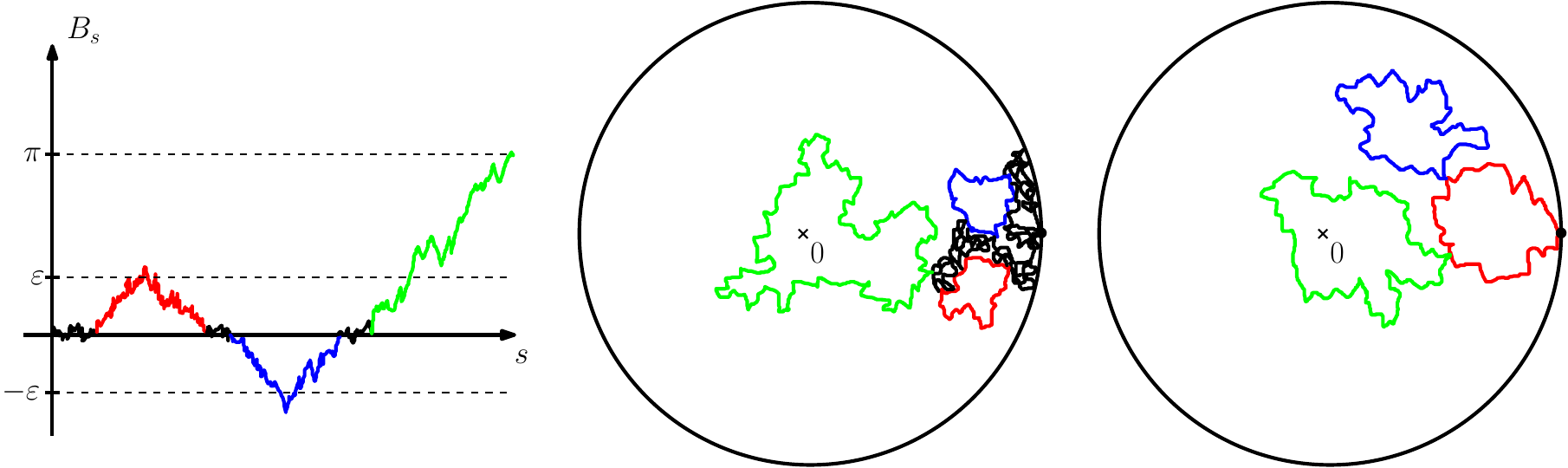}
   \caption{Left: the Brownian motion $(B_s)_{s\ge 0}$. Center: a radial exploration of the $\mathrm{CLE}_4$. The red and blue excursions of height at least $\vp$ give rise to two loops which do not encircle zero and the green part of the Brownian motion generates a loop which encircles zero. Right: approximation of the radial exploration. }
   \label{image explo radiale}
\end{figure}

Let us present an approximation of this radial $\mathrm{SLE}_4^{\langle \mu \rangle}(-2)$. Let $n \in \mathbb{N}$. The idea is to remove intervals of time where $B$ is making tiny excursions away from zero. We set $T_0^n\coloneqq0$ and for all $k\ge 0$,
$$
\left\{
\begin{array}{cl}
	R_{k+1}^n & = \inf \{ s\ge T_k^n; \ \lvert B_s \rvert \ge 1/2^n\}; \\
	S^n_{k+1} &= \sup \{s\le R^n_{k+1}; \ B_s=0\} ; \\
	T^n_{k+1} &= \inf \{ s\ge R^n_{k+1}; \ B_s =0\}.
\end{array}
\right.
$$
Call $e^n_k$ the $k$-th excursion associated to the interval $[S^n_k, T^n_k]$. Set $\Lambda^n \coloneqq \sup \{k; S^n_k \le \tau_0\}$ and
$$
l_k^n = T^n_k-S^n_k \ \forall k < \Lambda^n\ ; \ l^n_{\Lambda^n} = \tau_0-S^n_{\Lambda^n} \ ; \  L_k^n = \sum_{1\le j \le k} l^n_j \ \forall k \le \Lambda^n.
$$
Now we define for all $k \in \lb 1,  \Lambda^n \rb$,
$$
\forall k \in \lb 1,  \Lambda^n \rb, \ \forall s \in [ L^n_{k-1}, L^n_k), \qquad (W^{\mu,n}_0)_s= (W^{\mu}_0)_{S^n_k + (s-L^n_{k-1})},
$$
and set $D^{0,\mu, n}$ to be the radial Loewner chain with driving function $W^{\mu,n}_0$. This is defined up to time $\tau^{n}_0 = L^n_{\Lambda^n}$.

Let us recall an approximation of the uniform exploration of the nested $\mathrm{CLE}_4$. Sample $(X_k^n)_{k\ge 1}$ uniformly and independently on $\partial \mathbb{D}$. For all $s \in [0,\tau^n_0)$, set 
\begin{equation}\label{approximation exploration uniforme}
	\forall k \in \lb 1, \Lambda^n \rb, \ 
	\forall s \in [L^n_{k-1}, L^n_k), \qquad
	(W^n_0)_s= X^n_k \exp \left(
	i\left(2B_{s+S^n_k} -
	\int_{S_k^n}^{s+S_k^n} \cot(B_u) du
	\right)
	\right).
\end{equation}
{Recall  from Subsection \ref{sous-section de la cv Caratheodory a la prop de markov quantique unif} that $({\bf D}^0_{s})_{s\ge 0}$ is the uniform exploration targeted at $0$, again parametrized so that the conformal radius of ${\bf D}^0_s$ is $e^{-s}$. More precisely, ${\bf D}^0_{s}$ is the unexplored region containing $z$ at time $s$.} Let $D^{0,n}$ be the radial Loewner chain with driving function $W^n_0$. By the same argument as in Section 4 of  \cite{WW13} we know that $D^{0,n}$ converges in $\mathcal{D}$ in probability towards a uniform exploration $D^0$ targeted at $0$ in $\mathcal{D}$ and stopped at the time $\tau_0$ defined by $D^0_s= {\bf D}^0_{s \wedge \tau_0}$ for all $s \ge 0$
. Note that the time $\tau_0$ thus also corresponds to the first time at which $0$ gets encircled by a loop for the uniform exploration ${\bf D}^0$. Moreover, as in the beginning of Section 4 of \cite{WW13}, each loop drawn by ${\bf D}^0$ again corresponds to the excursion of $B$ away from $\pi \Z$ at the same time.

Let us show the convergence in law jointly in $z \in \Q^2 \cap \mathbb{D}$ as $\mu \to \infty$ of the $\mathrm{SLE}^{\langle\mu \rangle}_4(-2)$ branch towards $z$ to the uniform exploration branch towards $z$, together with the convergence of the $\mathrm{CLE}_4$ loops surrounding $z$ to the $\mathrm{CLE}_4$ loops surrounding $z$. We first show the convergence of the branch $({\bf D}^{0,\mu}_s)_{s\ge 0}$ targeted at $0$. To show this, it suffices to check the following statement:

\begin{proposition}\label{cv branche stoppee}
	The stopped exploration branch towards zero converges:
	$$
	(
	D^{0,\mu}
	, \tau_0,B)
	\mathop{\longrightarrow} \limits_{\mu \to \infty}^{(\mathrm{d})}
	(
	D^{0}
	, \tau_0,B),
	$$
	where the first coordinate converges in the space $\mathcal{D}$ and where $B$ is the Brownian motion from which the $D^{0,\mu}$'s are defined (and also $D^{0}$ via the approximation).
\end{proposition}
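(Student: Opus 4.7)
The plan is to prove the result through the approximations $D^{0,\mu,n}$ and $D^{0,n}$ of $D^{0,\mu}$ and $D^0$ introduced above: first establish joint convergence of the driving functions as $\mu \to \infty$ for each fixed $n$, then let $n \to \infty$. Since $\tau_0$ is the first hitting time of $\pm\pi$ by $B$, hence a deterministic functional of $B$, it is the same on both sides, so it suffices to prove the joint convergence $(D^{0,\mu}, B) \to (D^0, B)$ in distribution.

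By Remark \ref{remarque cv pilote}, the convergence $D^{0,\mu,n} \to D^{0,n}$ in $\mathcal{D}$ as $\mu \to \infty$ (for fixed $n$) follows from the joint $J_1$ Skorokhod convergence of the driving functions $(W^{\mu,n}_0, B) \to (W^n_0, B)$. On each macroscopic excursion interval $[L^n_{k-1}, L^n_k)$, using that $\ell^0$ does not grow during a macroscopic excursion of $B$ away from $0$, a direct computation yields the factorisation
\begin{equation*}
(W^{\mu,n}_0)_s = \Phi^\mu_k \cdot F_k(s, B), \qquad (W^n_0)_s = X^n_k \cdot F_k(s, B),
\end{equation*}
where $F_k(s, B)$ is a common $\mu$-independent functional of $B$, and $\Phi^\mu_k := \exp(-i(\theta_k + \mu \ell_k))$ with $\theta_k := \int_0^{S^n_k} \cot(B_v)\, dv$ and $\ell_k := \ell^0_{S^n_k}$. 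The problem thus reduces to showing the joint convergence $((\Phi^\mu_k)_{k \geq 1}, B) \to ((X^n_k)_{k \geq 1}, B)$ in distribution as $\mu \to \infty$, where the $X^n_k$ are i.i.d.\ uniform on $\partial \mathbb{D}$ and independent of $B$.

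For this key step I would use a Fourier/equidistribution argument. First, $\ell_k > 0$ almost surely for every $k \geq 1$, since Brownian motion accumulates strictly positive local time at $0$ during the initial microscopic phase $[0, S^n_1]$. By the strong Markov property at the stopping times $T^n_k$, the microscopic phases $B_{|[T^n_{k-1}, S^n_k]}$ are i.i.d., so the local-time increments $\ell_k - \ell_{k-1}$ are i.i.d.\ with a common continuous law. By Fourier inversion on the torus, it suffices to show, for every bounded measurable functional $g$ of $B$ and every nonzero $(m_1, \ldots, m_N) \in \Z^N$, that
\begin{equation*}
\E\left[g(B)\, e^{-i\mu \sum_{k=1}^N m_k \ell_k}\, e^{-i \sum_{k=1}^N m_k \theta_k}\right] \mathop{\longrightarrow}\limits_{\mu \to \infty} 0.
\end{equation*}
This expectation is the value at $\mu$ of the Fourier transform of the finite complex measure on $\R$ obtained by pushing forward $g(B)\, e^{-i \sum_k m_k \theta_k}\, dP$ by $A_N := \sum_k m_k \ell_k$; the absolute continuity of the law of $A_N$, inherited from the i.i.d.\ structure of $\ell_k - \ell_{k-1}$ and the continuity of the law of local time accumulated during one microscopic phase inside $(-1/2^n, 1/2^n)$, then allows Riemann--Lebesgue to conclude.

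To pass to the full proposition, I would combine this with the approximations $D^{0,\mu,n} \to D^{0,\mu}$ and $D^{0,n} \to D^0$ in probability in $\mathcal{D}$ as $n \to \infty$, which follow from the same Loewner-chain estimates as in Section 2 of \cite{AHPS21} and in \cite{WW13} respectively, and apply a diagonal interchange of limits in $(\mu, n)$. The main obstacle I foresee is the rigorous verification of the absolute continuity of the law of $A_N$ for every nonzero $(m_k)$: the cleanest route is to condition on the sigma-algebra generated by the macroscopic-excursion skeleton $(S^n_k, T^n_k, B_{|[S^n_k, T^n_k]})_{k \geq 1}$ and observe that, conditionally, the $\ell_k - \ell_{k-1}$ remain independent with explicit densities, so the conditional characteristic function of $A_N$ is a product of Fourier transforms of continuous laws, decaying to zero.
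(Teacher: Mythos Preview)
Your proof is correct and follows the same route as the paper: approximate by $D^{0,\mu,n}$ and $D^{0,n}$, show $(D^{0,\mu,n},\tau_0^n,B)\to(D^{0,n},\tau_0^n,B)$ for each fixed $n$ via convergence of the starting points $(W_0^\mu)_{S_k^n}$ to i.i.d.\ uniforms independent of $B$, then interchange limits using a uniform-in-$\mu$ approximation (Lemma~\ref{lemme approximation uniforme}). The paper's argument for the equidistribution step is slightly more direct than your Riemann--Lebesgue route: it simply observes that the local-time increments $\ell^0_{S^n_k}-\ell^0_{T^n_{k-1}}$ are i.i.d.\ \emph{exponential} (independent of the macroscopic excursions), so that $(E,e^{-i\mu E})\to(E,X)$ with $X$ uniform and independent of $E$ is elementary.
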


By the iterative definition of the exploration of the nested $\mathrm{CLE}_4$ towards zero, the convergence for all time follows immediately from Proposition \ref{cv branche stoppee}:

\begin{proposition}\label{prop cv branche vers zero}
	The whole exploration branch towards zero converges:
	$$
	({\bf D}^{0,\mu}, \tau_0,B)
	\mathop{\longrightarrow} \limits_{\mu \to \infty}^{(\mathrm{d})}
	({\bf D}^0, \tau_0,B),
	$$
	where the first coordinate converges in $\mathcal{D}$ and $B$ is the Brownian motion with which the $D^{0,\mu}$'s are defined. 		Notice that we also get in particular the convergence in the sense of Carathéodory of the domain ${\bf D}^{0,\mu}_{\tau_0}$ encircled by the outermost loop encircling $0$ towards ${\bf D}^{0}_{\tau_0}$.
\end{proposition}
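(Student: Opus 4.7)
The plan is to bootstrap Proposition~\ref{cv branche stoppee} from a single layer to the full nested exploration by iterating over successive loops encircling $0$, using the renewal property of the radial $\mathrm{SLE}^{\langle \mu \rangle}_4(-2)$ and the Markovian structure of the uniform exploration. Let $0 = \tau^{(0)} < \tau^{(1)} < \tau^{(2)} < \cdots$ denote the successive times at which a loop encircling $0$ is drawn, so that $\tau^{(1)} = \tau_0$. For each $\mu$ and each $k \ge 0$, let $\varphi_k^\mu : \mathbb{D} \to {\bf D}^{0,\mu}_{\tau^{(k)}}$ be the unique conformal map fixing $0$ with positive derivative there, and define $\varphi_k$ analogously for the uniform exploration.

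The first step is to invoke Proposition~\ref{cv branche stoppee} on the interval $[0, \tau^{(1)}]$; this yields the joint convergence of the stopped branch together with $B$, and in particular the Carath\'eodory convergence of ${\bf D}^{0,\mu}_{\tau_0}$ to ${\bf D}^{0}_{\tau_0}$, hence the uniform convergence on compact subsets of $\mathbb{D}$ of $\varphi_1^\mu$ to $\varphi_1$. For the continuation past $\tau^{(1)}$, I would use the renewal property of the radial $\mathrm{SLE}^{\langle \mu \rangle}_4(-2)$ recalled in Subsection~\ref{sous-section rappels CLE} (Proposition~3.10 of \cite{S09}), together with the analogous property for the uniform exploration coming from the Poissonian construction of Subsection~\ref{sous-section explo unif}: conditionally on $({\bf D}^{0,\mu}_s)_{s \le \tau^{(1)}}$, the pulled-back continuation $(\varphi_1^\mu)^{-1}({\bf D}^{0,\mu}_{\tau^{(1)} + s})$ is distributed as an independent copy of $({\bf D}^{0,\mu}_s)_{s \ge 0}$ driven by an independent Brownian motion, and the same statement holds for the uniform exploration with its own independent copy. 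Applying Proposition~\ref{cv branche stoppee} to this independent copy and composing with the convergent maps $\varphi_1^\mu$ then produces joint convergence on $[0, \tau^{(2)}]$; iterating over $k$ yields convergence on $[0, \tau^{(k)}]$ for every fixed $k$.

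To upgrade to convergence in $\mathcal{D}$ for the metric defined by~\eqref{eq distance sur D}, I would exploit that the conformal radius of ${\bf D}^{0,\mu}_s$ equals $e^{-s}$, so on any compact time interval $[0, T]$ the distance $d_\mathcal{D}$ is effectively determined by the first $K$ layers with $\tau^{(K)} > T$. Since the increments $\tau^{(k+1)} - \tau^{(k)}$ are i.i.d.\@ in distribution as the hitting time of $\pm \pi$ by a Brownian motion started at $0$, they a.s.\@ accumulate past any fixed $T$, so $K$ is tight uniformly in $\mu$ and the layer-by-layer convergence transfers to convergence in $\mathcal{D}$. The Carath\'eodory convergence of ${\bf D}^{0,\mu}_{\tau_0}$ towards ${\bf D}^{0}_{\tau_0}$ stated as a corollary is then just the $k = 1$ case.

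The main obstacle is formulating the Markov property of the uniform exploration sharply enough to carry out the induction. For the radial $\mathrm{SLE}^{\langle \mu \rangle}_4(-2)$ the renewal statement is clean; for the uniform exploration, one either appeals directly to the Poissonian structure inherited from the intensity measure $M$ in Subsection~\ref{sous-section explo unif}, or, more concretely, runs the induction at the level of the finite approximation~\eqref{approximation exploration uniforme}, where the uniform samples $X_k^n$ are manifestly independent across excursions, and passes to the limit $n \to \infty$ using the convergence machinery already set up for Proposition~\ref{cv branche stoppee}.
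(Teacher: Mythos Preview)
Your proposal is correct and follows the same route as the paper, which dispatches the proposition in a single sentence: ``By the iterative definition of the exploration of the nested $\mathrm{CLE}_4$ towards zero, the convergence for all time follows immediately from Proposition~\ref{cv branche stoppee}.'' What you have written is precisely a fleshed-out version of that sentence --- iterating Proposition~\ref{cv branche stoppee} over the successive times $\tau^{(k)}$ via the renewal property, composing with the convergent uniformizing maps $\varphi_k^\mu$, and observing that $\tau^{(k)}\to\infty$ so that finitely many layers suffice on any compact time window. Your worry about the Markov property of the uniform exploration is not a genuine obstacle here: the nested uniform exploration is \emph{defined} by iterating the one-layer construction inside the domain ${\bf D}^0_{\tau_0}$, so the required independence is built in rather than something to be proved.
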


For all $z \in \mathbb{D}\cap \Q^2$, we can define the space $\mathcal{D}_z$ of families of domains containing $z$ which are in $\mathcal{D}$ after applying the conformal map $w \mapsto (w-z)/(1-\bar{z}w)$, equipped by the topology induced by the topology of $\mathcal{D}$. We also denote by $\tau_z^\mu$ (resp. $\tau_z$) the first time at which $z$ is encircled by a loop in the radial $\mathrm{SLE}^{\langle \mu \rangle}_4(-2)$ targeted at $z$ (resp. in the uniform exploration $({\bf D}^z_s)_{s\ge 0}$). We also denote by $B^{z,\mu}$ (resp. $B^z$) the Brownian motion which defines $D^{z,\mu}$ (resp. $D^z$) composed with the conformal map $w \mapsto (w-z)/(1-\bar{z}w)$. The time $\tau^\mu_z$ (resp. $\tau_z$) is then the first time at which $B^{z,\mu}$ (resp. $B^z$) reaches $\pm \pi$. Note that as in the case $z=0$, we can build a coupling between the explorations using the same underlying Brownian motion so that $B^{z,\mu}=B^z$ and $\tau^\mu_z=\tau_z$ for all $\mu \in \R$. The convergence of the other branches then follows by conformal invariance as stated just below. We endow the space of continuous real functions $\mathcal{C}(\R_+,\R)$ with the topology of the uniform convergence on compact sets.

\begin{corollary}\label{cor cv branche z}
For all $z \in \mathbb{D}\cap \Q^2$,
\begin{equation}\label{eq cv branche vers z}
({\bf D}^{z,\mu}, \tau^\mu_z, B^{z,\mu}) 
\mathop{\longrightarrow}\limits_{\mu \to \infty}^{(\mathrm{d})}
({\bf D}^{z}, \tau_z, B^z) \qquad \text{in } \mathcal{D}_z \times \R \times \mathcal{C}(\R_+,\R),
\end{equation}
Moreover, we also have the convergence of the domain encircled by the outermost loop $D^{z,\mu}_{\tau^\mu_z}$ containing $z$ towards $D^z_{\tau_z}$ in the sense of Carath\'eodory jointly with the above convergence.
\end{corollary}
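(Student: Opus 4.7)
The plan is to deduce the corollary from Proposition \ref{prop cv branche vers zero} by conformal invariance, thereby reducing to the already-established case $z = 0$. For each $z \in \mathbb{D} \cap \mathbb{Q}^2$, consider the M\"obius automorphism $\varphi_z(w) = (w-z)/(1-\bar{z} w)$ of $\mathbb{D}$ sending $z$ to $0$. By the very definition of the space $\mathcal{D}_z$ as the pullback of $\mathcal{D}$ under $\varphi_z$, pushforward by $\varphi_z$ is a homeomorphism $\mathcal{D}_z \to \mathcal{D}$, so it suffices to prove the announced convergence in distribution after pushing everything forward by $\varphi_z$.

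First, I would check that $\varphi_z({\bf D}^{z,\mu})$ is a radial $\mathrm{SLE}_4^{\langle \mu \rangle}(-2)$ targeted at $0$ and that $\varphi_z({\bf D}^z)$ is a uniform exploration targeted at $0$, up to a rotation of the starting point on $\partial \mathbb{D}$ and a deterministic time-shift $-\log|\varphi_z'(z)|$ coming from the chain rule for the conformal radius. This is the target-invariance of the branching $\mathrm{SLE}_4^{\langle \mu \rangle}(-2)$ recalled in Subsection \ref{sous-section rappels CLE}, combined, for the uniform exploration, with Proposition 9 of \cite{WW13}. By construction $B^{z,\mu} = B^z$ coincides with the Brownian motion driving these pushed-forward processes, so the coupling between the Brownian motions is precisely the pullback of the coupling used in Proposition \ref{prop cv branche vers zero}, and in this coupling $\tau^\mu_z = \tau_z$ holds deterministically.

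Second, Proposition \ref{prop cv branche vers zero} applied to the pushforwards together with the continuous mapping theorem yields the announced convergence \eqref{eq cv branche vers z}. For the Carath\'eodory convergence of the encircled domain ${\bf D}^{z,\mu}_{\tau^\mu_z}$ toward ${\bf D}^z_{\tau_z}$, the key observation is that in the coupling $\tau^\mu_z = \tau_z$ does not depend on $\mu$, so that it suffices to evaluate the $\mathcal{D}_z$-convergent family at the single (non-$\mu$-dependent) time $\tau_z$; the uniform convergence of the uniformizing maps on compact subsets of $\mathbb{D}$ built into the definition of $\mathcal{D}_z$ then gives Carath\'eodory convergence of the resulting domains, jointly with \eqref{eq cv branche vers z}.

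The hardest step will be the conformal-invariance bookkeeping of the first paragraph, namely tracking precisely how the parametrizations by inverse conformal radius (seen from $z$ versus from $0$) and by the driving Brownian motion transform under $\varphi_z$, so that the pushforwards can be cleanly identified with the target-$0$ processes treated in Proposition \ref{prop cv branche vers zero}. Once this identification is made, the remaining steps are routine applications of the continuous mapping theorem.
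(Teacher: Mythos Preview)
Your proposal is correct and follows essentially the same approach as the paper. The paper does not give an explicit proof of this corollary but simply states, just before it, that ``the convergence of the other branches then follows by conformal invariance,'' which is precisely the reduction via $\varphi_z$ that you carry out; your additional remarks on the time-shift and the Carath\'eodory convergence at $\tau_z$ spell out details the paper leaves implicit.
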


Exactly as in \cite{AHPS21}, our proof of Proposition \ref{cv branche stoppee} goes through the approximations $D^{0,\mu,n}$ and $D^{0,n}$. Recall the definition of the distance $d_\mathcal{D}$ in \eqref{eq distance sur D}. {The following lemma states that the approximation $D^{0,\mu,n}$ converges to $D^{0,\mu}$ as $n\to \infty$ uniformly in $n$. Its proof relies on Remark \ref{remarque cv pilote} and on the convergence of the driving functions.}
\begin{lemma}\label{lemme approximation uniforme}
	We have, a.s.\@ $\tau^n_0 \rightarrow \tau_0$ as $n\to \infty$ and
	$$
	\sup_{\mu \in \R} d_{\mathcal{D}}(D^{0,\mu, n}, D^{0,\mu}) \mathop{\longrightarrow}\limits_{n \to \infty}^\text{a.s} 0.
	$$
\end{lemma}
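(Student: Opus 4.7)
My plan is to prove the two claims separately.

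For the first claim, I would observe that $\tau_0-\tau^n_0$ equals the total Lebesgue measure of the cut-out intervals, i.e.\@ of the union $E_n:=[0,S^n_1]\cup\bigcup_{k\ge 1}[T^n_k,S^n_{k+1}]$ (restricted to $[0,\tau_0]$). The sequence $(E_n)_{n\ge 1}$ is decreasing, and $\bigcap_n E_n$ is contained in the zero set $Z(B)\cap[0,\tau_0]$ of $B$, which has Lebesgue measure zero a.s.: indeed, any $t$ that belongs to $E_n$ for every $n$ cannot lie in any Brownian excursion of positive height. Monotone convergence thus yields $|E_n|\to 0$ a.s., i.e.\@ $\tau^n_0\to\tau_0$ a.s.

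For the uniform Loewner chain convergence, the plan is to follow the approach of Section~2 of \cite{AHPS21}. I would introduce the nondecreasing map $\phi_n:[0,\tau^n_0]\to[0,\tau_0]$ (with jumps at the $L^n_k$'s) defined by $\phi_n(L^n_{k-1}+u)=S^n_k+u$ for $u\in[0,l^n_k)$, so that by construction $(W^{\mu,n}_0)_s=(W^\mu_0)_{\phi_n(s)}$ on each kept interval and $0\le\phi_n(s)-s\le|E_n|$. The core observation is that, for any unit-modulus driving function $W_t$, the forward radial Loewner ODE
$$
\partial_t f_t(z)=-z\,f_t'(z)\,\frac{W_t+z}{W_t-z}
$$
has coefficients bounded on any compact subset of $\mathbb{D}$ by a constant depending only on the compact set, \emph{independently of the value of $W_t$}. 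Since $\mu$ enters $W^\mu_0$ only through the phase (via the local-time term $\mu\ell^0_s$), this bound is uniform in $\mu$; the Schwarz lemma further ensures $|f_t(z)|\le 1-1/j$ for $z\in(1-1/j)\mathbb{D}$, keeping the evolution inside a compact of $\mathbb{D}$. A Gronwall-type argument applied to the discrepancy between the two chains---introduced only on cut-out intervals of total capacity at most $|E_n|$ and propagated through kept intervals by a $\mu$-independent Lipschitz flow---would give, for each $j,k\in\N$,
$$
\sup_{\mu\in\R}\sup_{s\in[0,2^k]}\sup_{z\in(1-1/j)\mathbb{D}}\bigl|f[D^{0,\mu,n}_s](z)-f[D^{0,\mu}_s](z)\bigr|\le C(j,k)\,|E_n|,
$$
where the passage from $D^{0,\mu}_{\phi_n(s)}$ to $D^{0,\mu}_s$ costs an extra $C_0(j)|E_n|$ via the same estimate on the interval $[s,\phi_n(s)]$. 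Weighting by $2^{-j-k}$ and invoking dominated convergence (each summand being bounded by $2$) would then yield $\sup_{\mu}d_{\mathcal{D}}(D^{0,\mu,n},D^{0,\mu})\to 0$ a.s.

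The hard part is precisely this $\mu$-uniformity. The driving function $W^\mu_0$ has no $\mu$-uniform modulus of continuity, since the term $\mu\ell^0_s$ can create arbitrarily fast phase rotations for large $\mu$, so a naive comparison through uniform convergence of driving functions would fail. The key point will be that the Lipschitz constants of the Loewner equation on compact subsets of $\mathbb{D}$ depend only on the modulus of the driving function (always $1$) and not on its phase, so all estimates are phase-insensitive and hence uniform in $\mu$. A secondary technicality will be the handling of the ``shrinking'' extension of both chains past their respective final times $\tau_0$ and $\tau^n_0$: since this extension is an explicit radial contraction through the terminal conformal map and $|\tau_0-\tau^n_0|\to 0$, it reduces to the comparison of terminal domains, which is controlled by the same estimate.
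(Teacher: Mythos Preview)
Your argument for $\tau^n_0\to\tau_0$ is correct and makes explicit what the paper dismisses as ``elementary properties of the Brownian motion''.

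For the second claim you take a genuinely different route from the paper. The paper does not attempt any ODE estimate; instead it observes that the \emph{driving measures} $\delta_{(W^{\mu,n}_0)_{s\wedge\tau^n_0}}(dw)\,ds$ and $\delta_{(W^\mu_0)_{s\wedge\tau_0}}(dw)\,ds$ on $[0,T]\times\partial\mathbb{D}$ are weakly close, with a bound controlled by $\tau_0-\tau^n_0$ alone (hence uniform in $\mu$), invokes the continuity of the map from driving measure to radial Loewner chain (Proposition~6.1 of \cite{MS16}), and upgrades this to \emph{uniform} continuity by compactness of the space of such measures. The $\mu$-uniformity thus falls out of compactness with no flow analysis at all. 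Your phase-insensitivity observation is exactly the right diagnosis of why uniformity in $\mu$ should hold, and would in principle allow a quantitative bound that the paper's soft argument does not give.

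However, your Gronwall step has a gap. First, a minor correction: the forward radial Loewner equation for $f_t=g_t^{-1}$ reads
\[
\partial_t f_t(z)=-f_t'(z)\,f_t(z)\,\frac{W_t+f_t(z)}{W_t-f_t(z)},
\]
with $f_t(z)$ rather than $z$ in the Möbius factor. More seriously, the right-hand side depends on $f_t'(z)$, not only on $f_t(z)$. On a kept interval, where both $f^n_s$ and $f_{\phi_n(s)}$ are driven by the same value of $W$, the discrepancy $\Delta_s(z):=f^n_s(z)-f_{\phi_n(s)}(z)$ therefore satisfies $|\partial_s\Delta_s(z)|\le C_r\bigl(|\Delta_s(z)|+|\Delta_s'(z)|\bigr)$ on $\{|z|\le r\}$, and controlling $|\Delta_s'(z)|$ via Cauchy requires $\sup_{|w|\le r'}|\Delta_s(w)|$ on a strictly larger disk $r'>r$. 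The inequality does not close on a fixed compact: iterating with the trivial bound $|\Delta_s|\le 2$ on the outer disk produces an $O(1)$ error term rather than $O(|E_n|)$, so ``propagated through kept intervals by a $\mu$-independent Lipschitz flow'' is not justified as written. To repair this you would need either the measure-driven continuity result (as the paper uses), or a more careful argument via the composition structure $f_{t+s}=f_t\circ\tilde f^{(t)}_s$ of Loewner maps together with uniform distortion bounds on each factor; the latter can be carried out but is substantially more work than your outline suggests.
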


\begin{proof}
	The convergence of $ \tau^n_0$ stems from elementary properties of the Brownian motion. To prove the uniform convergence in $\mathcal{D}$, let us define the driving functions using the same Brownian motion $B$. Let $T>0$. Let us check that if $F: \partial \mathbb{D} \times [0,T] \to \R$ is a continuous function, bounded in absolute value by some constant $M>0$, then a.s.
	$$\int_0^T F((W^{\mu,n}_0)_{s\wedge \tau_0^n}, s )ds
	\mathop{\longrightarrow}\limits_{n\to \infty}
	\int_0^T F((W^{\mu}_0)_{s\wedge \tau_0}, s )ds,
	$$
	uniformly on $\mu \in \R$. This follows from the convergence of $ \tau^n_0$ since
	\begin{align*}
		\left\lvert
		\int_0^T F((W^{\mu,n}_0)_{s\wedge \tau_0^n}, s )ds
		-
		\int_0^T F((W^{\mu}_0)_{s\wedge \tau_0}, s)ds
		\right\rvert
		\le 2M(\tau_0-\tau^n_0).
	\end{align*}
	By Remark \ref{remarque cv pilote}, this gives the convergence for all $\mu \in \R$.
	To obtain the uniform convergence, we know by Remark \ref{remarque cv pilote} that the function which maps a measure $\lambda$ on $[0,\infty) \times \partial \mathbb{D}$ whose marginal on $[0,\infty)$ is the Lebesgue measure and whose marginal on $\partial \mathbb{D}$ is a probability measure to the corresponding element of $\mathcal{D}$ is continuous. Moreover, the space of such measures restricted to $\partial \mathbb{D} \times [0,T]$ is compact by compactness of $\partial \mathbb{D} \times [0,T]$. This enables to get the uniform convergence for the distance $d_{\mathcal{D}}$.
\end{proof}
{The lemma below is the key ingredient in the convergence towards the uniform exploration. It states the convergence of the approximation of the SLE$^{\langle \mu \rangle}_4(-2)$ exploration to the approximation of the uniform exploration. The main idea in the proof of this lemma is that the starting points of each loop become i.i.d.\@ uniform on the boundary when $\mu\to \infty$. This essentially stems from the fact that if $E$ is an exponential random variable, then $e^{-i\mu E}$ converges in distribution as $\mu\to \infty$ to a uniform random variable on the circle $\partial \mathbb{D}$.}
\begin{lemma}\label{lemme cv approximations}
	Fix $n \in \N$. Then
	$$
	(D^{0,\mu,n}, \tau^n_0,B)
	\mathop{\longrightarrow}\limits_{\mu\to \infty}^{(\mathrm{d})} (D^{0,n}, \tau^n_0,B),
	$$
	where the first coordinate converges in $\mathcal{D}$.
\end{lemma}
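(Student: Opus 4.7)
The strategy is to establish the joint distributional convergence of $(W_0^{\mu,n}, B)$ to $(W_0^n, B)$ in a sense strong enough to imply convergence of driving measures, and then to invoke the continuity of the radial Loewner evolution (Remark \ref{remarque cv pilote}). The key observation is that on each macroscopic excursion interval $[S^n_k, T^n_k]$ the local time $\ell^0$ is constant (since $B$ stays away from $0$ there), so that for $s \in [L^n_{k-1}, L^n_k)$, writing $r = s - L^n_{k-1}$,
\begin{equation}\label{eq decomp W mu n approx}
(W_0^{\mu,n})_s = e^{-iU_{S^n_k}} \cdot \Theta^{(k)}_r, \qquad
\Theta^{(k)}_r := \exp\!\left(i\!\left[2B_{S^n_k + r} - \int_{S^n_k}^{S^n_k + r}\!\! \cot(B_u)\,du\right]\right),
\end{equation}
where $\Theta^{(k)}$ depends only on $B$. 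Comparing \eqref{eq decomp W mu n approx} with \eqref{approximation exploration uniforme}, we see that $W_0^n$ has exactly the same form but with $e^{-iU_{S^n_k}}$ replaced by the i.i.d.\ uniform variables $X^n_k$. Since $\Lambda^n < \infty$ almost surely (because $\ell^0_{\tau_0} < \infty$ and macroscopic excursions occur at positive rate in local time), the lemma reduces to showing that, jointly with $B$, the vector $(e^{-iU_{S^n_k}})_{k=1}^K$ converges in distribution, as $\mu \to \infty$, to i.i.d.\ uniform random variables on $\partial \mathbb{D}$ independent of $B$, for each fixed $K$.

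To prove this phase convergence, note that $U_{S^n_k} = \int_0^{S^n_k} \cot(B_u)\,du + \mu\, \ell^0_{S^n_k}$ where the first summand is $\sigma(B)$-measurable; by Fourier inversion on the torus, the claim is equivalent to showing
\begin{equation*}
\E\!\left[F(B)\, e^{i\mu Y}\right] \mathop{\longrightarrow}\limits_{\mu \to \infty} 0
\end{equation*}
for every $c \in \Z^K \setminus \{0\}$, every bounded measurable functional $F$ of $B$, and $Y := \sum_{k=1}^K c_k\, \ell^0_{S^n_k}$. Conditioning on $Y$ gives $\E[F(B)\,e^{i\mu Y}] = \E[\psi(Y)\,e^{i\mu Y}]$ with $\psi(y) := \E[F(B)\mid Y=y]$ bounded, so the Riemann--Lebesgue lemma reduces the claim to showing that $Y$ admits an $L^1$ density. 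Setting $\Delta L_k := \ell^0_{S^n_k} - \ell^0_{S^n_{k-1}}$ (with $\ell^0_{S^n_0} := 0$), It\^o excursion theory at $0$ identifies the $\Delta L_k$ as i.i.d.\ exponential random variables with rate $\lambda_n > 0$ equal to the Revuz mass of excursions of height at least $1/2^n$: indeed the macroscopic excursions form a Poisson point process on the local time axis. Writing $Y = \sum_{j=1}^K \bigl(\sum_{k \ge j} c_k\bigr)\,\Delta L_j$ exhibits $Y$ as a nontrivial linear combination of i.i.d.\ exponentials, which therefore possesses a density by convolution.

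The conclusion is then routine. Combining the phase convergence with \eqref{eq decomp W mu n approx} and the continuous mapping theorem, one obtains that for any bounded continuous $F : \R_+ \times \partial \mathbb{D} \to \R$, the functional $\int_0^{\tau^n_0} F(s, (W_0^{\mu,n})_s)\,ds$ converges jointly with $(\tau^n_0, B)$ in distribution to $\int_0^{\tau^n_0} F(s, (W_0^n)_s)\,ds$. This gives the weak convergence of the driving measures on $\R_+ \times \partial \mathbb{D}$, and Remark \ref{remarque cv pilote} upgrades it to the desired convergence of the radial Loewner chains in $\mathcal{D}$; the extension of the chains past $\tau^n_0$ is the same on both sides and causes no issue. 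The main obstacle is the equidistribution of the phases: one must handle simultaneously the decorrelation of $\mu\, \ell^0_{S^n_k} \bmod 2\pi$ from $B$ and the joint asymptotic independence across $k$, both of which hinge on the Fourier/Riemann--Lebesgue reduction together with the excursion-theoretic description of the $\Delta L_k$.
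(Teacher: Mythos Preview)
Your proof is correct and follows essentially the same route as the paper: decompose $W^{\mu,n}_0$ on each macroscopic excursion interval into a $B$-measurable factor times the phase $e^{-iU_{S^n_k}}$, use excursion theory to identify the local-time gaps $\Delta L_k$ as i.i.d.\ exponentials, and conclude that the phases equidistribute to i.i.d.\ uniforms on $\partial\mathbb{D}$ independent of $B$, so that Remark~\ref{remarque cv pilote} applies. The only real difference is presentational: the paper compresses the equidistribution step into the elementary one-variable claim $(E,e^{-i\mu E})\to(E,X)$, whereas you spell out the Fourier/Riemann--Lebesgue argument to get the full joint decorrelation from $B$ and across $k$ in one stroke---this is slightly more explicit but not a different idea.
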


\begin{proof}
	We set for all $k \in \lb 1, \Lambda^n \rb$, $
	X_k^{\mu,n} = (W^\mu_0)_{S_k^{n}},
	$ 
	and ${\bf X}^{\mu,n} = (X^{\mu,n}_k)_{1 \le k \le \Lambda^n}$. For the uniform exploration we set
	${\bf X}^n = (X^n_k)_{1 \le k \le \Lambda^n}$. We also write $(e^n_k)_{k\ge 1}$ the family of the excursions {of $B$} of height larger than $1/2^n$. {Let us write also $(e_v)_{v\in \R_+}$ the Poisson point process of the excursions of $B$, which is indexed by the local time at $0$ and recall that we denote by $\tau(e_v)$ the duration of the excursion $e_v$. We define the point processes $(e^<_v)_{v\ge 0}$ and $(e^>_v)_{v\ge 0}$ of excursions of height smaller than $1/2^n$ and larger than $1/2^n$.}
	
	To prove the above lemma, it is enough to verify that
	\begin{equation}\label{cv vers des uniformes}
		{\bf X}^{\mu,n} \mathop{\longrightarrow}\limits_{\mu \to \infty}^{(\mathrm{d})} {\bf X}^n.
	\end{equation}
	Indeed, we know that $W_0^n$ is defined by (\ref{approximation exploration uniforme}) and moreover for all $k \in \lb 1, \Lambda^n \rb$, for all $s \in [L^n_{k-1}, L^n_k)$,
	\begin{align*}
		(W^{\mu, n}_0)_s &= (W^\mu_0)_{S^{n}_k}
		\exp \left(
		i\left(
		2B_{s+S^n_k} -2B_{S^n_k}- \int_{S^n_k}^{s+S^n_k} \cot(B_u)du - \mu (\ell^{0}_s -\ell^{0}_{S^n_k})\right)
		\right) \\
		&= X^{\mu,n}_k \exp\left(
		i\left(
		2B_{s+S^n_k} 
		- \int_{S^n_k}^{s+S^n_k} \cot(B_u)du
		\right)
		\right).
	\end{align*}
	Thus, if ones assumes (\ref{cv vers des uniformes}), then we have clearly the convergence of the driving function 
	$
	W^{\mu,n}_0
	$
	towards $W^n_0$ in the Skorokhod $J_1$ topology. This entails the convergence in $\mathcal{D}$ of $(D^{0,\mu,n}_s)_{s\ge 0}$ by Remark \ref{remarque cv pilote}.
	Let us now prove the convergence (\ref{cv vers des uniformes}). One can see that
	\begin{align}
	(W_0^\mu)_{S^n_1} &=
	\exp\left(i\left(
	 -  \int_0^{S^n_1} \cot(B_s)ds - \mu \ell^{0}_{S_1^n}
	\right)
	\right)\notag
	\\
	 &={\exp\left( i \left(- \sum_{v<\ell^{0}_{S_1^n}} \int_0^{\tau(e_v)} \cot(e_v(s))ds - \mu \ell^{0}_{S_1^n} \right) \right)}\label{eq W zero mu 1}
	\end{align}
	and that for all $k \ge 2$, 
	\begin{align}
	&(W_0^\mu)_{S^n_k} =
	(W_0^\mu)_{S^n_{k-1}} \exp\left(i\left(
	-\int_{S^n_{k-1}}^{S^n_k} \cot(B_s)ds -\mu (\ell^{0}_{S^n_k}-\ell^{0}_{T^n_{k-1}})
	\right)
	\right)\notag\\
	&={(W_0^\mu)_{S^n_{k-1}} \exp\left( i\left(-\int_0^{\tau(e^n_k)} \cot(e^n_k(s)) ds - \sum_{\ell^0_{T^n_{k-1}}<v<\ell^0_{S^n_k}} \int_0^{\tau(e_v)} \cot(e_v(s)) ds -\mu (\ell^{0}_{S^n_k}-\ell^{0}_{T^n_{k-1}}) \right)\right)}\label{eq W zero mu k}
	\end{align}
	{By the restriction property of Poisson point processes, the point processes $(e^<_v)_{v\ge 0}$ and $(e^>_v)_{v\ge 0}$ of excursions of height smaller than $1/2^n$ and larger than $1/2^n$ are independent Poisson point processes. Note also that the atoms of $(e^>_v)_{v\ge 0}$ are the $e^n_k$'s for $k\ge 1$ which appear at the local times $\ell^0_{S^n_k} = \ell^0_{T^n_k}$'s.} {In particular,} the $\ell^{0}_{S^n_k}-\ell^{0}_{T^n_{k-1}}$'s are i.i.d. exponential random variables whose parameter $\lambda_n>0$ only depends on $n$ (and not on $\mu$), {which are independent of $(e_v^<)_{v\ge 0}$ and of the excursions $(e^n_k)_{k\ge 1}$}. But one can see that if $E$ is an exponential random variable of parameter $\lambda_n>0$, then
	$$
	(E,e^{-i\mu E}) \mathop{\longrightarrow}\limits_{\mu \to \infty}^{(\mathrm{d})} (E,U_1^n),
	$$
	where $U^n_1$ is independent from $E$ and is a uniform random variable in $\partial \mathbb{D}$. {Taking \eqref{eq W zero mu 1} and \eqref{eq W zero mu k} into account, the above convergence yields
	$$
	\left(X^{\mu, n}_1, \frac{X^{\mu,n}_2}{X^{\mu,n}_1}, \ldots, \frac{X^{\mu, n}_{\Lambda^n}}{X^{\mu, n}_{\Lambda^n-1}} \right)
	\mathop{\longrightarrow}\limits_{\mu \to \infty}^{(\mathrm{d})} \left(U^n_1, \ldots, U^n_{\Lambda^n}\right),
	$$
	where $(U_j^n)_{j\ge 1}$ is a family of i.i.d.\@ uniform random variables in $\partial \mathbb{D}$ which is independent of $(B_s)_{s\ge 0}$, hence (\ref{cv vers des uniformes}).}
\end{proof}

\begin{proof}[Proof of Proposition \ref{cv branche stoppee}]
	This follows by combining Lemma \ref{lemme approximation uniforme}, Lemma \ref{lemme cv approximations} and the fact that $D^{0,n}$ converges towards the uniform exploration of the nested $\mathrm{CLE}_4$ targeted at zero. 
\end{proof}

\subsection{Joint convergence of the exploration branches}\label{sous-section convergence jointe des branches quand mu tend vers l infini}

Before proving Proposition \ref{prop mu vers l'infini}, let us show the convergence of the interiors of the loops drawn along the exploration. {The main idea of the proof of the lemma below is that the loops are drawn using the same excursions, the only difference being the ``starting points''.}
\begin{lemma}\label{lemme cv boucles}
	Let $\vp>0$. For all $i\ge 1$, let $e_{\vp, i}$ be the $i$-th excursion of $B$ of height at least $\vp$ away from $\pi \Z$ during an interval $[S_{\vp,i}, T_{\vp,i}]$. Let $\mathcal{L}^{0,\mu}_{\vp,i}$ (resp. $\mathcal{L}^{0}_{\vp,i}$) denote the loop drawn by ${\bf D}^{0,\mu}$ (resp. ${\bf D}^0$) during the interval $[S_{\vp,i}, T_{\vp,i}]$, so that the domain inside the loop is $\mathcal{B}^{0,\mu}_{\vp,i}= {\bf D}^{0,\mu}_{S_{\vp,i}} \setminus \overline{{\bf D}^{0,\mu}_{T_{\vp,i}}}$ (resp. $\mathcal{B}^0_{\vp, i} = {\bf D}^0_{S_{\vp, i}} \setminus \overline{{\bf D}^0_{T_{\vp, i}}}$). Then we have, jointly with the convergence of Proposition \ref{prop cv branche vers zero}, for all $\vp >0$,
	$$
	\left(
	\mathcal{B}^{0,\mu}_{\vp, i}
	\right)_{i\ge 1}
	\mathop{\longrightarrow}\limits_{\mu \to \infty}^{(\mathrm{d})}
	\left(
	\mathcal{B}^{0}_{\vp, i}
	\right)_{i\ge 1}
	$$
	in terms of finite dimensional distributions for the Carath\'eodory topology and for the Hausdorff topology for their closure.
\end{lemma}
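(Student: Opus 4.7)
The approach follows closely the loop-convergence arguments in the proof of Proposition \ref{prop cv CLE}, adapted to the limit $\mu \to \infty$ instead of $\kappa \uparrow 4$. First, I apply Skorokhod's representation theorem along a subsequence $\mu_n \to \infty$ so that the convergence in Proposition \ref{prop cv branche vers zero} becomes almost sure. The key simplification is that the coupling built in Proposition \ref{cv branche stoppee} uses a \emph{single} underlying Brownian motion $B$ for all $\mu$ and in the limit, so the excursion endpoints $S_{\vp,i}$ and $T_{\vp,i}$ are literally the same random variables across all explorations; the joint convergence with Proposition \ref{prop cv branche vers zero} is then automatic.

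Next, I exploit the $\mathcal{D}$-convergence of ${\bf D}^{0,\mu_n}$ to ${\bf D}^0$, which gives the uniform convergence of $f^{\mu_n}_s := f[{\bf D}^{0,\mu_n}_s]$ to $f_s := f[{\bf D}^0_s]$ on compact subsets of $\mathbb{D}$, uniformly in $s$ over any compact interval containing $[S_{\vp,i}, T_{\vp,i}]$. This yields the Carath\'eodory convergence of both ${\bf D}^{0,\mu_n}_{S_{\vp,i}}$ and ${\bf D}^{0,\mu_n}_{T_{\vp,i}}$. To upgrade to the Carath\'eodory convergence of the loop interior $\mathcal{B}^{0,\mu_n}_{\vp,i} = {\bf D}^{0,\mu_n}_{S_{\vp,i}} \setminus \overline{{\bf D}^{0,\mu_n}_{T_{\vp,i}}}$, I pick a base point $z$ in the open set $\mathcal{B}^0_{\vp,i}$: for $n$ large enough one has $z \in {\bf D}^{0,\mu_n}_{S_{\vp,i}}$, and since a small ball around $z$ avoids $\overline{{\bf D}^0_{T_{\vp,i}}}$, the uniform convergence of $f^{\mu_n}_{T_{\vp,i}}$ away from the boundary also gives $z \notin \overline{{\bf D}^{0,\mu_n}_{T_{\vp,i}}}$ for $n$ large. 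The usual characterisation of Carath\'eodory convergence via connected open sets and compact subsets can then be verified in a way analogous to Step 5 of the proof of Proposition \ref{prop cv CLE}.

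For Hausdorff convergence of the closures, I straighten by $(f^{\mu_n}_{S_{\vp,i}})^{-1}$: once pushed to $\mathbb{D}$, the loop $\mathcal{L}^{0,\mu_n}_{\vp,i}$ is an $\mathrm{SLE}_4$-type curve driven by $W^{\mu_n}_0$ on $[S_{\vp,i},T_{\vp,i}]$. A direct computation shows that the ratio $(W^{\mu_n}_0)_s / (W^{\mu_n}_0)_{S_{\vp,i}}$ depends only on the excursion $e_{\vp,i}$ of $B$ (and not on $\mu_n$), since $\ell^0$ is constant on any excursion away from $0$ and the $\mu$-dependent drift is therefore inert over $[S_{\vp,i}, T_{\vp,i}]$. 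Applying Proposition 4.47 of \cite{Law05} as in the derivation of \eqref{eq cv Hausdorff debut} and \eqref{eq cv Hausdorff fin} gives the uniform convergence of the associated radial Loewner maps away from the hull, and the compact-set / contradiction argument illustrated in Figure \ref{image preuve cv boucle} then transfers verbatim to yield the Hausdorff convergence of $\overline{\mathcal{B}^{0,\mu_n}_{\vp,i}}$.

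The main obstacle will be, exactly as in the $\kappa \uparrow 4$ case, ruling out the degenerate possibilities that the loop collapses to a point or pinches off an extra region in the limit. This is handled by combining Carath\'eodory convergence from outside with the continuity of the $\mathrm{SLE}_4$ trace along the excursion: any point strictly outside $\overline{\mathcal{B}^0_{\vp,i}}$ eventually lies in some neighbourhood $V_\delta$ as in the proof of Proposition \ref{prop cv CLE} and cannot be encircled by $\mathcal{L}^{0,\mu_n}_{\vp,i}$ for $n$ large, producing the two inclusions required for Hausdorff convergence. The joint convergence in $i$ for the product topology follows because every excursion $e_{\vp,i}$ is treated by the same argument with respect to the common Brownian motion $B$.
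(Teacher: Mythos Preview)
Your key observation—that on any excursion of $B$ away from $\pi\Z$ the local time $\ell^0$ is constant, so the ratio $(W^{\mu_n}_0)_s/(W^{\mu_n}_0)_{S_{\vp,i}}$ does not depend on $\mu_n$—is exactly the heart of the paper's argument. The problem is that you underuse it, and in doing so leave a gap in your Carath\'eodory step while overcomplicating the Hausdorff step.

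The gap is your claim that ``the uniform convergence of $f^{\mu_n}_{T_{\vp,i}}$ away from the boundary also gives $z \notin \overline{{\bf D}^{0,\mu_n}_{T_{\vp,i}}}$ for $n$ large.'' Carath\'eodory convergence of ${\bf D}^{0,\mu_n}_{T_{\vp,i}}$ seen from $0$ only controls compact subsets of the \emph{limiting} domain and says nothing about points lying outside its closure: the approximating domains could have thin protrusions reaching $z$, and uniform convergence of $f^{\mu_n}_{T_{\vp,i}}$ on compacts of $\mathbb{D}$ gives no control near $\partial\mathbb{D}$, hence none on such protrusions. So Carath\'eodory convergence of the set difference $\mathcal{B}^{0,\mu_n}_{\vp,i}$ cannot be read off from Carath\'eodory convergence of ${\bf D}^{0,\mu_n}_{S_{\vp,i}}$ and ${\bf D}^{0,\mu_n}_{T_{\vp,i}}$ alone. (Your reference to ``Step 5 of the proof of Proposition~\ref{prop cv CLE}'' is also off: that proof has no numbered steps; the numbered steps appear in the proof of Proposition~\ref{absolue continuit� SLE4}.)

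The paper's route is much shorter and uses your own observation to its full strength. After straightening by $g^{0,\mu_n}_{S_{\vp,i}}$, the $\mu$-independence gives an \emph{equality}
\[
\frac{1}{(W^{\mu_n}_0)_{S_{\vp,i}}}\, g^{0,\mu_n}_{S_{\vp,i}}\!\left(\mathcal{L}^{0,\mu_n}_{\vp,i}\right)
\;=\;
\frac{1}{X_{i,\vp}}\, g^{0}_{S_{\vp,i}}\!\left(\mathcal{L}^{0}_{\vp,i}\right),
\]
so the straightened loop domains differ only by a rotation in $\partial\mathbb{D}$, and $(W^{\mu_n}_0)_{S_{\vp,i}}\to X_{i,\vp}$ in distribution by the argument of Lemma~\ref{lemme cv approximations}. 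Carath\'eodory convergence of $g^{0,\mu_n}_{S_{\vp,i}}(\mathcal{B}^{0,\mu_n}_{\vp,i})$ is then immediate, and composing with $(g^{0,\mu_n}_{S_{\vp,i}})^{-1}$—which converges from the $\mathcal{D}$-convergence of Proposition~\ref{prop cv branche vers zero}—finishes. There is no need for Proposition~4.47 of \cite{Law05} or the $V_\delta$ argument. For the Hausdorff statement, the paper simply observes that the outermost loop around any fixed point is constant in law as $\mu$ varies (all explorations build the same $\mathrm{CLE}_4$), so once Carath\'eodory convergence identifies the limit, tightness in the Hausdorff topology upgrades it for free.
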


\begin{proof}
	It suffices to prove the convergence in the sense of Carath\'eodory since the outermost loop surrounding a point $z$ is constant in law as $\mu \to \infty$ {given that it is the outermost loop of a $\mathrm{CLE}_4$ surrounding $z$}. Let $\mu_n \to \infty$. By Skorokhod's representation theorem, we assume that the convergence of Proposition \ref{cv branche stoppee} holds almost surely along $\mu_n$.
	
	Let $\vp>0$. It is enough to prove the convergence for the excursions before the time $\tau_0$. Let $i\ge 1$ such that $T_{\vp,i}<\tau_0$. By Proposition \ref{cv branche stoppee}, we know that a.s.
	\begin{equation}\label{cv D S Carath\'eodory}
		D^{0,\mu_n}_{S_{\vp,i}}
		\mathop{\longrightarrow}\limits_{n\to \infty}
		D^0_{S_{\vp,i}}
	\end{equation}
	in the sense of Carath\'eodory.
	
	Besides, let us denote by $g^{0,\mu_n}_{S_{\vp,i}}: D^{0,\mu_n}_{S_{\vp,i}} \to \mathbb{D}$ (resp. $g^{0}_{S_{\vp,i}}: D^{0}_{S_{\vp,i}} \to \mathbb{D}$) the unique conformal map such that $g^{0,\mu_n}_{S_{\vp,i}}(0)=0$ and $(g^{0,\mu_n}_{S_{\vp,i}})'(0)= e^{-s}$ (resp. $g^{0}_{S_{\vp,i}}(0)=0$ and $(g^{0}_{S_{\vp,i}})'(0)= e^{-s}$). Then by definition of the radial Loewner chain, we have
	\begin{equation}\label{eq egalite boucles a rotation pres}
	\frac{1}{(W^{\mu_n}_0)_{S_{\vp,i}}} 
	g_{S_{\vp, i}}^{0,\mu_n} \left(
	\mathcal{L}^{0,\mu_n}_{\vp,i}
	 \right)
	 = 
	 \frac{1}{X_{i,\vp}} g^0_{S_{\vp,i}} \left(
	 \mathcal{L}^0_{\vp,i}
	 \right),
	\end{equation}
	where $X_{i,\vp}$ is a uniform random variable in $\partial \mathbb{D}$ which comes from the approximation of the uniform exploration: {indeed, from the approximation of the uniform exploration \eqref{approximation exploration uniforme}, we know for all $i\ge 1$, that the loop $g^0_{S_{\vp,i}}(\mathcal{L}^0_{\vp,i})$ is drawn using the radial Loewner chain driven by
		$$
		s\mapsto X_{i,\vp} \exp\left(i \left( 2e_{i,\vp}(s) - \int_0^s \cot(e_{i,\vp}(s))\right) \right),
		$$
		where the $X_{i,\vp}$'s are i.i.d.\@ uniform random variables in $\partial \mathbb{D}$ which are independent of $B$; while by definition of the radial $\mathrm{SLE}^{\langle \mu_n \rangle}_4(-2)$ exploration, using that the local time is constant on excursion time intervals, we know that the loop $g_{S_{\vp, i}}^{0,\mu_n} \left(
		\mathcal{L}^{0,\mu_n}_{\vp,i}
		\right)$ is drawn using the radial Loewner chain driven by
		$$
		s \mapsto (W^{\mu_n}_0)_{S_{\vp,i}}  \exp\left(i \left( 2e_{i,\vp}(s) - \int_0^s \cot(e_{i,\vp}(s))\right) \right),
		$$
		hence \eqref{eq egalite boucles a rotation pres} since the driving functions only differ by their starting point.} One concludes using the fact that the $(W^{\mu_n}_0)_{S_{\vp,i}}$'s for $i\ge 1$ converge to independent uniform random variables on $\partial \mathbb{D}$ as in the end of the proof of Lemma \ref{lemme cv approximations}, so that
	$$
	g_{S_{\vp, i}}^{0,\mu_n} \left(
	\mathcal{B}^{0,\mu_n}_{\vp,i}
	\right)
	\mathop{\longrightarrow}\limits_{n\to \infty}
	g_{S_{\vp, i}}^{0} \left(
	\mathcal{B}^{0}_{\vp,i}
	\right)
	$$
	in the sense of Carath\'eodory in distribution jointly in $i\ge 1$. By taking (\ref{cv D S Carath\'eodory}) into account, this ends the proof.
\end{proof}

Let us conclude this section by ending the proof of Proposition \ref{prop mu vers l'infini}. 

\begin{proof}[Proof of Proposition \ref{prop mu vers l'infini}]
Let us first prove the convergence of the exploration branches. More precisely, we want the convergences (\ref{eq cv branche vers z}) to hold jointly in $z \in \mathbb{D} \cap \Q^2$. 

For all $\mu \in \R$, for all $z \neq w \in \mathbb{D} \cap \Q^2$, we define the time $\sigma_{z,w}^\mu$ (resp.\@ $\sigma_{z,w}$) at which the radial $\mathrm{SLE}^{\langle \mu \rangle}_4(-2)$ exploration (resp.\@ uniform exploration) towards $z$ separates $w$ from $z$ by 
$$
\sigma^\mu_{z,w} = \inf \{ s\ge0; \ w \not \in {\bf D}^{z,\mu}_s\} \qquad \text{(resp.}
\enskip 
\sigma_{z,w} = \inf \{ s\ge0; \ w \not \in {\bf D}^{z}_s\} \text{).}
$$ 
By the target invariance of the explorations {(see Proposition \ref{prop target invariance} for the $\mathrm{SLE}^{\langle \mu \rangle}_4(-2)$)} and by the renewal property {(see Proposition \ref{prop renewal et Markov conforme} for the $\mathrm{SLE}^{\langle \mu \rangle}_4(-2)$)}, it is enough to show that for all $z \neq w \in \mathbb{D} \cap \Q^2$ we have the convergence of the separation time
$$
\sigma^\mu_{z, w} \mathop{\longrightarrow}\limits_{\mu \to \infty}^{(\mathrm{d})}
\sigma_{z,w},
$$
jointly with the convergence of the branch ${\bf D}^{z,\mu}$. Actually, by conformal invariance, it is enough to prove that for all $w \in (\mathbb{D} \cap \Q^2)\setminus \{0\}$, 
\begin{equation}\label{cv temps de separation}
\sigma^\mu_{0, w} \mathop{\longrightarrow}\limits_{\mu \to \infty}^{(\mathrm{d})}
\sigma_{0,w},
\end{equation}
jointly with the convergence of the branch ${\bf D}^{0,\mu}$. The convergence of the separation time (\ref{cv temps de separation}) will be easier to prove in our context than in \cite{AHPS21}. One can first see that $\sigma^\mu_{0,w}$ is tight as $\mu \to \infty $. Indeed, for all $\mu\in \R$, let $I^\mu$ be the smallest integer $i\ge 1$ such that the $i$-th loop surrounding $0$ does not contain $w$. Then $\sigma^\mu_{0,w}$ is smaller than the time 
at which we draw the $I^\mu$-th loop. If we assumed by contradiction that $\sigma^{\mu_n}_{0,z} \to \infty$ as $n \to \infty$ for some sequence $\mu_n \to \infty$, then we would have $I^{\mu_n} \rightarrow \infty$ and then $w$ would be at distance zero from $0$ by the Hausdorff convergence of the domains inside the loops encircling $0$ of Lemma \ref{lemme cv boucles}, absurd. Similarly, $\sigma_{w,0}^\mu$ is also tight as $\mu \to \infty$. Now let $\mu_n \to \infty$. By taking a subsequence, again denoted by $\mu_n$, 
we may assume that
\begin{equation}\label{eq cv D tau B}
({\bf D}^{0,\mu_n}, \tau^{\mu_n}_0, B^{0,\mu_n}, {\bf D}^{w,\mu_n}, \tau_w^{\mu_n}, B^{w,\mu_n}) 
\mathop{\longrightarrow}\limits_{n \to \infty}^{(\mathrm{d})}
({\bf D}^{0}, \tau_0, B^0, \widetilde{\bf D}^w, \widetilde{\tau}_w, \widetilde{B}^w) 
\end{equation}
in $\mathcal{D} \times \R \times \mathcal{C}(\R_+, \R) \times \mathcal{D}_w \times \R \times \mathcal{C}(\R_+, \R)$, where $ (\widetilde{\bf D}^w, \widetilde{\tau}_w, \widetilde{B}^{w})$ has the same law as $({\bf D}^w, \tau_w, B^w)$, jointly with the convergence of the separation times 
\begin{equation}\label{eq cv separation sous-suite}
(\sigma^{\mu_n}_{0,w}, \sigma^{\mu_n}_{w, 0})
\mathop{\longrightarrow}\limits_{n \to \infty}^{(\mathrm{d})}
(\sigma^*_{0,w}, \sigma^*_{w, 0})
\end{equation}
towards some random variables $\sigma^*_{0,w}, \sigma^*_{w, 0}$. 

{In what follows, we prove that $\sigma_{0,w}^*=\sigma_{0,w}$ so that \eqref{cv temps de separation} holds. The main idea of the proof is to show that $\widetilde{\bf D}^w$ coincides up to a time-change with ${\bf D}^0$ until the time $\sigma_{0,w}^*$, which will imply the difficult part of the equality $\sigma_{0,w}^*=\sigma_{0,w}$, which is the inequality $\sigma_{0,w}^*\le\sigma_{0,w}$.}

For all $n \ge 0$, let $\varphi_n : [0, \sigma^{\mu_n}_{0,w}] \to [0, \sigma^{\mu_n}_{w, 0}]$ be the continuous increasing function such that for all $s \in [0, \sigma^{\mu_n}_{0,w}]$, we have $B_s ^{0,\mu_n}= B^{w,\mu_n}_{\varphi_n(s)}$ (given by construction of the branching exploration). One can also express $\varphi_n(s)$ as minus the logarithm of the conformal radius of $D^{0,\mu_n}_s$ seen from $w$. 
Since $\sigma^{\mu_n}_{w,0}$ converges in distribution, and thanks to (\ref{eq cv D tau B}), we may take another subsequence again denoted by $\mu_n$ such that for all $s \in [0,\sigma^*_{0,w})\cap \Q$, we have
\begin{equation}\label{eq cv phi}
\varphi_n(s) \mathop{\longrightarrow}\limits_{n \to \infty}^{(\mathrm{d})} \varphi^*(s)
\end{equation}
for some increasing process $(\varphi^*(t))_{t \in [0,\sigma^*_{0,w})\cap \Q}$ jointly with the convergences (\ref{eq cv D tau B}) and (\ref{eq cv separation sous-suite}). This implies that $B^0_t = \widetilde{B}^w_{\varphi^*(t)}$ for all $t \in [0,\sigma^*_{0,w})\cap \Q$, hence $\varphi^*$ is strictly increasing. 
Similarly, we can also assume that for all $s\in [0,\sigma^*_{w,0})\cap \Q$, jointly with (\ref{eq cv D tau B}), (\ref{eq cv separation sous-suite}) and (\ref{eq cv phi}), we have
\begin{equation}\label{eq cv psi}
	\varphi_n^{-1}(s) \mathop{\longrightarrow}\limits_{n \to \infty}^{(\mathrm{d})} \psi^*(s)
\end{equation}
for some increasing process $(\psi^*(s))_{s \in [0,\sigma^*_{w,0})\cap \Q}$. For all $s\in [0,\sigma^*_{w,0})\cap \Q$, we also have $B^0_{\psi^*(s)} = \widetilde{B}^{w}_s$. So $B^0_{\vert [0,\sigma^*_{0,w}]}$ is obtained as a time change of $\widetilde{B}^z_{\vert [0,\sigma^*_{w,0}]}$, which is itself a time change of $B^0_{\vert [0,\sigma^*_{0,w}]}$ Thus, $\varphi^*$ and $\psi^*$ can be extended to continuous increasing bijections between $[0, \sigma^*_{0,w}]$ and $[0,\sigma^*_{w,0}]$ such that $\varphi^*= (\psi^*)^{-1}$. As a result, the convergences (\ref{eq cv phi}) and (\ref{eq cv psi}) hold uniformly on the corresponding intervals. 

For all $\vp>0$, for all $i,n\ge 1$, let $[S^{0,\mu_n}_{\vp,i}, T^{0,\mu_n}_{\vp, i}]$ (resp. $[S^{0}_{\vp,i}, T^{0}_{\vp, i}]$, $[S^{w,\mu_n}_{\vp,i}, T^{w,\mu_n}_{\vp, i}]$, $[\widetilde{S}^{w}_{\vp,i}, \widetilde{T}^{w}_{\vp, i}]$) be the interval of the $i$-th excursion of $B^{0, \mu_n}$ (resp. $B^0$, $B^{w,\mu_n}$, $\widetilde{B}^w$) of height larger than $\vp$, during which the exploration ${\bf D}^{0,\mu_n}$ (resp. ${\bf D}^0$, ${\bf D}^{w,\mu_n}$, $\widetilde{\bf D}^w$) draws a loop encircling a domain $\mathcal{B}^{0,\mu_n}_{\vp,i}$ (resp. $\mathcal{B}^0_{\vp,i}$, $\mathcal{B}^{w,\mu_n}_{\vp,i}$, $\widetilde{\mathcal{B}}^w_{\vp,i}$).
We may assume by Lemma \ref{lemme cv boucles} that these domains converge: for all $\vp>0$, jointly with the previous convergences, 
$$
	\left(
	\mathcal{B}^{0,\mu_n}_{\vp, i},
	\mathcal{B}^{w,\mu_n}_{\vp,i}
	\right)_{i\ge 1}
	\mathop{\longrightarrow}\limits_{n \to \infty}^{(\mathrm{d})}
	\left(
	\mathcal{B}^{0}_{\vp, i},
	\widetilde{\mathcal{B}}^w_{\vp,i}
	\right)_{i\ge 1}
	$$
in the sense of Carathéodory and also for the Hausdorff distance when we consider their closure. 
Notice that, by the convergence of the Brownian motions, we also have the convergence of the times of start and end of the excursions of height larger than $\vp $. By Skorokhod's representation theorem, we assume that all these convergences hold almost surely.

If $s < \sigma_{0,w}$, then $w \in {\bf D}^0_s$, so that for $n$ large enough we have $w \in {\bf D}^{0,\mu_n}_s$, hence $s< \sigma_{0,w}^{\mu_n}$. Thus we have plainly $\sigma^*_{0,w} \ge \sigma_{0,w}$. Let us now prove that $\sigma^*_{0,w}\le\sigma_{0,w}$. Let $\vp>0$ small enough so that we can find $i\ge 1$ such that $ S^0_{\vp,i} < T^0_{\vp,i} < \sigma^*_{0,w}$. Let $s <S^0_{\vp,i} $. Then we have by definition of the exploration $\mathcal{B}^0_{\vp,i} \subset {\bf D}^0_s$ and $\widetilde{\mathcal{B}}^w_{\vp,i} \subset \widetilde{\bf D}^w_{\vp,i}$. But since $T^0_{\vp,i} < \sigma^*_{0,w}$ we have for all $n$ large enough $T^{0,\mu_n}_{\vp,i} < \sigma^{\mu_n}_{0,w}$. Thus, for all $n$ large enough, $\mathcal{B}^{0,\mu_n}_{\vp,i} = \mathcal{B}^{w,\mu_n}_{\vp,i}$ (given that $B^{0,\mu_n}_s=B^{w,\mu_n}_{\varphi_n(s)}$ for all $s\le \sigma^{\mu_n}_{0,w}$). Hence $\mathcal{B}^0_{\vp,i} = \widetilde{\mathcal{B}}^w_{\vp,i}$ a.s.\@ by the Hausdorff convergence. Therefore ${\bf D}^0_s \cap \widetilde{\bf D}^w_{\varphi^*(s)} \neq \emptyset$ almost surely. But then by taking some $z \in {\bf D}^0_s \cap \widetilde{\bf D}^w_{\varphi^*(s)} $, we see by the Carathéodory convergence that the limits of ${\bf D}^{0,\mu_n}_s= {\bf D}^{w,\mu_n}_{\varphi_n(s)}$ coincide, i.e. that ${\bf D}^0_s =\widetilde{\bf D}^w_{\varphi^*(s)} $ {since $d_{\text{Carathéodory},z}$ defined in \eqref{eq distance caratheodory} is indeed a distance on simply connected open domains containing $z$.} As a consequence, $s< \sigma_{0,w}$. By taking $\vp$ small enough and $i$ such that $S_{\vp,i}^0$ is close enough to $\sigma^*_{0,w}$ we obtain that $\sigma^*_{0,w}\le\sigma_{0,w}$, hence (\ref{cv temps de separation}). This concludes the proof 
since the domains $D^{z,w,\mu_n}_s$ for $s\ge 0$ correspond up to a time-change to the branch ${\bf D}^{w,\mu_n}$ stopped when $z$ and $w$ are separated.
\end{proof}

\section{Conformally invariant distance and quantum natural distance}\label{section Lamperti}
This section is devoted to the relationships between the distance $\mathrm{d}_{\mathrm{WW}}$ to the boundary of \cite{WW13} {recalled in Subsection \ref{sous-section explo unif}} and the quantum natural distance $\mathrm{d}_{\mathrm{q}}$ introduced by \cite{AHPS21} {which is characterized by \eqref{eq distance quantique}}. We prove Theorem \ref{th Lamperti} using Theorem \ref{prop de Markov disques quantiques}, which relates these two distances via a Lamperti type transform modulo a process with stationary increments.

{The main idea of the proof is that when one parametrizes the exploration by the Lamperti transform of the quantum distance, if we stop the exploration at time $t$, denote by $x$ the quantum boundary length of the unexplored region and if we divide by $x$ the process giving the future evolution of the quantum boundary length, then the process has the same law as the initial process. A crucial ingredient, coming from Theorem \ref{prop de Markov disques quantiques} is that, once properly normalized, the unexplored region is again a unit-boundary quantum disk.}
\begin{proof}[Proof of Theorem \ref{th Lamperti}]
{Recall from Subsection \ref{sous-section composante localement la plus grande} that $\mathcal{Y}(u)=\nu^2_{h^2_{\vert C_u}} (\partial C_u)$ where $(C_u)_{u\ge 0}$ is the locally largest component in the uniform exploration parametrized by the distance $\mathrm{d}_{\mathrm{WW}}$ to the boundary. Recall that $\sigma$ is the right-continuous inverse of $(\mathrm{d}_\mathrm{q} (\partial \mathbb{D}, C_u))_{u\ge 0}$ and for all $t\ge 0$ we have $Y_t= \mathcal{Y}(\sigma(t))$.} Let $\tau$ be defined by (\ref{Lamperti}) with $\alpha=  -1$. We know from Theorem \ref{prop de Markov disques quantiques} that
	$$
	({\mathcal{Y}}({\sigma(t)}))_{t\ge 0} \mathop{=}\limits^{(\mathrm{d})}(e^{\xi(\tau({\pi t }))})_{t\ge 0},
	$$
	 so that $
	(\log(\mathcal{Y}(\sigma(\tau^{-1}(t){/\pi}))))_{t\ge 0}
	$ has the same law as $\xi$ and we identify the two processes using an appropriate coupling.
By (\ref{eq distance quantique}) and by the fact that the Poisson point process (PPP) of loops $(\widehat{\gamma}_u)_{u\ge 0}$ of intensity $M$ introduced in Subsection \ref{sous-section explo unif} has an infinite intensity, we get that $u \mapsto \mathrm{d}_{\mathrm{q}}(\partial \mathbb{D}, C_u)$ is increasing, so that $t \mapsto \sigma(t)$ is continuous. Similarly, since the Lévy measure of $\xi$ is infinite, $t \mapsto \sigma(t)$ is increasing.

To end the proof, it is enough to show that the process $(\sigma(\tau^{-1}(t){/\pi}))_{t\ge 0}$ has stationary increments.

Each positive jump {of $(\mathcal{Y}(u))_{u\ge 0}$} at some time $u$ corresponds to the discovery of a simple loop $F_u^{-1}(\widehat{\gamma}_u)$ where $(\widehat{\gamma}_u)_{u\ge 0}$ is a PPP which has the same law as described in Subsection \ref{sous-section explo unif} and $F_u: C_u \to \mathbb{D}$ is the conformal transformation which is given by $F_u = \Phi\circ\widehat{F}_u^{z_{j_k}} \circ \Phi^{-1}$ when $u \in [\tau_{k-1}, \tau_k)$ using the notation introduced in Subsections \ref{sous-section explo unif} and \ref{sous-section composante localement la plus grande}, and where $\Phi: \mathbb{H} \to \mathbb{D}$ is a fixed conformal map.

 Let us denote by $(\mathcal{E}_t)_{t\ge 0}$ (resp. $(\mathcal{F}_u)_{u\ge 0}$) the augmented natural filtration of the process $(\mathcal{Y}(\sigma(t)))_{t\ge 0}$ (resp. {$(h^2,\widehat{\gamma}_u, F_u)_{u\ge 0}$}).

{Furthermore, for all $u\ge 0$, we define the quantum surface
$$\mathrm{QS}(u)\coloneqq \left(\mathbb{D}, h\circ F_{u}^{-1} + 2 \log |(F_u^{-1})'| - \log (\nu^2_{h^2}(\partial C_u))\right).$$}
{The main idea of the proof is to show that for all $t\ge 0$, the pair $(\mathrm{QS}(\sigma(\tau^{-1}(t){/\pi})), (\widehat{\gamma}_u)_{u\ge \sigma(\tau^{-1}(t){/\pi})})$ has the same law as $(\mathrm{QS}(0), (\widehat{\gamma}_u)_{u\ge 0})$ and to show that this entails the fact that $(\sigma(\tau^{-1}(t){/\pi}))_{t\ge 0}$ has stationary increments.}

By Theorem \ref{prop de Markov disques quantiques}, for all $t\ge 0$, conditionally on $(\mathcal{Y}(\sigma(s)))_{0 \le s \le t}$, the unexplored region $(C_{\sigma(t)},h^2_{|C_{\sigma(t)}})$ is a $2$-quantum disk of quantum boundary length $\nu^2_{h^2}(\partial C_{\sigma(t)})$.  In other words, conditionally on $(\mathcal{Y}(\sigma(s)))_{0 \le s \le t}$, the quantum surface 
	$$ 
	{\mathrm{QS}(\sigma(t))=}\left(\mathbb{D}, h\circ F_{\sigma(t)}^{-1} + 2 \log |(F_{\sigma(t)}^{-1})'| - \log (\nu^2_{h^2}(\partial C_{\sigma(t)}))\right)
	$$ 
	is a unit boundary $2$-quantum disk. But using the fact that $\tau^{-1}(t){/\pi}$ is an $(\mathcal{E}_t)_{t\ge 0}$-stopping time, we get that for all $t\ge 0$, conditionally on $\mathcal{E}_{\tau^{-1}(t){/\pi}}$, the quantum surface 
	\begin{equation}\label{eq disque quantique renormalisé}
	{\mathrm{QS}(\sigma(\tau^{-1}(t){/\pi}))}=\left(\mathbb{D}, h\circ F_{\sigma(\tau^{-1}(t){/\pi})}^{-1} + 2 \log |(F_{\sigma(\tau^{-1}(t){/\pi})}^{-1})'| - \log (\nu^2_{h^2}(\partial C_{\sigma(\tau^{-1}(t){/\pi})}))\right)
	\end{equation}
	is a unit boundary $2$-quantum disk.
	
	Besides, by (\ref{eq distance quantique}) we know that $u \mapsto \sigma^{-1}(u)=\mathrm{d}_{\mathrm{q}}(\partial \mathbb{D}, C_u)$ is adapted with respect to $(\mathcal{F}_u)_{u\ge 0}$. 
	Moreover, for all $u\ge 0$,
	$$\tau({\pi}\sigma^{-1}(u))=\int_0^{{\pi}\sigma^{-1}(u)} \frac{dt}{\mathcal{Y}(\sigma(t{/\pi}))}  {= \int_0^{\sigma^{-1}(u)} \frac{\pi dt}{\mathcal{Y}(\sigma(t))}}= {\pi}\int_0^u \frac{ \sigma^{-1}(dv)}{\mathcal{Y}(v)}.$$
	So $(\tau({\pi}\sigma^{-1}(u)))_{u\ge0}$ is $(\mathcal{F}_u)_{u\ge 0}$-adapted. In particular, we obtain that for all $t\ge 0$, the random variable $\sigma(\tau^{-1}(t){/\pi})$ is an $(\mathcal{F}_u)_{u\ge 0}$-stopping time. As a result, given that $(\widehat{\gamma}_u)_{u\ge 0}$ is independent of $h^2$, we get that for all $t\ge 0$, conditionally on $\mathcal{F}_{\sigma(\tau^{-1}(t){/\pi})}$, the point process $(\widehat{\gamma}_{\sigma(\tau^{-1}(t){/\pi})+u})_{u\ge 0}$ has the same law as $(\widehat{\gamma}_u)_{u\ge 0}$ i.e. it is a PPP of intensity $M$. In addition to that, by definition of $(F_u)_{u\ge 0}$ from $(\widehat{\gamma}_u)_{u\ge 0}$, we have that conditionally on $\mathcal{F}_{\sigma(\tau^{-1}(t){/\pi})}$, the pair $((\widehat{\gamma}_{\sigma(\tau^{-1}(t){/\pi})+u})_{u\ge 0}, (F_{\sigma(\tau^{-1}(t){/\pi})} \circ F_{\sigma(\tau^{-1}(t){/\pi})+u}^{-1})_{u\ge 0})$ has the same law as $((\widehat{\gamma}_u)_{u\ge 0}, (F_u^{-1})_{u\ge 0})$.
	
	{But notice that $(\mathrm{QS}(u))_{u\ge 0}$ is $(\mathcal{F}_u)_{u\ge 0}$-adapted, so that the quantum disk $\mathrm{QS}(\sigma(\tau^{-1}(t){/\pi}))$ is $\mathcal{F}_{\sigma(\tau^{-1}(t){/\pi})}$-measurable.} Thus, by taking the above observation on the quantum disk (\ref{eq disque quantique renormalisé}) into account, we have that conditionally on $\mathcal{E}_{\tau^{-1}(t){/\pi}}\subseteq \mathcal{F}_{\sigma(\tau^{-1}(t){/\pi})}$, the quantum disk {$\mathrm{QS}(\sigma(\tau^{-1}(t){/\pi}))$} from (\ref{eq disque quantique renormalisé}) is a unit boundary length $2$-LQG disk independent of the pair $$((\widehat{\gamma}_{\sigma(\tau^{-1}(t){/\pi})+u})_{u\ge 0}, (F_{\sigma(\tau^{-1}(t){/\pi})} \circ F_{\sigma(\tau^{-1}(t){/\pi})+u}^{-1})_{u\ge 0}),$$
	which has the same law as $((\widehat{\gamma}_u)_{u\ge 0}, (F_u^{-1})_{u\ge 0})$. In other words, conditionally on $\mathcal{E}_{\tau^{-1}(t){/\pi}}$, the quantum disk {$\mathrm{QS}(\sigma(\tau^{-1}(t){/\pi}))$} is a unit boundary length $2$-LQG disk independent of the process $(F_{\sigma(\tau^{-1}(t){/\pi})}(C_{\sigma(\tau^{-1}(t){/\pi})+u}))_{u\ge 0}$, which has the same law as $(C_u)_{u\ge 0}$.
	
Besides, for all $u\ge 0$,
\begin{align*}
&\log \mathcal{Y}({\sigma(\tau^{-1}(t){/\pi})+u}) - \log \mathcal{Y}({\sigma(\tau^{-1}(t){/\pi})})
\\
&= \log \left(\frac{\nu^2_{h^2}(F_{\sigma(\tau^{-1}(t){/\pi})}^{-1}(F_{\sigma(\tau^{-1}(t){/\pi})}(\partial C_{\sigma(\tau^{-1}(t){/\pi})+u})))}{\nu^2_{h^2}(\partial C_{\sigma(\tau^{-1}(t){/\pi})})} \right) \\
&=
\log \left(\nu^2_{h\circ F_{\sigma(\tau^{-1}(t){/\pi})}^{-1} + 2 \log |(F_{\sigma(\tau^{-1}(t){/\pi})}^{-1})'| - \log (\nu^2_{h^2}(\partial C_{\sigma(\tau^{-1}(t){/\pi})})} (F_{\sigma(\tau^{-1}(t){/\pi})}( \partial C_{\sigma(\tau^{-1}(t){/\pi})+u})) \right),
\end{align*}
where we used (\ref{covariance}) and the fact that $\nu^2_{h^2-\log b} = \nu^2_{h^2}/b$ for all $b>0$. So conditionally on $\mathcal{E}_{\tau^{-1}(t){/\pi}}$, the process $(\log \mathcal{Y}({\sigma(\tau^{-1}(t){/\pi})+u}) - \log \mathcal{Y}({\sigma(\tau^{-1}(t){/\pi})}))_{u\ge 0}$ has the same distribution as $(\log \mathcal{Y}(u))_{u\ge 0}$.

It is now a simple matter to check that $(\sigma(\tau^{-1}(t){/\pi}))_{t\ge 0}$ has stationary increments. Indeed, let $t\ge 0$. For all $u,s \ge 0$, set
$$
\widetilde{\mathcal{Y}}(u) \coloneqq \frac{\mathcal{Y}({\sigma(\tau^{-1}(t){/\pi})+u})}{\mathcal{Y}({\sigma(\tau^{-1}(t){/\pi})})} \qquad
\text{and}
\qquad
\widetilde{\xi}(s) \coloneqq \xi({t+s})- \xi(t).
$$
Define $\widetilde{\sigma}$ from $\widetilde{\mathcal{Y}}$ as $\sigma$ is defined from $\mathcal{Y}$. Notice that $\widetilde{\xi}$ is characterized by the equality
\begin{equation}\label{eq caractérisation xi tilde}
\forall s \ge 0, \qquad \log \left(\widetilde{\mathcal{Y}}\left({\widetilde{\sigma}\left({(1/\pi)}\int_0^s e^{\widetilde{\xi}(r)}dr\right)}\right) \right) = \widetilde{\xi}(s).
\end{equation}
Since $(\log \mathcal{Y}({\sigma(\tau^{-1}(t){/\pi})+u}) - \log \mathcal{Y}({\sigma(\tau^{-1}(t){/\pi})}))_{u\ge 0}$ has the same distribution as $(\log \mathcal{Y}(u))_{u\ge 0}$, we obtain that 
$$
\left((\log \mathcal{Y}({\sigma(\tau^{-1}(t){/\pi})+u}) - \log \mathcal{Y}({\sigma(\tau^{-1}(t){/\pi})}))_{u\ge 0}, \widetilde{\sigma}, \widetilde{\xi} \right)
$$ has the same law as $(\mathcal{Y}, \sigma, \xi)$. 
Besides, by (\ref{eq caractérisation xi tilde}), we have for all $s\ge 0$,
\begin{align*}
\log \left(\widetilde{\mathcal{Y}}\left({\widetilde{\sigma}\left({(1/\pi)}\int_0^s e^{\widetilde{\xi}(r)}dr\right)}\right) \right) &= \widetilde{\xi}(s) \\
&=\xi(t+s)-\xi(t) \\
&= \log \mathcal{Y}({\sigma(\tau^{-1}(t+s){/\pi})}) - \log  \mathcal{Y}({\sigma(\tau^{-1}(t){/\pi})}) \\
&= \log \widetilde{{\mathcal{Y}}}({\sigma(\tau^{-1}(t+s){/\pi})-\sigma(\tau^{-1}(t){/\pi})}).
\end{align*}
As a consequence, by density of the jumps of $\widetilde{\mathcal{Y}}${, by looking e.g.\@ at the positive jumps of $\widetilde{\mathcal{Y}}$ of size in $[\vp,\infty)$ and letting $\vp \to 0$, we obtain} that for all $s\ge 0$, 
$$\sigma(\tau^{-1}(t+s){/\pi})-\sigma(\tau^{-1}(t){/\pi}) = {\widetilde{\sigma}\left({(1/\pi)}\int_0^s e^{\widetilde{\xi}(r)}dr\right)}.$$
Using the fact that for all $s \ge 0$, we have $\tau^{-1}(s) = \int_0^s e^{\xi(r)} dr$, we conclude that $\sigma\circ \tau^{-1}$ has stationary increments.
\end{proof}

\section{Scaling limit of the distances to the boundary in $3/2$-stable maps}\label{section cartes}
In this section, we describe the scaling limit of the distance between large faces and the boundary in $3/2$ stable maps.
\subsection{Background on $3/2$-stable maps and the peeling exploration}\label{rappels épluchage}

Recall the definition of finite $3/2$-stable maps from Subsection \ref{section background}. If one conditions a $3/2$-stable map of perimeter $2\ell$ on having at least $n$ edges and lets $n\to \infty$, then one obtains as a local limit an infinite $3/2$-stable map of perimeter $2\ell$, whose law is written $\P^{(\ell)}_\infty$. The infinite $3/2$-stable map was studied in \cite{BCM} and we will rely on this work in the next subsection.

{Recall from Subsection \ref{section background} that if $\mathfrak{m}$ is a planar map, $\mathrm{d}^\dagger_{\mathrm{gr}}$ is the graph distance on the dual map of $\mathfrak{m}$ and that $\mathrm{d}^\dagger_{\mathrm{fpp}}$ is the first passage percolation distance, which is obtained by putting i.i.d.\@ parameter $1$ exponential random lengths on each edge of the dual map.} To study the distances $\mathrm{d}^\dagger_{\mathrm{gr}}$ and $\mathrm{d}^\dagger_\mathrm{fpp}$ on Boltzmann planar maps, a convenient tool is the peeling exploration. We only give a brief overview of its definition and we refer the interested reader to the lecture notes \cite{StFlour} and \cite{BuddPeeling}. It consists in discovering the faces of the map, starting from the root face $f_r$ by looking at each step what is behind an edge on the boundary of the explored region which is chosen according to a peeling algorithm. More precisely, we say that a map $\mathfrak{e}$ with a distinguished simple face different from $f_r$ called a hole is a \textit{submap} (with one hole) of $\mathfrak{m}$ if one can obtain $\mathfrak{m}$ by gluing a well chosen planar map in the hole of $\mathfrak{e}$ (see Figure \ref{image sous-carte}).

\begin{figure}[h]
   \centering
   \includegraphics[scale=0.95]{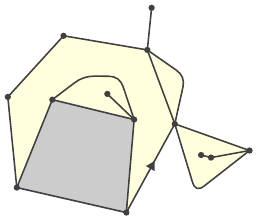}
	 \includegraphics[scale=0.95]{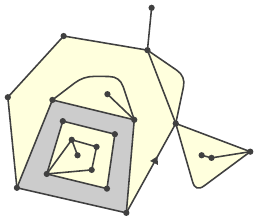}
	 \includegraphics[scale=1.05]{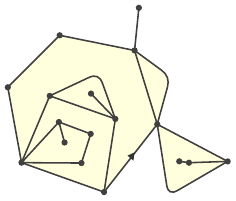}
   \caption{Left: a map $\mathfrak{e}$ with one hole which is a submap of the map $\mathfrak{m}$ on the right. The map $\mathfrak{u}$ which is in the hole of $\mathfrak{e}$ in the centre enables to recover $\mathfrak{m}$ after gluing the boundary of $\mathfrak{u}$ to the boundary of the hole.}
   \label{image sous-carte}
\end{figure}

A \textit{peeling algorithm} $\mathcal{A}$ is a function which takes a submap $\mathfrak{e}$ and gives back an edge in the boundary $\partial \mathfrak{e}$ of the hole of $\mathfrak{e}$. A (filled-in) peeling exploration of $\mathfrak{m}$ is an increasing sequence $(\overline{\mathfrak{e}}_n)_{n\ge 0}$ of submaps of $\mathfrak{m}$ starting from the submap $\overline{\mathfrak{e}}_0$ with only two faces which are the root face $f_r$ and a hole with the same degree, such that for all $n\ge 0$ the submap $\overline{\mathfrak{e}}_{n+1}$ is obtained from $\overline{\mathfrak{e}}_n$ as follows:
\begin{enumerate}
\item If the face $f$ which is on the other side of the peeled edge $\mathcal{A}(\overline{\mathfrak{e}}_n)$ is not in $\overline{\mathfrak{e}}_n$, then $\overline{\mathfrak{e}}_{n+1}$ is obtained by gluing $f$ to $\overline{\mathfrak{e}}_n$ onto the edge $\mathcal{A}(\overline{\mathfrak{e}}_n)$. We denote this case by $C_k$, where $k=\deg(f)/2$.
\item If the face $f$ on the other side of $\mathcal{A}(\overline{\mathfrak{e}}_n)$ is already in $\overline{\mathfrak{e}}_n$, then $f$ must be on the boundary of the hole of $\overline{\mathfrak{e}}_n$ and in this case $\overline{\mathfrak{e}}_{n+1}$ is obtained by identifying the two edges on the boundary of the hole. This creates at most two holes. If $\mathfrak{m}$ is infinite, then we choose to fill in the hole which contains a finite part of the map. If $\mathfrak{m}$ is finite, we choose here to fill in the hole which has the smallest degree and in case of equality we choose to fill in a hole chosen arbitrarily (so that in the finite case we explore the locally largest component). This case is written $G_{k,*}$ or $G_{*,k}$, where $2k$ is the degree of the hole which is filled-in, depending on whether the filled-in hole is on the left or on the right of the peeled edge {(we can speak of left and right since the map has an orientation)}. If there is not any hole, then the exploration stops.
\begin{figure}[h]
   \centering
	 \includegraphics[scale=1.2]{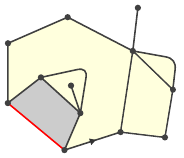}
	 \includegraphics[scale=1.05]{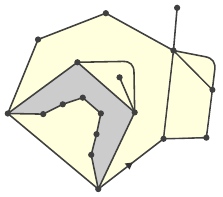}
	 \includegraphics[scale=1.05]{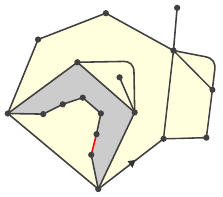}
	 \includegraphics[scale=1.05]{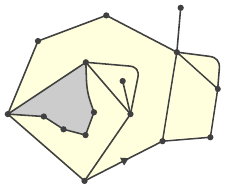}
   \caption{From left to right: two peeling steps of the map $\mathfrak{m}$ from Figure \ref{image sous-carte} starting from the submap $\mathfrak{e}$ on the left. The red edge on the left is the edge chosen by the peeling algorithm. The peeling of the red edge causes an event of type $C_4$. The peeling algorithm then chooses another edge on the boundary which is again in red. When we peel this edge, an event of type $G_{3,1}$ happens and we fill-in the hole which has the smallest perimeter with a map made in this case of only one edge.}
   \label{image épluchage}
\end{figure}
\end{enumerate}

When performing the exploration, one can keep track of the \textit{perimeter process} $(P(n))_{n\ge 0}$, where for all $n\ge 0$, the integer $P(n)$ is the half degree of the hole of $\overline{\mathfrak{e}}_n$. Notice that on an event of type $C_k$ for $k\ge 1$, the perimeter process increases by $k-1$ while on an event of type $G_{*,k}$ or $G_{k,*}$ for some $k\ge 0$, the perimeter process decreases by $k+1$.

A peeling algorithm which is particularly well suited to the study of the distance $\mathrm{d}_{\mathrm{fpp}}^\dagger$ is the \textit{uniform peeling algorithm}. Given a submap $\mathfrak{e}$, it chooses uniformly at random an edge in $\partial{e}$. Using this algorithm properly coupled with the exponential random lengths defining $\mathrm{d}_{\mathrm{fpp}}^\dagger$, by the elementary properties of exponential random variables, one can see that for all $n \ge 0$, assuming that the exploration has not stopped before, the fpp distance from $f_r$ to the hole $\mathfrak{u}_n$ of $\overline{\mathfrak{e}}_n$ can be written
\begin{equation}\label{eq distance fpp}
T({n+1})\coloneqq \mathrm{d}^\dagger_{\mathrm{fpp}}(f_r, \mathfrak{u}_n)= \sum_{i=0}^{n} \frac{\mathcal{E}_i}{2P(i)},
\end{equation}
where the $\mathcal{E}_i$'s are i.i.d. exponential random variables of parameter $1$. More precisely, $\overline{\mathfrak{e}}_n$ corresponds exactly to the closed ball around $f_r$ of radius $T(n)$ where we filled-in the holes which were filled-in during the exploration, and where $T(0)=0$ by convention. In particular, a face $f$ which is discovered at time $n$ (i.e. $f \in \mathrm{Faces}(\overline{\mathfrak{e}}_{n}) \setminus \mathrm{Faces}(\overline{\mathfrak{e}}_{n-1}$)) is at distance $T(n)$ from the root face $f_r$. When the explored map has the law $\P^{(\ell)}$ or $\P^{(\ell)}_\infty$, the $\mathcal{E}_i$'s are independent from the perimeter process. See e.g. Section 13.1 of \cite{StFlour} or Subsection 3.2 of \cite{Kam23} for details. 

For the dual graph distance $\mathrm{d}^\dagger_{\mathrm{gr}}$, an appropriate peeling algorithm is the \textit{peeling by layers algorithm} $\mathcal{A}_{\mathrm{layers}}$. Given a submap $\mathfrak{e}$ of $\mathfrak{m}$, we define the height of an edge $e\in \partial \mathfrak{e}$ as the dual graph distance between $f_r$ and the face next to $e$ which is not in the hole. Assume that there exists an integer $h\ge 0$ such that all the edges of $\partial \mathfrak{e}$ have heights in $\left\{h, h+1\right\}$ and that the edges at height $h$ form a connected interval of $\partial{\mathfrak{e}}$ (we denote this hypothesis by \textbf{(H)}). Then $\mathcal{A}_{\mathrm{layers}}(\mathfrak{e})$ is defined as the leftmost edge in $\partial \mathfrak{e}$ of height $h$ (see Figure \ref{image épluchage par couches}) and when all the edges of $\partial \mathfrak{e}$ are at the same height we choose one of them arbitrarily. Notice that $\overline{\mathfrak{e}}_0$ satisfies \textbf{(H)} and that if $\overline{\mathfrak{e}}_n$ satisfies \textbf{(H)} then with $\mathcal{A}_{\mathrm{layers}}$ the submap $\overline{\mathfrak{e}}_{n+1}$ also satisfies \textbf{(H)} (with $h$ increasing by zero or one). For all $n\ge 0$, let us denote by $H(n)$ the smallest height of the edges in $\partial \overline{\mathfrak{e}}_n$ when performing a peeling by layers exploration. The sequence $(H(n))_{n\ge 0}$ is called the \textit{height process}. Notice that $H(0)=0$ and that for all $n \ge 0$, we have $H(n+1)-H(n) \in \{0,1\}$. See e.g. Section 13.2 of \cite{StFlour} for more details.

\begin{figure}[h]
   \centering
	 \includegraphics[scale=1.2]{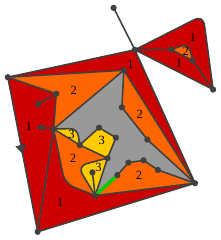}
   \caption{Illustration of the peeling by layers algorithm. The faces are colored according to their distance to the root face $f_r$. The edge chosen by the algorithm $\mathcal{A}_{\mathrm{layers}}$ is in green.}
   \label{image épluchage par couches}
\end{figure}

To be consistent with the notation of \cite{StFlour}, \cite{BBCK} and others, we add a subscript $\infty$ (resp. $*$) to the processes $(P(n))_{n\ge 0},(T(n))_{n\ge 0},(H(n))_{n\ge 0}$ under the law $\P_\infty^{(\ell)}$ (resp. $\P^{(\ell)}$). Recall that the peeling exploration under $\P^{(\ell)}$ satisfies a spatial Markov property in the sense that for all $n\ge 0$, conditionally on the explored region until time $n$, the maps which fill in the holes are independent Boltzmann maps, see e.g.\@ Proposition 4.6 of \cite{StFlour} for more details.

We finally recall from \cite{BBCK} the scaling limit of the perimeter process under $\P^{(\ell)}$ and $\P^{(\ell)}_\infty$ as $\ell \to \infty$. Let $(\Upsilon^\uparrow(t))_{t\ge 0}$ be a symmetric Cauchy process conditioned to stay positive and starting at $1$. More precisely, $(\Upsilon^\uparrow(t))_{t\ge 0}$ is the Doob $h$-transform of the Lévy process with no drift and no Brownian part, of Lévy measure {$(dx/|x|^2){\bf 1}_{x\neq 0}$} starting at one, using the function $h(x)= \sqrt{x}$. We recall from Proposition 6.6 of \cite{BBCK} the following convergences in distribution for the $J_1$ topology of Skorokhod:
\begin{align}
&\text{under } \P^{(\ell)}_\infty, \qquad \left(\frac{P_\infty(\lfloor \ell t \rfloor )}{\ell}\right)_{t\ge 0}
\mathop{\longrightarrow}\limits_{\ell \to \infty}^{(\mathrm{d})}
\left(\Upsilon^\uparrow(p_{\bf q} t)\right)_{t\ge 0}; \label{cvperimetre infini}\\
&\text{under } \P^{(\ell)}, \qquad \left(\frac{P_*(\lfloor \ell t \rfloor )}{\ell}\right)_{t\ge 0}
\mathop{\longrightarrow}\limits_{\ell \to \infty}^{(\mathrm{d})}
\left(X^{(-1)}(\pi p_{\bf q} t)\right)_{t\ge 0}.\label{cvperimetre fini}
\end{align}
Notice here that the scaling limit of the perimeter of the locally largest component (parametrized by the exploration time) corresponds already, up to a linear time change, to the quantum boundary length of the locally largest component during a uniform or $\mathrm{SLE}^{\langle \mu \rangle}_4(-2)$ exploration of a $\mathrm{CLE}_4$-decorated quantum disk, parametrized by the quantum natural distance or the quantum length of the trunk, by Theorem \ref{prop de Markov disques quantiques}.
\subsection{Metric growth for infinite $3/2$-stable maps with increasing perimeter}
Here we establish the scaling limit of $T_\infty$ and $H_\infty$ under $\P_\infty^{(\ell)}$ as $\ell \to \infty$. The scaling limit of $T_\infty$ is very easy to obtain:
\begin{proposition}\label{prop limite T infini}
Using the uniform peeling exploration, we have jointly with (\ref{cvperimetre infini}) the convergence in distribution for the topology of the uniform convergence on compact sets:
$$
\text{under } \P^{(\ell)}_\infty, \qquad \left(T_\infty(\lfloor \ell t \rfloor) \right)_{t\ge 0} \mathop{\longrightarrow}\limits_{\ell \to +\infty}^{(\mathrm{d})}  \left(\int_0^{t} \frac{1}{ 2  \Upsilon^\uparrow(p_{\bf q} s)} ds \right)_{t\ge 0}.
$$
\end{proposition}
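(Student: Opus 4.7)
The plan is to use the explicit representation \eqref{eq distance fpp}, which writes
\[
T_\infty(\lfloor \ell t\rfloor)
= \sum_{i=0}^{\lfloor \ell t\rfloor - 1} \frac{1}{2P_\infty(i)}
+ \sum_{i=0}^{\lfloor \ell t\rfloor - 1} \frac{\mathcal{E}_i-1}{2P_\infty(i)}
\eqqcolon A_\ell(t) + M_\ell(t),
\]
with $(\mathcal{E}_i)_{i\ge 0}$ i.i.d.\@ unit exponentials independent of the perimeter process. The strategy is standard for this kind of peeling computation: show that $M_\ell$ is a negligible martingale and that $A_\ell$ is a Riemann-type approximation of the target integral, then combine both via the joint convergence~\eqref{cvperimetre infini}. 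The joint convergence with~\eqref{cvperimetre infini} is automatic because the limit is expressed as a continuous functional of the process~$\Upsilon^\uparrow$.

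For the martingale part, I would condition on the perimeter process and use independence of the $\mathcal{E}_i$'s to obtain, by Doob's $L^2$ inequality,
\[
\E\!\left[\sup_{s\le t} M_\ell(s)^2 \,\Big|\, (P_\infty(i))_{i\ge 0}\right]
\le 4 \sum_{i=0}^{\lfloor \ell t\rfloor -1} \frac{1}{4 P_\infty(i)^2}.
\]
Since the perimeter $P_\infty(i)$ is of order $\ell$ on the time scale $i \le \ell t$, with the rescaled process bounded away from zero (because its scaling limit $\Upsilon^\uparrow(p_{\bf q}\, \cdot)$ is a strictly positive càdlàg process, hence a.s.\@ bounded below on $[0,t]$), this conditional variance is of order $t/\ell$. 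A truncation argument on the event $\{\min_{i\le \ell t} P_\infty(i)/\ell > \delta\}$, whose probability tends to one as first $\ell\to\infty$ then $\delta\to 0$ by~\eqref{cvperimetre infini}, then yields $\sup_{s\le t} |M_\ell(s)| \to 0$ in probability.

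For the deterministic part, applying Skorokhod's representation theorem I assume~\eqref{cvperimetre infini} holds almost surely in the $J_1$ topology. Writing the sum as a Riemann integral,
\[
A_\ell(t) = \int_0^{\lfloor \ell t\rfloor/\ell} \frac{du}{2\,P_\infty(\lfloor \ell u\rfloor)/\ell}.
\]
Since $P_\infty(\lfloor \ell \cdot\rfloor)/\ell \to \Upsilon^\uparrow(p_{\bf q}\,\cdot)$ in $J_1$, the convergence holds at every continuity point of the limit, hence Lebesgue-almost everywhere on $[0,t]$. On the event that $\Upsilon^\uparrow$ stays above $\delta$ on $[0,t]$, the integrand is uniformly bounded by $1/(2\delta)$ for $\ell$ large, so dominated convergence gives $A_\ell(t) \to \int_0^t du/(2\Upsilon^\uparrow(p_{\bf q} u))$ for each fixed $t$. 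As both $A_\ell$ and the limit are non-decreasing in $t$ and the limit is continuous, pointwise convergence on a dense set upgrades to uniform convergence on compacts by a Dini argument.

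The main potential obstacle is ensuring that the perimeter stays uniformly bounded away from zero on the whole time interval, so that the integrand $1/P_\infty$ is under control and does not produce a spurious contribution from near-zero values. This is not truly an obstacle because $\Upsilon^\uparrow$ is a.s.\@ strictly positive on $[0,t]$ (being a càdlàg process conditioned to stay positive and starting at $1$), so $\inf_{s\le t}\Upsilon^\uparrow(p_{\bf q} s) > 0$ a.s., and the $J_1$ convergence transfers the same lower bound (up to an arbitrarily small loss) to the rescaled perimeter with probability tending to one. With this uniform lower bound in hand, the two steps above combine to give the stated convergence jointly with~\eqref{cvperimetre infini}.
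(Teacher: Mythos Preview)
Your proof is correct and follows essentially the same approach as the paper: decompose $T_\infty$ into the Riemann sum $\sum 1/(2P_\infty(i))$ plus the centered martingale $\sum (\mathcal{E}_i-1)/(2P_\infty(i))$, show the latter vanishes via its conditional variance given $P_\infty$, and show the former converges using \eqref{cvperimetre infini} and the positivity of $\Upsilon^\uparrow$. The paper's version is simply more terse, skipping the explicit Doob/Skorokhod/Dini justifications you spell out.
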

\begin{proof}
Using (\ref{eq distance fpp}) and the fact that the $\mathcal{E}_n$'s are independent from $P_\infty$, we have for all $t\ge 0$,
$$
\E^{(\ell)}_\infty \left( \left. \sum_{n=0}^{\lfloor \ell t \rfloor -1} \left( \frac{\mathcal{E}_n -1}{ 2P_\infty(n)} \right)^2 \right \vert P_\infty \right)  = \sum_{n=0}^{\lfloor \ell t \rfloor -1} \frac{1}{ \left(2P_\infty(n) \right)^2 }\mathop{\longrightarrow}\limits_{\ell \to +\infty}^{(\P)} 0,
$$
where the convergence comes from (\ref{cvperimetre infini}). Moreover, again by (\ref{cvperimetre infini}),
$$
\text{under } \P^{(\ell)}_\infty, \qquad 
\left(  \sum_{n=0}^{\lfloor \ell t \rfloor -1}  \frac{1}{ 2P_\infty(n)}  \right)_{t\ge 0}
\mathop{\longrightarrow}\limits_{\ell \to \infty}^{(\mathrm{d})}
\left(\int_0^{t} \frac{1}{ 2  \Upsilon^\uparrow(p_{\bf q} s)} ds \right)_{t\ge 0}.
$$
This ends the proof since $t \mapsto \int_0^t 1/(2\Upsilon^\uparrow(p_{\bf q} s))ds$ is a continuous and increasing process.
\end{proof}

For the scaling limit of $H_\infty$, a $\log \ell$ factor appears:
\begin{proposition}\label{prop limite H infini}
For the peeling by layers exploration, we have the convergence in distribution jointly with (\ref{cvperimetre infini}) for the topology of the uniform convergence on compact sets:
$$
\text{under } \P^{(\ell)}_\infty, \qquad \left(\frac{H_\infty(\lfloor \ell t \rfloor)}{\log \ell} \right)_{t\ge 0} \mathop{\longrightarrow}\limits_{\ell \to +\infty}^{(\mathrm{d})}  \left(\int_0^{t} \frac{p_{\bf q}}{ 2  \Upsilon^\uparrow(p_{\bf q} s)} ds \right)_{t\ge 0}.
$$
\end{proposition}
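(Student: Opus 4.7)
My plan is to mirror the structure of the proof of Proposition~\ref{prop limite T infini}. First I would write $H_\infty(\lfloor \ell t \rfloor) = \sum_{k=0}^{\lfloor \ell t \rfloor - 1} \xi_k$, where $\xi_k \coloneqq H_\infty(k+1) - H_\infty(k) \in \{0,1\}$ indicates a height increase at peeling step $k$. By the spatial Markov property, the conditional distribution of $\xi_k$ given the past $\mathcal{F}_k \coloneqq \sigma(\overline{\mathfrak{e}}_0, \ldots, \overline{\mathfrak{e}}_k)$ depends on the state only through the perimeter $P_\infty(k)$ and through the number $L_k$ of edges on $\partial \overline{\mathfrak{e}}_k$ at the current minimal height. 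The pivotal estimate I aim for is
$$
\P^{(\ell)}_\infty(\xi_k=1 \mid \mathcal{F}_k) \;=\; (1+o(1))\,\frac{p_{\bf q} \log P_\infty(k)}{2\, P_\infty(k)}
$$
uniformly as $P_\infty(k) \to \infty$; from there the conclusion will follow by a Riemann sum together with a concentration argument parallel to the one used for Proposition~\ref{prop limite T infini}.

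To establish this asymptotic, I would argue layer by layer. At the start of a new layer of height $h$, essentially every boundary edge lies at height $h$, since during the processing of the previous layer every event $C_k$ produces $k-1$ fresh boundary edges at the next height while consuming one edge at the current height. Hence $L_k \asymp P_\infty(k)$ at the beginning of each layer. In a single peeling step, an event $C_k$ removes exactly one minimum-height edge, while a gluing event $G_{*,k}$ or $G_{k,*}$ removes roughly $\min(L_k,X)$ of them, where $X$ is the cyclic distance between the peeled edge and the edge with which it is identified. Since the negative jumps of the rescaled perimeter converge (via (\ref{cvperimetre infini})) to a Cauchy-type L\'evy measure of density $p_{\bf q}/(2x^2)$ near infinity, the expected number of minimum-height edges consumed per step satisfies
$$
\E\bigl[\min(L_k,X)\bigr] \;\sim\; \int_1^{L_k} x \cdot \frac{p_{\bf q}}{2x^2}\,dx \;\sim\; \frac{p_{\bf q}}{2}\log L_k \qquad (L_k \to \infty).
$$
Consequently a layer of initial size $L \asymp P$ takes on average of order $2L/(p_{\bf q}\log L)$ peeling steps to exhaust, giving the per-step rate $\sim p_{\bf q}\log P/(2P)$ stated above. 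The technical input I would invoke here is the fine peeling-transition analysis for $3/2$-stable maps carried out in \cite{BCM}.

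Granting the rate asymptotic, the rest is routine. Since $\log P_\infty(k) = \log \ell + O(1)$ once divided by $\log\ell$ (by (\ref{cvperimetre infini})), a Riemann sum yields
$$
\frac{1}{\log\ell} \sum_{k=0}^{\lfloor \ell t\rfloor -1} \frac{p_{\bf q}\log P_\infty(k)}{2 P_\infty(k)} \;\mathop{\longrightarrow}\limits_{\ell\to\infty}^{(\P)}\; \int_0^t \frac{p_{\bf q}}{2\,\Upsilon^\uparrow(p_{\bf q} s)}\,ds,
$$
jointly with the perimeter convergence. Because the $\xi_k$ are $\{0,1\}$-valued, $\mathrm{Var}(H_\infty(\lfloor \ell t \rfloor)) \le \E^{(\ell)}_\infty[H_\infty(\lfloor \ell t \rfloor)] = O(\log\ell)$, so $H_\infty(\lfloor \ell t \rfloor)/\log\ell$ concentrates on its mean and converges in probability. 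Uniform convergence on compact sets will then follow from the monotonicity of $H_\infty$ and the continuity of the limit by a Dini-type argument.

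The hard part will be the rate asymptotic in the second paragraph. The delicate point is that identifying which edges get swallowed during a gluing event requires tracking the cyclic position of the two identified edges on $\partial\overline{\mathfrak{e}}_k$, together with the distribution of heights along that cycle, and then averaging against the law of the cyclic distance; moreover one must control the fluctuations of $L_k$ within a single layer to justify replacing it by $P_\infty(k)$ inside the logarithm. The constant $p_{\bf q}/2$ is ultimately forced by the density of the L\'evy measure of $\Upsilon^\uparrow$ near infinity, but the combinatorial averaging is where all the real work lies.
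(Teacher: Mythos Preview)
Your proposal has a genuine gap at the ``pivotal estimate.'' The conditional probability $\P^{(\ell)}_\infty(\xi_k=1\mid\mathcal{F}_k)$ does \emph{not} satisfy the asymptotic you write, because it depends essentially on $L_k$ and not only on $P_\infty(k)$. Concretely: with the peeling-by-layers algorithm the minimum-height edges form a contiguous arc and the peeled edge is its left endpoint, so $\xi_k=1$ requires either $L_k=1$ (in which case a $C$-event already yields $\xi_k=1$) or a gluing event that swallows the entire remaining arc of length $L_k$. Since $L_k$ runs from roughly $2P_\infty(k)$ down to $1$ across a single layer, the conditional probability $\P^{(\ell)}_\infty(\xi_k=1\mid\mathcal{F}_k)$ varies from essentially $0$ to essentially $1$ within that layer; it is nowhere close to a uniform $p_{\bf q}\log P_\infty(k)/(2P_\infty(k))$. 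What your heuristic in the second paragraph actually computes is the expected \emph{number of steps per layer}, i.e.\ the asymptotics of $\theta_{j+1}-\theta_j$, and the ``per-step rate'' you extract from it is an average over the layer, not a conditional probability. As a consequence the compensator $\sum_k \E[\xi_k\mid\mathcal{F}_{k-1}]$ is not the Riemann sum you want, and your concentration argument (which controls $H_\infty-\sum_k\E[\xi_k\mid\mathcal{F}_{k-1}]$) does not give the limit.

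The paper sidesteps this by working at the layer scale rather than step by step. It partitions $[0,\lfloor \ell s\rfloor]$ into blocks of length $\asymp C\ell/\log\ell$ (so that each block spans of order $C$ layers) and controls $H_\infty(t^{(\ell)}_{k+1})-H_\infty(t^{(\ell)}_k)$ on each block. The lower bound uses Corollary~2 of \cite{BCM} (recorded here as Lemma~\ref{lemme theta k}) on the inverse $\theta_k$ of $H_\infty$, which says exactly that layers are completed in at most $(1+\vp)\,2\ell/(p_{\bf q}\log\ell)$ steps with high probability; the matching upper bound in expectation comes from Lemma~6 of \cite{BCM}. A sandwich argument with these two inputs (together with a truncation $P_\infty/\ell\in[\delta,1/\delta]$ to make the Riemann sums well behaved) then yields the stated limit. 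Your layer-duration heuristic is morally the content of these \cite{BCM} lemmas, but the argument must be organised block-by-block (or layer-by-layer), not step-by-step.
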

To prove the above result, we will use the inverse $\theta$ of $H_\infty$ defined by
$$
\forall k \ge 0, \qquad \theta_k = \inf\left\{n \ge 0, H_\infty(n) \ge k\right\}.
$$
We will rely on the following lemma from \cite{BCM} {which gives an upper bound for the number of peeling steps needed to complete $k$ layers.}
\begin{lemma}\label{lemme theta k}(Corollary 2 of \cite{BCM})
For all $\vp \in (0,1)$, for all $k \in \N$, 
$$
\lim_{\ell \to \infty}
\P^{(\ell)}_\infty
\left( \frac{p_{\bf q} \log \ell}{2\ell} \frac{\theta_k}{k} \le 1+\vp \right) = 1.
$$
\end{lemma}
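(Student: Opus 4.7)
The plan is to decompose $\theta_k = \sum_{j=0}^{k-1} (\theta_{j+1}-\theta_j)$, where each increment is the number of peeling steps required to raise the minimal height on the boundary of the explored region by one, and to control each increment by approximately $2\ell/(p_{\bf q} \log \ell)$ with probability going to~$1$. The target bound $\theta_k \le (1+\vp) \cdot 2\ell k/(p_{\bf q}\log\ell)$ will then follow by summing over $j\in\{0,\ldots,k-1\}$. Since $k$ is fixed, a union bound over the $k$ layers only costs a constant factor in probability, which is harmless.

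The first step is to use the spatial Markov property of the peeling exploration under $\P_\infty^{(\ell)}$: at the stopping time $\theta_j$, the unexplored map is again an infinite $3/2$-stable map, of perimeter $2P_\infty(\theta_j)$. The convergence \eqref{cvperimetre infini} then yields that for times $n$ up to order $\ell/\log\ell$, the perimeter $P_\infty(n)$ stays within $(1\pm\delta)\ell$ with probability going to~$1$, for any fixed $\delta>0$. Thus the single-layer increments we need to sum are each distributed (conditionally) as the first layer of a peeling by layers exploration in a map with perimeter of order $\ell$.

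The second step is the one-layer estimate: in a map of perimeter $2P$, the peeling by layers algorithm advances the minimal height after a number of peeling steps that is typically $\sim 2P/(p_{\bf q}\log P)$. Heuristically, a height increment happens once the arc of edges at the current minimal height has been entirely ``consumed''; under the critical non-generic type-$2$ regime \eqref{eq critique de type deux}, the one-step law of the peeling puts mass $\asymp 1/(p_{\bf q}\log P)$ per peeling step on events that clear a whole arc, which produces the announced order of magnitude. This is where one uses the explicit asymptotics of the transition probabilities of the peeling (as computed in \cite{B16} for the perimeter process and refined in \cite{BCM} for the peeling by layers height). Applying this to $P=P_\infty(\theta_j) \ge (1-\delta)\ell$ and to each $j\le k-1$ gives $\theta_{j+1}-\theta_j \le (1+\vp/2)\cdot 2\ell/(p_{\bf q}\log\ell)$ with probability $1-o(1)$.

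The main obstacle is the one-layer estimate: the peeling by layers algorithm, because it always selects the leftmost edge at minimal height, does not reduce to a Markov chain on the perimeter alone, so a direct martingale argument on $H_\infty$ is delicate. The workaround, which is the content of Corollary~2 of \cite{BCM}, is to couple the one-layer dynamics with an auxiliary Markov chain that tracks the length of the remaining arc at height $j$ and to apply hitting-time asymptotics for that chain under the critical regime; once this is granted, the global bound on $\theta_k$ follows by iteration as described above.
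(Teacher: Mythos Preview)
The paper does not prove this lemma; it is quoted as Corollary~2 of \cite{BCM} and used as a black box, so there is no argument in the paper to compare yours against.

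Your global strategy is the correct one and is indeed how such bounds are obtained in \cite{BCM}: decompose $\theta_k=\sum_{j=0}^{k-1}(\theta_{j+1}-\theta_j)$, use the spatial Markov property at the stopping times $\theta_j$, observe via \eqref{cvperimetre infini} that over the relevant $o(\ell)$ peeling steps the perimeter stays in $[(1-\delta)\ell,(1+\delta)\ell]$ with high probability, reduce to a single-layer estimate at perimeter $\approx\ell$, and take a union bound over the fixed number $k$ of layers.

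Your heuristic for the one-layer estimate, however, misidentifies the mechanism. You write that the peeling puts mass $\asymp 1/(p_{\bf q}\log P)$ per step on events that ``clear a whole arc''. A single step clears the entire remaining height-$h$ arc of length $L$ only through a gluing event swallowing at least $L$ edges, which has probability of order $1/L$, not $1/\log P$; if that were the mechanism the typical waiting time would be of order $P$, not $P/\log P$. The actual picture is that, in the critical type-$2$ regime, the chunk of the height-$h$ arc swallowed at each peeling step has a Cauchy-type tail, so its mean \emph{truncated at the current arc length} $L\le 2P$ is of order $\log P$; the arc therefore shrinks by order $\log P$ per step on average and is exhausted after roughly $2P/(p_{\bf q}\log P)$ steps. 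Turning this heuristic into a rigorous upper bound with the correct constant is precisely the content of the auxiliary-chain analysis in \cite{BCM} that you invoke at the end, so your sketch lands in the right place even if the informal explanation along the way is inaccurate.
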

{We will also rely on the following result of \cite{BCM} to upperbound in expectation an interpolated version of the height process.} {We denote by $D_\infty(n)\in \lb 1, 2P_\infty(n) \rb$ the number of edges on the boundary of the explored region at time $n$ which are at height $H_\infty(n)$. Let $\vp'>0$.  Let $f:[0,1] \to [0,1]$ be a continuous non-increasing function such that $f(0)=1$, $f(1)=0$. For all $n\ge 0$, we set
	\begin{equation}\label{eq delta H f}
	\Delta H_\infty^f(n) \coloneqq H_\infty(n+1)+ f\left(\frac{D_\infty(n+1)}{2P_\infty(n+1)}\right)-H_\infty(n)- f\left(\frac{D_\infty(n)}{2P_\infty(n)}\right).
	\end{equation}
	Note that $\Delta H_\infty^f(n)\ge 0$ since either the peeling by layers algorithm completes a layer and in this case $H_\infty(n+1)= H_\infty(n)+1$ while $D_\infty(n+1)= 2P_\infty(n+1)$, or the height process remains constant and $D_\infty(n+1) \le D_\infty(n)$. }
\begin{lemma}\label{lemme 6 BCM}(Lemma 6 of \cite{BCM}) For all $\vp'>0$, {if $f$ is a twice continuously differentiable non-increasing function such that $f(0)=1$, $f(1)=0$, $f'(0)=f'(1)=f''(0)=f''(1)=0$ and $0\le -f'(x) \le 1+ \vp'$ for all $x \in [0,1]$, then} there exists a constant $C'>0$ such that for all $\ell \ge 1$ and $n\ge 0$,
{$$
		\E^{(\ell)}_\infty\left(\left.\Delta H^f_\infty(n)\right\vert P_\infty(n)\right) \le (1+3\vp')^3 \frac{p_{\bf q}}{2}  \frac{\log P_\infty(n) + C'}{P_\infty(n)}.$$}
\end{lemma}
{The proof of Proposition \ref{prop limite H infini}} uses the same techniques as the proof of Proposition 6 in \cite{BCM}, which gives a lower bound for $H_\infty(n)/(\log n)^2$ under $\P^{(1)}_\infty$ as $n \to \infty$, {and then we will use Lemma \ref{lemme 6 BCM}}. {Let us introduce some notation.} Let $C>0$. For all $\ell \ge 1$ and $k\ge 0$, set $t_k^{(\ell)} = \lfloor C \frac{k\ell}{\log \ell} \rfloor$. Notice that 
\begin{equation}\label{eq delta t k}
\Delta t^{(\ell)}_k \coloneqq t_{k+1}^{(\ell)} - t^{(\ell)}_k = (C+o(1)) \frac{\ell}{\log \ell}
\end{equation}
when $\ell \to \infty$ where the $o(1)$ is uniform in $k$. Note also that for all real $s>0$ we have 
\begin{equation}\label{eq t s log ell}
t^{(\ell)}_{\lfloor \frac{s \log \ell}{C} \rfloor} = \lfloor \ell s \rfloor (1+o(1))
\end{equation}
as $\ell \to \infty$ where the $o(1)$ is non-positive.

Let $\delta \in (0,1)$. For all $\ell\ge 1$ and $k\ge 0$, let
\begin{equation}\label{eq def eta}
\eta^{(\ell)}_k = \left(H_\infty(t^{(\ell)}_{k+1})-H_\infty(t^{(\ell)}_{k}) - C \frac{p_{\bf q} \ell}{2P_\infty(t_k^{(\ell)})}\right) {\bf 1}_{P_\infty(t^{(\ell)}_k)/\ell \in [\delta,1/\delta]}.
\end{equation}
{Let us show that the negative part of $\eta^{ (\ell)}_k$ is bounded with high probability using Lemma \ref{lemme theta k}.}
{\begin{lemma}\label{lemme eq eta borne}
	We have
	$$
		\lim_{\ell \to \infty} \inf_{k\ge 0} \P^{(\ell)}_\infty \left(\eta^{(\ell)}_k  \ge -2\right) =1.
	$$
\end{lemma}}
\begin{proof}
Take $\vp \in (0, 2\delta/(C p_{\bf q}))$. 
Let ${N}^{(\ell)}_k = \lfloor (1-\vp)(C p_{\bf q}\ell)/(2P_\infty(t^{(\ell)}_k)) \rfloor$. Then one can see that
$$
	\left\{\frac{P_\infty(t^{(\ell)}_k)}{\ell} \not\in [\delta, 1/\delta]\right\}
	\cup
	\left(
	 \left\{\frac{P_\infty(t^{(\ell)}_k)}{\ell} \in [\delta, 1/\delta]\right\} \cap \left\{ {N}^{(\ell)}_k \le H_\infty(t^{(\ell)}_{k+1})-H_\infty(t^{(\ell)}_{k}) \right\}
	\right) \\
	\subset \left\{ \eta^{(\ell)}_k \ge - 2\right\}.
$$
Besides, by (\ref{eq delta t k}), for all $\ell$ large enough, for all $k\ge 0$, on the event $\{{P_\infty(t^{(\ell)}_k)}/{\ell} \in [\delta, 1/\delta] \}$, we have
\begin{equation}\label{eq delta t k sur N}
\frac{\Delta t^{(\ell)}_k}{{N}^{(\ell)}_k} \ge \frac{2P_\infty(t^{(\ell)}_k)}{(1-\vp/2)p_{\bf q} \log P_\infty({t^{(\ell)}_k})}.
\end{equation}
We define for all $\ell \ge 1$ and $k\ge0$ the stopping times $\theta_{t_k^{(\ell)}, 0}  = t_k^{(\ell)}$ and for all $i\ge 1$, 
$$\theta_{t_k^{(\ell)}, i} = \inf \left\{n \ge 0 ; \ H_\infty(n+t_k^{(\ell)}) \ge H_\infty({t_k^{(\ell)}}) +i \right\}.$$
Then by Lemma \ref{lemme theta k}, for all $\gamma \in (0,1)$, for $\ell$ large enough, for all $k\ge 0$,
\begin{align*}
\P^{(\ell)}_\infty &\left( \left\{\frac{P_\infty(t^{(\ell)}_k)}{\ell} \in [\delta, 1/\delta]\right\} \cap \left\{ {N}^{(\ell)}_k \le H_\infty(t^{(\ell)}_{k+1})-H_\infty(t^{(\ell)}_{k})  \right\} \right) \\
 &=
\P^{(\ell)}_\infty \left( \left\{\frac{P_\infty(t^{(\ell)}_k)}{\ell} \in [\delta, 1/\delta]\right\} \cap \left\{ \frac{\theta_{t_k^{(\ell)},{N}^{(\ell)}_k} }{ {N}^{(\ell)}_k } \le  \frac{\Delta t_k^{(\ell)}}{ {N}^{(\ell)}_k} \right\} \right) \\
&\ge\P^{(\ell)}_\infty \left(  \left\{\frac{P_\infty(t^{(\ell)}_k)}{\ell} \in [\delta, 1/\delta]\right\} 
\cap \left\{ \frac{\theta_{t_k^{(\ell)},{N}^{(\ell)}_k} }{ {N}^{(\ell)}_k } \le \frac{2P_\infty(t^{(\ell)}_k)}{(1-\vp/2)p_{\bf q} \log P_\infty({t^{(\ell)}_k})}  \right\}   
 \right) \qquad \text{by (\ref{eq delta t k sur N})}\\
&\ge (1-\gamma) \P^{(\ell)}_\infty \left(\frac{P_\infty(t^{(\ell)}_k)}{\ell} \in [\delta, 1/\delta]  \right),
\end{align*}
the last inequality being uniform in $k$ since, under the event $\{{P_\infty(t^{(\ell)}_k)}/{\ell} \in [\delta, 1/\delta]\}$, the random variable ${N}^{(\ell)}_k$ is bounded above by $(Cp_q) / (2 \delta) $. Thus, by taking $\gamma$ arbitrarily small, {we obtain the statement of the lemma.}
\end{proof}
Let us now finish the proof of Proposition \ref{prop limite H infini}.
\begin{proof}[Proof of Proposition \ref{prop limite H infini}]
{Recall that $C>0$, $t_k^{(\ell)} = \lfloor C \frac{k\ell}{\log \ell} \rfloor$ and that $\delta \in (0,1)$.} Let $s\ge 0$. Let 
$$\mathcal{H}_{\delta,{C,}\ell} (s)= 
\frac{1}{\log \ell} \sum_{k=0}^{\lfloor (s \log \ell)/C \rfloor} (H_\infty(t^{(\ell)}_{k+1})-H_\infty(t^{(\ell)}_k)) {\bf 1}_{\forall n \in \lb t_k^{(\ell)}, t_{k+1}^{(\ell)} \rb , \  P_\infty(n)/\ell \in [\delta, 1/\delta]} .
$$
By (\ref{eq t s log ell}), we have 
\begin{equation}\label{minoration hauteur}
\mathcal{H}_{\delta,{C,} \ell}(s) \le \frac{1}{\log \ell} H_\infty(\lfloor \ell s \rfloor)
\end{equation}
Moreover, by (\ref{eq t s log ell}) and (\ref{cvperimetre infini}), for all $\upsilon >0$, with probability $1-o(1)$ as $\delta \to 0$, for all $\ell$ large enough,
\begin{equation}\label{majoration hauteur}
\frac{1}{\log \ell} H_\infty(\lfloor \ell s \rfloor ) \le \mathcal{H}_{\delta,{C,} \ell}(s + \upsilon).
\end{equation}
{Recall the definition of $\eta^{(\ell)}_k$ given in \eqref{eq def eta}.} One can write
\begin{align*}
 \mathcal{H}_{\delta, {C,}\ell} (s) = &\frac{1}{\log \ell}\sum_{k=0}^{\lfloor (s \log \ell)/C\rfloor}
C \frac{p_{\bf q} \ell}{2P_\infty(t_k^{(\ell)})}  {\bf 1}_{\forall n \in \lb t^{(\ell)}_k, t^{(\ell)}_{k+1} \rb , \ P_\infty(n)/\ell \in [\delta, 1/\delta]} 
\\
&+\frac{1}{\log \ell}\sum_{k=0}^{\lfloor (s \log \ell)/C\rfloor} \eta^{(\ell)}_k {\bf 1}_{\forall n \in \lb t^{(\ell)}_k, t^{(\ell)}_{k+1} \rb , \ P_\infty(n)/\ell \in [\delta, 1/\delta]} .
\end{align*}
Consider the first term. 
\begin{align}
{X_{\delta, C,\ell}(s)} &\coloneqq\frac{1}{ \log \ell} \sum_{k=0}^{ \lfloor (s\log \ell )/ C  \rfloor}  C
 \frac{p_q \ell }{ 2 P_\infty({t_k^{(\ell)}})} {\bf 1}_{\forall n \in \lb t^{(\ell)}_k, t^{(\ell)}_{k+1} \rb , \ P_\infty(n)/\ell \in [\delta, 1/\delta]}\notag \\
&= \frac{1}{\log \ell} \int_0^{ \lfloor ({s\log \ell })/{ C } \rfloor +1} C \frac{p_q \ell }{ 2 P_\infty({t_{\lfloor v \rfloor}^{(\ell)}})}  {\bf 1}_{ \forall n \in \lb t^{(\ell)}_{\lfloor v \rfloor}, t^{(\ell)}_{\lfloor v \rfloor +1} \rb , \ {P_\infty(n) }/{ \ell} \in [\delta, \frac{1}{\delta}]} dv \notag\\
&=O\left(\frac{1}{\log \ell}\right)+ \int_0^{s }  \frac{p_q \ell}{ 2 P_\infty\left({t_{\lfloor ({u \log \ell })/{ C} \rfloor}^{(\ell)}}\right)} 
{\bf 1}_{\forall n \in \lb t_{\lfloor ({u \log \ell })/{ C} \rfloor}^{(\ell)}, t_{\lfloor ({u \log \ell })/{ C} \rfloor+1}^{(\ell)} \rb , \  {P_\infty\left(n\right)}/{ \ell} \in [\delta, \frac{1}{ \delta}]} du \notag \\
&\mathop{\longrightarrow}\limits_{\ell \to + \infty}^{(\mathrm{d})} { \mathcal{H}_\delta(s) \coloneqq}\int_0^s \frac{p_{\bf q}}{2\Upsilon^\uparrow(p_{\bf q} u)} {\bf 1}_{\Upsilon^\uparrow(p_{\bf q}u) \in [\delta, 1/\delta]} du \label{eq convergence premier terme H delta}
\end{align}
{jointly with (\ref{cvperimetre infini})} by (\ref{eq t s log ell}) and by dominated convergence. {By the $O(1/\log \ell)$ term in the third line, we mean a random variable which is bounded in absolute value by $\widetilde{C}/\log \ell$, where $\widetilde{C}$ is a constant depending on $C$ and $\delta$.}

For the second term, {note that since $H_\infty$ is non-decreasing, the negative part of $\eta^{(\ell)}_k$ is upperbounded as follows: for all $\ell\ge 1, k\ge 0$,
\begin{equation}\label{eq majoration eta moins}
\left( \eta^{(\ell)}_k\right)_-\le 2{\bf 1}_{\eta^{(\ell)}_k \ge -2} + {\bf 1}_{\eta^{(\ell)}_k <-2}  C \frac{p_{\bf q} \ell}{2P_\infty(t_k^{(\ell)})}{\bf 1}_{P_\infty(t^{(\ell)}_k)/\ell \in [\delta,1/\delta]} \le 2 +  {\bf 1}_{\eta^{(\ell)}_k <-2}  C \frac{p_{\bf q}}{2\delta},
\end{equation}
hence for all $\ell\ge 1, k\ge 0$,}
$$
\E^{(\ell)}_\infty \left((\eta^{(\ell)}_k)_-\right) \le 2 + \P^{(\ell)}_\infty\left(\eta^{(\ell)}_k < -2\right) C \frac{p_{\bf q}}{2\delta}.
$$
Hence, by {Lemma \ref{lemme eq eta borne}},
$$
{\limsup_{\ell \to \infty} }\sup_{k\ge 0} \E^{(\ell)}_\infty \left((\eta^{(\ell)}_k)_-\right) \le 2.
$$
{As a result, if we set
$$
Y_{\delta, C,\ell}(s) \coloneqq \frac{1}{\log \ell}\sum_{k=0}^{\lfloor (s \log \ell)/C\rfloor} \left(\eta^{(\ell)}_k\right)_- {\bf 1}_{\forall n \in \lb t^{(\ell)}_k, t^{(\ell)}_{k+1} \rb , \ P_\infty(n)/\ell \in [\delta, 1/\delta]} ,
$$
then
$$
\limsup_{\ell \to \infty} \E^{(\ell)}_\infty \left(Y_{\delta, C,\ell}(s) \right) \le \frac{2s}{C} \qquad {\text{and}} \qquad \forall \ell \ge 1, \quad \mathcal{H}_{\delta,C,\ell}(s) \ge X_{\delta,C,\ell}(s)- Y_{\delta,C,\ell}(s).
$$
Moreover, by \eqref{eq majoration eta moins}, one can see that the sequence of non-negative random variables $(Y_{\delta, C, \ell}(s))_{\ell \ge 1}$ is upperbounded by $(s/C)(2+ Cp_{\bf q}/(2\delta))$. Let $(\ell_m)_{m\ge 0}$ be an increasing sequence of integers. By taking a subsequence again denoted by $(\ell_m)_{m\ge 0}$, we may assume that $Y_{\delta,C, \ell_m}(s)$ converges in distribution jointly with \eqref{cvperimetre infini} and \eqref{eq convergence premier terme H delta}. By Skorokhod's representation theorem, we may assume that the random variables $\mathcal{H}_{\delta,C,\ell}(s)$, $X_{\delta,C,\ell}(s)$ and $Y_{\delta,C,\ell}(s)$ are defined on the same probability space and that the convergence \eqref{eq convergence premier terme H delta} holds almost surely together with the convergence of $Y_{\delta,C, \ell_m}(s)$ towards a non-negative random variable $Y_{\delta,C}(s)$. Note that since $Y_{\delta,C, \ell_m}$ is bounded it also converges in $\mathrm{L}^1$. As a result, a.s.
\begin{equation}\label{eq lowerbound H delta}
	\liminf_{m \to \infty} \mathcal{H}_{\delta,C, \ell_m} (s) \ge   \mathcal{H}_\delta(s)  -Y_{\delta,C}(s) \qquad \text{and} \qquad \E \left( Y_{\delta,C} \right) \le \frac{2s}{C}.
\end{equation}}

Let us then give an upper bound in expectation of $\mathcal{H}_{\delta,{C,}\ell}(s)$. {Let $\vp'>0$. Let $f: [0,1]\to [0,1]$ satisfying the assumptions of Lemma \ref{lemme 6 BCM}. Recall the definition of $\Delta H^f_\infty(n)$ in \eqref{eq delta H f}. By summing over $n \in \lb t^{(\ell)}_k, t^{(\ell)}_{k+1} -1\rb$, using that $\Delta H_\infty^f(n) \ge 0$ for all $n\ge 0$, we get
\begin{align*}
	\E^{(\ell)}_\infty &\left(\left(H_\infty(t^{(\ell)}_{k+1}) - H_\infty(t^{(\ell)}_k) + f\left(\frac{D_\infty(t^{(\ell)}_{k+1})}{2P_\infty(t^{(\ell)}_{k+1})} \right) -f\left(\frac{D_\infty(t^{(\ell)}_{k})}{2P_\infty(t^{(\ell)}_{k})} \right) \right){\bf 1}_{\forall n \in \lb t_k^{(\ell)}, t_{k+1}^{(\ell)} \rb , \  P_\infty(n)/\ell \in [\delta, 1/\delta]}\right)\\
	&\le\sum_{n=t^{(\ell)}_k}^{t^{(\ell)}_{k+1}-1} \E^{(\ell)}_\infty\left( \Delta H^f_\infty(n) {\bf 1}_{P_\infty(n) /\ell \in [\delta,1/\delta]} \right).
\end{align*}}
{Thus, using Lemma \ref{lemme 6 BCM} and the fact that $0 \le f \le 1$, we deduce that} for all $\ell \ge 1$ and $k\ge 0$,
\begin{align*}
\E^{(\ell)}_\infty &\left( \left(H_\infty(t^{(\ell)}_{k+1})-H_\infty(t^{(\ell)}_k)\right) {\bf 1}_{\forall n \in \lb t_k^{(\ell)}, t_{k+1}^{(\ell)} \rb , \  P_\infty(n)/\ell \in [\delta, 1/\delta]} \right)\\
 &\le 2+ (1+3\vp')^3 \frac{p_{\bf q}}{2} \sum_{n=t^{(\ell)}_k}^{t^{(\ell)}_{k+1} -1}
\E^{(\ell)}_\infty\left(\frac{\log P_\infty(n)+C'}{P_\infty(n)} {\bf 1}_{P_\infty(n)/\ell \in [\delta, 1/\delta]}
\right).
\end{align*}
As a result, by (\ref{eq t s log ell}),
$$
	\E^{(\ell)}_\infty\left( \mathcal{H}_{\delta,{C},\ell}(s) \right)
	\le \frac{1}{\log \ell} \left( 2\left\lfloor \frac{s \log \ell}{C} \right\rfloor+2+ (1+3\vp')^3 \frac{p_{\bf q}}{2} \sum_{n=0}^{\lfloor s \ell \rfloor -1} \E^{(\ell)}_\infty\left(\frac{\log P_\infty(n)+C'}{P_\infty(n)} {\bf 1}_{P_\infty(n)/\ell \in [\delta, 1/\delta]}\right)
	\right).
$$
Thus, using (\ref{cvperimetre infini}), by dominated convergence as $\ell \to \infty$, and then by letting $\vp' \to 0$, we get that for all $\delta >0$, 
\begin{equation}\label{eq majoration esp delta}
	\limsup_{\ell \to \infty} \E^{(\ell)}_\infty \left( \mathcal{H}_{\delta,{C},\ell}(s) \right)
	\le  \E \left(\int_0^{s} \frac{p_{\bf q}}{ 2  \Upsilon^\uparrow(p_{\bf q} v)} {\bf 1}_{\Upsilon^\uparrow(p_{\bf q} v) \in [\delta, 1/\delta]}dv \right) {+ \frac{2s}{C} = \E \left( \mathcal{H}_\delta(s) \right) + \frac{2s}{C}.}
\end{equation}
Besides, by the lower bound \eqref{eq lowerbound H delta} and by Fatou's lemma, 
$$
{\E \mathcal{H}_{\delta}(s) - \frac{2s}{C} \le \E \mathcal{H}_\delta(s) - \E Y_{\delta,C}(s) \le \E\left(\liminf_{m\to \infty} \mathcal{H}_{\delta, C,  \ell_m}(s) \right)\le}
\liminf_{m\to \infty} \E \mathcal{H}_{\delta,{C}, \ell_m}(s),
$$
{so that combining this with \eqref{eq majoration esp delta} we get
	\begin{equation}\label{eq controle esperance H delta}
	\E \mathcal{H}_{\delta}(s) - \frac{2s}{C} \le \liminf_{m\to \infty} \E \mathcal{H}_{\delta,C, \ell_m}(s) \le 
	\limsup_{m\to \infty} \E \mathcal{H}_{\delta,C, \ell_m}(s) \le \E \mathcal{H}_\delta(s) + \frac{2s}{C}.
	\end{equation}
}
Moreover, notice that for all $\vp'>0$, by dominated convergence and by (\ref{eq lowerbound H delta}),
\begin{align*}
\E &\left((\mathcal{H}_\delta(s)-\mathcal{H}_{\delta,{C},  \ell_m}(s){- Y_{\delta,C}(s)})_+\right) \\
&\le \vp' + \E\left( (\mathcal{H}_\delta(s)-\mathcal{H}_{\delta,C, \ell_m}(s){- Y_{\delta,C}(s)}){\bf 1}_{\mathcal{H}_\delta(s)-\mathcal{H}_{\delta,{C}, \ell_m}(s){- Y_{\delta,C}(s)}>\vp'}\right) \\
&\le \vp' +\E\left( \mathcal{H}_\delta(s){\bf 1}_{\mathcal{H}_\delta(s)-\mathcal{H}_{\delta,{C}, \ell_m}(s){- Y_{\delta,C}(s)}>\vp'}\right) \mathop{\longrightarrow}\limits_{m \to +\infty} \vp'.
\end{align*}
Now, since for all $x\in \R$, $|x|=-x+2x_+$, {using the second part of \eqref{eq lowerbound H delta}, the above upper bound and \eqref{eq controle esperance H delta}} we get that 
{
	\begin{align*}
		&\limsup_{m\to \infty} \E\left\vert\mathcal{H}_\delta(s) - \mathcal{H}_{\delta,C, \ell_m}(s)\right\vert \\&\le 
		\E(Y_{\delta,C}(s)) + \limsup_{m\to \infty}\E\left\vert\mathcal{H}_\delta(s) - \mathcal{H}_{\delta,C, \ell_m} (s)- Y_{\delta,C}(s)\right\vert \\
		&\le \frac{2s}{C} + 2 \limsup_{m\to \infty}\E\left(\left(\mathcal{H}_\delta(s) - \mathcal{H}_{\delta,C, \ell_m} (s)- Y_{\delta,C}(s)\right)_+\right) - \liminf_{m\to \infty} \E\left(\mathcal{H}_\delta(s) - \mathcal{H}_{\delta,C, \ell_m} (s)- Y_{\delta,C}(s)\right)\\
		&\le \frac{6s}{C} .
	\end{align*}
	By taking $C$ large and then $\delta$ small,} using  (\ref{minoration hauteur}) and (\ref{majoration hauteur}), {we get that for any increasing sequence of integers $(\ell_m)_{m\ge 0}$ the sequence of random variables $H_\infty(\lfloor \ell_m s \rfloor )/\log \ell_m$ converges in distribution towards $\int_0^s p_{\bf q}/(2 \Upsilon^\uparrow(p_{\bf q}u))du$ along a subsequence, jointly with (\ref{cvperimetre infini}).} This concludes the proof. 
\end{proof}
\subsection{Large $3/2$-stable maps and $\mathrm{CLE}_4$-decorated quantum disks}\label{sous-section cartes et CLE}

We first derive the analogues of Propositions \ref{prop limite T infini} and \ref{prop limite H infini} for finite $3/2$-stable maps by absolute continuity. {Recall from Subsection \ref{rappels épluchage} that $H_*$ is the height process under $\P^{(\ell)}$ and that $T_*$ records the fpp distance from the root face to the boundary of the unexplored region along the uniform peeling exploration under $\P^{(\ell)}$.}

\begin{corollary}\label{corollaire limite des hauteurs}
For the uniform peeling, jointly with (\ref{cvperimetre infini}) we have the convergence
$$
\text{under } \P^{(\ell)}, \qquad
\left(T_*({\lfloor \ell t\rfloor }) \right)_{t\ge 0} \mathop{\longrightarrow}\limits_{\ell \to + \infty}^{(\mathrm{d})} \left(\int_0^t {1\over 2X^{(-1)}(\pi p_{\bf q}s)} ds \right)_{t\ge 0}.
$$
For the peeling by layers, jointly with (\ref{cvperimetre infini}) we have the convergence
$$
\text{under } \P^{(\ell)}, \qquad
\left({H_*({\lfloor \ell t\rfloor }) \over \log \ell}\right)_{t\ge 0} \mathop{\longrightarrow}\limits_{\ell \to +\infty}^{(\mathrm{d})} \left(\int_0^t {p_{\bf q}\over 2X^{(-1)}(\pi p_{\bf q}s)} ds \right)_{t\ge 0}.
$$

\end{corollary}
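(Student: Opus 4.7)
The strategy is to transfer the scaling limits of Propositions \ref{prop limite T infini} and \ref{prop limite H infini} from the infinite-volume measure $\P^{(\ell)}_\infty$ to the finite-volume measure $\P^{(\ell)}$. This is done by combining the perimeter convergence (\ref{cvperimetre fini}) with an absolute continuity argument between the two measures restricted to events on which the perimeter of the locally largest component stays in a compact range $[\delta \ell, \delta^{-1}\ell]$.

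For the first passage percolation distance, I would essentially re-run the proof of Proposition \ref{prop limite T infini} in the finite-volume setting. The crucial identity $T_*(n+1) = \sum_{i=0}^{n} \mathcal{E}_i/(2P_*(i))$, with the $\mathcal{E}_i$'s i.i.d.\@ $\exp(1)$ and independent of $(P_*(i))_{i \ge 0}$, continues to hold under $\P^{(\ell)}$, since it only uses the Markovian nature of the uniform peeling and the properties of exponential random variables. The same conditional second-moment computation then shows that $\sum_{i<\lfloor \ell t\rfloor}(\mathcal{E}_i-1)/(2P_*(i))$ tends to zero in probability (first localising on $\{\min_{i\le \lfloor \ell t\rfloor} P_*(i)/\ell \ge \delta\}$ and then letting $\delta \to 0$), so that it is enough to control the deterministic Riemann sum $\sum_{i<\lfloor \ell t\rfloor} 1/(2P_*(i))$, whose convergence to $\int_0^t ds/(2X^{(-1)}(\pi p_{\bf q} s))$ follows from (\ref{cvperimetre fini}) and dominated convergence applied in the style of the previous section.

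For the dual graph distance under the peeling by layers, I would mirror the proof of Proposition \ref{prop limite H infini}, substituting $P_*$ for $P_\infty$ throughout. The two inputs borrowed from \cite{BCM} in that proof, namely Lemma \ref{lemme theta k} (bounding $\theta_k/k$ from above) and Lemma 6 of \cite{BCM} (bounding the expected height increments), need to be transferred from $\P^{(\ell)}_\infty$ to $\P^{(\ell)}$. For this I would rely on the fact that, on the event
\[
A_{\delta, T}^{(\ell)} = \left\{\forall n \le T\ell, \ P_*(n)/\ell \in [\delta, 1/\delta]\right\},
\]
the law of $(P_*(n))_{n \le T\ell}$ under $\P^{(\ell)}$ is mutually absolutely continuous with the law of $(P_\infty(n))_{n \le T\ell}$ under $\P^{(\ell)}_\infty$, with a Radon--Nikodym density bounded uniformly in $\ell$. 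This follows from the Doob $h$-transform relation between the two perimeter chains together with the estimate $W^{(\ell)} \sim (p_{\bf q}/2) c_{\bf q}^{\ell+1}\ell^{-2}$ given by (\ref{eq critique de type deux}). Both auxiliary lemmas then transfer to $\P^{(\ell)}$ on $A_{\delta, T}^{(\ell)}$, and the remainder of the proof of Proposition \ref{prop limite H infini} applies unchanged, giving the claimed convergence with $\Upsilon^\uparrow(p_{\bf q}\cdot)$ replaced by $X^{(-1)}(\pi p_{\bf q}\cdot)$.

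The main obstacle will be the end of the exploration: $X^{(-1)}$ has a finite lifetime $\zeta$ almost surely and $P_*$ eventually hits $0$, so $1/P_*$ blows up near extinction and the absolute continuity argument degenerates as $\delta \to 0$. This is handled in two steps. First, for any fixed $t$ strictly smaller than $\zeta/(\pi p_{\bf q})$, one has $\lim_{\delta \to 0}\liminf_{\ell \to \infty}\P^{(\ell)}(A_{\delta, t}^{(\ell)}) = 1$ by (\ref{cvperimetre fini}), which yields the convergence on $[0,t]$ for such $t$. Second, thanks to the monotonicity of both $T_*$ and $H_*$ and of the limiting integrals together with the fact that these integrals explode past $\zeta/(\pi p_{\bf q})$, one extends the convergence to all $t \ge 0$ in the $J_1$ Skorokhod topology (or the uniform-on-compacts topology for the limiting continuous processes), in the same spirit as at the end of the proofs of Propositions \ref{prop limite T infini} and \ref{prop limite H infini}.
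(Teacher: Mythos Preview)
Your approach is correct and shares with the paper the central idea of transferring the infinite-volume results by absolute continuity, but the paper executes it more directly and uniformly. Rather than splitting into a direct reproof for $T_*$ and a localised bounded-density argument for $H_*$, the paper invokes the explicit Radon--Nikodym formula from Proposition~6.3 of \cite{BBCK}: the density of $(P_*,T_*,H_*)$ up to time $n$ under $\P^{(\ell)}$ with respect to $(P_\infty,T_\infty,H_\infty)$ under $\P^{(\ell)}_\infty$ equals $f^\uparrow(\ell)/f^\uparrow(P_\infty(n))$ times the indicator that $P_\infty$ makes no negative jump exceeding half its current value, where $f^\uparrow(\ell)=h^\uparrow(\ell)/(W^{(\ell)}c_{\bf q}^{-\ell})$. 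By \eqref{eq critique de type deux} this density converges to $\Upsilon^\uparrow(p_{\bf q}t)^{-5/2}$, which Proposition~4.1 of \cite{BBCK} identifies as precisely the density of $(X^{(-1)}(\pi p_{\bf q}s))_{s\le t}$ with respect to $(\Upsilon^\uparrow(p_{\bf q}s))_{s\le t}$. This single step transfers both joint convergences from Propositions~\ref{prop limite T infini} and~\ref{prop limite H infini} at once, without re-entering the proofs of the \cite{BCM} lemmas and without a separate near-extinction analysis.

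Your route is sound but more laborious. One point to tighten: the absolute continuity you invoke on $A_{\delta,T}^{(\ell)}$ must also carry the constraint that $P_\infty$ makes no negative jump larger than half its value, since $P_*$ (being the locally largest component) never does so by construction while $P_\infty$ can; without this indicator the two laws are not mutually absolutely continuous even on $A_{\delta,T}^{(\ell)}$.
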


Then the convergences (\ref{cv rayon fpp}) and (\ref{cv rayon graphe}) are clearly a consequence of the above corollary.

\begin{proof}
{Let us prove the statement for $H_*$ (the same proof works for $T_*$ using Proposition \ref{prop limite T infini}).} We will mimic the proof of Proposition 6.6 of \cite{BBCK}, but let us write the details. 

By {Equation (39) of} Proposition 6.3 of \cite{BBCK}, for all $\ell \ge 1$, the Radon-Nikodym derivative of the law of the processes $(P_*,H_*)$ until some time $n\ge0$ under $\P^{(\ell)}$ with respect to the law of $(P_\infty,H_\infty)$ until time $n$ under $\P^{(\ell)}_\infty$ is given by $f^\uparrow(\ell)/f^\uparrow(P_\infty(n))$ times the indicator function that $P_\infty$ does not make any negative jump of size larger than its half, where $f^\uparrow (\ell) \coloneqq h^\uparrow(\ell)/(W^{(\ell)} c_{\bf q}^{-\ell})$, {where $W^{(\ell)}$ is the partition function of planar maps of perimeter $2\ell$, $c_{\bf q}$ appears in \eqref{eq critique de type deux}} and $h^\uparrow(\ell)= 2\ell 2^{-\ell} \binom{2\ell}{\ell}$. 

{More precisely, let $T>0$. Denote by $\mathbb{D}([0,T])$ the set of real-valued càdlàg functions on $[0,T]$ equipped with the $J_1$ Skorokhod topology. Let $\vp>0$. Set
$$
U_\vp \coloneqq \left\{(x(t))_{t\in [0,T]} \in \mathbb{D}([0,T]); \ x(T)<\vp \ \text{or} \ \exists t \in (0,T], \ x(t)< \frac{x(t-)}{2}\right\}.
$$
Note that the closure of $U_\vp$ satisfies the inclusion
$$
\overline{U_\vp} \subseteq \left\{(x(t))_{t\in [0,T]} \in \mathbb{D}([0,T]); \ x(T)\le \vp \ \text{or} \ \exists t \in (0,T], \ x(t)\le \frac{x(t-)}{2}\right\}.
$$
Let $F: \mathbb{D}([0,T], \R)^2 \to \R$ be a continuous bounded function vanishing on the open set $\mathbb{D}([0,T]) \times U_\vp$. Then,
\begin{align*}
	\E^{(\ell)} F &\left( \left( \frac{H_*(\lfloor \ell t \rfloor)}{\log \ell} \right)_{t \in [0,T]}, \left( \frac{P_*(\lfloor \ell t \rfloor)}{\ell} \right)_{t \in [0,T]} \right) \\&= \E^{(\ell)}_\infty \left( \frac{f^\uparrow(\ell)}{f^\uparrow(P_\infty(\lfloor \ell T \rfloor))} F\left( \left( \frac{H_\infty(\lfloor \ell t \rfloor)}{\log \ell} \right)_{t \in [0,T]}, \left( \frac{P_\infty(\lfloor \ell t \rfloor)}{\ell} \right)_{t \in [0,T]} \right)\right)\\
	&\mathop{\longrightarrow}\limits_{\ell \to \infty}
	\E \left( \left(\Upsilon^\uparrow (p_{\bf q} T)\right)^{-5/2} F \left( \left(\int_0^{t} \frac{p_{\bf q}}{ 2  \Upsilon^\uparrow(p_{\bf q} s)} ds\right)_{t \in [0,T]}, (\Upsilon^\uparrow(p_{\bf q}t))_{t\in [0,T]} \right) \right),
\end{align*}
}
where the convergence comes from Proposition \ref{prop limite H infini}, together with the convergence in distribution of $f^\uparrow(\ell)/f^\uparrow(P_\infty(\lfloor \ell T\rfloor))$ towards $\Upsilon^\uparrow(p_{\bf q}t)^{-5/2}$ which comes from (\ref{eq critique de type deux}) {and \eqref{cvperimetre infini}} {and from the fact that $h^\uparrow(\ell) \sim 2 \sqrt{\ell/\pi}$ as $\ell \to \infty$.}

 Finally, identifying the Radon-Nikodym derivative of the law of $(X^{(-1)}(\pi p_{\bf q}s))_{0\le t \le T}$ with respect to the law of $(\Upsilon^\uparrow(p_{\bf q} t))_{0\le t \le T}$ by Proposition 4.1 of \cite{BBCK}, {one obtains the identity
 \begin{align*}
 	\E &\left( \left(\Upsilon^\uparrow (p_{\bf q} T)\right)^{-5/2} F \left( \left(\int_0^{t} \frac{p_{\bf q}}{ 2  \Upsilon^\uparrow(p_{\bf q} s)} ds\right)_{t \in [0,T]}, (\Upsilon^\uparrow(p_{\bf q}t))_{t\in [0,T]} \right) \right)\\
 	&=\E \left(  F \left( \left(\int_0^{t} \frac{p_{\bf q}}{ 2  X^{(-1)}(\pi p_{\bf q}s)} ds\right)_{t \in [0,T]}, (X^{(-1)}(\pi p_{\bf q}t))_{t\in [0,T]} \right) \right).
 \end{align*}
 It is now elementary but tedious to get the desired convergence in law. Let us still give the details using the Portmanteau theorem. Let $A_\vp$ be the closed subset of $\mathbb{D}([0,T])$ defined by
 $$
 A_\vp \coloneqq \left\{(x(t))_{t \in [0,T]} \in \mathbb{D}([0,T]); \ x(T) \ge 2\vp \text{ and } \forall t \in [0,T], \ x(t)\ge \frac{1+\vp}{2} x(t-)\right\}.
 $$
 Let $B$ be a closed subset of $\mathbb{D}([0,T])^2$ and let $B^\delta$ be its $\delta$-neighborhood in $\mathbb{D}([0,T])^2$ for some distance which induces the $J_1$ Skorokhod topology. Note that $(B\cap(\mathbb{D}([0,T]) \times A_\vp) )\cap ((\mathbb{D}([0,T])^2 \setminus B^\delta)\cup (\mathbb{D}([0,T]) \times\overline{U_\vp})) = \emptyset $, so that by Urysohn's lemma, there exists a continuous function $F: \mathbb{D}([0,T])^2 \to [0,1]$ which vanishes on the closed set $((\mathbb{D}([0,T])^2 \setminus B^\delta)\cup (\mathbb{D}([0,T]) \times\overline{U_\vp}))$ and is $1$ on the closed set $B\cap(\mathbb{D}([0,T]) \times A_\vp) $. Let us write
 $$
 (X_\ell,Y_\ell) \coloneqq \left(\left( \frac{H_*(\lfloor \ell t \rfloor)}{\log \ell} \right)_{t \in [0,T]}, \left( \frac{P_*(\lfloor \ell t \rfloor)}{\ell} \right)_{t \in [0,T]}\right) 
 $$ and
 $$ (X,Y)\coloneqq  \left( \left(\int_0^{t} \frac{p_{\bf q}}{ 2  X^{(-1)}(\pi p_{\bf q}s)} ds\right)_{t \in [0,T]}, (X^{(-1)}(\pi p_{\bf q}t))_{t\in [0,T]} \right).
 $$
 Then, by the above convergence,
 one can see that
 \begin{align*}
 	\limsup_{\ell \to \infty} \P^{(\ell)}\left( (X_\ell, Y_\ell) \in B \right)&\le\limsup_{\ell \to \infty} \P^{(\ell)} (Y_\ell \not\in A_\vp) + \limsup_{\ell\to \infty}\P^{(\ell)}\left( (X_\ell,Y_\ell) \in B\cap(\mathbb{D}([0,T]) \times A_\vp)  \right)\\
 	&\le \limsup_{\ell \to \infty} \P^{(\ell)} (Y_\ell \not\in A_\vp)  + \limsup_{\ell \to \infty} \E^{(\ell)} F(X_\ell,Y_\ell) \\
 	&= \limsup_{\ell \to \infty} \P^{(\ell)} (Y_\ell \not\in A_\vp)  + \E F(X,Y) \\
 	&\le \limsup_{\ell \to \infty} \P^{(\ell)} (Y_\ell \not\in A_\vp)  + \P( (X,Y)\in B^\delta \cap (\mathbb{D}([0,T])\times (\mathbb{D}([0,T]) \setminus \overline{U_\vp}))).
 \end{align*}
 Moreover, it is easy to see that $\sup_{\ell \to \infty} \P^{(\ell)} (Y_\ell \not\in A_\vp)\to 0$ and $\P(Y \in \overline{U}_\vp)\to 0$ as $\vp \to 0$ so that
 $$
 \limsup_{\ell \to \infty} \P^{(\ell)}\left( (X_\ell, Y_\ell) \in B \right) \le \P((X,Y) \in B^\delta).
 $$
 Finally, by letting $\delta \to 0$, we conclude that $\limsup_{\ell \to \infty} \P^{(\ell)}\left( (X_\ell, Y_\ell) \in B \right) \le \P((X,Y) \in B).$ This concludes the proof by the Portmanteau theorem.}
\end{proof}

Let us now consider the whole exploration tree in order to prove Theorem \ref{gros théorème}: instead of filling-in the holes, we run new explorations inside. Let us focus on the dual graph distance $\mathrm{d}^\dagger_{\mathrm{gr}}$, using the peeling by layers algorithm.

Recall {from Section 2.2 of \cite{BBCK}} the recursive definition of the discrete perimeter cell-system $\left( (P_u(k))_{k \ge 0} \right)_{u\in \U}$ associated with a branching peeling exploration of a finite map $\mathfrak{m}$, where $\U = \bigcup_{n \ge 0} (\N^*)^n$ is the Ulam tree. The process $P_\emptyset$ is the (half-)perimeter process of the locally largest cycle. We denote by $(N_j)_{j\ge 1}$ the times of negative jumps of $P_\emptyset$ (usually ranked in the non-increasing order of jump sizes and in the decreasing order of jump times if the jump sizes are equal). Each one corresponds to an event of type $G_{k_1,k_2}$, i.e. to a splitting of the distinguished hole of half-perimeter $P_\emptyset(N_j)$ into two holes of half-perimeters $P_\emptyset(N_j +1)$ and $\Delta_-P_\emptyset(N_j) := P_\emptyset(N_j)- P_\emptyset(N_j+1)-1 \le P_\emptyset(N_j +1)$. We then define $(P_j(k))_{k\ge 0}$ as the perimeter process of the locally largest cycle during the exploration of the submap filling the second hole. We define $P_{uj}$ from $P_u$ and the time of negative jump $N_{uj}$ in the same way for all $u \in \U$. The birth time $B_u$ of the cell $u \in \U$ is defined recursively by $B_\emptyset =0$ and for all $u \in \U, j \in \N^*$, we set $B_{uj}=B_u + N_{uj}$. 
For all $u \in \U$, let $\cal{C}_u$ be the cycle (i.e. the boundary of the hole) created during the birth of the cell $u$, with by convention $\cal{C}_\emptyset$ being the boundary of the root face. For all $u\in \U$, each non-negative jump $P_u(k+1)-P_u(k)\ge 0$ of the perimeter of the cell $u$ at some time $k$ such that $P_u(k)\ge1$ corresponds to the discovery of some face $f$ in the branch $u$. The time of exploration of $f$ is defined by 
\begin{equation}\label{eq temps d'exploration}
n(f)\coloneqq B_{u}+k.
\end{equation} 
Let $X$ be a self-similar Markov process such that $0$ is an absorbing state which tends a.s.\@ towards zero or is a.s.\@ absorbed at zero after a finite time. Let $P_x$ be the law of $X$ starting from $x \in \R_+$. We say that $(X_u)_{u\in \U}$ is a (continuous) cell-system associated with $X$ starting at $x\ge 0$ if $X_\emptyset$ follows the law $P_x$ and for all $u \in \U$, conditionally on $X_u$, if we write $((x_j,\beta_{uj}))_{j\ge 1}$ the family of negative jumps of $X_u$, where $x_j>0$ is the size of the jump and $\beta_{uj}$ is the time of jump, ordered in the non increasing lexicographic order ($x_j\ge x_{j+1}$ and if $x_j=x_{j+1}$ then $\beta_j>\beta_{j+1}$), then the $X_{uj}$ for $j\ge 1$ are independent of laws $P_{x_j}$. For all $u\in\U$, we set $\zeta_u = \inf\{t\ge 0, \ X_u(t)=0\}$. Let $\left( (\cal{X}_u(t))_{t\ge 0} , (\beta_{uj})_{j\ge 1},b_u, \zeta_u\right)_{u \in \U}$ be the continuous cell-system associated with the self-similar Markov process $X^{(-1)}$.

Recall the definition of the $J_1$ Skorokhod topology on the space of families of càdlàg functions indexed by $\U$ denoted by $\mathbb{D}(\R_+, \R)^\U$ : $(f_u^{(n)}(t))_{t\ge 0, u \in \U}$ converges towards $(f_u(t))_{t \ge 0, u \in \U}$ if for all finite subset $F \subset \U$, $(f_u^{(n)}(t))_{t\ge 0, u \in F}$ converges towards $(f_u(t))_{t \ge 0, u \in F}$ in the space $\mathbb{D}(\R_+, \R^F)$ for the $J_1$ Skorokhod topology.

{The following lemma proves that the scaling limit of the discrete cell-system is given by the continuous cell-system.}
\begin{lemma}\label{lemme prérimètre cellule}
The rescaled di{s}crete cell-system associated with $\mathfrak{M}^{(\ell)}$ converges in distribution to the continuous cell-system as a random variable with values in $\mathbb{D}(\R_+, \R)^\U$ equipped with the $J_1$ Skorokhod topology:
\begin{equation}\label{périmètre cellule}
\text{under }\P^{(\ell)}, \qquad \left( \left( \frac{P_u ( \ell t) }{ \ell}\right)_{t\ge 0} \right)_{u \in \U} \mathop{\longrightarrow}\limits_{\ell \to +\infty}^{(\mathrm{d})} \left( \left( \cal{X}_u(\pi p_{\bf q}t)\right)_{t \ge 0} \right)_{u\in \U}.
\end{equation}
\end{lemma}
\begin{proof}
The proof is the same as the proof of Lemma 17 in \cite{BCK}. The first step is to show that the convergence \eqref{cvperimetre fini} holds in the space $\mathbb{D}([0,\infty])$ equipped with Skorokhod's $J_1$ topology. To prove this, it suffices to control $P_*(\ell t)/\ell$ for large $t\ge 0$ uniformly in $\ell$ using the supermartingale $(f^\uparrow(P_*(n)))_{n\ge 0}$ under $\P^{(\ell)}$ coming from Proposition 6.4 of \cite{BBCK}, where $f^\uparrow$ is defined in the proof of Corollary \ref{corollaire limite des hauteurs}. Then \eqref{périmètre cellule} stems from the spatial Markov property and from the fact that the $\mathcal{X}_u$'s for $u \in \U$ have a.s.\@ no common jump times, using Proposition 2.2 page 338 of \cite{JS87}. 
\end{proof}

At this point, we are in position to prove the main result of this section.

\begin{proof}[Proof of Theorem \ref{gros théorème}]
We first extend the convergence (\ref{périmètre cellule}) to the height processes. For every $u \in \U$, we can define $H_u$ the height process of the locally largest cycle during the (filled-in) exploration of the submap of $\mathfrak{m}$ contained in the hole created at the birth time of the cell $u$. Notice that $H_\emptyset  = H_*$. Analogously, in the continuous setting, we write for all $u \in \U$, for all $t\ge 0$,
$$
\cal{H}_u(t) = \int_0^t \frac{p_{\bf q}}{2 \cal{X}_u(\pi p_{\bf q} s)} ds.
$$
A direct consequence of Corollary \ref{corollaire limite des hauteurs} and Lemma \ref{lemme prérimètre cellule} is the following convergence towards a family of continuous increasing processes which holds jointly with (\ref{périmètre cellule}):
\begin{equation}\label{hauteur cellule}
\text{under } \P^{(\ell)}, \qquad \left( \left( \frac{H_u ( \ell t) }{\log \ell}\right)_{t\ge 0} \right)_{u \in \U} \mathop{\longrightarrow}\limits_{\ell \to +\infty}^{(\mathrm{d})} \left( \left( \cal{H}_u(t)\right)_{t \ge 0} \right)_{u\in \U}.
\end{equation}
The topology for which the above convergence holds is the product topology over $\U$ where each factor is equipped with the topology of the uniform convergence on compact sets.

Yet, the processes $H_u$ for $u \in \U$ only describe the height in the map encircled by $\mathcal{C}_u$. In order to describe the height in the whole map $\mathfrak{m}$, we introduce some additional notation. In the discrete setting, for all $u \in \U$, let $H(\cal{C}_u)$ be the minimum of the heights (i.e. the distances to the root face) of the edges of the cycle $\cal{C}_u$. We also define $\widetilde{H}_u$ by induction: $\widetilde{H}_\emptyset = 0$ and for all $u\in \U, j \ge 1$, $\widetilde{H}_{uj} = \widetilde{H}_u + H_u(N_{uj})$. In the continuous setting, we define the birth heights $(b^{\cal{H}}_u)_{u \in \U}$ by $b^{\cal{H}}_\emptyset = 0$ and for all $u \in \U$, for all $j \ge 1$, $b_{uj}^\cal{H} = b^{\cal{H}}_u +\cal{H}_u\left({\beta_{uj}}/({\pi p_{\bf q}})\right)$.

Notice that by (\ref{périmètre cellule}) and (\ref{hauteur cellule}), we have jointly with (\ref{périmètre cellule}) and (\ref{hauteur cellule}) the convergence for the product topology
\begin{equation}\label{hauteurs de naissance}
\text{under } \P^{(\ell)}, \qquad
\left( \frac{H_u(N_{uj})}{\log \ell}
\right)_{u\in \U, j\ge 1}
\mathop{\longrightarrow}\limits_{\ell \to \infty}^{(\mathrm{d})}
\left(
\cal{H}_u\left(\frac{\beta_{uj}}{\pi p_{\bf q}}\right)
\right)_{u \in \U, j\ge 1}.
\end{equation}
Since the peeling algorithm which is used in the exploration is the peeling by layers, for all $u\in \U, j \ge 1$, every edge of $\cal{C}_{uj}$ is at distance $H_u(N_{uj})$ or $H_u(N_{uj})+1$ from $\mathcal{C}_u$. Therefore, for all $u \in \U$, 
we have $
\widetilde{H}_{u} \le H(\cal{C}_u)  \le  \widetilde{H}_{u} + |u|,
$, 
where $|u|$ denotes the length of the word $u$. Thus (\ref{hauteurs de naissance}) entails that, jointly with (\ref{périmètre cellule}) and (\ref{hauteur cellule}),
\begin{equation}\label{eq cv hauteur cycle}
\left( \frac{H(\cal{C}_u)}{\log \ell}\right)_{u \in \U}
\mathop{\longrightarrow}\limits_{\ell \to + \infty}^{(\mathrm{d})} 
\left(b^{\cal{H}}_u \right)_{u \in \U}
\end{equation}
Note that the positive jumps of the $P_u$'s for $u \in \U$ correspond to the (half) perimeters of the faces of $\mathfrak{m}$ minus one (without the root face). 

Besides, notice that by the Markov property of Theorem \ref{prop de Markov disques quantiques}, the cell system 
$$\left((Y_u(t))_{t\ge 0}\right)_{u \in \U} \coloneqq \left( \left( \mathcal{X}_u(\pi t)\right)_{t \ge 0} \right)_{u\in \U}$$
describes the quantum boundary lengths along the exploration tree of the $\mathrm{CLE}_4$ decorated $2$-LQG disk, both for the uniform exploration and for the $\mathrm{SLE}_4^{\langle \mu \rangle} (-2)$ exploration, parametrized by the quantum natural distance for the first one and by the quantum length of the trunk for the second one. In particular, the positive jumps correspond the quantum boundary lengths of the loops, so that the quantum boundary lengths of the loops are all distinct (note that one can also see the fact that the quantum boundary lengths of the loops are distinct as a direct consequence of Theorem 1.2 of \cite{AG23} which gives the law of the quantum boundary lengths of the loops). 
 Moreover, one can see that for all $\vp >0$ there is a finite number of loops of quantum boundary length larger than $\vp$. It can be seen for instance as a consequence of Theorem \ref{prop de Markov disques quantiques} which shows that one can write 
$
\mu^2_{h^2}(\mathbb{D}) \ge  \sum_{j\ge 1} b_j^2 a_j,
$ where the $a_j$'s are i.i.d. of the same law as $\mu^2_{h^2}(\mathbb{D})$ and independent of the $b_j$'s which are the quantum boundary lengths of the loops. 

Then the desired result is a straightforward consequence of the convergences (\ref{périmètre cellule}), (\ref{hauteur cellule}) and (\ref{eq cv hauteur cycle}). Indeed, by Skorokhod's representation theorem, take a probability space where these convergences hold almost surely. Take $\vp>0$. Let $F\subset \U$ be the finite set of $u$'s such that $Y_u$ has at least one jump of size larger than $\vp$. The convergence (\ref{périmètre cellule}) entails the scaling limit of the degrees of the faces to the quantum boundary lengths of the loops and the scaling limit of the exploration times $n(f_i^{(\ell)})$ for $i\ge 1$ towards the quantum natural distances or quantum lengths of the trunk. Finally, the convergences (\ref{hauteur cellule}) and (\ref{eq cv hauteur cycle}) give the scaling limit of the height towards the Lamperti transform.
\end{proof}

Note that one can also obtain a ``slicing at heights'' of $3/2$-stable maps as in \cite{BCK} or \cite{BBCK}. For all $r\ge 1$, let $\mathrm{Ball}^\dagger_r (\mathfrak{m})$ be the ball of radius $r$ centred at $f_r$ in $\mathfrak{m}$. This ball is the submap of $\mathfrak{m}$ {whose faces are the faces} of $\mathfrak{m}$ which are at height at most $r$ {and the adjacency relation between these faces is the same as in $\mathfrak{m}$, except that two faces at distance $r$ to the root face which were adjacent in $\mathfrak{m}$ are no longer adjacent in the ball.}  This ball has several holes of boundaries $\mathfrak{h}_1(\mathfrak{m},r), \mathfrak{h}_2(\mathfrak{m},r)\ldots$ ranked in the non-increasing order of perimeter. For all $i \ge 1$ and $r\ge 0$, let $L_i(r)$ be the (half-)perimeter of the hole $\mathfrak{h}_i(\mathfrak{m},r)$.

Let us recall the notions of self-similar growth-fragmentation. See Subsections 2.2 and 3.1 of \cite{BBCK} or Section 2 of \cite{Ber17} for more details. If $(\cal{X}_u, b_u, \zeta_u)_{u \in \U}$ is a continuous cell system associated with a self-similar Markov process $X$, for all $t\ge 0$ we set 
$$
{\bf X}(t) = \left\{\!\!\{ \cal{X}_u(t-b_u) ; \ u \in \U, \ b_u \le t< b_u + \zeta_u \right\}\!\!\}
$$ 
where the notation $\left\{\!\!\{ \cdots \right\}\!\!\}$ refers to multiset. By Theorem 2 in \cite{Ber17}, the elements of ${\bf X}(t)$ can be ranked in the non-increasing order and form a sequence that tends to $0$, say $X_1(t) \ge X_2(t) \ge \ldots \ge 0$. If ${\bf X}(t)$ has only finitely many elements, say $n$, then we set $X_{n+1}(t)=X_{n+2}(t)= \ldots = 0$. The process $({\bf X}_t)_{t\ge 0}$ is called the growth-fragmentation process associated with $X$.

When $p\ge 1$, let $\ell_{p}^\downarrow$ be the space of non-increasing sequences $(x_i)_{i\ge 0}$ such that $\sum_{i\ge 1} x_i^{p} <\infty$. Let $\mathbb{D}(\R_+, \ell_{p}^\downarrow)$ be the space of càdlàg functions from $\R_+$ to $\ell^\downarrow_{p}$ equipped with the $J_1$ Skorokhod topology. Let ${\bf X}^{(0)} = ((X^{(0)}_i(t))_{i\ge 1})_{t\ge 0}$ 
be the growth-fragmentation associated to the self-similar process $X^{(0)}$. 
Then, using exactly the same proof as the proof of Theorem 6.8 of \cite{BBCK}, we obtain the following scaling limit: 
\begin{proposition}\label{proposition saucisson}
In the space $\mathbb{D}(\R_+, \ell_{5/2}^\downarrow)$,
$$
\text{Under } \P^{(\ell)}, \qquad
\left(\left(\frac{L_i(s \log \ell)}{\ell}\right)_{i\ge 1}\right)_{s\ge 0}
\mathop{\longrightarrow}\limits_{\ell \to \infty}^{(\mathrm{d})}
\left({\bf X}^{(0)}(2 \pi s) \right)_{s\ge 0}.
$$
\end{proposition}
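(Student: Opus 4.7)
The plan is to mimic step-by-step the proof of Theorem 6.8 of \cite{BBCK}, using the cell-system convergences already established in the proof of Theorem \ref{gros th�or�me} together with a Lamperti time-change which converts the self-similar Markov process $X^{(-1)}$ (which drives the discrete perimeter cell-system) into $X^{(0)}$ (which drives the limiting growth-fragmentation).

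First, I would identify the holes of $\mathrm{Ball}^\dagger_{r}(\mathfrak{m})$ in terms of the cell decomposition. A cell $u \in \U$ contributes a hole at height $r$ if and only if $H(\mathcal{C}_u) \le r < H(\mathcal{C}_{u'})$ for every child $u' = uj$ that inherits $u$'s boundary after the peeling step responsible for $u$'s termination; the half-perimeter of that hole is essentially $P_u(H_u^{-1}(r - H(\mathcal{C}_u)))$, where $H_u^{-1}$ is the right-continuous inverse of $H_u$. In the continuous picture, a cell $u$ is alive at height $s$ iff $b^{\mathcal{H}}_u \le s < b^{\mathcal{H}}_u + \mathcal{H}_u(\zeta_u)$, and its mass at height $s$ is $\mathcal{X}_u(\mathcal{H}_u^{-1}(s - b^{\mathcal{H}}_u)/(\pi p_{\bf q}))$ up to the Lamperti reparametrization.

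Second, I would extract from (\ref{p�rim�tre cellule}), (\ref{hauteur cellule}) and (\ref{eq cv hauteur cycle}) the joint a.s.\@ convergence (via Skorokhod) of cell masses, height processes, and birth heights. For any fixed cell $u$, the Lamperti time substitution
\[
s \longmapsto \tau_u(s) \coloneqq \inf\left\{t \ge 0 \ ; \ \int_0^t \frac{p_{\bf q}}{2\mathcal{X}_u(\pi p_{\bf q} r)} dr \ge s\right\}
\]
applied to $\mathcal{X}_u(\pi p_{\bf q} \cdot)$ yields a self-similar Markov process of index $0$ (since $X^{(-1)}$ has index $-1$ and one time-changes by integrating $X^{-1}$), whose law, with the correct starting point and after absorbing the factor $2\pi$, is exactly $X^{(0)}$. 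Applying this reparametrization to each cell produces a continuous cell-system associated with $X^{(0)}$; the collection of masses alive at time $2\pi s$ is precisely $\mathbf{X}^{(0)}(2\pi s)$. Combined with the discrete identification of the first paragraph and the $J_1$-continuity of evaluating the cell-system at a fixed height (which holds at almost every $s$ by right-continuity), this gives finite-dimensional convergence in $\R_+^\N$ for the product topology.

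The main obstacle, exactly as in \cite{BBCK}, is to upgrade this finite-dimensional, product-topology convergence to convergence in the stronger space $\mathbb{D}(\R_+, \ell^\downarrow_{5/2})$ (which requires uniform control of the $5/2$-power tails). I would handle this by invoking the intrinsic martingale for the growth-fragmentation driven by $X^{(0)}$: the exponent $5/2$ is the Malthusian (or mean) exponent associated to this process, so $\sum_i X^{(0)}_i(t)^{5/2}$ is a non-negative martingale, and its discrete counterpart $\sum_i L_i(n)^{5/2}/\ell^{5/2}$ is a non-negative supermartingale under $\P^{(\ell)}$ coming from the $f^\uparrow$-transform of Proposition 6.4 of \cite{BBCK} (the same supermartingale used in the proof of Corollary \ref{corollaire limite des hauteurs}). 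Uniform integrability of the tails $\sum_{i > N} L_i(s \log \ell)^{5/2}/\ell^{5/2}$ as $N \to \infty$, uniform in $\ell$, follows from this supermartingale property together with (\ref{eq critique de type deux}); this is standard and identical to the argument carried out in Section 6 of \cite{BBCK}. Combining the finite-dimensional convergence with this tightness yields the announced convergence in $\mathbb{D}(\R_+, \ell^\downarrow_{5/2})$.
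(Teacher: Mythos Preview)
Your proposal is correct and follows exactly the approach the paper indicates: the paper's own proof consists of the single sentence ``using exactly the same proof as the proof of Theorem 6.8 of \cite{BBCK}'', and your sketch faithfully unpacks that argument (cell-system convergence from Lemma \ref{lemme pr�rim�tre cellule} and (\ref{hauteur cellule})--(\ref{eq cv hauteur cycle}), Lamperti reparametrization from $X^{(-1)}$ to $X^{(0)}$ producing the $2\pi$ factor, and the $\ell^\downarrow_{5/2}$-tightness via the $f^\uparrow$-supermartingale of Proposition 6.4 of \cite{BBCK}, noting $f^\uparrow(\ell)\sim c\,\ell^{5/2}$). One minor terminological point: $5/2$ is the larger root $\omega_+$ of the cumulant rather than the Malthusian exponent, but this does not affect the argument.
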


\section{Discussion}\label{section discussion}
\subsection{Application to $O(2)$-decorated maps}\label{sous-section cartes deco}
Here we describe briefly how our results transfer to some critical $O(2)$ loop-decorated planar maps. Let us first recall the definition of a rigid loop-decorated map and the definition of Boltzmann $O(n)$ loop-decorated random planar maps for $n \in (0,2]$. 

A \textit{loop-decorated map} $(\mathfrak{m}, {\bf \mathcal{L}})$ is a (rooted bipartite planar) map $\mathfrak{m}$ together with a loop configuration ${\bf L}= (\mathcal{L}_1, \ldots, \mathcal{L}_k)$ of disjoint unoriented simple closed paths on the dual map $\mathfrak{m}^\dagger$. Furthermore, the loops $\mathcal{L}_i$ do not go through the root face of $\mathfrak{m}$ and are rigid in the sense that they only visit quadrangles and they enter and exit the quadrangles through opposite sides. See Figure \ref{image carte boucles}.

\begin{figure}[h]
	\centering
	\includegraphics[scale=0.65]{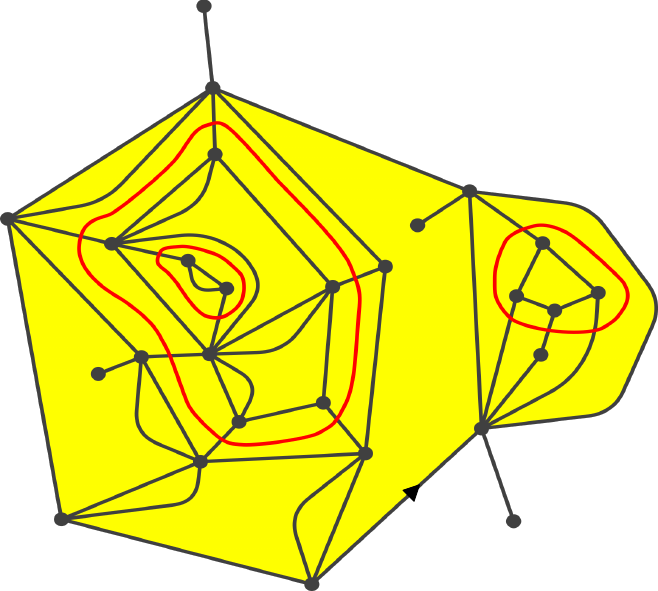}
	\includegraphics[scale=1.42]{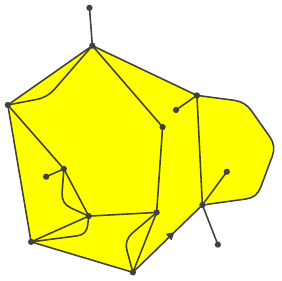}
	\caption{Left: a loop-decorated planar map. The loops, which cross quadrangles, are in red. Right: its gasket, obtained by removing the interior of the outermost loops.}
	\label{image carte boucles}
\end{figure}

By convention, the distances $\mathrm{d}_\mathrm{gr}^\dagger$ and $\mathrm{d}_{\mathrm{fpp}}^\dagger$ are defined on a loop-decorated map by adding the rule that all the quadrangles of each loop are identified to one point, i.e.\@ are at distance zero.

For all $\ell \ge 1$, we introduce the set $\mathcal{LM}^{(\ell)}$ of loop-decorated maps whose root face has degree $2\ell$. 
{Let $\widetilde{\bf q} = (\widetilde{q}_k)_{k\ge 1}$ be a sequence of non-negative real numbers and two real numbers $h,n\ge 0$. We define the weight of a loop decorated map $(\mathfrak{m}, {\bf L})\in \mathcal{LM}^{(\ell)}$ by setting
$$
w_{\widetilde{\bf q}, h,n} (\mathfrak{m}, {\bf L}) = \left(\prod_{\mathcal{L} \in {\bf L}} n h^{\vert \mathcal{L} \vert}\right) \left(\prod_{f \in \mathrm{Faces}(\mathfrak{m})\setminus(\{f_r\}\cup \bigcup_{\mathcal{L} \in {\bf L}} \mathcal{L})} \widetilde{q}_{\deg(f)/2}\right),
$$
where $\vert \mathcal{L} \vert $ is the length of the loop $\mathcal{L}$, i.e.\@ its number of quadrangles. For all $\ell\ge 1$, we recall that the partition function of $O(n)$ loop-decorated maps with triple $(\widetilde{\bf q}, h,n)$ of perimeter $2 \ell$ is defined by
$$
F^{(\ell)}(\widetilde{\bf q}, h,n) = \sum_{(\mathfrak{m}, {\bf L}) \in \mathcal{LM}^{(\ell)}} w_{\widetilde{\bf q}, h,n}(\mathfrak{m}, {\bf L}).
$$
The triple $(\widetilde{\bf q}, h,n)$ is said admissible if $F^{(\ell)}(\widetilde{\bf q}, h, n)<\infty$ for all $\ell \ge 1$. The \textit{Boltzmann probability measure} on $O(n)$ loop-decorated maps $\widetilde{\P}^{(\ell)}$ on $\mathcal{LM}^{(\ell)}$ is then defined by setting 
\begin{equation}\label{eq loi de Boltzmann carte a boucles}
\forall (\mathfrak{m}, {\bf L}) \in \mathcal{LM}^{(\ell)}, \qquad \widetilde{\P}^{(\ell)}(\{(\mathfrak{m}, {\bf L})\}) = \frac{w_{\widetilde{\bf q}, h, n}(\mathfrak{m}, {\bf L})}{F^{(\ell)}(\widetilde{\bf q}, h, n)}.\end{equation}

Let us now recall from \cite{BBG12} the gasket decomposition of a loop-decorated planar map. See also Sections 1.2.1 and 3.1 of \cite{B18} for a presentation of this decomposition. The gasket of a loop-decorated map $(\mathfrak{m}, {\bf L}) \in \mathcal{LM}^{(\ell)}$ is a planar map $\mathfrak{g} \in \mathcal{M}^{(\ell)}$ obtained by removing the interior of the outermost loops of $(\mathfrak{m}, {\bf L})$. See the right-hand side of Figure \ref{image carte boucles}. If we define the weight sequence ${\bf q}= (q_k)_{k\ge 1}$ by setting for all $k\ge 1$,
\begin{equation}\label{eq lien q qtilde}
	q_k = \widetilde{q}_k + n h^{2k} F^{(k)}(\widetilde{\bf q}, h, n),
\end{equation}
then for all $\mathfrak{g}\in \mathcal{M}^{(\ell)}$,
$$
w_{\bf q}(\mathfrak{g})  = \sum_{\substack{(\mathfrak{m}, {\bf L}) \in \mathcal{LM}^{(\ell)}\\ \mathfrak{g} \text{ is the gasket of } (\mathfrak{m}, {\bf L})}}w_{\widetilde{\bf q}, h, n}(\mathfrak{m}, {\bf L}),
$$
where we recall that $w_{\bf q}$ is defined in Subsection \ref{section background}. Recall also from Subsection \ref{section background} that $W^{(\ell)}$ is the partition function of random maps with perimeter $2\ell$.
In particular, summing over $(\mathfrak{m}, {\bf L})\in \mathcal{LM}^{(\ell)}$, we deduce that $W^{(\ell)} = F^{(\ell)}(\widetilde{\bf q}, h,n)$ for all $\ell \ge 1$, and that $\bf q$ is admissible if and only if the triple $(\widetilde{\bf q}, h,n)$ is admissible.

As in \cite{B18}, we say that the triple $(\widetilde{\bf q}, h,n)$ is \textit{non-generic critical} if it is admissible, $n>0$ and $h = 1/c_{\bf q} $. Note that by \eqref{eq lien q qtilde}, a weight sequence ${\bf q} = (q_k)_{k\ge 1}$ is the weight sequence of the gasket of a non-generic critical $O(n)$ loop decorated map if and only if $\bf q$ is admissible and
\begin{equation}\label{eq gasket}
	\forall k\ge 1, \qquad \qquad q_k-n c_{\bf q}^{-2k} W^{(k)} \ge 0.
\end{equation}
Moreover, the associated triple $(\widetilde{\bf q}, h,n)$ of $O(n)$ loop-decorated maps is given by \eqref{eq lien q qtilde} and $h = 1/c_{\bf q}$. By Theorem 1.1 of \cite{Kam24}, see also Equation (1.7) of  \cite{AdSH24} which was obtained independently for the particular case of quadrangulations, we know that if ${\bf q} = (q_k)_{k\ge 1}$ is the weight sequence of the gasket of a non-generic critical $O(2)$ loop-decorated map, then there exists a slowly varying function $L_{\bf q} : \R_+ \to \R_+$ such that $1= O(L_{\bf q}(\ell))$, $L_{\bf q}(\ell) = O(\log \ell)$ and
\begin{equation}\label{eq slowly varying function}
W^{(\ell)} \mathop{\sim}\limits_{\ell \to \infty } c_{\bf q}^{\ell+1} \frac{L_{\bf q}(\ell) }{2\ell^{2}},
\end{equation}
where we recall that $L_{\bf q}$ is a slowly varying function if for all $\lambda>0$, we have $L_{\bf q}(\lambda x) \sim L_{\bf q}(x)$ as $x \to \infty$. When this slowly varying function $L_{\bf q}$ is constant, we get \eqref{eq critique de type deux} with $a=2$. In other words, in our setting, when $L_{\bf q}$ is constant, the gasket of a critical $O(2)$ loop-decorated map is a $3/2$-stable maps. We will make this assumption in the rest of this subsection so that \eqref{eq critique de type deux} holds with $a=2$.
} 
Such a sequence does exist{: in fact, }as shown in Remark 1 of \cite{B18} one can take for all $k\ge 1$
$$
q_k = \frac{1}{(3\pi)^{k-1}} \left(\frac{2}{\pi} \frac{1}{(2k-3)(2k-1)} + {\bf 1}_{k=1}\right).
$$
Using the above-described gasket decomposition {and more precisely \eqref{eq lien q qtilde},} one can construct the critical $O(2)$ Boltzmann measure $\widetilde{\P}^{(\ell)}$ on $\mathcal{LM}^{(\ell)}$ for all $\ell \ge 1$ {defined in \eqref{eq loi de Boltzmann carte a boucles}} in a recursive way as follows. Let $\mathfrak{M}^{(\ell)}$ be a random variable in $\mathcal{M}^{(\ell)}$ of law $\P^{(\ell)}$. Then, independently, on each face $f$ of $\mathfrak{M}^{(\ell)}$, with probability $(2 W^{(p)})/(c_{\bf q}^{2p} q_p)$ where $p=\deg(f)/2$, we glue quadrangles along the edges around $f$ hence forming a rigid loop and we glue in this loop an independent loop-decorated map of law $\widetilde{\P}^{(p)}$. The fact that $\widetilde{\P}^{(\ell)}$ is well defined comes from the admissibility of $\bf q$, from (\ref{eq gasket}) and from \cite{BBG12} or from Theorem 1 of \cite{B18}. 
The map $\mathfrak{M}^{(\ell)}$ is the gasket of the obtained loop-decorated map. See Figure \ref{image carte boucles}.

\paragraph{Exploration of loop-decorated maps and signed cell-systems}{As observed in \cite{B18}, we can extend the peeling exploration of the gasket to a peeling exploration of the loop-decorated map.  More precisely, one can extend the exploration and the cell-system defined in Subsection \ref{sous-section cartes et CLE} to explore the whole critical $O(2)$ loop-decorated map (not only its gasket) as follows: each time we observe a non-negative jump of size $p-1\ge 0$ of the perimeter process, we know that with probability $(2 W^{(p)})/(c_{\bf q}^{2p} q_p)$ it corresponds to the discovery of a loop $\mathcal{L}$ of size $2p$ (and not of a face of degree $2p$). In this case, we run a new exploration of the map encircled by the loop $\mathcal{L}$. We thus obtain a discrete cell-system $((\widetilde{P}_u(k))_{k\ge 0})_{u \in \mathbb{U}}$ which is defined as follows: $\widetilde{P}_\emptyset = P_\emptyset$ and $\widetilde{B}_\emptyset = 0$. Then, assume that the cell $\widetilde{P}_u$ and its birth time $\widetilde{B}_u$ are defined. Let $\widetilde{N}_{uj}$ for $j\ge 1$ be the times of negative jumps and also the non-negative jumps corresponding to the discovery of a loop, in the lexicographic order (using the non-increasing order of absolute value of the sizes of the jumps). We then set $\widetilde{B}_{uj}= \widetilde{B}_u+\widetilde{N}_{uj}$ for all $j\ge 1$ and we define $\widetilde{P}_{uj}$ as the perimeter process of the exploration of the map filling the hole in the case of a negative jump or the map encircled by the loop in the case of a discovery of a loop. In the case of a discovery of a loop, by convention we change the sign of $\widetilde{P}_{uj}$ (i.e. if $\widetilde{P}_u$ is positive, then $\widetilde{P}_{uj}$ is minus the perimeter process, while if $\widetilde{P}_u$ is negative, then $\widetilde{P}_{uj}$ is the perimeter process).

Similarly, as in Subsection 3.1 of \cite{dS23}, we can define the continuous signed cell-system as follows. Let $X$ be a self-simialar Markov process such that $0$ is an absorbing state which tends a.s.\@ towards zero or is a.s.\@ absorbed at zero after a finite time. Let $P_x$ be the law of $X$ starting from $x \in \R_+$. We say that $(\overline{X}_u)_{u\in \U}$ is a continuous signed cell-system associated with $X$ starting at $x\ge 0$ if $\overline{X}_\emptyset$ follows the law $P_x$ and for all $u \in \U$, conditionally on $\overline{X}_u$, if we write $((\overline{x}_j, \overline{\beta}_{uj}))_{j\ge 1}$ the family of negative and positive jumps of $\overline{X}_u$, where $\overline{x}_j \in \R^*$ is the size of the jump and $\overline{\beta}_{uj}$ is the time of the jump, ordered in the non-increasing lexicographic order (the $\overline{x}_j$'s are compared using their absolute values), then the $\overline{X}_{uj}$'s are independent and have the law of $\pm X$ under $P_{\vert \overline{x}_j\vert}$, where the sign is determined as follows: the sign of $\overline{X}_{uj}$ is minus the sign of $\overline{X}_u$ when $\overline{x}_j>0$ and is the same sign when $\overline{x}_j<0$. We define inductively the birth times by setting $\overline{b}_\emptyset = 0$ and $\overline{b}_{uj} = \overline{b}_u + \overline{\beta}_{uj}$. The lifetime of $\overline{X}_u$ is $\overline{\zeta}_u \coloneqq \inf\{t\ge 0, \ \overline{X}_u(t)= 0\}$.

For all $\alpha\in \R$, let $((\overline{\mathcal{X}}^{(\alpha)}_u(t))_{t\ge 0}, (\overline{\beta_{uj}})_{j\ge 1}, \overline{b}_u, \overline{\zeta}_u)_{u\in \U}$ be the continuous signed cell-system associated with the self-similar Markov process $X^{(\alpha)}$. Let us denote by $\overline{\bf X}^{(\alpha)}$ the self-similar signed growth-fragmentation process associated with this cell-system, which is defined for all $t\ge 0$ by
$$
\overline{\bf X}^{(\alpha)}(t) = \left\{\!\!\!\left\{ \overline{\mathcal{X}}^{(\alpha)}_u(t-\overline{b}_u) ; \ u \in \U, \ \overline{b}_u \le t< \overline{b}_u + \overline{\zeta}_u \right\}\!\!\!\right\}.
$$ 
The self-similar signed growth-fragmentation $\overline{\bf X}^{(-1)}$ was studied in details in \cite{AdS22}. As in Section 4 of \cite{AdS22}, we arrange the elements of $\overline{\bf X}^{(\alpha)}(t) $ in the non-increasing order of absolute values.}

\paragraph{Scaling limit of the distances to the root.} Using this gasket decomposition, one can see that Theorem \ref{gros théorème} and Proposition \ref{proposition saucisson} have their counterparts in terms of critical $O(2)$ loop-decorated planar maps.

 {In order to extend Theorem \ref{gros théorème} to critical $O(2)$ loop-decorated maps}, it suffices to replace the faces of the $3/2$-stable map by the loops of the loop-decorated map and the non-nested $\mathrm{CLE}_4$ by a nested $\mathrm{CLE}_4$. {Recall the uniform exploration of the nested CLE$_4$ from Subsection \ref{sous-section explo unif}. The definition of the quantum natural distance defined in \eqref{eq distance quantique target} can be extended beyond the time $\tau^z$, which enables to define the quantum distance $\mathrm{d}_{\mathrm{q}}(\partial \mathbb{D}, \mathcal{L})$ to any loop $\mathcal{L}$ of the nested CLE$_4$. The distance $\mathrm{d}_{\mathrm{Lamperti}}$ can be extended in the same way.}
{
	\begin{corollary}
		For all $\ell \ge 1$, let $(\mathfrak{M}^{(\ell)},{\bf L}^{(\ell)})$ of law $\widetilde{\P}^{(\ell)}$. Let $(\mathcal{L}_i^{(\ell)})_{i\ge 1}$ be the collection of loops of $(\mathfrak{M}^{(\ell)},{\bf L}^{(\ell)})$ ranked in the non-increasing order of size. Let $(n(\mathcal{L}_i^{(\ell)}))_{i\ge 1}$ be their exploration times (i.e. the birth time of their corresponding cells in the discrete cell-system described above) using the peeling by layers algorithm. Let $(\mathcal{L}_i)_{i\ge 1}$ be the collection of loops of the nested $\mathrm{CLE}_4$ ranked in the decreasing order of quantum boundary length. Let us denote by $\nu^2_{h^2}(\mathcal{L})$ the quantum boundary length of a loop $\mathcal{L}$. Then, for the product topology,
		$$
		\left(\frac{\vert\mathcal{L}_i^{(\ell)}\vert}{2\ell}, \frac{n(\mathcal{L}_i^{(\ell)})}{p_{\bf q} \ell}, \frac{2\mathrm{d}^\dagger_\mathrm{gr}(f_r, \mathcal{L}_i^{(\ell)})}{ \log \ell}\right)_{i\ge 1}
		\mathop{\longrightarrow}\limits_{\ell \to \infty}^{(\mathrm{d})}
		\left(\nu^2_{h^2}(\mathcal{L}_i), \mathrm{d}_{\mathrm{q}}(\partial \mathbb{D}, \mathcal{L}_i), \mathrm{d}_{\mathrm{Lamperti}}(\partial \mathbb{D}, \mathcal{L}_i)\right)_{i\ge 1}.
		$$
	\end{corollary}
	The same theorem can be stated for the $\mathrm{SLE}_4^{\langle \mu \rangle}(-2)$ exploration, replacing the quantum natural distance by the quantum length of the trunk. For the fpp distance, the factor $\log \ell$ is replaced by a factor $1/p_{\bf q}$ and the peeling by layers exploration is replaced by the uniform peeling exploration. }
	
	{The above corollary is a consequence of Theorem \ref{gros théorème}, of the fact that the maps encircled by the outermost loops are independent of law $\widetilde{\P}^{ (\ell)}$ conditionally on their perimeter, of the Markov property for the uniform exploration of the nested CLE$_4$  and of Theorem \ref{prop de Markov disques quantiques} which states in particular that the encircled domains are independent $2$ quantum disks conditionally on their quantum boundary lengths. 
		
	The only difficulty is to see why in the limit all the positive jumps give rise to a loop while it is not the case in the exploration of the loop-decorated map. This comes from the fact that the jumps of the Cauchy process are symmetric. In other words, the terms $q_k$ and $2c_{\bf q}^{-2k} W^{(k)}$ are ``of the same order'' so that the weights of the faces $\widetilde{q}_k$ are ``negligible'' compared with $2c_{\bf q}^{-2k} W^{(k)}$. More precisely, as explained in Subsection 6.4 of \cite{BBCK}, the perimeter process is absolutely continuous with respect to the random walk with step distribution $\nu$ given by $\nu(k) =q_{k+1} c_{\bf q}^k$ and $\nu(-k-1) = 2W^{(k)} c_{\bf q}^{-k-1}$ for all $k\ge 0$. Here, by \eqref{eq lien q qtilde}, each non-negative jump of size $k-1\ge 0$ corresponds to the discovery of a loop with probability $2W^{(k)}/(c_{\bf q}^{2k} q_k) = \nu(-k-1)/\nu(k-1)$ but by Proposition 5.10 of \cite{StFlour} we know that $\nu([k,\infty))\sim p_{\bf q}/k \sim \nu((-\infty,-k])$ as $k\to \infty$. Thus, the PPP of the positive jumps of the Cauchy process appearing as the scaling limit of the random walk is the scaling limit of the non-negative jumps corresponding to the discovery of a loop.}

{For Proposition \ref{proposition saucisson}, one has to add a sign $(-1)^{N(\mathfrak{h})}$ to the perimeter of each hole $\mathfrak{h}$, where $N(\mathfrak{h})$ is the number of loops surrounding $\mathfrak{h}$ and one has to replace the growth-fragmentation ${\bf X}^{(0)}$ by the signed growth fragmentation $\overline{\bf X}^{(0)}$. Notice that this signed growth-fragmentation corresponds up to a Lamperti transform to the signed growth-fragmentation $\overline{\bf X}^{(-1)}$ studied in \cite{AdS22}. 

For all $r\ge 1$, let $\mathrm{Ball}^\dagger_r ((\mathfrak{m}, {\bf L}))$ be the ball of radius $r$ centred at $f_r$ in the loop decorated map $(\mathfrak{m}, {\bf L})$ for the distance $\mathrm{d}^\dagger_{\mathrm{gr}}$. This ball has several holes of boundaries $\mathfrak{h}_1(\mathfrak{m},r), \mathfrak{h}_2(\mathfrak{m},r)\ldots$ ranked in the non-increasing order of perimeter. For all $i \ge 1$ and $r\ge 0$, let $L_i(r)$ be $(-1)^{N(\mathfrak{h}_i)}$ times the (half-)perimeter of the hole $\mathfrak{h}_i(\mathfrak{m},r)$. One obtains:
\begin{proposition}
	In the space $\mathbb{D}(\R_+, \ell_{2}^\downarrow)$,
	$$
	\text{Under } \widetilde{\P}^{(\ell)}, \qquad
	\left(\left(\frac{L_i(\lfloor s \log \ell\rfloor)}{\ell}\right)_{i\ge 1}\right)_{s\ge 0}
	\mathop{\longrightarrow}\limits_{\ell \to \infty}^{(\mathrm{d})}
	\left(\overline{\bf X}^{(0)}(2 \pi s) \right)_{s\ge 0}.
	$$
\end{proposition}}

{The proof goes along the same lines as the proof of Theorem 6.8 of \cite{BBCK}.} The convergence towards the growth-fragmentation holds here in $\mathbb{D}(\R_+, \ell^\downarrow_2)$ since the harmonic functions associated to the exploration of Boltzmann planar maps which give rise to the martingales of Proposition 6.4 in \cite{BBCK} are replaced by the harmonic function $h^\downarrow_\mathfrak{p}$ for $\mathfrak{p}=1$ in Equation (5) of \cite{B18}, which is constant equal to $1$.

{We believe that the above results have their counterparts when the slowly varying function $L_{\bf q}$ in \eqref{eq slowly varying function} is not constant but one has to adapt the techniques to deal with this case.}

\subsection{Why the author thinks that the conformally invariant distance is not the Lamperti transform of the quantum distance}\label{sous-section heuristique}
We finally argue why we believe that the conformally invariant distance to the boundary $\mathrm{d}_{\mathrm{WW}}$ of \cite{WW13} is not the Lamperti transform $\mathrm{d}_{\mathrm{Lamperti}}$ of the quantum distance.

Let us consider, as before, a unit boundary length $2$-quantum disk $(\mathbb{D}, h^2)$ on which we draw an independent (non-nested) $\mathrm{CLE}_4$ using the uniform exploration. Let $Z$ be an independent random point of law $\mu^2_{h^2}/\mu^2_{h^2}(\mathbb{D})$ and let $\mathcal{L}$ be the loop surrounding $Z$. By definition of the uniform exploration, conditionally on $h^2$ and on $Z$, the distance $\mathrm{d}_\mathrm{WW}(\partial \mathbb{D},\mathcal{L})$ is an exponential random variable of parameter $\lambda$ for some constant $\lambda>0$.

{Indeed, since $Z$ is independent from the uniform exploration, this distance has the same law as $\mathrm{d}_\mathrm{WW}(\partial \mathbb{D},\mathcal{L}^z)$, where $\mathcal{L}^z$ is the loop which encircles a deterministic point $z\in \mathbb{D}$. But coming back to the definition of the uniform exploration on $\mathbb{H}$ in Subsection \ref{sous-section explo unif}, the distance to the loop surrounding $z \in \mathbb{H}$ corresponds to the time $\tau^z$ when a loop of the PPP of loops $(\widehat{\gamma}_u)_{u\ge 0}$ encircles $z$, which is an exponential random variable since it is the time of the first atom of the restriction of the PPP $(\widehat{\gamma}_u)_{u\ge 0}$ to the loops surrounding $z$.}

 In particular, if we stop the exploration at distance $u>0$, conditionally on not having encircled $Z$ before time $u$, the distance $\mathrm{d}_\mathrm{WW}(\partial \mathbb{D},\mathcal{L})-u$ from $\mathcal{L}$ to the boundary of the unexplored region is still an exponential random variable of parameter $\lambda$. On the contrary, if we stop the exploration at Lamperti transformed quantum distance $t>0$, then, conditionally on not having encircled $Z$ yet, the point $Z$ lies in each unexplored quantum disk with probability proportional to its quantum area. So the quantum disk containing $Z$ is {in some sense weighted} by its area, so that the absence of memory for $\mathrm{d}_\mathrm{Lamperti}(\partial \mathbb{D}, \mathcal{L})$ should be ruled out.

Nevertheless, one could start the uniform exploration from a loop of the $\mathrm{CLE}_4$ and measure the quantum boundary lengths of the loops it discovers, which should provide a way to measure the $\mathrm{d}_\mathrm{Lamperti}$-distance from a given loop. Since $\mathrm{d}_\mathrm{Lamperti}$ is the scaling limit of the distances to the boundary in $3/2$-stable maps, it is natural to conjecture that such an extension of $\mathrm{d}_\mathrm{Lamperti}$ is a measurable function of the $\mathrm{CLE}_4$-decorated $2$-LQG and is a distance, which would be the scaling limit of $3/2$-stable maps.

\paragraph{Acknowledgements.}
I am deeply indebted to Matthis Lehmkuehler for his guidance in the exploration of CLEs as well as in the powder snow of Les Diablerets, especially for his help for the proof of Proposition \ref{prop cv CLE}. I also thank Wendelin Werner for his advice. I am grateful to my supervisors Cyril Marzouk and Nicolas Curien for their invaluable support. I thank all the anonymous referees for their reading and for their extremely valuable and numerous comments and suggestions which improved greatly the paper.

\bibliographystyle{alpha}
\bibliography{distances_CLE4_nouveau}

\end{document}